\UseAllTwocells \xyoption{frame} \CompileMatrices
\theoremstyle{theorem}
\newtheorem{thm}{Theorem}[section]
\newtheorem{cor}[thm]{Corollary}
\newtheorem{lem}[thm]{Lemma}
\newtheorem{prop}[thm]{Proposition}
\theoremstyle{definition}
\newtheorem{defn}[thm]{Definition}
\newtheorem{rmk}[thm]{Remark}
\newtheorem{numrmk}[thm]{Remark}
\newtheorem{exm}[thm]{Example}
\newtheorem{conds}[thm]{Condition}
\newtheorem{example}[thm]{Example}
\def\Label{\label}
\newtheorem*{thm*}{Theorem}
\numberwithin{equation}{section}
\newtheorem{theorem}[subsection]{Theorem}
\theoremstyle{definition}
\theoremstyle{remark}
\theoremstyle{remark}
\numberwithin{equation}{section}
\newcommand{\M}{\mathcal{M}}
\newcommand{\Mbar}{\overline{\M}}
\newcommand{\tJ}{\tilde{J}}
\newcommand{\bz}{\mathbf{z}}
\newcommand{\ms}{\mathfrak s}
\def\<{\left\langle}
\def\>{\right\rangle}
\newcommand{\Wkp}{\mathcal{W}^{k,p}}
\newcommand{\E}{\mathcal{E}}
\newcommand{\B}{\mathcal{B}}
\newcommand{\X}{\mathcal{X}}
\newcommand{\Y}{\mathcal{Y}}
\newcommand{\F}{\mathcal{F}}
\newcommand{\gG}{\mathcal{G}}
\newcommand{\gH}{\mathcal{H}}
\newcommand{\twoarrows}{\rightrightarrows}
\newcommand{\Man} {\textsf{Man}}
\newcommand{\St} {\textsf{St}}
\newcommand{\sfC}{{\textsf C}}
\newcommand{\sfD}{{\textsf D}}
\newcommand{\XxS}{\X\times S^{2}}
\newcommand{\com}{\mathbb{C}}
\newcommand{\ration}{\mathbb{Q}}
\newcommand{\real}{\mathbb{R}}
\newcommand{\inte}{\mathbb{Z}}
\newcommand{\Ztwo}{\mathbb{Z}_{2}}
\def\darr#1{\raise1.5ex\hbox{$\leftrightarrow$}
\mkern-16.5mu #1}
\def\roughly#1{\raise.3ex\hbox{$#1$\kern-.75em
\lower1ex\hbox{$\sim$}}}
\def\opname#1{\mathop{\kern0pt{\rm #1}}\nolimits}
\def\Aut{\operatorname{Aut}}
\def\ms{\mathfrak{s}}
\title{Seidel Representation for Symplectic Orbifolds}
\author{Hsian-Hua Tseng}
\address{Department of Mathematics\\ Ohio State University\\ 100 Math Tower\\ 231 West 18th Avenue\\ Columbus, OH 43210-1174\\ USA}
\email{hhtseng@math.ohio-state.edu}
\author{Dongning Wang}
\address{Department of Mathematics \\ University of Wisconsin-Madison\\ Van Vleck Hall\\ 480 Lincoln Dr.\\ Madison, WI 53706\\ USA}
\email{dwang@math.wisc.edu}
\date{\today}
\begin{document}
\begin{abstract}
Let $(\X,\omega)$ be a compact symplectic orbifold. We define $\pi_1(Ham(\X, \omega))$, the fundamental group of the 2-group of Hamiltonian diffeomorphisms of $(\X, \omega)$, and construct a group homomorphism from $\pi_1(Ham(\X, \omega))$  to the group $QH_{orb}^*(\X,\Lambda)^{\times}$ of multiplicatively invertible elements in the orbifold quantum cohomology ring of $(\X, \omega)$. This extends the Seidel representation (\cite{Se}, \cite{M}) to symplectic orbifolds. 
\end{abstract}
\maketitle
\tableofcontents

\section{Introduction}
\subsection*{Results}
The purpose of this paper is to construction Seidel representation for compact symplectic orbifolds. Let $(\X, \omega)$ be a compact symplectic orbifold and let $Ham(\X,\omega)$ be the group\footnote{This is in fact a $2$-group.} of Hamiltonian diffeomorphisms of $(\X, \omega)$. We develop a suitable notion of loops of Hamiltonian diffeomorphisms of $(\X, \omega)$ and consider the set $$\pi_1(Ham(\X,\omega))$$ of homotopy classes of (based) loops of Hamiltonian diffeomorphisms of $(\X,\omega)$. Composition of loops gives $\pi_1(Ham(\X,\omega))$ a natural group structure. 

By the work of Chen-Ruan \cite{CR2}, one can define Gromov-Witten invariants of a compact symplectic orbifold $(\X,\omega)$. Roughly speaking, Gromov-Witten invariants of $(\X,\omega)$ are virtual counts of pseudo-holomorphic maps from nodal orbifold Riemann surfaces to $\X$ satisfying certain incidence conditions. A brief review of Gromov-Witten invariants can be found in Section \ref{secReviewoGW}. In this paper we are mainly concerned with genus $0$ Gromov-Witten invariants of $(\X, \omega)$. These invariants can be used to defined an alternate product structure called the {\em quantum cup product} on the cohomology group $H^*(I\X)$ of the {\em inertia orbifold} $I\X$ of $\X$. The resulting ring is called the {\em quantum orbifold cohomology ring} of $(\X, \omega)$ and is denoted\footnote{The dependence in $\omega$ is usually suppressed in the notation. We also omit the Novikov ring in the notation.} by $QH^*_{orb}(\X)$. 

Let $QH^*_{orb}(\X)^\times$ be the set of multiplicatively invertible elements in $QH^*_{orb}(\X)$. The set $QH^*_{orb}(\X)^\times$ is a group under the quantum cup product. The {\em Seidel representation} of $(\X,\omega)$ is a group homomorphism
\begin{equation}\label{seidel_representation}
\mathcal{S}: \pi_1(Ham(\X,\omega))\to QH^*_{orb}(\X)^\times.
\end{equation}
The definition of $\mathcal{S}$ may be briefly outlined as follows. Given an element $a\in \pi_1(Ham(\X,\omega))$ represented by a Hamiltonian loop $\alpha$, we consider a fiber bundle\footnote{In fact this is an example of an {\em orbifiber bundle}, in the sense of Definition \ref{orbifiberbundle}.} $\mathcal{E}_\alpha\to S^2$ which roughly speaking is obtained by gluing two copies of $D^2\times \X$ along the boundary $\partial D^2\times \X=S^1\times \X$ using $\alpha$. The total space $\mathcal{E}_\alpha$ can be equipped with a symplectic structure which is compatible with the fiber bundle structure. We define an element $$\mathcal{S}(a)\in QH^*_{orb}(\X)^\times$$ using genus $0$ Gromov-Witten invariants of $\mathcal{E}_\alpha$. See Definition \ref{defseidel} for the precise definition. 

The following is the main result of the paper.
\begin{theorem}[= Corollary \ref{seidel_hom}]
The map $$\mathcal{S}: \pi_1(Ham(\X,\omega))\to QH^*_{orb}(\X)^\times$$ is a group homomorphism.
\end{theorem}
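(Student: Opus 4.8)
The plan is to establish the homomorphism property $\mathcal{S}(a\cdot b) = \mathcal{S}(a) * \mathcal{S}(b)$, where $\cdot$ denotes composition of Hamiltonian loops and $*$ the quantum cup product, by an orbifold version of McDuff's gluing argument for fibrations over $S^2$. First I would reduce the group operation on $\pi_1(Ham(\X,\omega))$ to the pointwise product of loops $t\mapsto \alpha_t\beta_t$; by the Eckmann--Hilton argument (adapted to the $2$-group setting) this agrees up to based homotopy with concatenation, so that $\mathcal{S}$ is well defined on the product class and it suffices to work with representatives in a convenient normal form. The trivial loop yields $\mathcal{S}(1)=\mathbf 1$, the unit of $QH^*_{orb}(\X)$; hence once multiplicativity is established, the identity $\mathcal{S}(a)*\mathcal{S}(a^{-1})=\mathcal{S}(1)=\mathbf 1$ confirms that every $\mathcal{S}(a)$ is invertible, so $\mathcal{S}$ genuinely takes values in $QH^*_{orb}(\X)^\times$.

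The geometric heart is a gluing lemma: the orbifiber bundle $\E_{\alpha\beta}\to S^2$ associated to the composed loop is deformation equivalent, as a symplectic orbifiber bundle carrying a compatible fibered almost complex structure, to the fiber sum of $\E_\alpha$ and $\E_\beta$ along a common fiber. Concretely I would degenerate the base $S^2$ to a nodal curve $S^2\vee S^2$ and build a family over a disk whose central fiber is the nodal total space $\E_\alpha\cup_{\X}\E_\beta$ (the two bundles glued along a copy of $\X$) and whose generic fiber is $\E_{\alpha\beta}$. Producing this family, and verifying that the symplectic and orbifold structures match along the neck --- in particular that the local uniformizing charts of the two bundles are compatibly identified over the gluing fiber --- is where the orbifold bookkeeping is most delicate.

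With the degeneration in hand, I would set up the moduli spaces of genus-$0$ orbifold holomorphic sections of $\E_{\alpha\beta}$ subject to the point constraints used to define $\mathcal{S}$, verify that their virtual dimensions are as expected (using the index of the fibered Cauchy--Riemann operator together with the age/degree-shifting contributions at the orbifold marked points), and construct the Chen--Ruan virtual fundamental cycles. The orbifold degeneration formula for genus-$0$ Gromov--Witten invariants then expresses the section invariants of $\E_{\alpha\beta}$ as a sum, over the components of the inertia orbifold $I\X$ indexing the twisted sectors at the node, of products of section invariants of $\E_\alpha$ and $\E_\beta$ coupled by a matching insertion and its Poincar\'e dual. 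This sum is precisely the structure constant defining the quantum cup product of the two Seidel elements, giving $\mathcal{S}(a\cdot b)=\mathcal{S}(a)*\mathcal{S}(b)$.

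The step I expect to be the main obstacle is the orbifold splitting formula at the node, together with the attendant transversality and gluing analysis. One must control how holomorphic sections degenerate so that the limit lands in the correct twisted sector of $I\X$, show that the evaluation maps at the node are compatible with the orbifold Poincar\'e pairing on $H^*(I\X)$, and establish a gluing theorem for orbifold sections ensuring that no lower-dimensional strata contribute. Everything else --- the index and dimension count, the identity $\mathcal{S}(1)=\mathbf 1$, and the reduction of the loop product to pointwise multiplication --- is comparatively routine once this splitting is in place.
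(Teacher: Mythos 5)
Your overall skeleton coincides with the paper's: prove the triviality property $\mathcal{S}(e)=1$ and the composition property $\mathcal{S}(a\cdot b)=\mathcal{S}(a)*\mathcal{S}(b)$ (Theorem \ref{SeidelAxiom}), note that the loop product may be computed either by concatenation or by pointwise composition (the paper's comparison of $\cdot_{cn}$ and $\cdot_{cp}$), and then deduce invertibility from $\mathcal{S}(a)*\mathcal{S}(a^{-1})=\mathcal{S}(e)=1$ (Corollary \ref{seidel_hom}). The genuine gap is that you dismiss $\mathcal{S}(e)=1$ as routine. It is not, and it is not a formal consequence of the splitting step: for the trivial bundle $\X\times S^{2}$ the Seidel element receives contributions from every class $\sigma_{0}+\iota_{*}B$, and for $B\neq 0$ the moduli spaces $\Mbar_{0,1}(\X\times S^{2},\sigma_{0}+\iota_{*}B,\tJ_{0},(g))$ are in general non-empty (constant sections with fiber bubbles attached), so one must show that their \emph{virtual} contributions vanish rather than that the spaces are empty. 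In the Kuranishi framework this is delicate, because a lower-dimensional image of the evaluation map on the honest moduli space does not by itself kill an invariant defined through perturbed zero sets. The paper devotes all of Section \ref{trivaxiom} to exactly this point (Proposition \ref{ModuliofTrivial}, Case 3): it introduces a parametrized action of the affine group parametrized by $\com P^{1}$ (Definition \ref{defparaAction}), builds a parametrized equivariant Kuranishi structure (Proposition \ref{constructequivkura}), chooses equivariant transversal multisections away from the fixed locus (Lemma \ref{free_equmultisection}) --- such perturbations provably cannot be extended equivariantly over the fixed locus (Remark \ref{nonextension}) --- and then homotopes the evaluation map near the fixed locus (Lemmas \ref{tubnbd} and \ref{evhomotopy}) so that the perturbed cycle has image of dimension at least two below the virtual dimension. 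Without an argument of this kind, your concluding step that $\mathcal{S}$ takes values in $QH^{*}_{orb}(\X)^{\times}$ is unsupported.

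On the composition property your geometry agrees with the paper's (a family whose smooth fibers are $\E_{a\cdot b}$ and whose singular fiber is $\E_{a}\cup_{\X}\E_{b}$), but the mechanism for extracting the identity of invariants is genuinely different. You invoke an orbifold degeneration/splitting formula at the node, which requires relative invariants and a gluing theorem for orbifold stable maps --- precisely the analysis you flag as the main obstacle. The paper never proves or uses such a formula. Instead it compactifies the family into a closed symplectic orbifold $\Y$ fibered over $\mathcal{T}$, the blow-up of $\com P^{1}\times\com P^{1}$ at two points, so that $pr:\Y\to\com P^{1}$ has generic fiber $\E_{\varpi}$ and singular fibers $\E_{\alpha}\cup_{\X}\E_{\beta}$. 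Because the almost complex structure is doubly fibered, every stable map in the class $\mathfrak{c}^{\varpi}_{*}\sigma$ lies in a single fiber of $pr$; the two relevant loci $\M^{\E_{\varpi}}(\Y)$ and $\M^{\Delta}(\Y)$ are disjoint closed subsets of $\Mbar_{0,1}(\Y,\mathfrak{c}^{\varpi}_{*}\sigma)$, so Kuranishi structures and multisections can be prescribed independently and compatibly over each (Lemma \ref{virtualY}); and the single number $ev_{*}[\Mbar_{0,1}(\Y,\mathfrak{c}^{\varpi}_{*}\sigma)]^{v}\smallfrown i^{nd}_{*}f$ is then evaluated in two ways by localizing intersections at the insertion cycle (Lemmas \ref{diagonal}, \ref{identity1}, \ref{identity2}). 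This trades all gluing analysis for bookkeeping of compatible Kuranishi structures on a compact total space. Your route could in principle be completed by importing a degeneration formula such as \cite{CLSZ}, and would then be more modular; the paper's route has the advantage of staying entirely within the Kuranishi machinery already used to define the invariants, which is why it succeeds without any new analytic theorem.
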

To prove that $\mathcal{S}$ is a group homomorphism, it suffices to prove the following
\begin{theorem}[see Theorem \ref{SeidelAxiom}]
\hfill
\begin{enumerate}
\item
Let $e\in \pi_1(Ham(\X, \omega))$ be the identity loop, then 
\begin{equation}\label{S(e)}
\mathcal{S}(e)=1\in QH^*_{orb}(X)^\times.
\end{equation}

\item Let $a,b\in \pi_1(Ham(\X,\omega))$ and let $a\cdot b\in \pi_1(Ham(\X, \omega))$ be their product defined by loop composition. Then
\begin{equation}\label{seidel_composition}
\mathcal{S}(a\cdot b)=\mathcal{S}(a)* \mathcal{S}(b)
\end{equation}
where $*$ denote the quantum cup product of $(\X,\omega)$. 
\end{enumerate}
\end{theorem}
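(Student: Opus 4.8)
The plan is to prove the two properties of the Seidel element by reducing everything to gluing/degeneration properties of the fibrations $\mathcal{E}_\alpha \to S^2$ and Gromov-Witten invariants computed on their total spaces. Throughout, I would fix the convention that $\mathcal{S}(a)$ is extracted from genus $0$ Gromov-Witten invariants of $\mathcal{E}_\alpha$ with insertions constrained so that one marked point lies in a fiber over the south pole $0 \in S^2$ and the relevant section-class contributions are summed with Novikov weights recording fiber symplectic area and the Chern class of the vertical tangent bundle.

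\begin{proof}[Proof plan]
\textbf{Part (1): the identity loop.} For the identity loop $e$, the associated bundle $\mathcal{E}_e \to S^2$ is the trivial product $S^2 \times \X$ (as an orbifiber bundle, with the product symplectic structure after capping). The plan is to show that on the trivial bundle the only section classes contributing to $\mathcal{S}(e)$ are the constant sections, i.e.\ those of the form $[\mathrm{pt}] \times S^2$ through each twisted sector, together with fiber bubbling. Concretely, I would compute the Gromov-Witten invariants defining $\mathcal{S}(e)$ on $S^2 \times \X$ using the product structure: the invariant factors, by a dimension/degree argument, into the Gromov-Witten invariant of $S^2$ in the $S^2$-direction (which forces degree zero in that factor and contributes the fundamental cycle) times a $3$-point genus $0$ orbifold Gromov-Witten invariant of $\X$ itself. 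The latter is exactly the structure constant of the quantum product of $\X$, and after assembling all twisted sectors one obtains that $\mathcal{S}(e)$ equals the multiplicative unit $1 \in QH^*_{orb}(\X)$. This is essentially a product formula for orbifold Gromov-Witten invariants applied to a trivial bundle; the orbifold subtlety is to verify that the inertia stack $I(S^2 \times \X) = S^2 \times I\X$ behaves compatibly and that the twisted sectors match up on both sides.

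\textbf{Part (2): multiplicativity under composition.} This is the main content. The geometric input is that the bundle $\mathcal{E}_{a\cdot b}$ for the composed loop degenerates: one can construct a family over a disk whose general fiber is $\mathcal{E}_{a\cdot b} \to S^2$ and whose central fiber is a nodal configuration $\mathcal{E}_a \cup_{\X} \mathcal{E}_b$, where the two total spaces are glued along a fiber copy of $\X$ sitting over the node. The plan is to realize this degeneration as an orbifold analogue of the standard ``glue two spheres at a point'' picture: the base $S^2$ degenerates to two spheres joined at a node, and the Hamiltonian loop $a \cdot b$ is built by concatenation precisely so that the total space splits as the fiber sum of $\mathcal{E}_a$ and $\mathcal{E}_b$.

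With this degeneration in place, I would invoke a gluing/degeneration formula for genus $0$ orbifold Gromov-Witten invariants (a symplectic-orbifold version of the degeneration formula, or equivalently an analysis of the splitting axiom applied to the node over which the fiber $\X$ sits). Section classes of $\mathcal{E}_{a\cdot b}$ decompose as a section class of $\mathcal{E}_a$ glued to a section class of $\mathcal{E}_b$ along a matching twisted sector of the fiber $\X$; summing over all ways the node can be decorated by elements of $I\X$ produces exactly the quantum cup product pairing. The key bookkeeping is that the Novikov weights are additive under gluing — the fiber area and vertical Chern number of the glued section are the sums of those of the pieces — so the product of Novikov-weighted contributions reproduces the structure constants of $\mathcal{S}(a) * \mathcal{S}(b)$. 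Assembling these pieces over all twisted-sector decorations of the node yields \eqref{seidel_composition}.

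\textbf{Main obstacle.} The hard part will be establishing the degeneration/gluing formula in the orbifold setting with sufficient care: one must match the twisted sectors (i.e.\ the connected components of $I\X$) at the gluing fiber correctly, ensure that the virtual fundamental classes of the degenerate family split as expected (which requires controlling the obstruction theory across the node and verifying that the orbifold structure at the node contributes the correct automorphism/multiplicity factors), and confirm that only section classes — not multiple-cover or purely-fiber configurations — contribute to the leading term defining $\mathcal{S}$. Equivalently, one must verify that the ``section-class'' sector of the Gromov-Witten theory of $\mathcal{E}_\alpha$ is closed under the degeneration and gives a well-defined invertible element, so that the gluing formula descends to an honest identity in $QH^*_{orb}(\X)^\times$ rather than merely in some completed invariant.
\end{proof}
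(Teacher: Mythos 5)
Your plan for part (1) has a genuine gap. You reduce everything to a product formula for $\XxS$ plus ``a dimension/degree argument,'' but neither ingredient is available as stated. In the framework this paper commits to (virtual cycles built from Kuranishi structures), a K\"unneth/product formula for orbifold Gromov--Witten invariants is not something you can cite; it would itself be a substantial theorem, requiring a comparison of the Kuranishi structure on $\Mbar_{0,1}(\XxS,\sigma_{0}+\iota_{*}B,\tJ_{0},(g))$ with a fiber product of Kuranishi structures, and for one-point genus-zero invariants the factorization over the Deligne--Mumford space does not even make sense (there is no stable $1$-pointed genus-$0$ curve), forcing a divisor-equation detour you never mention. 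More importantly, the required vanishing for $B\neq 0$ (Case 3 of Proposition \ref{ModuliofTrivial}) is \emph{not} a dimension count: for instance when $c_{1}(B)=0$ and the degree-shifting number of $(g)$ vanishes, the virtual dimension exactly matches the degree of the insertion $\iota_{*}f_{i}$, so nothing formal forces the invariant to be zero. The paper's proof of this vanishing occupies all of Section \ref{trivaxiom}: one puts a parametrized action of the affine group $\mathbb{A}$ (a group bundle over $\com P^{1}$, Definition \ref{defparaAction}) on the moduli space, lifts it to a parametrized equivariant Kuranishi structure (Proposition \ref{constructequivkura}), constructs equivariant multisections away from the fixed locus (Lemma \ref{free_equmultisection}), and perturbs the evaluation map near the fixed locus (Lemmas \ref{tubnbd}--\ref{evhomotopy}), so that the resulting cycle has image of dimension strictly below the virtual dimension. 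Nothing in your outline substitutes for this. A further orbifold-specific point you gloss over: constant sections of $\XxS$ admit \emph{no} nontrivial orbifold structure (Lemma \ref{constantsectiontwistsector}), which is what kills the $B=0$ contributions outside the untwisted sector; ``constant sections through each twisted sector'' do not exist.

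For part (2), your degeneration is exactly the paper's geometry: they build $pr:\Y\to\com P^{1}$ from the blow-up $\mathcal{T}$ of $\com P^{1}\times\com P^{1}$ at two points, with smooth fibers $\E_{a\cdot b}$ and singular fibers $\E_{a}\cup_{\X}\E_{b}$. But your next step, invoking ``a gluing/degeneration formula for genus $0$ orbifold Gromov--Witten invariants,'' is precisely the missing ingredient: no such formula existed in the symplectic-orbifold/Kuranishi setting, you offer no route to prove it, and proving it would be at least as hard as the theorem itself. (Your parenthetical ``equivalently the splitting axiom'' is also off: the splitting axiom concerns degenerations of the domain curve, not of the target.) The paper's actual argument needs no gluing analysis at all: both sides of Proposition \ref{compGW} are written as pairings of the virtual class of the single compact moduli space $\Mbar_{0,1}(\Y,\mathfrak{c}^{\varpi}_{*}\sigma)$ against cohomology classes supported on distinguished fibers of $\Y$ --- the class $i^{nd}_{*}f$ carried by the fiber $\X$ over the node, and the corresponding class carried by a fiber sitting inside a smooth fiber $\E_{\varpi}$, which are cohomologous in $\Y$. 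These insertions localize the moduli onto the two closed, disjoint subsets $\M^{\Delta}(\Y)$ and $\M^{\E_{\varpi}}(\Y)$, over which the Kuranishi structure is chosen to restrict to the fiber-product, respectively product, structures (Lemma \ref{virtualY}); Lemmas \ref{diagonal}, \ref{identity1} and \ref{identity2} then yield the identity. To repair your proposal you would have to either prove the orbifold degeneration formula from scratch or replace it by this total-space localization argument.
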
 

The proof of the above Theorem involves detailed analysis of Gromov-Witten invariants and are highly technical. In this paper, we work with the definition of Gromov-Witten invariants using Kuranishi structures (as introduced in \cite{FO}). To prove (\ref{S(e)}), we first reduce it to the vanishing of certain Gromov-Witten invariants, see Proposition \ref{ModuliofTrivial}. We then show the needed vanishing by showing that the relevant virtual fundamental cycles have dimensions less than their expected dimensions. The arguments are quite technical since all these must be done in the context of Kuranishi spaces. See Section \ref{trivaxiom} for details. 

The proof of (\ref{seidel_composition}), given in Section \ref{compaxiom}, essentially amounts to comparing certain Gromov-Witten invariants of $\E_{a\cdot b}$ with Gromov-Witten invariants of $\E_a$ and $\E_b$. We do this by a degeneration type argument. The main geometric ingredient here is a bigger symplectic fibration that contains both $\E_{a\cdot b}$ and the connected sum $\E_a\#_\X\E_b$ as its fibers. 

\subsection*{Motivations}
The homomorphism $\mathcal{S}$ was first constructed by P. Seidel \cite{Se} for a class of symplectic manifolds. The construction was later extended to all symplectic manifolds\footnote{We remark that Gromov-Witten invariants considered in \cite{M} are not defined using the framework of Kuranishi structures. Therefore our work specialized to the manifold case gives a new construction of Seidel representations for symplectic manifolds.} by D. McDuff \cite{M}. The structure of the Hamiltonian diffeomorphism group $Ham(X,\omega)$ of symplectic manifolds $(X,\omega)$ has been an important topic in symplectic topology (cf. \cite{MS}). The Seidel representation $\mathcal{S}$ has been used to study the group $Ham(X,\omega)$ for symplectic manifolds $(X,\omega)$, see e.g. \cite{MT}. 

Our interests in Seidel representations are motivated by another application, which we now explain. Hamiltonian $S^1$-actions naturally give rise to loops of Hamiltonian diffeomorphisms. Since the Seidel representation $\mathcal{S}$ is a group homomorphism, if a collection of loops $a_1,a_2,...,a_k$ compose to the identity loop $e$, i.e. $$a_1\cdot a_2\cdot...\cdot a_k=e,$$ then we have $$\mathcal{S}(a_1)*\mathcal{S}(a_2)*...*\mathcal{S}(a_k)=1,$$ which is a relation in the quantum cohomology ring. This gives an approach to study quantum cohomology rings of symplectic manifolds/orbifolds by Hamiltonian loops. This approach is particularly successful in the case of symplectic toric manifolds, which admits many Hamiltonian loops due to the presence of Hamiltonian tori actions. In \cite{MT}, this approach was carried out to give a presentation of the quantum cohomology ring of compact symplectic toric manifolds. This presentation plays an important role in the mirror theorem for toric manifolds recently proved by Fukaya-Oh-Ohta-Ono \cite{FOOOtoric3}. We aim at studying the quantum orbifold cohomology ring of compact symplectic toric orbifolds by this approach. The construction of Seidel representation for symplectic orbifolds, carried out in this paper, is the first step in this project. In \cite{TW} we will use this approach to study the quantum orbifold cohomology ring of compact symplectic toric orbifolds. 

\subsection*{Outline}The rest of this paper is organized as follows. 

In Section \ref{part1}, we set up the foundation of Hamiltonian loops and Hamiltonian orbifiber bundles which naturally generalize Hamiltonian loop and Hamiltonian fiber bundle in the manifold case. We begin in \S \ref{sec:stack_review} with a brief review of orbifolds using the language of differentiable stacks, then recall in \S \ref{sec:vec_field} differential forms, vector fields and flows on orbifolds/stacks. In \S \ref{hamloop}, we define Hamiltonian loops and their homotopy equivalence, which give rise to the fundamental group of Hamiltonian diffeomorphism group. In \S \ref{orbifiberbundlegeneral}, we give the definition of orbifiber bundle and a special class of orbifold morphisms into such bundle which is called sectional orbifold morphism. Sectional orbifold morphisms will be the map we ``count'' when we define Seidel representation. In \S \ref{orbifiberbundleS2}, we explain the construction of a Hamiltonian orbifiber bundle over $S^{2}$ from a Hamiltonian loop.

Section \ref{part2} contains the construction of Seidel representation for symplectic orbifolds. We begin with a review of Kuranishi structures in \S \ref{secReviewoKura}, and then recall the construction of orbifold Gromov-Witten invariants and quantum cohomology in \S \ref{secReviewoGW}. In \S \ref{secCurvHamBundle}, we collect properties of $J$-holomorphic orbicurves in a Hamiltonian orbifiber bundle over sphere. Then we use the moduli space of such kind of orbicurves to define Seidel representation for symplectic orbifolds.

In Section \ref{trivaxiom}, we prove the triviality property (\ref{S(e)}). The idea is explained in \S \ref{trivideal}. In \S \ref{secParaAction} and \S \ref{secEqKura}, we introduce the tools of parametrized group actions and parametrized equivariant Kuranishi structures to carry out the proof. In particular, parametrized equivariant Kuranishi structures are constructed for the relevant moduli spaces in \S \ref{secEqKura}. Finally  in \S \ref{wholetriv}, we complete the proof by carefully analyzing the Kuranishi structure constructed in \S \ref{secEqKura}.

In Section \ref{compaxiom}, we construct a degeneration of Hamiltonian orbifiber bundles which relates two Hamiltonian orbifiber bundles associated to two Hamiltonian loops with the Hamiltonian orbifiber bundle corresponding to their product, and then use it to prove the composition property (\ref{seidel_composition}).

\section*{Acknowledgment} 
We are in debt to A. Givental for suggesting this problem and helpful discussions. We thank E. Getzler for suggesting the framework of stacks and their automorphism 2-groups, E. Lerman for explaining stratification of circle action on orbifolds, and R. Hepworth for explaining glueing of stack maps. We are grateful to Y.-G. Oh and K. Ono who listened and gave helpful comments to the proof of the two properties. We also thank B. Chen, D. McDuff for helpful discussions and Y. Ruan for suggestions. 

\section{Hamiltonian Loops and Hamiltonian Orbifiber Bundles}\label{part1}
\subsection{A Review of Orbifolds via Deligne-Mumford Stacks}\label{sec:stack_review}

In this subsection, we collect the definitions of orbifolds using the language of stacks following \cite{Le}. 

The classical definition of orbifolds can be reformulated using groupoids since every orbifold atlas in the sense of Satake determines an \'{e}tale Lie groupoid. A morphism between orbifolds should carry more information than just a map defined on charts in order to pullback bundles. Such a map carrying additional information is called a good map in \cite{CR2}. These maps correspond to bibundles or Hilsum-Skandalis maps when orbifolds are viewed as Lie groupoids. Since we need to glue morphisms between orbifolds in this paper, any framework using isomorphism classes of bibundles/Hilsum-Skandalis maps as morphisms between orbifolds will encounter the issue pointed out in \cite[Lemma 3.41]{Le}. Namely, there could be two different morphisms which are the same when restricted to an open cover. Furthermore, we need to study Hamiltonian flows on symplectic orbifolds, thus viewing an orbifold as an \'{e}tale groupoid is inconvenient, because a flow can easily flow out of that groupoid (see \cite{Hep}). 
 We refer the readers to \cite{Le} and \cite{ML} for more detailed reasons why stack is a better language to study orbifolds.

A stack is a category fibered in groupoids satisfying certain glueing conditions. We give the definition of category fibered in groupoids below:

\begin{defn}\label{CFG}
A {\em category fibered in groupoids over a category $\sfC$} is a functor $\pi:\sfD\to \sfC$ such that 
\begin{enumerate}
\item\label{pullob} for an arrow $x' \stackrel{r}{\to} x$ in $\sfC$ and an object $\xi\in Ob(\sfD)$ with $\pi(\xi)=x$ there is an arrow $\xi' \stackrel{g}{\to} \xi$ such that $\pi(g)=r$.
\item\label{pullarr} for a diagram $\vcenter{ \xymatrix@=8pt@ur{
& \xi''\ar[d]^{g_{2}}\\ \xi' \ar[r]_{g_{1}} & \xi}}$ in $\sfD$ and a diagram
$\vcenter{ \xymatrix@=8pt@ur{ & \pi(\xi'')\ar[dl]_{r}\ar[d]^{\pi(g_{2})}\\
\pi(\xi') \ar[r]_{\pi(g_{1})} & \pi(\xi)}}$ in $\sfC$ there is a unique
arrow $g:\xi''\to \xi'$ in $\sfD$ making the diagram $\vcenter{
\xymatrix@=8pt@ur{ & \xi''\ar[d]^{r_{2}}\ar[dl]_{g}\\ \xi'
\ar[r]_{r_{1}} & \xi}}$ commutative and satisfying $\pi (g) = r$.
\end{enumerate}
\end{defn}

\begin{defn}[Fiber of CFG] Let $\pi:\sfD\to \sfC$ be a category fibered
in groupoids and $x\in Ob(\sfC)$ an object.  The {\em fiber} of $\sfD$ over $x$ is the category $\sfD (x)$ with
objects given by
\[
Ob(\sfD (x)) := \{\xi \in Ob(\sfD) \mid \pi (\xi) = x\}\]
 and arrows/morphisms given by
\[
Mor(\sfD (x))(\xi',\xi) :=\{ (g:\xi' \to \xi) \in Mor(\sfD )(\xi', \xi)  \mid \pi (g) = id_x\} \quad \text{for }  \xi, \xi' \in Ob(\sfD (x)).
\]
\end{defn}

Note that in Definition \ref{CFG} (\ref{pullob}), $\xi'$ may not be uniquely determined by $r$ and $\xi$. We make a choice to get unique $\xi'$ for $r$ and $\xi$.
\begin{defn}\label{cleavage}
For a category fibered in groupoids $\pi:\sfD\to \sfC$, $\xi\in Ob(\sfD(x))$ and  $x'\stackrel{r}{\to}x\in Mor(\sfC)$  we choose an arrow $g\in Mor(\sfD)$ with target $\xi$ and $\pi(g)=r$.  We denote the source of $g$ by $r^* \xi$ and refer to it as the {\em pullback of $\xi$ by $r$}.  We always choose $id^*\xi = \xi$. Such a choice is called a {\em cleavage} of $\pi:\sfD\to \sfC$. 
\end{defn}

The glueing conditions for stack are defined using descent categories. 

\begin{defn}\label{Descent}
Let $\Man$ be the category of smooth manifolds with functors being smooth maps. Let $\pi:\sfD\to \Man$ be a category fibered in groupoids over $\Man$, $M$ a manifold and $\mathfrak{U}:=\{U_i\}$ an open cover of $M$. The {\em descent category of } $\mathfrak{U}$, denoted by $\sfD(\mathfrak{U})$, is defined as follows:

\begin{enumerate}
\item An object $(\{\xi_i\}, \{\phi_{ij}\})$ of $\sfD(\mathfrak{U})$ is a collection of objects $\xi_i \in \sfD(U_i)$, together with isomorphisms $\phi_{ij}\colon \mathrm{pr}_2^* \xi_j \simeq \mathrm{pr}_1^* \xi_i$ in $\sfD(U_{ij}) :=\sfD(U_i \times_M U_j)$, satisfying the following cocycle condition: 
for any triple of indices $i$, $j$ and $k$, we have the equality \[
\mathrm{pr}_{13}^* \phi_{ik} = \mathrm{pr}_{12}^* \phi_{ij} \circ
\mathrm{pr}_{23}^* \phi_{jk} \colon \mathrm{pr}_3^*\xi_k \to\mathrm{pr}_1^*\xi_i \]
where $\mathrm{pr}_{ab}$ and $\mathrm{pr}_a$ are projections from $U_{ijk}:=U_{i}\times_{M}U_{j}\times_{M} U_{k}$ to the $a^{th}$ and $b^{th}$ factors, and the $a^{th}$ factor respectively (e.g. $\mathrm{pr}_{13}:U_{ijk}\to U_{ik}$, $\mathrm{pr}_{2}:U_{ijk}\to U_{j}$). 
\item An arrow $\{g_i\} \colon (\{\xi_i\}, \{\phi_{ij}\}) \to (\{\xi'_i\}, \{\psi_{ij}\})$ in $\sfD(\mathfrak{U})$
is a collection of arrows $\xi_i \stackrel{g_{i}}{\to} \xi'_i$ in $\sfD(U_i)$ such that for each pair of indices $i$, $j$, the following diagram is commutative:
   \[
   \xymatrix@C+15pt{
   {}\mathrm{pr}_2^* \xi_j \ar[r]^{\mathrm{pr}_2^* g_j} \ar[d]^{\phi_{ij}}
   & {}\mathrm{pr}_2^* \xi'_j\ar[d]^{\psi_{ij}}\\
   {}\mathrm{pr}_1^* \xi_i \ar[r]^{\mathrm{pr}_1^* g_i}&
   {}\mathrm{pr}_1^* \xi'_i.
   }
   \]
\end{enumerate}
\end{defn}

Note that $\sfD(\mathfrak{U})$ does not depend on the choice of cleavage of $\pi:\sfD\to \Man$ in the sense that different choices of cleavage yield canonically isomorphic descent categories.

For each object $\xi$ of $\sfD(M)$ we can construct an object $(\{\xi_i\}, \{\phi_{ij}\})$ in $\sfD(\mathfrak{U})$ on a covering $\mathfrak{U}:=\{\iota_{i}:U_i\to M\}$ as follows: 
\begin{itemize}
\item The objects are the pullbacks $\iota_{i}^{*}\xi$;
\item The isomorphisms $\phi_{ij} : pr_{2}^{*} \iota_{j}^{*}\xi \simeq pr_{1}^{*} \iota_{i}^{*}\xi$ are the isomorphisms that come from the fact that both $pr_{2}^{*} \iota_{j}^{*}\xi$ and $pr_{1}^{*} \iota_{i}^{*}\xi$ are pullbacks of $\xi$ to $U_{ij}$.
\end{itemize}
 If we identify $pr_{2}^{*} \iota_{j}^{*}\xi$ and $pr_{1}^{*} \iota_{i}^{*}\xi$, as is commonly done, then the $\phi_{ij}$ are identities. For each arrow $\alpha:\xi\to \xi'$ in $\sfD(\mathfrak{U})$, we get arrows $\iota_{i}^{*}\alpha: \iota_{i}^{*}\xi \to \iota_{i}^{*}\xi'$, yielding an arrow from the object with descent associated with $\xi$ to the one associated with $\xi'$. This defines a functor $\sfD(M) \to \sfD(\mathfrak{U})$. 

\begin{defn}\label{stackman}
A category fibered in groupoids $\pi: \sfD\to \Man$ is a {\em stack over manifolds} if for any manifold $M$ and any open cover $\mathfrak{U}$ of $M$ the functor $\sfD(M) \to \sfD(\mathfrak{U})$ defined above is an equivalence of categories.
\end{defn}

\begin{defn}\label{Mapstacksman}
Let $ \pi_\sfC: \sfC \to \Man$, $\pi_\sfD: \sfD \to \Man$ be two stacks. A functor $f: \sfC\to \sfD$ is a {\em map of stacks} (more precisely a 1-arrow in the 2-category $\St$ of stacks) if it commutes with the projections to $\Man$:
$\pi_\sfD \circ f = \pi_\sfC.$ A map of stacks is called an {\em isomorphism} if it is an equivalence of categories.
\end{defn}

Every manifold $M$ determines a stack $\underline{M}$ over manifolds:  the objects of $\underline{M}$ are maps $Y\stackrel{f}{\to} M$ of manifolds into $M$. A morphism in
$\underline{M}$ from $f:Y\to M$ to $f':Y' \to M$ is a map of manifolds $h:Y\to Y'$ making the diagram $\vcenter{\xy (0,6)*+{Y}="1";
(0,-6)*+{Y'}="2"; (12,0)*+{M} ="3"; {\ar@{->}_{h} "1";"2"};
{\ar@{->}_{f'} "2";"3"};{\ar@{->}^{f} "1";"3"};
\endxy }$ commutative. The functor $\underline{M}\to \Man$ is the forgetful map which sends $f:Y\to M$ to $Y$. To keep notation simple, we follow the common practice to drop the underline and write $M$ as the stack. 

\begin{defn}\label{atlas}
A stack $\X$ over manifolds is called a {\em differentiable stack} if there is a manifold $X$ and a morphism $p : X\to \X$ such that:
\begin{enumerate}
\item For all $Y \to \X$ the stack $X \times_{\X} Y$  is a manifold (i.e., equivalent to the stack of some manifold).
\item For all $Y\to \X$, the
projection $Y \times_{\X} X\to Y$ is a surjective submersion.
\end{enumerate}
The map $p: X \to \X$ is then called a {\em covering} or an {\em atlas} of $\X$.

Given an atlas $p: X_{0} \to \X$, $X_{1}:=X_{0} \times_{\X} X_{0}$ is a manifold, the projections to the first and second factor of $X_{0}$ define two maps $s:X_{1}\to X_{0}$ and $t:X_{1}\to X_{0}$. With these two maps, $\gG:=(X_{1}\twoarrows X_{0})$ is a Lie groupoid. Such Lie groupoid is called an atlas groupoid of the stack $\X$. The category $B\gG$ of principal $\gG$-bundles in turn defines a category fibered in groupoids over manifolds, which is equivalent to $\X$ as differentiable stack. 

A differentiable stack $\X$ is called a {\em differentiable Deligne-Mumford stack},  if $\X$ admits an atlas groupoid $X_{1}\twoarrows X_{0}$ which is \'{e}tale (i.e., $s$ and $t:X_1\to X_0$ are local diffeomorphisms) and proper (i.e., $s\times t: X_{1}\to X_{0}\times X_{0}$ is proper). From now on we will refer differentiable Deligne-Mumford stack as DM stack for short.
\end{defn}

\begin{rmk}
\begin{enumerate}
\item Differentiable stacks form a 2-category, with DM stacks forming a sub 2-category.
\item Every differentiable stack $\X$ determines an equivalent class of topological spaces $|\X|$ called the {\em coarse moduli space}: take an atlas $\gG_{\X}=(G_{1}\twoarrows G_{0})$ of $\X$, then 
$$|\gG_{\X}|:=G_{0}/\<x\sim y\ \text{ i.f.f.}\ \exists g\in G_{1}\ s.t.\ s(g)=x,\ t(g)=y\>$$
defines a topological space. Different choices of atlas are Morita equivalent and thus define homeomorphic topological spaces. Every stack map $\pmb{f}:\X\to\Y$ induces a continuous map $|\pmb{f}|:|\X|\to |\Y|$.
\end{enumerate}
\end{rmk}

Different atlas groupoids play the role of different coordinates of a given stack. We collect the dictionary lemmas between groupoids and stacks from \cite[Lemma 2.29--2.31]{BX}. The first lemma says every morphism between atlas groupoids defines a map between the corresponding stacks.

\begin{lem}[First Dictionary Lemma]
Let $\X$ and $\Y$ be differentiable stacks, $\gG_{\X}=(X_{1}\twoarrows X_{0})$ and $\gG_{\Y}=(Y_{1}\twoarrows Y_{0})$ atlas groupoids respectively. Let $\phi:\gG_{\X}\to \gG_{\Y}$ be a morphism of Lie groupoids. Then there exists a morphism of stacks $f:\X\to\Y$ and a
2-isomorphism 
\begin{equation}\label{eta-di}
\vcenter{\xymatrix{
X_0\dto\rto^{\phi_0}\drtwocell\omit{^\eta} & Y_0\dto\\
\X\rto^f&\Y}}
\end{equation}
such that the cube
\begin{equation}\label{cube}
\vcenter{\xymatrix@=.5pc{
& X_1\ddlto\drto \ar[rrr]^{\phi_1} &&& Y_1\ddlto\drto &\\
&& X_0\ddlto\ar[rrr]^(.3){\phi_0} &&& Y_0\ddlto \\
X_0\drto\ar[rrr]^(.3){\phi_0} &&& Y_0\drto &&\\
& \X\ar[rrr]^f &&& \Y &}}
\end{equation}
is 2-commutative.  If $(f',\eta')$ is another pair satisfying these properties,
then there is a unique 2-isomorphism $\theta:f\Rightarrow f'$ such that
$\theta\ast\eta'=\eta$. 
\end{lem}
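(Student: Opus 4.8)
The plan is to identify each stack with its stack of principal bundles and to realize $f$ by a change-of-structure-groupoid (induction) construction. By Definition \ref{atlas} there are equivalences $\X\simeq B\gG_{\X}$ and $\Y\simeq B\gG_{\Y}$, where $B\gG$ denotes the category fibered in groupoids of principal $\gG$-bundles; so it suffices to build a map of stacks $f\colon B\gG_{\X}\to B\gG_{\Y}$ together with the $2$-isomorphism $\eta$, and then to verify the cube and the uniqueness clause.

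\textbf{Construction of $f$.} Given a manifold $S$ and a right principal $\gG_{\X}$-bundle $\pi\colon P\to S$ with anchor $\epsilon\colon P\to X_{0}$, I would form the induced $\gG_{\Y}$-bundle
\[
\phi_{*}P:=\bigl(P\times_{\phi_{0}\circ\epsilon,\,t}Y_{1}\bigr)/\gG_{\X},
\]
where $\gG_{\X}$ acts by $(p,h)\cdot g=(p\cdot g,\,\phi_{1}(g)^{-1}h)$ and the residual right $\gG_{\Y}$-action is $[p,h]\cdot k=[p,hk]$, with anchor $[p,h]\mapsto s(h)$ and projection to $S$ induced by $\pi$. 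The first task is to check that $\phi_{*}P\to S$ is again principal: freeness and principality of the $\gG_{\Y}$-action, and smoothness of the quotient, follow from the corresponding properties of $P$ once one uses that $\phi$ intertwines source, target, units and multiplication. Because morphisms of $\gG_{\X}$-bundles push forward to morphisms of $\gG_{\Y}$-bundles, and because $\phi_{*}$ is built from fiber products and quotients (hence commutes with pullback along any $S'\to S$), the assignment $P\mapsto\phi_{*}P$ defines a functor over $\Man$, i.e.\ the desired map of stacks $f$.

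\textbf{The $2$-cell and the cube.} Under $\X\simeq B\gG_{\X}$ the atlas $p_{\X}\colon X_{0}\to\X$ corresponds to the tautological bundle $X_{1}\to X_{0}$ (base map $s$, anchor $t$, right multiplication), and similarly for $\Y$. To produce $\eta$ in (\ref{eta-di}) I would exhibit the canonical isomorphism of $\gG_{\Y}$-bundles over $X_{0}$,
\[
\phi_{*}(X_{1})\xrightarrow{\ \simeq\ }\phi_{0}^{*}Y_{1},\qquad [g,h]\longmapsto\bigl(s(g),\,\phi_{1}(g)^{-1}h\bigr),
\]
which is well defined and $\gG_{\Y}$-equivariant precisely because $\phi$ respects the groupoid structure. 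The $2$-commutativity of (\ref{cube}) then reduces to checking that $\eta$ is compatible with the two structure maps $s,t\colon X_{1}\to X_{0}$ and with $\phi_{1}$; since $\phi_{1}$ intertwines the multiplications, this is a direct naturality check of $\eta$ with respect to the $\gG_{\X}$-action.

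\textbf{Uniqueness, and the main obstacle.} For the final clause I would use that an atlas is an epimorphism of stacks and that $2$-morphisms satisfy descent. Given a second pair $(f',\eta')$, the composite $\eta\circ(\eta')^{-1}$ is a $2$-isomorphism between the restrictions $f\circ p_{\X}$ and $f'\circ p_{\X}$ to the atlas; the cube conditions for both pairs guarantee that it agrees on the two pullbacks along $s,t\colon X_{1}\to X_{0}$, hence forms descent data, and therefore descends to a unique $2$-isomorphism $\theta\colon f\Rightarrow f'$ with $\theta\ast\eta'=\eta$; uniqueness is forced since any two candidates agree after restriction to the covering. The hard part is the very first step: proving that $\phi_{*}P$ is genuinely a principal $\gG_{\Y}$-bundle, i.e.\ that the $\gG_{\Y}$-action is free and that $\phi_{*}P\times_{Y_{0}}Y_{1}\to\phi_{*}P\times_{S}\phi_{*}P$ is a diffeomorphism, and that the quotient is a smooth manifold with $\phi_{*}P\to S$ a surjective submersion. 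This smoothness-and-principality verification is where the groupoid-morphism hypotheses on $\phi$ must be used most carefully; the remaining coherence checks (the cube and the descent argument) are then formal.
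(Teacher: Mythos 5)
The paper itself contains no proof of this lemma: it is quoted directly from \cite[Lemmas 2.29--2.31]{BX}, so there is no in-paper argument to compare against. Your proposal is, in essence, the standard proof of that cited result: identify $\X\simeq B\gG_{\X}$ and $\Y\simeq B\gG_{\Y}$ with their stacks of principal bundles, send a principal $\gG_{\X}$-bundle $P$ to the induced bundle $\phi_{*}P=(P\times_{X_{0}}Y_{1})/\gG_{\X}$, produce $\eta$ from the canonical isomorphism $\phi_{*}(X_{1})\simeq\phi_{0}^{*}Y_{1}$ on the tautological torsor, and get uniqueness of $\theta$ by descent of $2$-morphisms along the atlas (the atlas being an epimorphism of stacks). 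This is correct, and each of the three steps is the right one; the principality and smoothness of $\phi_{*}P$ that you flag as the main obstacle do follow as you indicate, since freeness and properness of the $\gG_{\X}$-action on $P\times_{X_{0}}Y_{1}$ are inherited from $P$, so the quotient manifold theorem applies. Two points deserve care in a full write-up: first, your conventions for the tautological torsor (base map $s$, anchor $t$) must be matched consistently with the composition convention implicit in the formula $\phi_{1}(g)^{-1}h$, or sources and targets will not line up; second, the claim that $\phi_{*}$ ``commutes with pullback'' holds only up to canonical isomorphism, so to obtain an honest functor over $\Man$ (as required by the paper's Definition \ref{Mapstacksman}) you should define $f$ on morphisms of bundles directly and check that cartesian arrows are sent to cartesian arrows, rather than treating $\phi_{*}$ as strictly pullback-compatible.
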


The second and third Dictionary Lemmas below treat the converse. Note that if we fix atlas groupoids of two differentiable stacks, a stack map between them may not be expressible as a morphism between the two groupoids. This is not surprising because this happens for smooth manifolds already: if a chart of the domain manifold is too large compared to the corresponding chart of the target manifold, we cannot express the map on these two charts.

\begin{lem}[Second Dictionary Lemma]\Label{sdl}
Let $f:\X\to\Y$ be a morphism of stacks, $\phi_0:X_0\to Y_0$ a morphism of manifolds and $\eta$ a 2-isomorphism as in~(\ref{eta-di}). 
Then there exists a unique morphism of Lie groupoids
$\phi_1:X_1\to Y_1$ covering $\phi_0$ and making the cube~(\ref{cube})
2-commutative. 
\end{lem}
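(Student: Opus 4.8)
The plan is to build $\phi_1$ directly from the universal property of the $2$-fibre product presentation $Y_1 = Y_0\times_\Y Y_0$, and to extract both the uniqueness and the groupoid compatibility from the \emph{uniqueness clause} of that universal property. Write $p\colon X_0\to\X$ and $q\colon Y_0\to\Y$ for the two atlases, so that $X_1 = X_0\times_\X X_0$ carries its projections $s,t\colon X_1\to X_0$ (the source and target of $\gG_\X$) together with the tautological $2$-isomorphism $\tau\colon p\circ s\Rightarrow p\circ t$, and likewise $Y_1 = Y_0\times_\Y Y_0$ carries $s',t'\colon Y_1\to Y_0$ and $\tau'\colon q\circ s'\Rightarrow q\circ t'$. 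The defining property of the $2$-fibre product says that, for any manifold $Z$, a morphism $Z\to Y_1$ is the same datum as a triple $(u,v,\rho)$ with $u,v\colon Z\to Y_0$ morphisms and $\rho\colon q\circ u\Rightarrow q\circ v$ a $2$-isomorphism, where $s',t'$ recover $u,v$ and $\rho$ is the pullback of $\tau'$; since $Y_1$ is a manifold this is a genuine bijection on morphisms.

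First I would produce the triple corresponding to the sought $\phi_1$. Take $u:=\phi_0\circ s$ and $v:=\phi_0\circ t$, and define the required $2$-isomorphism by whiskering the given $\eta$ against $\tau$. Reading $\eta\colon q\circ\phi_0\Rightarrow f\circ p$ (the opposite orientation merely replaces $\eta$ by $\eta^{-1}$), set
\[
\sigma \;:=\; (\eta^{-1}\ast t)\,\cdot\,(f\ast\tau)\,\cdot\,(\eta\ast s)\;\colon\; q\circ\phi_0\circ s \;\Rightarrow\; q\circ\phi_0\circ t,
\]
where $\ast$ denotes horizontal composition (whiskering) and $\cdot$ vertical composition; the domains and codomains chain as $q\phi_0 s\Rightarrow fps\Rightarrow fpt\Rightarrow q\phi_0 t$. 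Applying the universal property to the triple $(\phi_0\circ s,\ \phi_0\circ t,\ \sigma)$ yields a unique morphism $\phi_1\colon X_1\to Y_1$ with $s'\circ\phi_1=\phi_0\circ s$, $\;t'\circ\phi_1=\phi_0\circ t$, and $(\phi_1)^{\ast}\tau'=\sigma$. The first two identities are exactly the statement that $\phi_1$ covers $\phi_0$ (it intertwines source and target), while the identity $(\phi_1)^{\ast}\tau'=\sigma$ records the compatibility of the $2$-isomorphisms; together with the two copies of the hypothesis square (\ref{eta-di}) read through $s$ and through $t$, this is precisely the $2$-commutativity of the cube (\ref{cube}).

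Uniqueness is then immediate: any $\phi_1'$ covering $\phi_0$ and making the cube $2$-commutative satisfies $s'\circ\phi_1'=\phi_0\circ s$, $t'\circ\phi_1'=\phi_0\circ t$, and $(\phi_1')^{\ast}\tau'=\sigma$, hence corresponds to the same triple and equals $\phi_1$. It remains to verify that $(\phi_0,\phi_1)$ is a functor of Lie groupoids, and here I would again argue by matching triples and invoking uniqueness. For the unit $e_X\colon X_0\to X_1$ one compares $\phi_1\circ e_X$ with $e_Y\circ\phi_0$: both have projections $\phi_0$, and the pullback of $\sigma$ along $e_X$ collapses to the identity, since $e_X^{\ast}\tau=\mathrm{id}$ forces the two whiskered copies of $\eta$ to cancel; so the two maps agree. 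For multiplication one compares $\phi_1\circ m_X$ with $m_Y\circ(\phi_1\times\phi_1)$ over the space of composable arrows $X_1\times_{s,X_0,t}X_1$, where the projections match by the covering property and the $2$-isomorphism comparison reduces to the compatibility of $\tau$ with composition after whiskering by $f$.

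I expect this last point, the multiplication compatibility, to be the main obstacle, because it is where one must unwind the groupoid multiplication in terms of the triple $2$-fibre product $X_0\times_\X X_0\times_\X X_0$ and check that vertical composition of the whiskered $\eta$–$\tau$–$\eta^{-1}$ zig-zags is preserved. All the genuine content, however, is packaged in the uniqueness clause of the $2$-fibre-product universal property: once the projections and the pulled-back tautological $2$-isomorphisms are shown to coincide, agreement of the two maps is forced, so no separate gluing or cocycle verification is needed beyond the bookkeeping of the $2$-cells.
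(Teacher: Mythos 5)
Your proposal is correct: building $\phi_1$ from the triple $(\phi_0\circ s,\ \phi_0\circ t,\ \sigma)$ with $\sigma=(\eta^{-1}\ast t)\cdot(f\ast\tau)\cdot(\eta\ast s)$ via the universal property of $Y_1=Y_0\times_{\Y}Y_0$, and then extracting uniqueness, unit and multiplication compatibility from the uniqueness clause (using $e_X^*\tau=\mathrm{id}$ and the cocycle identity $m_X^*\tau=pr_2^*\tau\cdot pr_1^*\tau$), is exactly the standard argument. Be aware, though, that the paper itself contains no proof of this statement --- it is quoted from \cite{BX} (Lemmas 2.29--2.31) --- and your argument is essentially the one given in that reference, so there is nothing in the paper to diverge from.
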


\begin{lem}[Third Dictionary Lemma]
Let $f:\X\to\Y$ be a morphism of stacks. Let $\phi:\gG_{\X}\to \gG_{\Y}$ and $\psi:\gG_{\X}\to \gG_{\Y}$ be two morphisms of Lie groupoids. Let $\eta$ and
$\eta'$ be 2-isomorphisms, where $(\phi,\eta)$ and $(\psi,\eta')$ both form
2-commutative cubes such as~(\ref{eta-di}).  Then there exists a unique
natural equivalence $\theta:\phi\Rightarrow\psi$ such that the diagram
$$\xymatrix@=.5pc{
&&&& Y_1\ddlto\drto &\\
X_0\ar[urrrr]^\theta\ddrto\ar[drrr]_{\phi_0}\ar[rrrrr]^{\psi_0} &&&&&
Y_0\ddlto\\ 
&&& Y_0\drto &&\\
&\X\ar[rrr]^f &&&\Y &}$$
is 2-commutative. 
\end{lem}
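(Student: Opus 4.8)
The plan is to construct $\theta$ via the universal property of the fiber product $Y_1=Y_0\times_\Y Y_0$ and then to verify naturality and uniqueness by the same universal property. Write $p_\X\colon X_0\to\X$ and $p_\Y\colon Y_0\to\Y$ for the atlas maps, and let $s,t\colon Y_1\to Y_0$ be the two projections, so that a point of $Y_1$ over a manifold $T$ is the same as a pair of maps $a,b\colon T\to Y_0$ together with a $2$-isomorphism $p_\Y\circ a\Rightarrow p_\Y\circ b$. Recall that a natural equivalence $\theta\colon\phi\Rightarrow\psi$ is by definition a smooth map $\theta\colon X_0\to Y_1$ with $s\circ\theta=\phi_0$ and $t\circ\theta=\psi_0$, satisfying the naturality identity $(\theta\circ t)\cdot\phi_1=\psi_1\cdot(\theta\circ s)$ of maps $X_1\to Y_1$, where $\cdot$ is multiplication in $\gG_\Y$ and $s,t\colon X_1\to X_0$ are now the projections of $\gG_\X$.

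First I would build $\theta$ on objects. The $2$-isomorphisms $\eta\colon f\circ p_\X\Rightarrow p_\Y\circ\phi_0$ and $\eta'\colon f\circ p_\X\Rightarrow p_\Y\circ\psi_0$ compose to a single $2$-isomorphism $\Theta:=\eta'\circ\eta^{-1}\colon p_\Y\circ\phi_0\Rightarrow p_\Y\circ\psi_0$. The triple $(\phi_0,\psi_0,\Theta)$ is exactly the data of a map into the fiber product, so the universal property of $Y_1=Y_0\times_\Y Y_0$ yields a unique smooth map $\theta\colon X_0\to Y_1$ with $s\circ\theta=\phi_0$, $t\circ\theta=\psi_0$ and $\Y$-component $\Theta$. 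The $2$-commutativity condition displayed in the statement is precisely the requirement that these three data be $\phi_0$, $\psi_0$ and $\Theta$; hence any $\theta$ satisfying it must coincide with this one, which already yields the uniqueness clause.

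Next I would verify naturality, the only substantive point. Both $(\theta\circ t)\cdot\phi_1$ and $\psi_1\cdot(\theta\circ s)$ are maps $X_1\to Y_1$, so by the universal property it suffices to compare their two $Y_0$-components and their $\Y$-components. Both composites have $s$-component $\phi_0\circ s$ and $t$-component $\psi_0\circ t$, so only the $\Y$-components need matching. Here I would invoke the explicit description of $\phi_1$ from the Second Dictionary Lemma: if $\tau\colon p_\X\circ s\Rightarrow p_\X\circ t$ denotes the tautological $2$-isomorphism carried by $X_1=X_0\times_\X X_0$, then the $\Y$-component of $\phi_1$ is $\Phi=(t^*\eta)\circ(f\ast\tau)\circ(s^*\eta)^{-1}$, and likewise that of $\psi_1$ is $\Psi=(t^*\eta')\circ(f\ast\tau)\circ(s^*\eta')^{-1}$. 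Since multiplication in $Y_1$ corresponds to vertical composition of $2$-isomorphisms over $\Y$, the $\Y$-component of the left-hand side is $(t^*\Theta)\circ\Phi$ and that of the right-hand side is $\Psi\circ(s^*\Theta)$. Substituting $\Theta=\eta'\circ\eta^{-1}$ and cancelling the pair $(t^*\eta)^{-1}\circ(t^*\eta)$ on the left and $(s^*\eta')^{-1}\circ(s^*\eta')$ on the right, both reduce to $(t^*\eta')\circ(f\ast\tau)\circ(s^*\eta)^{-1}$. The components and the $\Y$-components therefore agree, so naturality holds.

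The main obstacle is bookkeeping rather than conceptual: keeping the directions of all $2$-isomorphisms consistent, correctly identifying the pullbacks $s^*\eta,t^*\eta$ (and their primed versions) that appear in the cube's $2$-commutativity, and checking that multiplication in $\gG_\Y$ really translates into the stated vertical composition over $\Y$. Once these conventions are fixed so that $\Phi$, $\Psi$ and $\Theta$ above are literally the $2$-isomorphisms furnished by the previous lemmas, the naturality check collapses to the single cancellation shown, and smoothness of $\theta$ is automatic from the universal property. I would close by noting that applying the same universal-property argument to a hypothetical second natural equivalence reproves uniqueness intrinsically, independent of the explicit formula for $\Theta$.
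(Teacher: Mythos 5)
Your proof is correct. Note that the paper itself contains no proof of this lemma: it is quoted, together with the First and Second Dictionary Lemmas, from \cite[Lemmas 2.29--2.31]{BX}, so the only available comparison is with that source — and your argument is essentially the one given there: build $\theta$ from the triple $(\phi_0,\psi_0,\eta'\circ\eta^{-1})$ via the universal property of $Y_1=Y_0\times_{\Y}Y_0$, check naturality by comparing the two $Y_0$-components and the $\Y$-components (using that $2$-commutativity of the cube forces the $\Y$-component of $\phi_1$ to be $(t^*\eta)\circ(f\ast\tau)\circ(s^*\eta)^{-1}$, and likewise for $\psi_1$), and read off uniqueness from the same universal property, since the displayed prism pins down all three components of $\theta$. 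The cancellation computation is exactly right, and smoothness of $\theta$ is indeed automatic because $Y_1$ is a manifold representing the relevant fiber product.
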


Now given an orbifold defined using orbifold charts and embeddings as in \cite{Sa} or \cite{CR2}, we can construct an \'{e}tale Lie groupoid, then the groupoid defines a DM stack. If we start with another system of local charts defining the same orbifold, the corresponding \'{e}tale Lie groupoid is Morita equivalent to the first one. Then the corresponding DM stack is equivalent to the DM stack determined by the first one. When the DM stack is effective, i.e. has an effective \'{e}tale atlas, we can go backward to get a topological space together with a system of orbifold charts. In this sense, we use DM stack as the definition of orbifold. 

\begin{defn}\label{orbifolddef} The 2-category of orbifolds are defined as the 2-category of DM stacks, namely:
\begin{itemize}
\item An {\em orbifold} is a DM stack. It is called effective if the stack has an effective atlas.
\item An orbifold morphism is a stack map between two DM stacks.
\item A 2-morphism between two orbifold morphisms is a 2-morphism between the two stack maps.
\end{itemize}
\end{defn}
\begin{defn}\label{orbifolddiffeo}
An orbifold morphism $\X\to \Y$  is called a {\em diffeomorphism} if it is an isomorphism between the two stacks.
\end{defn}

\begin{rmk}
When $\pmb{f}:\X \to \Y$  is diffeomorphism between orbifolds, its underlying map $|\pmb{f}|: |\X|\to |\Y|$ is a homeomorphism.
\end{rmk}
\begin{defn}
For an orbifold $\X$, a 0-dimensional substack of $\X$ which covers one point in $|\X|$ is called an orbipoint of $\X$.
\end{defn}
\begin{defn}
Let $\X$ be an orbifold $\X$ and let $\gG_\X:=(\gG_{\X1}\rightrightarrows \gG_{\X0})$ be an atlas groupoid of $\X$, with source and target maps denoted by $s, t: \gG_{\X1}\to \gG_{\X0}$.
\begin{enumerate}
\item 
The {\em inertia groupoid} $\Lambda \gG_{\X}$ of $\gG_{\X}$ is  defined as follows. The objects are $Ob(\Lambda\gG_{\X}):=\{g\in \gG_{\X1}: s(g)=t(g)\}$. For $g, g'\in  Ob(\Lambda\gG_{\X})$, an arrow $\alpha: g\to g'$ is an element $\alpha\in \gG_{\X1}$ such that $g\alpha=\alpha g'$. 

\item 
The {\em inertia orbifold} $I\X$ associated to $\X$ is the orbifold presented by the inertia groupoid $\Lambda \gG_{\X}$.
\end{enumerate}
\end{defn}
The inertia orbifold $I\X$ does not depend on the choice of atlas groupoid $\gG_{\X}$. As a category, the objects of $I\X$ are pairs $(x, g)$ where $x\in Ob(\X)$ and $g$ is an automorphism of $x\in Ob(\X)$. The inertia orbifold $I\X$ can be viewed as a way to stratify the orbifold according to the orbit types. Following \cite{CR1, CR2} we write $I\X$ as a disjoint union of components\footnote{By a component we mean a union of irreducible components.}$$I\X=\sqcup \X_{(g)}.$$
Each component $\X_{(g)}$ is called a {\em twisted sector}\footnote{Because of the description of objects of $I\X$ as pairs $(x,g)$ with $x\in Ob(\X)$ and $g$ an automorphism of $x$, we often think of $g$ which indexes the twisted sector $\X_{(g)}$ as an element in the automorphism group.}. There is a distinguished component $$\X\simeq \X_{(0)}:=\{(x, id)\,|\, x\in Ob(\X)\}\subset I\X,$$ called the {\em untwisted sector}.

There is a natural involution $\mathcal{I}: I\X\to I\X$ defined by $\mathcal{I}((x,g)):=(x, g^{-1})$, which plays an important role in the construction of Chen-Ruan orbifold cohomology. 

\subsection{Differential Forms, Vector Fields and Flows on Orbifolds}\label{sec:vec_field}
In this subsection we review differential forms, vector fields and flows on orbifolds/DM stacks following \cite{BX}, \cite{ML} and \cite{Hep}.

The sheaves $\Omega^{k}(\X)$ of differential $k$-forms on a stack $\X$ is defined as follows (\cite[Section 3.2]{BX}): for an object $v\in\X$ over a manifold $U$, define $\Omega^{k}(v):=\Omega^{k}_{U}(U)$, the differential forms on $U$. A differential form of degree $k$ on $\X$ is a global section of the sheaf $\Omega^{k}(\X)$, i.e., a homomorphism from the trivial sheaf on $\X$ to $\Omega^{k}(\X)$.

For DM stacks, differential forms have the following description in an atlas.
\begin{lem}[Proposition 2.9.i of \cite{ML}]\label{dictionaryForms}
Let $X_1 \rightrightarrows X_0 \xrightarrow{\xi} \mathcal{X}$ be a groupoid presentation of a DM stack $\mathcal{X}$, then

$$
\Omega^{\bullet} (\X) \cong
\{ \tau \in \Omega^{\bullet} (X_0) \ | 
\ s^* \tau = t^* \tau \} \cong 
\bigl\{ ( \sigma_1,\sigma_0)  \in 
\Omega^{\bullet} (X_1) \times \Omega^{\bullet} (X_0)
\bigr)
\mid
 s^* \sigma_0 = \sigma_1 , \  t^* \sigma_0 = \sigma_1 \bigr\} .
$$
\end{lem}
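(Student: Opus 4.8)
The plan is to exhibit the first isomorphism explicitly, by restricting a global form on $\X$ to the atlas, and to obtain the second isomorphism as a trivial reformulation. Recall that the atlas $\xi\colon X_0\to \X$ corresponds to a distinguished object of $\X$ lying over $X_0$ (a map from a manifold into a stack is the same datum as an object of the stack over that manifold), which I also denote $\xi$. Evaluation at this object defines a restriction map
\[
\mathrm{res}\colon \Omega^{\bullet}(\X)\longrightarrow \Omega^{\bullet}(X_0),\qquad \omega\mapsto \omega(\xi),
\]
since $\Omega^{\bullet}(\xi)=\Omega^{\bullet}_{X_0}(X_0)$ by the definition of the sheaf $\Omega^{\bullet}(\X)$ recalled above. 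The content of the lemma is then that $\mathrm{res}$ is injective with image exactly the invariant forms $\{\tau : s^*\tau=t^*\tau\}$.

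First I would check that the image is invariant. By Definition \ref{atlas}, $X_1=X_0\times_\X X_0$ and $s,t$ are its two projections, so the two composites $\xi\circ s$ and $\xi\circ t\colon X_1\to\X$ are canonically $2$-isomorphic; equivalently, $s^*\xi\cong t^*\xi$ as objects of $\X$ over $X_1$. Since a global section $\omega$ of $\Omega^{\bullet}(\X)$ is compatible with pullback and assigns the same value to isomorphic objects (the isomorphism here covers $\mathrm{id}_{X_1}$, hence acts trivially on forms), I get $s^*(\omega(\xi))=\omega(s^*\xi)=\omega(t^*\xi)=t^*(\omega(\xi))$, so $\mathrm{res}(\omega)$ is invariant. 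For injectivity I would use that $\xi$ is an atlas: for any object $v$ of $\X$ over a manifold $U$, the projection $X_0\times_\X U\to U$ is a surjective submersion, hence admits local sections, so $U$ is covered by opens $U_\alpha$ on each of which $v$ lifts to $f_\alpha\colon U_\alpha\to X_0$ with $\xi\circ f_\alpha\cong v|_{U_\alpha}$. Pullback-compatibility of $\omega$ then forces $\omega(v)|_{U_\alpha}=f_\alpha^*(\omega(\xi))$, so $\omega(v)$ is determined everywhere by $\mathrm{res}(\omega)$; in particular $\mathrm{res}(\omega)=0$ gives $\omega=0$.

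The main step is surjectivity. Given $\tau\in\Omega^{\bullet}(X_0)$ with $s^*\tau=t^*\tau$, I reverse the previous construction: for an object $v$ over $U$, choose local lifts $f_\alpha\colon U_\alpha\to X_0$ as above and set $\omega(v)|_{U_\alpha}:=f_\alpha^*\tau$. The crux is well-definedness. On an overlap $U_{\alpha\beta}$, the two lifts $f_\alpha,f_\beta$ of $v|_{U_{\alpha\beta}}$ together with the $2$-isomorphism $\xi\circ f_\alpha\cong\xi\circ f_\beta$ are precisely the data of a map $g\colon U_{\alpha\beta}\to X_0\times_\X X_0=X_1$ with $s\circ g=f_\alpha$ and $t\circ g=f_\beta$, whence
\[
f_\alpha^*\tau=g^*s^*\tau=g^*t^*\tau=f_\beta^*\tau .
\]
Thus the local pieces agree on overlaps, and by the sheaf axiom for $\Omega^{\bullet}_U$ they glue to a well-defined $\omega(v)\in\Omega^{\bullet}(U)$, independent of all choices; a routine verification shows $v\mapsto\omega(v)$ is compatible with pullbacks, hence a genuine global section with $\mathrm{res}(\omega)=\tau$.

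Finally, the second isomorphism is purely formal: $\tau\mapsto(\sigma_1,\sigma_0)=(s^*\tau,\tau)=(t^*\tau,\tau)$ is inverse to $(\sigma_1,\sigma_0)\mapsto\sigma_0$, the conditions $s^*\sigma_0=\sigma_1=t^*\sigma_0$ being literally the invariance condition. I expect the surjectivity step to be the principal obstacle, specifically the identification of the overlap data of two local lifts with a single map into $X_1=X_0\times_\X X_0$ and the verification that the glued form is pullback-compatible; this is exactly where both the surjective-submersion property of the atlas and the invariance hypothesis $s^*\tau=t^*\tau$ are indispensable.
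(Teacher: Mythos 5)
Your proof is correct. Note that the paper itself offers no argument for this lemma---it is quoted verbatim from Lerman--Malkin \cite{ML}---and your proof is precisely the standard descent argument underlying that citation: restriction to the atlas, injectivity from the existence of local lifts (the surjective-submersion property of $X_0\times_{\X}U\to U$), and surjectivity by gluing pullbacks of an invariant form, with the key observation that two local lifts of the same object together with their connecting $2$-isomorphism constitute a map into $X_1=X_0\times_{\X}X_0$, so that $s^*\tau=t^*\tau$ forces agreement on overlaps.
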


By \cite{Hep}, there is a lax functor from the $2$-category of differentiable stacks to itself which extends the functor $\Man\to \Man$ that sends a manifold to its tangent bundle and a map to its derivative. 

\begin{defn}(Definition 3.2 in \cite{Hep})
A \emph{vector field} on a differentiable stack $\X$ is a pair $(X,a_X)$ consisting of a morphism
\[X\colon\X\to T\X,\]
where $T\X$ is the tangent bundle of $\X$, and a $2$-cell (i.e. 2-morphism)
\begin{equation}\label{IntroVectorFieldEquation}\xymatrix{
\X\ar[r]^X\ar@/_6ex/[rr]_{Id_{\X}}^{}="1"& T\X\ar[r]^{\pi_\X}\ar@{=>}"1"^{a_X}& \X.
}\end{equation}
Here $\pi_\X\colon T\X\to\X$ is the natural projection map.
\end{defn}

We recall the definition of integral morphism and flow of a vector field from \cite{Hep}. Let $I$ be either $\real$ or an interval on $\real$.

\begin{defn}[Definition  4.1 in \cite{Hep}]\label{IntegralDefinition}
Let $X$ be a vector field on $\X$.  Then $\Phi\colon\Y\times I\to\X$ is \emph{an integral morphism of $X$} if there is a $2$-morphism
\begin{equation}\label{IntegralTwoMorphism}
t_\Phi\colon X\circ\Phi\Longrightarrow T\Phi\circ\frac{\partial}{\partial t},
\end{equation}
which we represent as the following diagram
\begin{equation}\label{FlowDiagram}\xymatrix{
T(\Y\times I)\ar[r]^-{T\Phi}_{}="2"& T\X\\
\Y\times I\ar[u]^{\frac{\partial}{\partial t}}\ar[r]_-\Phi &\X.\ar[u]_X^{}="1"\ar@{=>}^{t_\Phi}"1";"2"
}\end{equation}
The $2$-morphism $t_\Phi$ must satisfy the property that the $2$-morphisms in the following diagram 
\begin{equation}\label{CategorifiedFlowDiagram}\xymatrix{
\Y\times I\ar@{<-}@/_12ex/[dd]^{}="1"_{Id_{\Y\times I}}\ar[r]_{}="3"^\Phi & \X\ar@{<-}@/^12ex/[dd]_{}="2"^{Id_{\Y\times I}} \\
T(\Y\times I)\ar[u]\ar[r]^-{T\Phi}_{}="4"\ar@{=>}"1"^-{a_{\frac{\partial}{\partial t}}} & T\X\ar@{=>}"2"_-{a_X}\ar[u]_{}="5"\\
\Y\times I\ar[u]\ar[r]_\Phi & \X\ar[u]_{}="6"\ar@{=>}"6";"4"^{t_\Phi}\ar@{=>}"5";"3"
}\end{equation}
compose to the trivial $2$-morphism from $\Phi\colon\Y\times\real\to\X$ to itself.  The choice of $t_\Phi$ is regarded as part of the data for $\Phi$.  Note that if $\Phi$ integrates $X$ and there is an equivalence $\lambda\colon Y\Rightarrow X$, then $\Phi$ also integrates $Y$ when equipped with the $2$-morphism $t_\Phi\circ (\lambda\ast Id_\Phi)$.
\end{defn}

\begin{defn}[Definition  4.3 in \cite{Hep}]\label{FlowDefinition}
Let $X$ be a vector field on $\X$.  A \emph{flow of $X$} is a morphism
\[\Phi\colon\X\times\real\to\X\]
integrating $X$ and equipped with a $2$-morphism $e_\Phi\colon\Phi|_{\X\times\{0\}}\Rightarrow Id_\X$.
\end{defn}

The following proposition in \cite{ML} describes vector fields and its pairing with differential forms in an atlas. Recall that a Lie algebroid of a Lie groupoid $X_{1}\twoarrows X_{0}$ is a vector bundle map called the anchor $a:A \to TX_{0}$, where $A = \text{ker}\, ds|_{X_{0}}$, and $a = dt$. Moreover, given a surjective submersion  $f: Y \rightarrow X$ of manifolds, a vector field $v_{_Y} \in \mathrm{Vect} (Y)$ and a vector field  $v_{_X} \in \mathrm{Vect} (X)$, 
then $v_{_Y}$ is called \emph{$f$-related} to $v_{_X}$ if $df (v_{_Y}) = v_{_X} \circ f$. 

Given $v_{_X}$, $v_{_Y}$ is determined up to a section of the bundle $\ker df \subset TY$.

\begin{prop}[Proposition 2.9.ii \& iii in \cite{ML}]\label{dictionVector}
Let $X_1 \rightrightarrows X_0 \xrightarrow{\xi} \mathcal{X}$
be a groupoid presentation of a DM stack $\mathcal{X}$ with the associated
algebroid $A \hookrightarrow TX_0$. Then
\begin{enumerate}
\item\label{ArtinGlobalVectors}
The Lie algebra $\mathrm{Vect} (\mathcal{X}) := \mathcal{C}^{\infty}_{T\mathcal{X}} (\mathcal{X})$ of vector fields on $\mathcal{X}$, i.e., of global sections of $T\mathcal{X}$, is isomorphic
to the Lie algebra $\mathcal{C}^{\infty}_{TX_0/A} (X_0)^{X_1}$ of
$X_1$-invariant sections of the bundle $TX_0/A$.  Explicitly, vector
fields on $\mathcal{X}$ are equivalence classes of pairs consisting of
a vector field $v_0$ on $X_0$ and a vector field $v_1$ on $X_1$, which
are both $s$- and $t$-related:
\[
\mathrm{Vect} (\mathcal{X}) \cong
\frac{ 
\bigl\{(v_1, v_0) \in \mathrm{Vect} (X_1) 
 \times \mathrm{Vect} (X_0) \mid  
ds (v_1) = v_0 \circ s , \ dt (v_1) = v_0 \circ t \bigr\}
}{
\bigl\{(v_1, v_0) \mid
ds (v_1) = v_0 \circ s , \ dt (v_1) = v_0 \circ t ,
\ v_1 \in ( \ker ds + \ker dt ) \bigr\} 
}
\]
\item
The contraction of vector fields and forms on $\mathcal{X}$ is induced 
by the contraction of vector fields and forms on $X_0$ and $X_1$:
\begin{equation}\nonumber
\iota_{( v_1 , v_0 )} \, ( \sigma_1 , \sigma_0 ) = ( \iota_{v_1}  \sigma_1 , \iota_{v_0} \sigma_0 )
\end{equation}
\end{enumerate}
\end{prop}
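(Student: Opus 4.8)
The plan is to treat $T\X$ as a vector bundle stack over $\X$ and to read off vector fields, i.e.\ its global sections, by descent along the atlas $\xi$; the crucial input is that $T\X$ is itself presented by the tangent groupoid. First I would record that applying the manifold tangent functor to the presentation $X_1 \rightrightarrows X_0 \xrightarrow{\xi} \X$ yields a presentation $TX_1 \rightrightarrows TX_0 \to T\X$ of the tangent stack, with structure maps $Ts = ds$ and $Tt = dt$, and with $\pi_\X\colon T\X \to \X$ presented by the bundle projections $\pi_{X_1},\pi_{X_0}$. This rests on the construction of $T\X$ in \cite{Hep} together with the fact that $T$ commutes with the fibered products $X_1 = X_0 \times_\X X_0$ defining the atlas groupoid. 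Granting this, a vector field $(X,a_X)$ on $\X$ is precisely a global section of the vector bundle stack $\pi_\X$, and by the stack condition (Definition \ref{stackman}) such a section is the same datum as descent data for $\pi_\X$ along $\xi$.

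Unwinding this descent condition, exactly as for forms in Lemma \ref{dictionaryForms} but now for sections of the projection rather than for pulled-back forms, a global section amounts to a pair $(v_1,v_0)\in \mathrm{Vect}(X_1)\times\mathrm{Vect}(X_0)$ compatible with the two structure maps of the tangent groupoid, namely $ds(v_1)=v_0\circ s$ and $dt(v_1)=v_0\circ t$ --- the asserted $s$- and $t$-relatedness. Two such pairs present the same section of $\pi_\X$ exactly when they are related by a $2$-isomorphism of sections, and I would show that this gauge freedom is precisely the subspace of related pairs with $v_1\in\ker ds+\ker dt$, yielding the equivalence-class description in (\ref{ArtinGlobalVectors}).

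Next I would reconcile this with the invariant-section picture. Because $\X$ is a DM stack its stabilizers are discrete, so $\ker ds\cap\ker dt=0$ and the anchor $a=dt\colon A\to TX_0$ is injective; moreover $a(A)$ is exactly the relative tangent bundle $\ker d\xi$ of the submersion $\xi$, whence $\xi^*T\X\cong TX_0/A$. Sending $(v_1,v_0)\mapsto[v_0]$ lands in $\mathcal{C}^\infty_{TX_0/A}(X_0)$, and the two relations are equivalent to $X_1$-invariance of $[v_0]$: the vector $v_1(g)$ certifies that $v_0(s(g))$ and $v_0(t(g))$ agree modulo $A$. A short computation shows this map is a bijection modulo the equivalence above; the appearance of the sum $\ker ds+\ker dt$, rather than the intersection, reflects that a related lift $v_1$ of an orbit-tangent $v_0$ splits into an $s$-vertical piece absorbing the $dt$-constraint and a $t$-vertical piece absorbing the $ds$-constraint. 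Finally the Lie algebra structure transports because $f$-relatedness is preserved under the Lie bracket, so $[(v_1,v_0),(w_1,w_0)]=([v_1,w_1],[v_0,w_0])$, and the distinguished subspace is a Lie ideal.

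For part (ii) I would write forms on $\X$ as compatible pairs $(\sigma_1,\sigma_0)$ with $s^*\sigma_0=\sigma_1=t^*\sigma_0$ (Lemma \ref{dictionaryForms}), define $\iota_{(v_1,v_0)}(\sigma_1,\sigma_0):=(\iota_{v_1}\sigma_1,\iota_{v_0}\sigma_0)$, and check two things. That the output is again a form on $\X$ follows from naturality of contraction under pullback together with relatedness: $s^*(\iota_{v_0}\sigma_0)=\iota_{v_1}(s^*\sigma_0)=\iota_{v_1}\sigma_1$, and likewise for $t$. That it is independent of the chosen representative follows because $\sigma_1=s^*\sigma_0$ annihilates $\ker ds$ while $\sigma_1=t^*\sigma_0$ annihilates $\ker dt$, so $\iota_{v_1}\sigma_1=0$ whenever $v_1\in\ker ds+\ker dt$. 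The main obstacle I anticipate lies in the second paragraph: establishing the tangent-groupoid presentation of $T\X$ from Hepworth's lax tangent functor, and above all tracking the $2$-categorical gauge freedom precisely enough to identify it with $\ker ds+\ker dt$; once this dictionary is in place, the remaining steps are routine bundle bookkeeping.
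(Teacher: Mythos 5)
The paper itself contains no proof of this statement: it is imported verbatim as Proposition 2.9(ii)--(iii) of Lerman--Malkin \cite{ML}, so there is no internal argument to compare yours against. Judged on its own terms, your outline is correct and follows essentially the same route as the cited source: present $T\X$ by the tangent groupoid $TX_1 \rightrightarrows TX_0$ (this is Hepworth's construction in \cite{Hep}), compute global sections of $\pi_\X$ by descent along $\xi$, and use $\ker ds \cap \ker dt = 0$ (finite isotropy of a DM stack) both to see that the $2$-isomorphism ambiguity is exactly the kernel of $(v_1,v_0)\mapsto [v_0]\in \mathcal{C}^{\infty}_{TX_0/A}(X_0)$ and to get uniqueness, hence smoothness, of the lift $v_1$ in the surjectivity step. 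One omission you should repair in part (ii): a change of representative moves $v_0$ as well as $v_1$ --- inside the gauge subspace $v_0$ can be any section of $a(A)$, not just $0$ --- so well-definedness also requires $\iota_{v_0}\sigma_0 = 0$ whenever $v_0$ lies in the image of the anchor; this follows because a basic form (one with $s^*\sigma_0 = t^*\sigma_0$, as in Lemma \ref{dictionaryForms}) annihilates $a(A)$, which you see by evaluating that identity on $A \subset \ker ds$ at unit points, where $ds$ is surjective.
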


To talk about Hamiltonian dynamics, we need the notion of time dependent vector fields, which can be considered as a special class of vector fields on $\real\times \X$: 

\begin{defn}
A \emph{time dependent vector field} on a differentiable stack $\X$ is a pair $(X,a_X)$ consisting of a morphism 
$$X\colon  \X\times I\to T\X$$
and a $2$-cell
\begin{equation}\label{IntroVectorFieldEquation}\xymatrix{
 \X\times I\ar[r]^X\ar@/_6ex/[rr]_{pr_{\X}}^{}="1"& T\X\ar[r]^{\pi_\X}\ar@{=>}"1"^{a_X}& \X.
}\end{equation}
Here $\pi_\X\colon T\X\to\X$ is the natural projection map and $pr_{\X}$ is the projection to the $\X$ component.
\end{defn}

Like the manifold case, every time dependent vector field $X$ on $\X$ determines a vector field on $\X\times \real$ by $\bar{X}:=X\oplus\frac{\partial}{\partial t}$. The flow $\bar{\Phi}$ of $\bar{X}$ exists locally by the existence and uniqueness of integral morphism (\cite[Theorems 4.4--4.5]{Hep}). Note that the vector field along the non-compact direction is $\frac{\partial}{\partial t}$ whose integral morphism exists globally on $\real$. Repeating the proof of \cite[Theorem 4.8]{Hep}, we get the global existence of the flow $\bar{\Phi}$. Define $\Phi:\X\times \real\to\X$ to be the composition of the following:
$$\xymatrix{
\X\times \real \ar[r]^{Id_{\X}\times \Delta_{t}} &\ \ \ \  \X\times \real\times \real \ar[r]^{\bar{\Phi}} & \X\times \real \ar[r]^{pr_{\X}} & \X}
$$
where $\Delta_{t}$ is the diagonal map $\real\to\real\times\real$.  Then $\Phi$ is called the {\em integral of the time dependent vector field $X$}. It is easy to see that this generalizes the integral of time dependent vector field in the manifold case.

\subsection{Hamiltonian Dynamics of Symplectic Orbifold and $\pi_{1}(Ham(\X,\omega))$}\label{hamloop}
In this section, we give the basic definition of Hamiltonian dynamics such as Hamiltonian vector fields and Hamiltonian diffeomorphisms for symplectic orbifolds. Then we define Hamiltonian loops based at the identity for a symplectic orbifold and the homotopy equivalence between them, and show the homotopy equivalence classes form a group  denoted by $\pi_{1}(Ham(\X,\omega))$. 

\begin{rmk}
Note that we do not discuss the infinite dimensional orbifold structure on the 2-group of Hamiltonian diffeomorphisms $Ham(\X,\omega)$ in this paper. With a proper definition of such structure on $Ham(\X,\omega)$, $\pi_{1}(Ham(\X,\omega))$ defined here is expected to coincide with the orbifold fundamental group of $Ham(\X,\omega)$ with respect to that orbifold structure.
\end{rmk}

\begin{defn}\label{Hamfield}
A vector field $X:\X\to T\X$ is called a {\em Hamiltonian vector field} if there is a 0-form (function) $H\in \Omega^{0}(\X)$ such that  $dH=\omega(X,\cdot)$.
A time dependent vector field $X:\X\times\real\to T\X$ is called {\em time dependent Hamiltonian vector field} if there is a 0-form (function) $H\in \Omega^{0}(\X\times \real)$ such that  $dH_{t}=\omega(X_{t},\cdot)$ for all $t\in \real$, where $H_{t}$ is the pullback of $H$ by the inclusion $\X\times\{t\}\to\X\times\real$ and $X_{t}:=X|_{\X\times\{t\}}$.
\end{defn}
\begin{defn}\label{Hamdiffeo}
A diffeomorphism $\phi:\X\to \X$ is called a {\em Hamiltonian diffeomorphism} if $\phi=\Phi|_{\X\times\{1\}}$ where $\Phi$ is the integral of some time dependent Hamiltonian vector field $X$. 
\end{defn}

All Hamiltonian diffeomorphisms together with 2-morphisms between them form a 2-group, which is denoted by $Ham(\X,\omega)$.

Throughout this section we denote $I=[0,1]$. One may want to define a path of Hamiltonian diffeomorphisms as a functor $I\times \X \to \X$ integrating a Hamiltonian vector field. However in the 2-category of orbifolds defined above, we cannot talk about non-smooth morphisms since they are fibered over the category of smooth manifolds. Namely, every path defined as a functor from $I\times \X$ to $\X$ is smooth. On the other hand, simply connecting two smooth paths may not give a smooth path. This motivates our definition of Hamiltonian paths given below.

Any finite set of numbers $0=t_{0}<t_{1}<...<t_{L+1}=1$ determine a partition of $I$, $$I=\cup_{l=0,...,L}[t_{l},t_{l+1}].$$ We denote $I_{l}:=[t_{l},t_{l+1}]$.
\begin{defn}\label{Hamiltonianpath}
A {\em Hamiltonian path} $\pmb{\gamma}$ is an $L$-tuple of functors: $\{\gamma_{l}:I_{l}\times \X \to \X |l=1,...,L\}$ such that:
\begin{enumerate}
\item $\gamma_{l}$ integrates a time dependent Hamiltonian vector field, for $l=1,...,L$;
\item For all $l\in\{1,...,L-1\}$, there exists a 2-morphism $$a_{l}:\gamma_{l}(t_{l})\Rightarrow \gamma_{l+1}(t_{l}),$$ where $\gamma_{l}(t_{l}):\{t_{l}\}\times \X\to \X$ is the restriction of $\gamma_{l}$ to the substack $\{t_{l}\}\times \X$.
\end{enumerate}
\end{defn}

\begin{rmk}\label{jumprmk}
The 2-morphisms $a_{l}$ measure the changes when moving from one piece to the next. In the manifold case, they are always trivial. But they may not match smoothly at the joining points. Thus the above definition gives piecewisely smooth path in the manifold case. If a Hamiltonian path is  given by a single functor $\gamma:I \times \X\to \X$, i.e. $L=1$, then we call it a smooth Hamiltonian path.
\end{rmk}

Next we define homotopy equivalence relation between Hamiltonian paths. 

It is natural to consider two paths as equivalent if one path is defined by splitting an interval of the other path. Namely, for $\pmb{\gamma}$ an $L$-tuple of functors $\{ \gamma_{l}:I_{l}\times \X \to \X \}$ and a refined partition $0=t_{0}<t_{1}<...<t_{i}<\hat{t}<t_{i+1}<...<t_{L+1}=1$, there is a Hamiltonian path $\pmb{\gamma}'$ with an $(L\!+\!1)$-tuple of functors:  $\{\gamma_{l}:I_{l}\times \X \to \X|l\neq i \}\cup \{\gamma_{i}|_{[t_{i},\hat{t}]\times \X},\ \gamma_{i}|_{[\hat{t},t_{i+1}]\times \X}\}$. We say that $\pmb{\gamma}'$ and $\pmb{\gamma}$ are {\em splitting related}.

Another natural equivalence is that $\pmb{\gamma}$ and $\pmb{\gamma}'$ are both $L$-tuple of functors $\{ \gamma_{l}:I_{l}\times \X \to \X \}$ and $\{ \gamma'_{l}:I_{l}\times \X \to \X \}$ and there exists a 2-morphism $\gamma_{l}\Rightarrow \gamma'_{l}$ for every $l$. Such two paths are called {\em 2-related}.

Two paths $\pmb{\gamma}$ and $\pmb{\gamma}'$ are said to be {\em naturally related} if there exists a finite collection of Hamiltonian paths $\pmb{\gamma}=\pmb{\gamma}_1,\pmb{\gamma}_2,...,\pmb{\gamma}_k=\pmb{\gamma}'$ such that for each $i=1,2,...,k-1$, $\pmb{\gamma}_i$ and $\pmb{\gamma}_{i+1}$ are either splitting related or 2-related.

A partition of $I\times I=[0,1]\times [0,1]$ will be called a {\em brick partition} if it is given by first dividing $[0,1]\times [0,1]$ into rectangles with horizontal lines, then dividing each rectangle into smaller rectangles (bricks) with vertical lines. Namely, $$[0,1]\times [0,1]=\cup_{k=0}^{K}\cup_{l=0}^{L_{k}} [s_{k},s_{k+1}]\times [t_{k,l},t_{k,l+1}],$$ where $0=s_{0}<s_{1}<...<s_{K+1}=1$, and $0=t_{k,0}<t_{k,1}<...<t_{k,L_{k}+1}=1$. The bricks on each layer have the same height but possibly different width. See Figure 1 for an example of brick partitions \footnote{The first coordinate corresponds to the vertical direction.}.

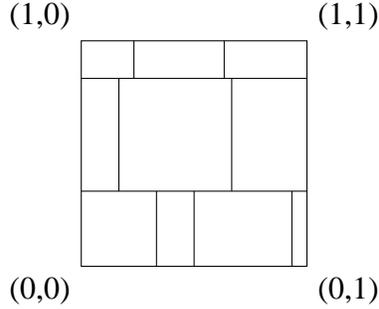
\begin{figure}[htb!]\label{fig_brickpart}
\centering
\begin{tikzpicture}
 \draw (0,0) node[below left] {(0,0)} --
        (3,0) node[below right] {(0,1)} --
        (3,3) node[above right] {(1,1)} --
        (0,3) node[above left]  {(1,0)} -- cycle;
 \draw (0,1) -- (3,1);
 \draw (0,2.5) -- (3,2.5);
 \draw (1,0) -- (1,1);
 \draw (1.5,0) -- (1.5,1);
 \draw (2.8,0) -- (2.8,1);
 \draw (0.5,1) -- (0.5,2.5);
 \draw (2,1) -- (2,2.5);
 \draw (0.7,2.5) -- (0.7,3);
 \draw (1.9,2.5) -- (1.9,3);
\end{tikzpicture} 
\caption{An example of brick partition.}
\end{figure}

Now we are ready to define homotopy equivalence between two Hamiltonian paths.
\begin{defn}\label{Hamhomotopy}
Two Hamiltonian paths $\pmb{\gamma}=\{\gamma_{l}:I_{l}\times \X \to \X \}$ and $\pmb{\gamma}'=\{\gamma'_{l'}:I'_{l'}\times \X \to \X \}$ are homotopy equivalent if there is a brick partition $I\times I=\cup_{k=0}^{K}\cup_{l=0}^{L_{k}} [s_{k},s_{k+1}]\times [t_{k,l},t_{k,l+1}]$ as above and a collection of functors $\pmb{\Gamma}:=\{ \Gamma_{k,l}: [s_{k},s_{k+1}]\times [t_{k,l},t_{k,l+1}]\times \X \to\X|0\le k\le K, \ 0\le l\le L_{k}\}$ such that
\begin{enumerate}
\item For all $k\in \{0,...,K\}$ and $\forall s\in [s_{k},s_{k+1}]$, $\{\Gamma_{k,l}|_{\{s\}\times  [t_{k,l},t_{k,l+1}] }|0\le l\le L_{k}\}$ define a Hamiltonian path;
\item For all $k\in\{1,...,K\}$, $\{\Gamma_{k-1,l}|_{\{s_{k}\}\times  [t_{k-1,l},t_{k-1,l+1}] }|0\le l\le L_{k-1}\}$ and $\{\Gamma_{k,l}|_{\{s_{k}\}\times  [t_{k,l},t_{k,l+1}] }|0\le l\le L_{k}\}$ are naturally related;
\item $\pmb{\Gamma}_{0}:=\{\Gamma_{0,l}|_{\{0\}\times  [t_{0,l},t_{0,l+1}] }|0\le l \le L_{0}\}$ and $\pmb{\gamma}$ are naturally related, \\
$\pmb{\Gamma}_{1}:=\{\Gamma_{K,l}|_{\{1\}\times  [t_{K,l},t_{K,l+1}] }|0\le l \le L_{0}\}$ and $\pmb{\gamma}'$ are naturally related.
\end{enumerate}
Moreover, if  $\Gamma_{k,0}|_{\{(s,0)\times \X\} }=\pmb{\gamma}|_{\{0\}\times \X}=\pmb{\gamma}'|_{\{0\}\times \X}$, and
$\Gamma_{k,L_{k}}|_{\{(s,0)\times \X\} }=\pmb{\gamma}|_{\{1\}\times \X}=\pmb{\gamma}'|_{\{1\}\times \X}$, for all $k$ and $s\in [s_{k},s_{k+1}]$,
 then the two Hamiltonian paths are called {\em homotopy equivalent relative to the ends}.
\end{defn}

It is easy to see the above definition of homotopy equivalence is indeed an equivalence relation.

\begin{lem}\label{removejump}
Let $\gamma:[a,b]\times \X \to \X$ be a stack map integrating a time dependent Hamiltonian vector field $X:[a,b]\times \X \to T\X$, then there exists a stack map $\Gamma:[0,1]\times [a,b]\times \X \to \X$ such that:
\begin{enumerate}
\item $\Gamma|_{\{0\}\times [a,b]\times \X}=\gamma$;
\item for $t\in [0,1]$, $\Gamma|_{ \{t\}\times [a,b]\times \X}$ is Hamiltonian, i.e. it integrates a time dependent Hamiltonian vector field;
\item for $t\in [0,1]$, $\Gamma|_{\{t\}\times \{b\}\times \X}=\gamma|_{\{b\}\times \X}$;
\item there is a small $\epsilon>0$, such that $\Gamma|_{\{1\}\times(b-\epsilon,b]\times \X}=\gamma|_{\{b\}\times \X}\circ \pi_{\X}$, where $\pi_{\X}:(b-\epsilon,b]\times \X\to \X$ is the obvious projection.
\end{enumerate}
\end{lem}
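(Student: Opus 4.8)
The plan is to \emph{reparametrize the time variable} $s\in[a,b]$ so that the path is slowed to a standstill just before $s=b$, while its value at $s=b$ is held fixed and the Hamiltonian property is preserved throughout the homotopy. Concretely, fix a small $\epsilon>0$ with $b-a>2\epsilon$ and a smooth nondecreasing function $\psi\colon[a,b]\to[a,b]$ with $\psi(s)=s$ for $s\le b-2\epsilon$ and $\psi(s)=b$ for $s\ge b-\epsilon$, and set
$$R\colon[0,1]\times[a,b]\to[a,b],\qquad R(t,s):=(1-t)\,s+t\,\psi(s).$$
Then $R$ is smooth with $R(0,s)=s$, $R(t,b)=b$ for all $t$, $R(1,s)=b$ for $s\in[b-\epsilon,b]$, and $\partial_sR(t,s)=(1-t)+t\,\psi'(s)\ge 0$; note $R$ takes values in $[a,b]$, being a convex combination of $s$ and $\psi(s)$.

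A smooth map of manifolds is in particular a map of stacks, so $R\times\mathrm{Id}_\X$ is a stack map and I define $\Gamma$ to be the composition
$$[0,1]\times[a,b]\times\X\xrightarrow{\;R\times\mathrm{Id}_\X\;}[a,b]\times\X\xrightarrow{\;\gamma\;}\X,$$
again a stack map. Conditions (1), (3) and (4) are then immediate from functoriality together with the three boundary properties of $R$: restricting to $t=0$ yields $\gamma$; restricting to $s=b$ yields $\gamma|_{\{b\}\times\X}$ for every $t$; and restricting to $t=1$ over $s\in(b-\epsilon,b]$ yields $\gamma|_{\{b\}\times\X}\circ\pi_\X$. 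The one substantial point is condition (2): that for each fixed $t$ the slice $\Gamma|_{\{t\}\times[a,b]\times\X}$ again integrates a time dependent \emph{Hamiltonian} vector field in the sense of Definitions \ref{Hamfield} and \ref{IntegralDefinition}.

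For (2) I would identify the generating field of the reparametrized path. Writing $X_\sigma$ for the time dependent vector field integrated by $\gamma$ and $H_\sigma$ for its Hamiltonian, the chain rule in the $s$-direction shows that $s\mapsto\gamma(R(t,s),-)$ integrates the rescaled field $Y^{(t)}_s:=\partial_sR(t,s)\cdot X_{R(t,s)}$, and this field is Hamiltonian with Hamiltonian $K^{(t)}_s:=\partial_sR(t,s)\cdot H_{R(t,s)}\in\Omega^0(\X\times[a,b])$: since $\partial_sR(t,s)$ is constant along $\X$ for fixed $(t,s)$,
$$dK^{(t)}_s=\partial_sR(t,s)\,dH_{R(t,s)}=\partial_sR(t,s)\,\omega\bigl(X_{R(t,s)},\cdot\bigr)=\omega\bigl(Y^{(t)}_s,\cdot\bigr).$$
To make this precise on a stack I would pass to an atlas groupoid $\gG_\X=(X_1\twoarrows X_0)$ and represent $\omega$, $X$ and $H$ by $X_1$-invariant data on $X_0\times[a,b]$ using Lemma \ref{dictionaryForms} and Proposition \ref{dictionVector}. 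Since $R$ acts only on the $[a,b]$-factor it commutes with the groupoid (which is trivial on $[a,b]$), so $Y^{(t)}$ and $K^{(t)}$ remain invariant and descend to a time dependent Hamiltonian vector field on $\X$ and its Hamiltonian; the flow equation for $\Gamma|_{\{t\}}$ then holds on the atlas by the manifold chain rule and descends.

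The main obstacle will be the $2$-categorical bookkeeping rather than the geometry. Definition \ref{IntegralDefinition} encodes ``$\Gamma|_{\{t\}}$ integrates $Y^{(t)}$'' not as an equality of velocities but as the datum of a $2$-morphism $t_{\Gamma|_{\{t\}}}\colon Y^{(t)}\circ\Gamma|_{\{t\}}\Rightarrow T\Gamma|_{\{t\}}\circ\tfrac{\partial}{\partial s}$ satisfying the coherence condition of diagram (\ref{CategorifiedFlowDiagram}). I would build this $2$-morphism by pasting the given integral datum $t_\gamma$ of $\gamma$ with the lax-structure $2$-cell of the tangent functor $T$ relating $T(\gamma\circ(R\times\mathrm{Id}_\X))$ and $T\gamma\circ T(R\times\mathrm{Id}_\X)$, and with the elementary chain-rule identity $T(R\times\mathrm{Id}_\X)\circ\tfrac{\partial}{\partial s}=\partial_sR\cdot\bigl(\tfrac{\partial}{\partial\sigma}\bigr)$ in the $[a,b]$-direction. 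Verifying that the pasted $2$-cell obeys the required coherence is where the care lies; but because the reparametrization is constant along the $\X$-directions, each such verification reduces, fiberwise over the atlas, to the classical statement for manifolds.
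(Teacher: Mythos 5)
Your proposal is correct and follows essentially the same route as the paper: the paper likewise defines a smooth family of time-reparametrizations $\rho_s\colon[a,b]\to[a,b]$ (identity at $s=0$, fixing $b$, and with vanishing derivative near $b$ at $s=1$) and sets $\Gamma=\gamma\circ pr\circ(\rho\times Id_\X)$, which is exactly your $\gamma\circ(R\times\mathrm{Id}_\X)$ with $R(t,s)=(1-t)s+t\psi(s)$ as one explicit choice of such a family. The only difference is that the paper leaves the verification of the four properties as "straightforward to check," whereas you additionally supply the rescaled Hamiltonian $K^{(t)}_s=\partial_sR(t,s)\cdot H_{R(t,s)}$ and the atlas/2-morphism bookkeeping for condition (2), which is a welcome elaboration rather than a divergence.
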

\begin{proof}
Let $\rho:[0,1]\times [a,b]\to [0,1]\times [a,b]$ be a smooth map such that:
\begin{enumerate}
\item $\rho_{s}:=\rho|_{\{s\}\times [a,b]}:\{s\}\times [a,b] \to \{s\}\times [a,b]$;
\item $\rho_{0}(t)=t$, for all $t\in [a,b]$;
\item $\rho_{s}(b)=b$, for all $s\in [0,1]$;
\item $\rho'_{1}(t)=0$, for all $t\in (b-\epsilon,b]$.
\end{enumerate}
Denote by $pr$ the projection from $[0,1]\times [a,b]\times \X$ to $[a,b]\times \X$. Let $\tilde{\Gamma}=\gamma\circ pr$, then it is straightforward to check $\Gamma:=\tilde{\Gamma}\circ(\rho\times Id_{\X})$ satisfies the required properties.
\begin{figure}[htb!]\label{fig_repara}
\centering
\begin{tikzpicture}
 \draw (0,0) node[below left] {(a,a)} --
        (2,0) node[below right] {(a,b)} --
        (2,2) node[above right] {(b,b)} --
        (0,2) node[above left]  {(b,a)} -- cycle;
 \draw (0,0) -- (2,2);
 \draw[smooth,domain=0:1] plot ({\x*\x*\x-2*\x*\x+2*\x},\x);
 \node at (0.8,1.1) {s=0};
\node at (1.1,0.6) {s=1};
\end{tikzpicture} 
\caption{Graph of the function $\rho_{s}$.}
\end{figure}
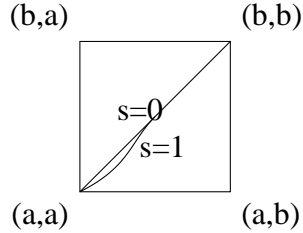
\end{proof}
The above lemma shows that every smooth Hamiltonian path is homotopy equivalent to a Hamiltonian path with stationary end, namely does not depend on time near $b$. Similarly we have the same result at the other end point $a$.

\begin{defn}\label{Hamiltonianloop}
A Hamiltonian path $\pmb{\gamma}:=\{\gamma_{l}:I_{l}\times \X \to \X|l=1,...,L \}$ is called a {\em Hamiltonian loop} if there exists a 2-morphism 
$$a_{0}:\gamma_{L}|_{\{1\}\times \X}\Rightarrow \gamma_{1}|_{\{0\}\times \X}.$$

A Hamiltonian loop is called {\em based at the identity} if $\gamma_{1}|_{\{0\}\times \X}= Id_{\X}$ (and thus there is 2-morphism $\gamma_{1}|_{\{1\}\times \X}\Rightarrow Id_{\X}$).
 
For a symplectic orbifold $(\X,\omega)$, we define $\pi_{1}(Ham(\X,\omega))$ to be the set of all homotopy equivalence classes of Hamiltonian loops of $(\X,\omega)$. 
\end{defn}

As in Remark \ref{jumprmk}, Hamiltonian loops defined as above correspond to piecewise smooth Hamiltonian loops in the manifold case. We introduce the following definition of smooth Hamiltonian loop which generalizes smooth Hamiltonian loop in the manifold case. We denote by $exp:I\to S^{1}$ the exponential map $exp(t):=e^{2\pi i t}$.
\begin{rmk}
In what follows we will need to ``combine'' maps. Here is the convention we use for that. For two maps $f: A\to B$ and $g:A\to C$ with the same domain, we write $f\times g: A\to B\times C$ for the map such that for $a\in A$, $(f\times g)(a)=(f(a),g(a))\in B\times C$. For any two maps $p:A\to B$ and $q:C\to D$ we write $(f,g): A\times C\to B\times D$ for the map such that for $(a,c)\in A\times C$, $(f,g)(a,c)=(f(a),g(c))$. 
\end{rmk}

\begin{defn}\label{smthHamiltonianloop} 
A Hamiltonian loop $\pmb{\gamma}:=\{\gamma:I\times \X \to \X\}$ is called a {\em smooth Hamiltonian loop} if there exists a 2-morphism $\gamma\Rightarrow\tilde{\gamma}\circ (exp, Id_{\X})$ for some stack map $\tilde{\gamma}:S^{1}\times \X\to \X$. 

Similarly a smooth Hamiltonian loop is called based at the identity if there exists a 2-morphism $\gamma|_{\{e^{0i}\}\times \X}\Rightarrow Id_{\X}$.
\end{defn}
From now on, we call (smooth) Hamiltonian loop based at the identity {\em (smooth) based Hamiltonian loop}, and homotopy relative to the based point  \emph{relative homotopy} for short.

\begin{lem}\label{smthyloop}
Every based Hamiltonian loop is relative homotopy equivalent to a smooth based Hamiltonian loop. 
\end{lem}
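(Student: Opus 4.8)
The plan is to identify the two separate ways in which a based Hamiltonian loop $\pmb{\gamma}=\{\gamma_l:I_l\times\X\to\X\mid l=1,\dots,L\}$ can fail to be smooth, and to remove them one at a time by relative homotopies. The first failure is that the pieces $\gamma_l$ need not fit together smoothly across the partition points $t_l$; the second is that the matching data at the junctions, and at the loop's seam $t=0\sim t=1$, are only given up to the $2$-morphisms $a_l$ and $a_0$ of Definitions \ref{Hamiltonianpath} and \ref{Hamiltonianloop}, rather than on the nose. In the manifold case all of these $2$-morphisms are trivial (Remark \ref{jumprmk}), so only the first failure is present; the genuinely new content here is to absorb the $2$-morphisms, and the main tool throughout is the flattening Lemma \ref{removejump}.

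First I would flatten each piece. Applying Lemma \ref{removejump} at the right endpoint of each $\gamma_l$, and its mirror at the left endpoint, produces through a relative homotopy a based Hamiltonian loop, still with $L$ pieces, in which every $\gamma_l$ is \emph{stationary} (constant in $t$, hence integrating the zero, in particular Hamiltonian, vector field) on a small neighborhood of each of $t_{l-1}$ and $t_l$. Since Lemma \ref{removejump} preserves the endpoint restrictions, the basepoint condition $\gamma_1|_{\{0\}}=Id_\X$ is untouched. Next I would strictify the interior junctions: near $t_l$ the piece $\gamma_l$ is the constant family $\phi_l:=\gamma_l|_{\{t_l\}}$ and $\gamma_{l+1}$ is the constant family $\gamma_{l+1}|_{\{t_l\}}$, related by $a_l$. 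Using $a_l$ as a $2$-related move (Definition \ref{Hamhomotopy}), I would replace $\gamma_{l+1}$ by a $2$-isomorphic piece whose constant value near $t_l$ is exactly $\phi_l$. Doing this inductively in $l$, consecutive pieces agree as literally the same constant family on overlapping neighborhoods of each $t_l$, so by the gluing property of stacks (Definition \ref{stackman}) they patch to a single smooth functor $\gamma:I\times\X\to\X$, which integrates the time-dependent Hamiltonian vector field obtained by splicing the $X_l$ with zeros near the $t_l$. This exhibits $\pmb{\gamma}$ as relatively homotopy equivalent to a \emph{smooth} Hamiltonian path in the sense of Remark \ref{jumprmk}.

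It remains to close this single path up into a loop that factors through $\exp$. After the first step the path $\gamma$ is stationary near both $t=0$ and $t=1$, with $\gamma|_{\{0\}}=Id_\X$ and a closing $2$-morphism $a_0:\gamma|_{\{1\}}\Rightarrow Id_\X$. Repeating the strictification argument with $a_0$ in place of $a_l$, I would replace $\gamma$ near $t=1$ by a $2$-isomorphic map, so that $\gamma|_{\{1\}}=Id_\X=\gamma|_{\{0\}}$ and $\gamma$ is the constant family $Id_\X$ on neighborhoods of both $0$ and $1$. All $t$-derivatives then vanish at the seam, so the identification $\exp(0)=\exp(1)$ lets $\gamma$ descend, again by the stack gluing property, to a stack map $\tilde{\gamma}:S^1\times\X\to\X$ together with a $2$-morphism $\gamma\Rightarrow\tilde{\gamma}\circ(\exp,Id_\X)$, and the basepoint $2$-morphism $\gamma|_{\{e^{0i}\}}\Rightarrow Id_\X$ is the identity. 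By Definition \ref{smthHamiltonianloop} this is a smooth based Hamiltonian loop, relatively homotopy equivalent to $\pmb{\gamma}$, as required.

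I expect the genuine difficulty to be concentrated in the strictification and gluing of stack maps used in the last two steps: transporting the pointwise $2$-morphisms $a_l$ and $a_0$ into honest modifications of the functors over whole intervals, while staying inside the class of integral morphisms of Hamiltonian vector fields, and then verifying that the patched maps are well-defined stack maps. This is exactly the kind of subtlety flagged in the introduction (the issue of \cite[Lemma 3.41]{Le} and the gluing of stack maps), and it is the step with no analogue in the manifold setting. The flattening of the first step is what makes it tractable, since it reduces every gluing to a gluing of \emph{constant} families, across which the $2$-morphisms can be carried rigidly.
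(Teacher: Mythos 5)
Your proposal is correct and takes essentially the same route as the paper: flatten each piece near the junction points via Lemma \ref{removejump} (a relative homotopy), then use the $2$-morphisms $a_l$ and $a_0$ to glue the now-constant neighboring pieces into a single stack map and to close the seam into $\tilde{\gamma}:S^1\times\X\to\X$. Your intermediate ``strictification'' step is just a rephrasing of the gluing-up-to-$2$-morphism that the paper invokes directly, so the two arguments coincide in substance.
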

\begin{proof}
At a jumping point $t_{k}$, apply Lemma \ref{removejump}, we get a based Hamiltonian path $\pmb{\gamma}'=\{\gamma_{l}:I_{l}\times \X\to \X\}$ which
\begin{enumerate}
\item is relative homotopy equivalent to the original Hamiltonian loop;
\item  $\gamma_{k-1}|_{(t_{k}-\epsilon,t_{k}]\times \X}=\gamma_{k-1}|_{\{t_{k}\}\times \X}\circ \pi^{-}_{\X}$, where $\pi^{-}_{\X}$ is the projection $(t_{k}-\epsilon,t_{k}]\times \X\to
\X$;
\item  $\gamma_{k}|_{(t_{k},t_{k}+\epsilon]\times \X}=\gamma_{k}|_{\{t_{k}\}\times \X}\circ \pi^{+}_{\X}$, where $\pi^{+}_{\X}$ denote the projection $(t_{k},t_{k}+\epsilon]\times
\X\to \X$.
\end{enumerate}
Note that there exists a 2-morphism $\gamma_{k-1}|_{\{t_{k}\}\times \X}\Rightarrow \gamma_{k}|_{\{t_{k}\}\times \X}$, thus the two stack maps $\gamma_{k-1}$ and $\gamma_{k}$ can be glued into one stack map from $[t_{k-1},t_{k+1}]\times \X$ to $\X$. 

Apply the above procedure at $t_{k}$ for $k=1,...,L-1$, then we get a smooth Hamiltonian path $\gamma$ relative homotopy equivalent to the original one. Moreover, if the original path is a Hamiltonian loop, then there exists a 2-morphism $\gamma_{L}|_{\{1\}\times \X}\Rightarrow \gamma_{1}|_{\{0\}\times \X}$, then we can apply the above glueing at the two end points to define $\tilde{\gamma}:S^{1}\times \X\to \X$. Thus the original based Hamiltonian loop is relative homotopy equivalent to a smooth based Hamiltonian loop.
\end{proof}
Moreover there is a notion of smooth relative homotopy between smooth based Hamiltonian loops, which will be useful in the next section.
\begin{defn}\label{smthhomotopy}
A  relative homotopy $\pmb{\Gamma}=\{\Gamma: I\times I\times \X\to \X\}$ between two smooth based Hamiltonian loops $\pmb{\gamma_{0}}=\{\gamma_{0}\}$ and $\pmb{\gamma_{1}}=\{\gamma_{1}\}$ is called smooth if  it factors through $(Id_{I}, exp, Id_{\X}):I\times I \times \X\to I\times S^{1}\times \X$, namely $$\Gamma=  \tilde{\Gamma}\circ (Id_{I}, exp, Id_{\X})$$ for some stack map $\tilde{\Gamma}:I\times S^{1}\times \X\to \X$.
\end{defn}

\begin{lem}\label{smthhomotopylem}
If two smooth based Hamiltonian loops are relative homotopy equivalent, then there are smooth relative homotopy between them.
\end{lem}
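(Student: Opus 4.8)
The plan is to follow the strategy of the proof of Lemma~\ref{smthyloop}, now carried out for a two-parameter family. Starting from a relative homotopy $\pmb{\Gamma}=\{\Gamma_{k,l}\}$ in the sense of Definition~\ref{Hamhomotopy}, I would remove all of its jumps --- both the jumps in the loop direction $t$ within each layer and the jumps in the homotopy direction $s$ between consecutive layers --- by reparametrizing near the brick boundaries, and then glue the reparametrized brick functors into a single smooth functor $\Gamma\colon I\times I\times\X\to\X$ factoring through $(Id_I,exp,Id_\X)$, as required by Definition~\ref{smthhomotopy}.

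First I would put the homotopy into a standard form. Since ``naturally related'' means connected by splitting and $2$-related moves, and splitting only refines the $t$-partition, I can pass to a common refinement $0=t_0<t_1<\dots<t_{N+1}=1$ of the $t$-partitions of all layers. After this refinement every layer uses the same partition, and the layer-to-layer relations in condition~(2) of Definition~\ref{Hamhomotopy}, as well as the boundary relations in condition~(3), become genuine $2$-relations. They thus supply explicit $2$-morphisms $b_{k,l}\colon\Gamma_{k-1,l}|_{\{s_k\}}\Rightarrow\Gamma_{k,l}|_{\{s_k\}}$ across layers, explicit $2$-morphisms to $\pmb{\gamma}_0$ and $\pmb{\gamma}_1$ at $s=0,1$, and, within each layer, the jump $2$-morphisms $a_{k,l}\colon\Gamma_{k,l-1}|_{\{t_l\}}\Rightarrow\Gamma_{k,l}|_{\{t_l\}}$ coming from condition~(1).

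Next I would eliminate these jumps. Applying Lemma~\ref{removejump} and its symmetric version at the other endpoint, in the $t$-variable, reparametrizes each $\Gamma_{k,l}$ so that it is stationary (independent of $t$) on small neighborhoods of $t_l$ and $t_{l+1}$; since the reparametrization $\rho_s$ merely rescales time, it preserves the time-dependent Hamiltonian property, so condition~(1) survives. A two-parameter version of the same argument in the $s$-variable --- where no Hamiltonian condition is imposed, so the reparametrization is unconstrained --- makes each $\Gamma_{k,l}$ stationary near $s_k$ and $s_{k+1}$ as well. Using the $2$-morphisms $a_{k,l}$ and $b_{k,l}$ to identify the now-constant values across each brick boundary, the stack gluing property (Definition~\ref{stackman}, i.e.\ descent) glues the reparametrized brick functors into one smooth stack map $\Gamma\colon I\times I\times\X\to\X$. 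Finally, the fixed-endpoint conditions force $\Gamma$ to equal the basepoint map near $t=0$ and $t=1$, so together with the based-loop $2$-morphism it descends through $exp$ to a stack map $\tilde{\Gamma}\colon I\times S^1\times\X\to\X$ with $\Gamma=\tilde{\Gamma}\circ(Id_I,exp,Id_\X)$; its restrictions at $s=0,1$ recover $\pmb{\gamma}_0$ and $\pmb{\gamma}_1$ up to $2$-morphism, giving the desired smooth relative homotopy.

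I expect the main obstacle to be the corner compatibility at the points $(s_k,t_l)$, where a brick abuts its neighbors in both the $s$- and $t$-directions at once: there the $t$-direction gluing via the $a_{k,l}$ and the $s$-direction gluing via the $b_{k,l}$ must be made to commute, which is a cocycle condition of the type appearing in Definition~\ref{Descent}. Arranging the two families of reparametrizations so that each brick functor is simultaneously stationary in both normal directions near every corner, and checking that the four $2$-morphisms meeting at a corner compose to the identity, is the technical heart of the argument; once this coherence is secured, the descent property produces a genuine (rather than merely local) functor on the whole square.
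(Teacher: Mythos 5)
Your proposal is correct and follows essentially the same route as the paper: the paper's proof likewise invokes the method of Lemma~\ref{removejump} to reparametrize each brick functor $\Gamma_{k,l}$ via cutoff functions $\xi_k(s)$ and $\eta_{k,l}(t)$ that are constant near the brick boundaries in both the $s$- and $t$-directions, and then glues the resulting functors $\Gamma_{k,l}\circ(\xi_k,\eta_{k,l},Id_\X)$ along their boundaries into a single smooth relative homotopy. Your preliminary common refinement of the $t$-partitions and your explicit check of the corner cocycle coherence are details the paper leaves implicit in the phrase ``glue along proper boundaries,'' not a different argument.
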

\begin{proof}
For two smooth Hamiltonian loops $\pmb{\gamma}=\{\gamma I\times\X \to \X \}$ and $\pmb{\gamma}'=\{\gamma':I\times \X \to \X \}$, let $\pmb{\Gamma}=\{ \Gamma_{k,l}: [s_{k},s_{k+1}]\times [t_{k,l},t_{k,l+1}]\times \X \to\X|0\le k\le K, \ 0\le l\le L_{k}\}$ be a relative homotopy between them. We will apply the method in the proof of Lemma \ref{removejump} to reparametrize $\Gamma_{k,l}$, and then similar to the proof of Lemma \ref{smthyloop} we can get them into one stack map. 

Let $\xi_{k}:[s_{k},s_{k+1}]\to[s_{k},s_{k+1}]$ be a smooth function such that for a small positive number $\delta$,
\begin{enumerate}
\item $\xi_{k}(s)=s_{k}$ for $s\in [s_{k},s_{k}+\delta)$;
\item $\xi_{k}(s)=s_{k+1}$ for $s\in [s_{k+1}-\delta,s_{k+1})$;
\item $\xi_{k}(s)=s$ for $s\in [s_{k}+2\delta,s_{k+1}-2\delta)$.
\end{enumerate}
Similarly define $\eta_{k,l}:[t_{k,l},t_{k,l+1}]\to[t_{k},t_{k,l+1}]$ be a smooth function such that for a small positive number $\delta$,
\begin{enumerate}
\item $\eta_{k,l}(t)=t_{k,l}$ for $t\in [t_{k,l},t_{k,l}+\delta)$;
\item $\eta_{k,l}(t)=t_{k,l+1}$ for $t\in [t_{k,l+1}-\delta,t_{k,l+1})$;
\item $\eta_{k,l}(t)=t$ for $t\in [t_{k,l}+2\delta,t_{k,l+1}-2\delta)$.
\end{enumerate}
Then we can glue all $\Gamma_{k,l}\circ(\xi_{k}, \eta_{k,l}, Id_{\X})$ along proper boundaries to get a smooth relative homotopy between the original loops.
\end{proof}

\begin{example}[Hamiltonian loop of $\lbrack\com/\Ztwo\rbrack$\footnote{We use the brackets [ ] to indicate stacky quotient.}]  Consider the $\Ztwo$-action on $\com$ by $\pm1\cdot z:=\pm z$.
Let $[\com/\Ztwo]$ be the stack represented by the translation groupoid of this action $\Ztwo \ltimes \com:=\Ztwo\times \com\twoarrows \com$. By the dictionary lemma, the groupoid morphism $I\times \Ztwo \ltimes \com\to \Ztwo \ltimes \com$ defined by 
$$
\gamma_{0}(t;z):=e^{\pi it}z,\ \ \ \gamma_{1}(t;g,z):=(g,e^{\pi it}z)
$$
determines a stack map $\gamma: I\times[\com/\Ztwo]\to[\com/\Ztwo]$. It is easy to see that $\gamma$ defines a based Hamiltonian loop. Note that on $Ob(\Ztwo \ltimes \com)=\com$, $\gamma_{0}$ starts with the identity map, and ends with a $180^{\circ}$ rotation which induces an automorphism on $[\com/\Ztwo]$ differing from the identity by a 2-morphism, thus it is a path based at the identity.

 We now show that it is also a smooth based Hamiltonian loop by giving a concrete construction of $\tilde{\gamma}$ such that $\gamma=\tilde{\gamma}\circ (exp, Id_{\X})$. Consider  
$$U_{L}:= \{e^{2\pi it}|t\in (0,1)\}=
\begin{aligned}
\begin{tikzpicture}
 \def\R{1} 
 \draw[thick] (0.5*\R,0) arc (7:355:0.5*\R);
\filldraw[fill=white, draw=black, fill opacity=0] (0.5*\R, 0) circle (0.06);
\end{tikzpicture} 
\end{aligned}
, \ \ \ \ \ \ U_{R}:=\{e^{2\pi it}|t\in (-\frac{1}{2},\frac{1}{2})\}=
\begin{aligned}\begin{tikzpicture}
 \def\R{1} 
  \draw[thick] (-0.5*\R,0) arc (-175:175:0.5*\R);
\filldraw[fill=white, draw=black, fill opacity=0] (-0.5*\R, 0) circle (0.06);
\end{tikzpicture}\end{aligned}.$$

Now $U_{S^{1}}$ is defined by:
 \begin{eqnarray*}
 Ob(U_{S^{1}}) & = & U_{L}\sqcup U_{R},\\
 Mor(U_{S^{1}}) & = & U_{L}\times_{S^{1}} U_{L}\ \sqcup\ U_{R}\times_{S^{1}} U_{R}\ \sqcup\ U_{L}\times_{S^{1}} U_{R}\ \sqcup\ U_{R}\times_{S^{1}} U_{L}.
\end{eqnarray*}
Note that:
$$
\begin{aligned} U_{L}\times_{S^{1}} U_{L} \end{aligned} =  \begin{aligned}\begin{tikzpicture}
 \def\R{1} 
 \draw[thick] (0.5*\R,0) arc (7:355:0.5*\R);
\filldraw[fill=white, draw=black, fill opacity=0] (0.5*\R, 0) circle (0.06);
\end{tikzpicture}\end{aligned}, \ \  \
 U_{R}\times_{S^{1}} U_{R}  = \begin{aligned} \begin{tikzpicture}
 \def\R{1} 
   \draw[thick] (-0.5*\R,0) arc (-175:175:0.5*\R);
\filldraw[fill=white, draw=black, fill opacity=0] (-0.5*\R, 0) circle (0.06);
\end{tikzpicture}\end{aligned},
$$

$$
U_{L}\times_{S^{1}} U_{R}  =   \begin{aligned}\begin{tikzpicture}
 \def\R{1} 
 \draw[thick] (0.5*\R,0) arc (0:180:0.5*\R);
\end{tikzpicture} \end{aligned} \  \ \ \sqcup\ \ \ \  \begin{aligned}\begin{tikzpicture}
 \def\R{1} 
 \draw[thick] (-0.5*\R,0) arc (180:360:0.5*\R);
\end{tikzpicture}\end{aligned} , 
$$
$$
U_{R}\times_{S^{1}} U_{L}  = \begin{aligned} \begin{tikzpicture}
 \def\R{1} 
 \draw[thick] (0.5*\R,0) arc (0:180:0.5*\R);
\end{tikzpicture} \end{aligned} \ \ \ \sqcup \ \ \  \begin{aligned} \begin{tikzpicture}
 \def\R{1} 
 \draw[thick] (-0.5*\R,0) arc (180:360:0.5*\R);
\end{tikzpicture}\end{aligned} .
$$

We denote $(e^{2\pi i t})_{*}\in U_{*}$ and $g=(e^{2\pi i t})_{*}\to (e^{2\pi i t'})_{\bullet}\in U_{*}\times_{S^{1}} U_{\bullet} $,  for $*, \bullet=R, L$, where
\begin{itemize}
\item  $t'=t-1$, if $g\in \begin{aligned} \begin{tikzpicture}
 \def\R{1} 
 \draw[thick] (-0.5*\R,0) arc (180:360:0.5*\R);
\end{tikzpicture}\end{aligned} \subset U_{L}\times_{S^{1}} U_{R} $;
\item $t'=t+1$, if $g\in \begin{aligned} \begin{tikzpicture}
 \def\R{1} 
 \draw[thick] (-0.5*\R,0) arc (180:360:0.5*\R);
\end{tikzpicture}\end{aligned} \subset U_{R}\times_{S^{1}} U_{L} $;
\item $t'=t$, otherwise.
\end{itemize}

Then the groupoid morphism $\tilde{\gamma}$ is given by:
$$\tilde{\gamma}_{0}((e^{2\pi i t})_{*},z):=e^{\pi i t}z , \ \ \ \ \ \ \ \ \ \   \text{for } \ (e^{ 2\pi i t})_{*}\in U_{*},\ *=R,L;$$
$$\tilde{\gamma}_{1}((e^{2\pi i t})_{*}\to (e^{ 2\pi i t'})_{\bullet}, \xymatrix{z\ar[r]^{h} & h\cdot z}):=\xymatrix{e^{\pi i t}z \ar[r]^{h'} & h\cdot e^{\pi i t}z},\ \ \ \ \ *,\bullet =R,L.$$

There is an obvious groupoid morphism $Exp$ from $I\twoarrows I$ to $U_{S^{1}}$ representing the exponential map $exp$. Let $Id_{\inte_{2}\ltimes\com}$ be the identity groupoid morphism from $\inte_{2}\ltimes\com$ to itself. Then  $\tilde{\gamma}$ composed with $(Exp, Id_{\inte_{2}\ltimes\com})$ defines the stack map $\gamma$. Thus $\gamma$ is a Hamiltonian loop based at the identity. 
\end{example}

\begin{rmk}
In general, all Hamiltonian circle actions in \cite{LT} and \cite{ML} can be viewed as Hamiltonian loops.
\end{rmk}

Given two based Hamiltonian loops:
$$\pmb{\gamma}:=\{\gamma_{l}:I_{l}\times \X \to \X|l=1,...,L \}, \ \ \ \ \pmb{\gamma}':=\{\gamma'_{l}:I_{l}\times \X \to \X|l=1,...,L \},$$ there are two ways to define a product operation. One way is by connecting two loops:
$$ 
\pmb{\gamma}\cdot_{cn}\pmb{\gamma}':=\{\gamma'_{l}\circ db_{l}^{L}:I^{L}_{l}\times \X \to \X,\gamma'_{l'}\circ db^{R}_{l}:I'^{R}_{l'}\times \X \to \X|l=1,...,L,l'=1,...,L' \}
$$
where $db^{L}_{l}(t):=\frac{t}{2}$, $I^{L}_{l}$ is the pre-image of $I_{l}$, and $db^{R}_{l'}(t):=\frac{t+1}{2}$, $I'^{R}_{l'}$ is the pre-image of  $I'_{l'}$.

The other way is by composition. Without loss of generality, we may assume that the loop is defined over the same partition of $I$ since there is a common refined partition and split the two loops according to the refined partition. Thus 
$$\pmb{\gamma}=\{\gamma_{l}:I_{l}\times \X \to \X|l=1,...,L \},\ \ \ \ \ \pmb{\gamma}'=\{\gamma'_{l}:I_{l}\times \X \to \X|l=1,...,L \}.$$
 Then define
$$\pmb{\gamma}\cdot_{cp}\pmb{\gamma}':=\{\gamma_{l}\circ(pr_{I_{l}}\times\gamma'_{l}):I_{l}\times \X \to \X|l=1,...,L \},$$where $pr_{I_{l}}:I_l\times \X\to I_l$ is the natural projection to $I_{l}$.

It is straightforward to check from the definitions that if $\pmb{\gamma}_{1},\pmb{\gamma}_{2}$ and $\pmb{\gamma}'_{1},\pmb{\gamma}'_{2}$ are homotopy equivalent, then so do their products. Thus the two products descend to products in $\pi_{1}(Ham(\X,\omega))$.

Note that ``$\cdot_{cp}$'' and ``$\cdot_{cn}$''  are nothing but two ways to define products between loops inside a Lie group. In that case the two definitions coincide when passing to homotopy. The next lemma shows the same is true for Hamiltonian loops considered in this paper. 
\begin{prop}
 The products $\pmb{\gamma}\cdot_{cp}\pmb{\gamma}'$ and $\pmb{\gamma}\cdot_{cn}\pmb{\gamma}'$ induce the same product on $\pi_{1}(Ham(\X,\omega))$.
\end{prop}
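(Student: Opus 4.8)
The plan is to run the classical Eckmann--Hilton argument, adapted to the stacky Hamiltonian setting. Since both $\cdot_{cp}$ and $\cdot_{cn}$ are already known to descend to well-defined operations on $\pi_{1}(Ham(\X,\omega))$ that respect relative homotopy, and since by Lemma \ref{smthyloop} every class has a smooth based representative, it suffices to compare $\pmb{\gamma}\cdot_{cp}\pmb{\gamma}'$ and $\pmb{\gamma}\cdot_{cn}\pmb{\gamma}'$ for smooth based loops $\pmb{\gamma}=\{\gamma\}$ and $\pmb{\gamma}'=\{\gamma'\}$. Write $\pmb{e}$ for the identity loop. The argument rests on two facts: $\pmb{e}$ is a two-sided unit for each product, and a single ``bridge'' loop is simultaneously equal to one product and relatively homotopic to the other.

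First I would record the unit properties. Directly from the definition of $\cdot_{cp}$ one has the \emph{strict} equalities $\pmb{\gamma}\cdot_{cp}\pmb{e}=\pmb{\gamma}$ and $\pmb{e}\cdot_{cp}\pmb{\gamma}'=\pmb{\gamma}'$, because composing a fiber map with the identity stack map (in either order) does nothing: $\gamma\circ(pr\times e)=\gamma$ and $e\circ(pr\times\gamma')=\gamma'$. For $\cdot_{cn}$, I would show $\pmb{\gamma}\cdot_{cn}\pmb{e}\simeq\pmb{\gamma}$ and $\pmb{e}\cdot_{cn}\pmb{\gamma}'\simeq\pmb{\gamma}'$ by a reparametrization homotopy that slides the concatenation break-point, built exactly as in the proof of Lemma \ref{removejump}. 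The point to verify is that each slice of this homotopy is admissible in the sense of Definition \ref{Hamhomotopy}, i.e. remains Hamiltonian; this holds because a smooth time-reparametrization $t\mapsto\rho(t)$ carries a time-dependent Hamiltonian vector field $X_{t}$ (with generating function $H_{t}$) to the time-dependent Hamiltonian vector field $\rho'(t)\,X_{\rho(t)}$, whose generating function is $\rho'(t)\,H_{\rho(t)}$.

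Next comes the bridge. Computing directly from the definitions over the partition $\{[0,\tfrac12],[\tfrac12,1]\}$, the first factor of $(\pmb{\gamma}\cdot_{cn}\pmb{e})\cdot_{cp}(\pmb{e}\cdot_{cn}\pmb{\gamma}')$ is the sped-up $\gamma$ on the first half and the identity on the second, while the second factor is the identity on the first half and the sped-up $\gamma'$ on the second. Since $\cdot_{cp}$ composes the two factors fiberwise and composition with the identity does nothing, the result is the sped-up $\gamma$ on $[0,\tfrac12]$ and the sped-up $\gamma'$ on $[\tfrac12,1]$; up to the canonical $2$-morphisms at the junction this is precisely $\pmb{\gamma}\cdot_{cn}\pmb{\gamma}'$. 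Hence, as based Hamiltonian loops,
\[
(\pmb{\gamma}\cdot_{cn}\pmb{e})\cdot_{cp}(\pmb{e}\cdot_{cn}\pmb{\gamma}')=\pmb{\gamma}\cdot_{cn}\pmb{\gamma}',
\]
which is the only instance of the interchange law that I need. On the other hand, because $\cdot_{cp}$ respects relative homotopy in both arguments and, by the unit step, $\pmb{\gamma}\cdot_{cn}\pmb{e}\simeq\pmb{\gamma}$ and $\pmb{e}\cdot_{cn}\pmb{\gamma}'\simeq\pmb{\gamma}'$, the same bridge loop satisfies
\[
(\pmb{\gamma}\cdot_{cn}\pmb{e})\cdot_{cp}(\pmb{e}\cdot_{cn}\pmb{\gamma}')\simeq\pmb{\gamma}\cdot_{cp}\pmb{\gamma}'.
\]
Combining the two displays yields $\pmb{\gamma}\cdot_{cn}\pmb{\gamma}'\simeq\pmb{\gamma}\cdot_{cp}\pmb{\gamma}'$, i.e. the two products agree in $\pi_{1}(Ham(\X,\omega))$.

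I expect the main obstacle to lie entirely in the unit step: verifying that the reparametrization homotopies are genuine relative homotopies through Hamiltonian paths, with the full $2$-morphism bookkeeping of Definition \ref{Hamhomotopy} (the brick partition, the ``naturally related'' conditions on consecutive slices, and the $2$-cells at the break-points) carried out correctly, and that the break-point slide stays based at the identity throughout. The bridge computation and the final combination are essentially formal once the homotopy-invariance of $\cdot_{cp}$ and these unit homotopies are in hand.
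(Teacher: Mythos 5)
Your proposal is correct, but it takes a genuinely different route from the paper's. The paper, after the same reduction to smooth based loops via Lemma \ref{smthyloop}, writes down a single explicit two-parameter homotopy $\pmb{\Gamma}=\{\Gamma_1,\Gamma_2\}$ over the fixed brick partition $[0,1]\times\bigl([0,\tfrac12]\cup[\tfrac12,1]\bigr)$, using the linear reparametrizations $h_1(s,t)=(1+s)t$, $h_2(s,t)=(1-s)t$, $h_3(s,t)=(1-s)t+s$, $h_4(s,t)=(1+s)t-s$: each slice is the fiberwise composition of the two loops run at $s$-dependent speeds, so that at $s=0$ one sees $\pmb{\gamma}\cdot_{cp}\pmb{\gamma}'$ and at $s=1$ (using the basepoint $2$-morphisms $\gamma|_{\{0\}}=Id_{\X}$, $\gamma'|_{\{1\}}\Rightarrow Id_{\X}$) one of the two factors freezes on each half-interval and the slice degenerates to the concatenation. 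You instead run Eckmann--Hilton: strict computation of products with the constant loop, unit-up-to-homotopy laws for $\cdot_{cn}$, one strict instance of the interchange law, and homotopy invariance of $\cdot_{cp}$ in both arguments (which the paper asserts immediately before the proposition, so you may cite it). Your decomposition isolates the only genuinely geometric input --- the reparametrization homotopy $\pmb{\gamma}\cdot_{cn}\pmb{e}\simeq\pmb{\gamma}$ of a single loop --- and makes the rest formal; as a bonus it exhibits commutativity of $\pi_1(Ham(\X,\omega))$, which the paper's direct homotopy yields only implicitly. The paper's construction, in exchange, is self-contained (it never needs homotopy invariance of the products) and its explicit formulas make the verification of Definition \ref{Hamhomotopy} immediate. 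One caveat on your unit step: a literal ``sliding break-point'' homotopy is incompatible with the brick-partition requirement of Definition \ref{Hamhomotopy}, since break points must be constant in $s$ within each horizontal layer; you must, as your appeal to Lemma \ref{removejump} in effect does, keep the partition $\{[0,\tfrac12],[\tfrac12,1]\}$ fixed and deform the speed profiles on the two pieces (matching values, hence $2$-morphisms, at $t=\tfrac12$), which stays Hamiltonian by exactly the formula $\rho'(t)H_{\rho(t)}$ you give.
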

\begin{proof}
By Lemma \ref{smthyloop}, it is sufficient to prove that for any two smooth based Hamiltonian loops $\pmb{\gamma}=\{\gamma:I\times \X\to \X\}$ and $\pmb{\gamma}'=\{\gamma':I\times \X\to \X\}$, there is a homotopy $\pmb{\Gamma}$ between

$$\pmb{\gamma}\cdot_{cn}\pmb{\gamma}'=\{\gamma\circ db^{L}:[0,\frac{1}{2}]\times \X \to \X,\gamma'\circ db^{R}:[\frac{1}{2},1]\times \X \to \X\},$$
and 
$$\pmb{\gamma}\cdot_{cp}\pmb{\gamma}'=\{\gamma\circ(pr_I\times \gamma'): [0,1]\times \X \to \X\}.$$ 

Let $h_{1}, h_{2}: [0,1]\times [0,\frac{1}{2}]\to [0,1], h_{3}, h_{4}: [0,1]\times[\frac{1}{2},1]\to [0,1]$ be given by:
$$h_{1}(s,t):=(1+s)t,\ \ h_{2}(s,t):=(1-s)t,$$ 
$$h_{3}(s,t):=(1-s)t+s, \ \ h_{4}(s,t):=(1+s)t-s.$$

Define 
$\Gamma_{1}: [0,1]\times[0,\frac{1}{2}]\times\X\to\X$ to be the composition of
$$
(pr_{[0,1]}\times pr_{[0,\frac{1}{2}]}\times (\gamma'\circ pr_{[0,1]\times \X}))\circ((h_{1}\times pr_{[0,\frac{1}{2}]}), Id_{\X}): [0,1]\times[0,\frac{1}{2}]\times\X \to [0,1]\times[0,\frac{1}{2}]\times \X 
$$
and
$$
(\gamma\circ pr_{[0,1]\times \X})\circ ((h_{2}\times pr_{[0,\frac{1}{2}]}),Id_{\X}) : [0,1]\times[0,\frac{1}{2}]\times \X \to \X.
$$

Define 
$\Gamma_{2}: [0,1]\times[\frac{1}{2},1]\times\X\to\X$ to be the composition of
$$
(pr_{[0,1]}\times pr_{[\frac{1}{2},1]}\times (\gamma'\circ pr_{[0,1]\times \X}))\circ((h_{3}\times pr_{[\frac{1}{2},1]}), Id_{\X}): [0,1]\times[\frac{1}{2},1]\times\X \to  [0,1]\times[\frac{1}{2},1]\times \X 
$$
and
$$
(\gamma\circ pr_{[0,1]\times \X})\circ ((h_{4}\times pr_{[\frac{1}{2},1]}), Id_{\X}) :[0,1]\times[\frac{1}{2},1]\times \X \to \X.
$$

Then $\pmb{\Gamma}:=\{\Gamma_{1},\Gamma_{2}\}$ defines a homotopy between $\pmb{\gamma}\cdot_{cn}\pmb{\gamma}'$ and $\pmb{\gamma}\cdot_{cp}\pmb{\gamma}'$.
\end{proof}

We denote the induced product on $\pi_{1}(Ham(\X,\omega))$ by ``$\cdot$". From the ``$\cdot_{cp}$" description, it is easy to see that the product ``$\cdot$'' is associative, and  that every Hamiltonian loop is invertible.  Thus we have:

\begin{prop}
The set $\pi_{1}(Ham(\X,\omega))$ forms a group with the product ``$\cdot$''.
\end{prop}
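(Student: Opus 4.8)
The plan is to verify the three group axioms---associativity, a two-sided identity, and inverses---directly on homotopy classes, using throughout the composition description $\cdot_{cp}$ of the product, for which the underlying operation on each piece is the pointwise composition $(t,x)\mapsto \gamma_l(t,\gamma'_l(t,x))$, i.e.\ $t\mapsto\gamma_l(t)\circ\gamma'_l(t)$ of Hamiltonian diffeomorphisms $\X\to\X$. Well-definedness of $\cdot$ on homotopy classes and the coincidence of $\cdot_{cp}$ with $\cdot_{cn}$ have already been recorded, so it remains to check the axioms on representatives and to note that each identity we need holds either on the nose or up to a $2$-morphism, the latter being absorbed by the $2$-related move built into Definition \ref{Hamhomotopy}.

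For the identity I would take $[e]$, the class of the constant loop at $Id_\X$ given by the single functor $pr_\X\colon I\times\X\to\X$ (the identity loop $e$ of the Introduction). Putting $\pmb\gamma$ and $e$ on a common partition, the piece of $\pmb\gamma\cdot_{cp}e$ is $\gamma_l\circ(pr_{I_l}\times e_l)=\gamma_l$, since $pr_{I_l}\times e_l$ is the identity of $I_l\times\X$, and symmetrically $e\cdot_{cp}\pmb\gamma=\pmb\gamma$; thus $[e]$ is a two-sided identity on the nose. For associativity, given three loops on a common refined partition, both bracketings of the triple product are built piecewise from the composite $\gamma_l(t)\circ\gamma'_l(t)\circ\gamma''_l(t)$ of stack maps $\X\to\X$; since composition of stack maps is associative up to a canonical $2$-isomorphism, the two bracketings are $2$-related and hence homotopy equivalent.

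For inverses, the candidate inverse of $\pmb\gamma$ is the pointwise-inverse loop $\pmb\gamma^{-1}$ with pieces $t\mapsto\gamma_l(t)^{-1}$. Granting that $\pmb\gamma^{-1}$ is again a based Hamiltonian loop, the piece of $\pmb\gamma\cdot_{cp}\pmb\gamma^{-1}$ is $t\mapsto\gamma_l(t)\circ\gamma_l(t)^{-1}$, which is $2$-isomorphic to $Id_\X$ for every $t$; hence $\pmb\gamma\cdot_{cp}\pmb\gamma^{-1}$ is $2$-related to $e$, and likewise on the other side. This reduces the entire matter to a single point: that $\pmb\gamma^{-1}$ is Hamiltonian and based at the identity.

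That last point is where the real work lies, and it is the step I expect to be the main obstacle, since it must be carried out in the stack setting rather than pointwise on a manifold. In the manifold case the inverse flow of a time-dependent Hamiltonian $H_t$ is again Hamiltonian, generated by $\bar H_t=-H_t\circ\gamma_t$; I would reproduce this on an atlas groupoid by using the dictionary for vector fields and their contraction with $\omega$ (Proposition \ref{dictionVector}) to produce an $X_1$-invariant generating function for the inverse, and then invoke Hepworth's existence and uniqueness of integral morphisms to identify the resulting flow with $t\mapsto\gamma_l(t)^{-1}$ up to the required $2$-morphisms. The delicate part is bookkeeping: every coherence cell (for composition, for the identity, and for the flow data $t_\Phi$ and $e_\Phi$) is a $2$-morphism, and one must check that each discrepancy so produced is swallowed by the $2$-related and naturally-related moves of Definition \ref{Hamhomotopy}. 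Once $\pmb\gamma^{-1}$ is shown to be a based Hamiltonian loop, the three axioms above combine to show that $\pi_{1}(Ham(\X,\omega))$ is a group.
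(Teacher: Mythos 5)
Your proof is correct and takes essentially the same route as the paper, which disposes of this proposition by remarking that associativity and invertibility are evident from the $\cdot_{cp}$ description of the product; your verification of the identity and associativity axioms up to $2$-morphisms, and your choice of the pointwise inverse $t\mapsto\gamma_{l}(t)^{-1}$ (the same $\pmb{\gamma}^{-1}$ the paper later constructs in the proof of Proposition \ref{homotopylem}), are exactly the intended ingredients. The step you rightly flag as the real work---that the pointwise inverse is again a \emph{based Hamiltonian} loop, via the generating function $-H_{t}\circ\gamma_{l}(t)$ transported to an atlas through Proposition \ref{dictionVector} and Hepworth's integration results---is not spelled out in the paper at all, so your sketch is in fact more detailed than the paper's own argument.
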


\begin{defn}\label{defn_fund_gp_Ham}
The group of homotopy equivalence classes of Hamiltonian loops  $(\pi_{1}(Ham(\X,\omega)),\cdot)$ is called the fundamental group of Hamiltonian diffeomorphisms of the symplectic orbifold $(\X,\omega)$.
\end{defn}

We remark that the results in this section can be easily extended to any Lie 2-group using the same idea here.

\subsection{Orbifiber Bundle and Sectional Orbifold morphism}\label{orbifiberbundlegeneral}
In this subsection, we discuss a notion of fiber bundles with fibers being orbifolds.

\begin{defn}\label{orbifiberbundle}
An {\em orbifiber bundle} is a quadruple $(\E, \B, \pi, \F)$, where $\E$, $\B$ and $\F$ are orbifolds, and $\pmb{\pi}: \E \to \B$ is an orbifold morphism, 
satisfying the following local triviality condition: For any point $x\in |\B|$, there is a substack $U_{x}$ of $\B$ s.t. $|U_{x}|$ is an open neighborhood of $x$ and an orbifold diffeomorphism $\pmb{\phi} :\F\times U_{x}  \to \pmb{\pi}^{-1}(U_{x})$,
such that the following diagram is commutative:
$$
\xymatrix{
\gH_{\F,x}\times U_{x} \ar[r]^{\phi} \ar[rd]_{p_{2}} & \pmb{\pi}^{-1}(U_{x}) \ar[d]^{\pmb{\pi}|_{U_{x}}}\\
& U_{x},
}
$$
where $p_{2}$ is the projection to the second component.

The orbifold $\B$ is called the base of the bundle, $\E$ the total orbifold, and $\F$ the fiber. The orbifold morphism $\pmb{\pi}$  is called the  bundle projection.
\end{defn}

\begin{numrmk}
Orbifiber bundles generalize the notion of gerbes over manifold/orbifold by allowing fibers to be orbifolds other than stacky points.
\end{numrmk}

\begin{numrmk}
Obviously every orbifiber bundle $(\E, \B, \pmb{\pi}, \F)$ determines a fiber bundle $(|\E|, |\B|, |\pmb{\pi}|, |\F|)$ where $|\E|$ and $|\F|$ are the underlying topological space of $\E$ and $\F$ respectively, $|\pmb{\pi}|$ is the continuous map induced by $\pmb{\pi}$.
\end{numrmk}

\begin{defn}\label{orbifibrmorphism}
An {\em orbifiber bundle morphism} between $\pmb{\pi}_{1}:\E_{1}\to \B_{1}$ and $\pmb{\pi}_{2}:\E_{2}\to \B_{2}$ is a pair of orbifold morphisms $\pmb{\xi}:\E_{1}\to\E_{2}$ and $\pmb{f}:\B_{1}\to\B_{2}$ which commute with $\pmb{\pi}_{1}$ and $\pmb{\pi}_{2}$.
An orbifiber bundle morphism is called an orbifiber bundle isomorphism if the pair of orbifold morphisms are diffeomorphisms.
\end{defn}

To define Seidel representation, we only need orbifiber bundles over manifolds. Thus in what follows, we assume that the base $B$ of orbifiber bundles are manifolds.  

To construct Seidel representation for symplectic manifolds, one counts $J$-holomorphic sections of the Hamiltonian fibration. In the orbifold case, we need an analogue of sections for orbifiber bundles. A natural candidate is a morphism from the base to the total orbifold such that its precomposition with the projection is the identity of the base up to a natural transformation. We call such morphisms {\em orbisections} of the orbifiber bundle. However in order to define orbifold Gromov-Witten invariants, one needs to count orbicurves with all possible orbifold structures. For this reason we need a more general notion of sections in the orbifold case, defined as follows:

\begin{defn}\label{orbisection}
A \emph{sectional orbifold morphism} of  $\pmb{\pi}:\E\to B$ is an orbifold structure $\B$ on $B$ together with an orbifold morphism $\pmb{s}:\B\to \E$ which lifts a section $s:B\to E$ of the bundle $(E, B, \pi, F)$. 
\end{defn}

The following example illustrates an important point in the notion of sectional orbifold morphisms. 

\begin{example}\label{exmCtwo}
Let $\X$ be the orbifold $[\com/\Ztwo]$ with $\Ztwo$ acting on $\com$ by $\pm1\cdot z=\pm z$. Consider the following groupoid chart $U_{S^{2}}$ of $S^{2}$:
 
 \begin{eqnarray*}
 Ob(U_{S^{2}}) & & :=\begin{aligned}
 \begin{tikzpicture}
 \def\R{1} 
\filldraw[gray, ultra nearly transparent] (0.4*\R,-0.3*\R) arc (-asin(0.6):180+asin(0.6):0.5*\R) arc (180:0:0.4 and 0.06)-- cycle;
\fill[gray, semitransparent] (0,-0.3*\R) ellipse (0.4 and 0.05); 
\shadedraw[shading=ball,ball color=red](0.4*\R,-0.3*\R) arc (-asin(0.6):-180+asin(0.6):0.5*\R) arc (180:360:0.4 and 0.06) -- cycle;
\end{tikzpicture} 
\end{aligned}
\sqcup
\begin{aligned}
\begin{tikzpicture}
 \def\R{1} 
 \shadedraw[shading=ball,ball color=red, white] (0,0) circle (0.5*\R);
 \filldraw[fill=white, draw=black, fill opacity=0] (0,0.5*\R) circle (0.06);
 \filldraw[fill=white, draw=black, fill opacity=0] (0,-0.5*\R) circle (0.06);
\draw[thick, double] (0,0.5*\R) arc (90:-90:0.5*\R);
\end{tikzpicture} 
\end{aligned}
\sqcup
\begin{aligned}
\begin{tikzpicture}
 \def\R{1} 
 \shadedraw[shading=ball,ball color=red, white] (0,0) circle (0.5*\R);
  \filldraw[fill=white, draw=black, fill opacity=0] (0,0.5*\R) circle (0.06);
 \filldraw[fill=white, draw=black, fill opacity=0] (0,-0.5*\R) circle (0.06);
 \draw[thick, double] (0,-0.5*\R) arc (-90:-270:0.5*\R);
\end{tikzpicture} 
\end{aligned}
\sqcup
\begin{aligned}
\begin{tikzpicture}
 \def\R{1} 
\shadedraw[shading=ball,ball color=red](0.4*\R,0.3*\R) arc (asin(0.6):180-asin(0.6):0.5*\R) arc (180:360:0.4 and 0.06) -- cycle;
 \fill[gray, ultra nearly transparent] (0.4*\R,0.3*\R) arc (asin(0.6):-180-asin(0.6):0.5*\R) arc (-180:0:0.4 and 0.06) -- cycle;
\fill[gray, semitransparent] (0,0.3*\R) ellipse (0.4 and 0.05); 
\end{tikzpicture}
\end{aligned}\\
Mor(U_{S^{2}}) & & :=
\begin{aligned}
\begin{tikzpicture}
 \def\R{1} 
\shadedraw[shading=ball,ball color=red](0.4*\R,-0.3*\R) arc (-asin(0.6):-180+asin(0.6):0.5*\R) arc (180:360:0.4 and 0.06) -- cycle;
\filldraw[gray, ultra nearly transparent] (0.4*\R,-0.3*\R) arc (-asin(0.6):180+asin(0.6):0.5*\R) arc (180:0:0.4 and 0.06)-- cycle;

\fill[gray, semitransparent] (0,-0.3*\R) ellipse (0.4 and 0.05); 
\filldraw[fill=white, draw=black, fill opacity=0] (0.4*\R,-0.3*\R) circle (0.06);
\filldraw[fill=white, draw=black, fill opacity=0] (0,-0.5*\R) circle (0.06);
\draw[thick, double] (0.4*\R,-0.3*\R) arc (-asin(0.6):-90:0.5*\R);
\end{tikzpicture} 
\end{aligned}
\sqcup
\begin{aligned}
\begin{tikzpicture}
\def\R{1} 
\shadedraw[shading=ball,ball color=red](0.4*\R,-0.3*\R) arc (-asin(0.6):-180+asin(0.6):0.5*\R) arc (180:360:0.4 and 0.06) -- cycle;
\filldraw[gray, ultra nearly transparent] (0.4*\R,-0.3*\R) arc (-asin(0.6):180+asin(0.6):0.5*\R) arc (180:0:0.4 and 0.06)-- cycle;

\fill[gray, semitransparent] (0,-0.3*\R) ellipse (0.4 and 0.05); 
\filldraw[fill=white, draw=black, fill opacity=0] (-0.4*\R,-0.3*\R) circle (0.06);
\filldraw[fill=white, draw=black, fill opacity=0] (0,-0.5*\R) circle (0.06);
\draw[thick, double] (0,-0.5*\R) arc (-90:-180+asin(0.6):0.5*\R);
\end{tikzpicture} 
\end{aligned}
\sqcup
\begin{aligned}   
\begin{tikzpicture}
\def\R{1} 
\shadedraw[shading=ball,ball color=red, white] (0,0) circle (0.5*\R);
\draw[thick, double] (0,0) circle (0.5*\R);
\end{tikzpicture} 
\end{aligned}
\sqcup
\begin{aligned}  
\begin{tikzpicture}
 \def\R{1} 
\shadedraw[shading=ball,ball color=red](0.4*\R,0.3*\R) arc (asin(0.6):180-asin(0.6):0.5*\R) arc (180:360:0.4 and 0.06) -- cycle;
 \fill[gray, ultra nearly transparent] (0.4*\R,0.3*\R) arc (asin(0.6):-180-asin(0.6):0.5*\R) arc (-180:0:0.4 and 0.06) -- cycle;
\fill[gray, semitransparent] (0,0.3*\R) ellipse (0.4 and 0.05); 
\filldraw[fill=white, draw=black, fill opacity=0] (0.4*\R,0.3*\R) circle (0.06);
\filldraw[fill=white, draw=black, fill opacity=0] (0,0.5*\R) circle (0.06);
\draw[thick, double] (0,0.5*\R) arc (90:asin(0.6):0.5*\R);
\end{tikzpicture}
\end{aligned}
\sqcup
\begin{aligned}
\begin{tikzpicture}
 \def\R{1} 
\shadedraw[shading=ball,ball color=red](0.4*\R,0.3*\R) arc (asin(0.6):180-asin(0.6):0.5*\R) arc (180:360:0.4 and 0.06) -- cycle;
 \fill[gray, ultra nearly transparent] (0.4*\R,0.3*\R) arc (asin(0.6):-180-asin(0.6):0.5*\R) arc (-180:0:0.4 and 0.06) -- cycle;
\fill[gray, semitransparent] (0,0.3*\R) ellipse (0.4 and 0.05); 
\filldraw[fill=white, draw=black, fill opacity=0] (-0.4*\R,0.3*\R) circle (0.06);
\filldraw[fill=white, draw=black, fill opacity=0] (0,0.5*\R) circle (0.06);
\draw[thick, double] (-0.4*\R,0.3*\R) arc (180-asin(0.6):90:0.5*\R);
\end{tikzpicture}
\end{aligned}
\sqcup\\
&& \hspace{1cm} 
\begin{aligned}
\begin{tikzpicture}
 \def\R{1} 
\shadedraw[shading=ball,ball color=red](0.4*\R,-0.3*\R) arc (-asin(0.6):-180+asin(0.6):0.5*\R) arc (180:360:0.4 and 0.06) -- cycle;
 \filldraw[gray, ultra nearly transparent] (0.4*\R,-0.3*\R) arc (-asin(0.6):180+asin(0.6):0.5*\R) arc (180:0:0.4 and 0.06)-- cycle;

\fill[gray, semitransparent] (0,-0.3*\R) ellipse (0.4 and 0.05); 
 \filldraw[fill=white, draw=black, fill opacity=0] (0.4*\R,-0.3*\R) circle (0.06);
 \filldraw[fill=white, draw=black, fill opacity=0] (0,-0.5*\R) circle (0.06);
 \draw[thick, double] (0.4*\R,-0.3*\R) arc (-asin(0.6):-90:0.5*\R);
\end{tikzpicture} 
\end{aligned}
\sqcup
\begin{aligned}
\begin{tikzpicture}
\def\R{1} 
\shadedraw[shading=ball,ball color=red](0.4*\R,-0.3*\R) arc (-asin(0.6):-180+asin(0.6):0.5*\R) arc (180:360:0.4 and 0.06) -- cycle;
\filldraw[gray, ultra nearly transparent] (0.4*\R,-0.3*\R) arc (-asin(0.6):180+asin(0.6):0.5*\R) arc (180:0:0.4 and 0.06)-- cycle;

\fill[gray, semitransparent] (0,-0.3*\R) ellipse (0.4 and 0.05); 
\filldraw[fill=white, draw=black, fill opacity=0] (-0.4*\R,-0.3*\R) circle (0.06);
\filldraw[fill=white, draw=black, fill opacity=0] (0,-0.5*\R) circle (0.06);
\draw[thick, double] (0,-0.5*\R) arc (-90:-180+asin(0.6):0.5*\R);
\end{tikzpicture} 
\end{aligned}
\sqcup
\begin{aligned}
\begin{tikzpicture}
\def\R{1} 
\shadedraw[shading=ball,ball color=red, white] (0,0) circle (0.5*\R);
\draw[thick, double] (0,0) circle (0.5*\R);
\end{tikzpicture} 
\end{aligned}
\sqcup
\begin{aligned}  
\begin{tikzpicture}
 \def\R{1} 
\shadedraw[shading=ball,ball color=red](0.4*\R,0.3*\R) arc (asin(0.6):180-asin(0.6):0.5*\R) arc (180:360:0.4 and 0.06) -- cycle;
 \fill[gray, ultra nearly transparent] (0.4*\R,0.3*\R) arc (asin(0.6):-180-asin(0.6):0.5*\R) arc (-180:0:0.4 and 0.06) -- cycle;
\fill[gray, semitransparent] (0,0.3*\R) ellipse (0.4 and 0.05); 
\filldraw[fill=white, draw=black, fill opacity=0] (0.4*\R,0.3*\R) circle (0.06);
\filldraw[fill=white, draw=black, fill opacity=0] (0,0.5*\R) circle (0.06);
\draw[thick, double] (0,0.5*\R) arc (90:asin(0.6):0.5*\R);
\end{tikzpicture}
\end{aligned}
\sqcup
\begin{aligned}
\begin{tikzpicture}
 \def\R{1} 
\shadedraw[shading=ball,ball color=red](0.4*\R,0.3*\R) arc (asin(0.6):180-asin(0.6):0.5*\R) arc (180:360:0.4 and 0.06) -- cycle;
 \fill[gray, ultra nearly transparent] (0.4*\R,0.3*\R) arc (asin(0.6):-180-asin(0.6):0.5*\R) arc (-180:0:0.4 and 0.06) -- cycle;
\fill[gray, semitransparent] (0,0.3*\R) ellipse (0.4 and 0.05); 
\filldraw[fill=white, draw=black, fill opacity=0] (-0.4*\R,0.3*\R) circle (0.06);
\filldraw[fill=white, draw=black, fill opacity=0] (0,0.5*\R) circle (0.06);
\draw[thick, double] (-0.4*\R,0.3*\R) arc (180-asin(0.6):90:0.5*\R);
\end{tikzpicture}
\end{aligned}\\
&&\hspace{1cm} \sqcup
\begin{aligned}
\begin{tikzpicture}
 \def\R{1} 
\shadedraw[shading=ball,ball color=red](0.4*\R,-0.3*\R) arc (-asin(0.6):-180+asin(0.6):0.5*\R) arc (180:360:0.4 and 0.06) -- cycle;
 \filldraw[gray, ultra nearly transparent] (0.4*\R,-0.3*\R) arc (-asin(0.6):180+asin(0.6):0.5*\R) arc (180:0:0.4 and 0.06)-- cycle;

\fill[gray, semitransparent] (0,-0.3*\R) ellipse (0.4 and 0.05); 
\end{tikzpicture} 
\end{aligned}
 \sqcup
 \begin{aligned}
\begin{tikzpicture}
 \def\R{1} 
 \shadedraw[shading=ball,ball color=red, white] (0,0) circle (0.5*\R);
 \filldraw[fill=white, draw=black, fill opacity=0] (0,0.5*\R) circle (0.06);
 \filldraw[fill=white, draw=black, fill opacity=0] (0,-0.5*\R) circle (0.06);
\draw[thick, double] (0,0.5*\R) arc (90:-90:0.5*\R);
\end{tikzpicture} 
\end{aligned}
 \sqcup
 \begin{aligned}   
\begin{tikzpicture}
 \def\R{1} 
 \shadedraw[shading=ball,ball color=red, white] (0,0) circle (0.5*\R);
  \filldraw[fill=white, draw=black, fill opacity=0] (0,0.5*\R) circle (0.06);
 \filldraw[fill=white, draw=black, fill opacity=0] (0,-0.5*\R) circle (0.06);
 \draw[thick, double] (0,-0.5*\R) arc (-90:-270:0.5*\R);
\end{tikzpicture} 
\end{aligned}
 \sqcup
 \begin{aligned}
\begin{tikzpicture}
 \def\R{1} 
\shadedraw[shading=ball,ball color=red](0.4*\R,0.3*\R) arc (asin(0.6):180-asin(0.6):0.5*\R) arc (180:360:0.4 and 0.06) -- cycle;
 \fill[gray, ultra nearly transparent] (0.4*\R,0.3*\R) arc (asin(0.6):-180-asin(0.6):0.5*\R) arc (-180:0:0.4 and 0.06) -- cycle;
\fill[gray, semitransparent] (0,0.3*\R) ellipse (0.4 and 0.05); 
\end{tikzpicture}
\end{aligned}
\end{eqnarray*}

Define and orbifiber bundle $\pi:\E\to S^2$ with fibers $\X$ as follows. Define the total orbifold $\E$ by the following groupoid chart $\gG_{\E}$:

\begin{eqnarray*}
Ob(\gG_{\E}):=
\begin{aligned}
\begin{tikzpicture}
 \def\R{1} 
\shadedraw[shading=ball,ball color=red](0.4*\R,-0.3*\R) arc (-asin(0.6):-180+asin(0.6):0.5*\R) arc (180:360:0.4 and 0.06) -- cycle;
 \filldraw[gray, ultra nearly transparent] (0.4*\R,-0.3*\R) arc (-asin(0.6):180+asin(0.6):0.5*\R) arc (180:0:0.4 and 0.06)-- cycle;
\fill[gray, semitransparent] (0,-0.3*\R) ellipse (0.4 and 0.05); 
\end{tikzpicture} 
\end{aligned}
\times \com\ && \sqcup (
\begin{aligned}
\begin{tikzpicture}
 \def\R{1} 
\shadedraw[shading=ball,ball color=red](0.5*\R,0) arc (0:-180:0.5*\R) arc (180:360:0.5 and 0.1) -- cycle;
 \fill[gray, ultra nearly transparent] (0.5*\R,0) arc (0:180:0.5*\R) arc (180:0:0.5 and 0.1) -- cycle;
\filldraw[fill=gray, draw=black, semitransparent] (0,0) ellipse (0.5 and 0.1); 
 \filldraw[fill=white, draw=black, fill opacity=0] (0.5*\R,0) circle (0.06);
 \filldraw[fill=white, draw=black, fill opacity=0] (0,-0.5*\R) circle (0.06);
\draw[thick, double] (0.5*\R,0) arc (0:-90:0.5*\R);
\end{tikzpicture} 
\end{aligned}
\times\com\ \sqcup
\begin{aligned}
\begin{tikzpicture}
 \def\R{1} 
\shadedraw[shading=ball,ball color=red](0.5*\R,0) arc (0:180:0.5*\R) arc (180:360:0.5 and 0.1) -- cycle;
 \fill[gray, ultra   nearly transparent] (0.5*\R,0) arc (0:-180:0.5*\R) arc (-180:0:0.5 and 0.1)-- cycle;
\filldraw[fill=gray, draw=gray, semitransparent] (0,0) ellipse (0.5 and 0.1); 
\draw[densely dashed] (0.5*\R,0) arc (0:180:0.5 and 0.1);
\draw (-0.5*\R,0) arc (180:360:0.5 and 0.1);
  \filldraw[fill=white, draw=black, fill opacity=0] (0,0.5*\R) circle (0.06);
 \filldraw[fill=white, draw=black, fill opacity=0] (0.5*\R,0) circle (0.06);
 \draw[thick, double] (0.5*\R,0) arc (0:90:0.5*\R);
\end{tikzpicture} 
\end{aligned}
\times\com)/rel_{Ob,1}\ \sqcup \\
\begin{aligned}
\begin{tikzpicture}
 \def\R{1} 
\shadedraw[shading=ball,ball color=red](0.4*\R,0.3*\R) arc (asin(0.6):180-asin(0.6):0.5*\R) arc (180:360:0.4 and 0.06) -- cycle;
 \fill[gray, ultra nearly transparent] (0.4*\R,0.3*\R) arc (asin(0.6):-180-asin(0.6):0.5*\R) arc (-180:0:0.4 and 0.06) -- cycle;
\fill[gray, semitransparent] (0,0.3*\R) ellipse (0.4 and 0.05); 
\end{tikzpicture}
\end{aligned}
\times\com \ && \sqcup  
(   
\begin{aligned}
\begin{tikzpicture}
\def\R{1} 
\shadedraw[shading=ball,ball color=red](0.5*\R,0) arc (0:-180:0.5*\R) arc (180:360:0.5 and 0.1) -- cycle;
 \fill[gray, ultra nearly transparent] (0.5*\R,0) arc (0:180:0.5*\R) arc (180:0:0.5 and 0.1) -- cycle;
\filldraw[fill=gray, draw=black, semitransparent] (0,0) ellipse (0.5 and 0.1); 
\filldraw[fill=white, draw=black, fill opacity=0] (-0.5*\R,0) circle (0.06);
\filldraw[fill=white, draw=black, fill opacity=0] (0,-0.5*\R) circle (0.06);
\draw[thick, double] (0,-0.5*\R) arc (-90:-180:0.5*\R);
\end{tikzpicture} 
\end{aligned}
\times  \com\ \sqcup 
\begin{aligned}
\begin{tikzpicture}
 \def\R{1} 
\shadedraw[shading=ball,ball color=red](0.5*\R,0) arc (0:180:0.5*\R) arc (180:360:0.5 and 0.1) -- cycle;
 \fill[gray, ultra   nearly transparent] (0.5*\R,0) arc (0:-180:0.5*\R) arc (-180:0:0.5 and 0.1)-- cycle;
\filldraw[fill=gray, draw=gray, semitransparent] (0,0) ellipse (0.5 and 0.1); 
\draw[densely dashed] (0.5*\R,0) arc (0:180:0.5 and 0.1);
\draw (-0.5*\R,0) arc (180:360:0.5 and 0.1);
\filldraw[fill=white, draw=black, fill opacity=0] (0,0.5*\R) circle (0.06);
\filldraw[fill=white, draw=black, fill opacity=0] (-0.5*\R,0) circle (0.06);
\draw[thick, double] (-0.5*\R,0) arc (-180:-270:0.5*\R);
\end{tikzpicture} 
\end{aligned}
\times\com )/ rel_{Ob,2}
\end{eqnarray*}

\begin{eqnarray*}
Mor(\gG_{\E}):=
\begin{aligned}
\begin{tikzpicture}
 \def\R{1} 
\shadedraw[shading=ball,ball color=red](0.4*\R,-0.3*\R) arc (-asin(0.6):-180+asin(0.6):0.5*\R) arc (180:360:0.4 and 0.06) -- cycle;
 \filldraw[gray, ultra nearly transparent] (0.4*\R,-0.3*\R) arc (-asin(0.6):180+asin(0.6):0.5*\R) arc (180:0:0.4 and 0.06)-- cycle;

\fill[gray, semitransparent] (0,-0.3*\R) ellipse (0.4 and 0.05); 
 \filldraw[fill=white, draw=black, fill opacity=0] (0.4*\R,-0.3*\R) circle (0.06);
 \filldraw[fill=white, draw=black, fill opacity=0] (0,-0.5*\R) circle (0.06);
 \draw[thick, double] (0.4*\R,-0.3*\R) arc (-asin(0.6):-90:0.5*\R);
\end{tikzpicture} 
\end{aligned}
\times \com\times\Ztwo\ \sqcup
\begin{aligned}
\begin{tikzpicture}
\def\R{1} 
\shadedraw[shading=ball,ball color=red](0.4*\R,-0.3*\R) arc (-asin(0.6):-180+asin(0.6):0.5*\R) arc (180:360:0.4 and 0.06) -- cycle;
\filldraw[gray, ultra nearly transparent] (0.4*\R,-0.3*\R) arc (-asin(0.6):180+asin(0.6):0.5*\R) arc (180:0:0.4 and 0.06)-- cycle;

\fill[gray, semitransparent] (0,-0.3*\R) ellipse (0.4 and 0.05); 
\filldraw[fill=white, draw=black, fill opacity=0] (-0.4*\R,-0.3*\R) circle (0.06);
\filldraw[fill=white, draw=black, fill opacity=0] (0,-0.5*\R) circle (0.06);
\draw[thick, double] (0,-0.5*\R) arc (-90:-180+asin(0.6):0.5*\R);
\end{tikzpicture} 
\end{aligned}
\times\com\times\Ztwo\ && \sqcup \\
(
\begin{aligned}
\begin{tikzpicture}
 \def\R{1} 
\shadedraw[shading=ball,ball color=red](0.5*\R,0) arc (0:-180:0.5*\R) arc (180:360:0.5 and 0.1) -- cycle;
  \fill[gray, ultra nearly transparent] (0.5*\R,0) arc (0:180:0.5*\R) arc (180:0:0.5 and 0.1) -- cycle;
\filldraw[fill=gray, draw=black,  semitransparent] (0,0) ellipse (0.5 and 0.1); 
 \filldraw[fill=white, draw=black, fill opacity=0] (0.5*\R,0) circle (0.06);
 \filldraw[fill=white, draw=black, fill opacity=0] (-0.5*\R,0) circle (0.06);
\draw[thick, double] (0.5*\R,0) arc (0:-180:0.5*\R);
\end{tikzpicture} 
\end{aligned}
\times\com\times \Ztwo\ && \sqcup
\begin{aligned}
\begin{tikzpicture}
 \def\R{1} 
\shadedraw[shading=ball,ball color=red](0.5*\R,0) arc (0:180:0.5*\R) arc (180:360:0.5 and 0.1) -- cycle;
 \fill[gray, ultra   nearly transparent] (0.5*\R,0) arc (0:-180:0.5*\R) arc (-180:0:0.5 and 0.1)-- cycle;
\filldraw[fill=gray, draw=gray, semitransparent] (0,0) ellipse (0.5 and 0.1); 
\draw[densely dashed] (0.5*\R,0) arc (0:180:0.5 and 0.1);
\draw (-0.5*\R,0) arc (180:360:0.5 and 0.1);  
\filldraw[fill=white, draw=black, fill opacity=0] (-0.5*\R,0) circle (0.06);
 \filldraw[fill=white, draw=black, fill opacity=0] (0.5*\R,0) circle (0.06);
 \draw[thick, double] (0.5*\R,0) arc (0:180:0.5*\R);
\end{tikzpicture} 
\end{aligned}
\times\com\times \Ztwo)/rel_{Mor,1}\\
&& \sqcup
\begin{aligned}
\begin{tikzpicture}
 \def\R{1} 
\shadedraw[shading=ball,ball color=red](0.4*\R,0.3*\R) arc (asin(0.6):180-asin(0.6):0.5*\R) arc (180:360:0.4 and 0.06) -- cycle;
 \fill[gray, ultra nearly transparent] (0.4*\R,0.3*\R) arc (asin(0.6):-180-asin(0.6):0.5*\R) arc (-180:0:0.4 and 0.06) -- cycle;
\fill[gray, semitransparent] (0,0.3*\R) ellipse (0.4 and 0.05); 
\filldraw[fill=white, draw=black, fill opacity=0] (0.4*\R,0.3*\R) circle (0.06);
\filldraw[fill=white, draw=black, fill opacity=0] (0,0.5*\R) circle (0.06);
\draw[thick, double] (0,0.5*\R) arc (90:asin(0.6):0.5*\R);
\end{tikzpicture}
\end{aligned}
\times \com\times\Ztwo\ \sqcup
\begin{aligned}
\begin{tikzpicture}
 \def\R{1} 
\shadedraw[shading=ball,ball color=red](0.4*\R,0.3*\R) arc (asin(0.6):180-asin(0.6):0.5*\R) arc (180:360:0.4 and 0.06) -- cycle;
 \fill[gray, ultra nearly transparent] (0.4*\R,0.3*\R) arc (asin(0.6):-180-asin(0.6):0.5*\R) arc (-180:0:0.4 and 0.06) -- cycle;
\fill[gray, semitransparent] (0,0.3*\R) ellipse (0.4 and 0.05); 
\filldraw[fill=white, draw=black, fill opacity=0] (-0.4*\R,0.3*\R) circle (0.06);
\filldraw[fill=white, draw=black, fill opacity=0] (0,0.5*\R) circle (0.06);
\draw[thick, double] (-0.4*\R,0.3*\R) arc (180-asin(0.6):90:0.5*\R);
\end{tikzpicture}
\end{aligned}
\times\com\times\Ztwo\\
\sqcup
\begin{aligned}
\begin{tikzpicture}
 \def\R{1} 
\shadedraw[shading=ball,ball color=red](0.4*\R,-0.3*\R) arc (-asin(0.6):-180+asin(0.6):0.5*\R) arc (180:360:0.4 and 0.06) -- cycle;
 \filldraw[gray, ultra nearly transparent] (0.4*\R,-0.3*\R) arc (-asin(0.6):180+asin(0.6):0.5*\R) arc (180:0:0.4 and 0.06)-- cycle;

\fill[gray, semitransparent] (0,-0.3*\R) ellipse (0.4 and 0.05); 
 \filldraw[fill=white, draw=black, fill opacity=0] (0.4*\R,-0.3*\R) circle (0.06);
 \filldraw[fill=white, draw=black, fill opacity=0] (0,-0.5*\R) circle (0.06);
 \draw[thick, double] (0.4*\R,-0.3*\R) arc (-asin(0.6):-90:0.5*\R);
\end{tikzpicture} 
\end{aligned}
\times \com\times\Ztwo\ \sqcup
\begin{aligned}
\begin{tikzpicture}
\def\R{1} 
\shadedraw[shading=ball,ball color=red](0.4*\R,-0.3*\R) arc (-asin(0.6):-180+asin(0.6):0.5*\R) arc (180:360:0.4 and 0.06) -- cycle;
\filldraw[gray, ultra nearly transparent] (0.4*\R,-0.3*\R) arc (-asin(0.6):180+asin(0.6):0.5*\R) arc (180:0:0.4 and 0.06)-- cycle;

\fill[gray, semitransparent] (0,-0.3*\R) ellipse (0.4 and 0.05); 
\filldraw[fill=white, draw=black, fill opacity=0] (-0.4*\R,-0.3*\R) circle (0.06);
\filldraw[fill=white, draw=black, fill opacity=0] (0,-0.5*\R) circle (0.06);
\draw[thick, double] (0,-0.5*\R) arc (-90:-180+asin(0.6):0.5*\R);
\end{tikzpicture} 
\end{aligned}
\times\com\times\Ztwo\ &&\sqcup \\
(
\begin{aligned}
\begin{tikzpicture}
 \def\R{1} 
\shadedraw[shading=ball,ball color=red](0.5*\R,0) arc (0:-180:0.5*\R) arc (180:360:0.5 and 0.1) -- cycle;
  \fill[gray, ultra nearly transparent] (0.5*\R,0) arc (0:180:0.5*\R) arc (180:0:0.5 and 0.1) -- cycle;
\filldraw[fill=gray, draw=black,  semitransparent] (0,0) ellipse (0.5 and 0.1); 
 \filldraw[fill=white, draw=black, fill opacity=0] (0.5*\R,0) circle (0.06);
 \filldraw[fill=white, draw=black, fill opacity=0] (-0.5*\R,0) circle (0.06);
\draw[thick, double] (0.5*\R,0) arc (0:-180:0.5*\R);
\end{tikzpicture} 
\end{aligned}
\times\com\times \Ztwo\ && \sqcup
\begin{aligned}
\begin{tikzpicture}
 \def\R{1} 
\shadedraw[shading=ball,ball color=red](0.5*\R,0) arc (0:180:0.5*\R) arc (180:360:0.5 and 0.1) -- cycle;
 \fill[gray, ultra   nearly transparent] (0.5*\R,0) arc (0:-180:0.5*\R) arc (-180:0:0.5 and 0.1)-- cycle;
\filldraw[fill=gray, draw=gray, semitransparent] (0,0) ellipse (0.5 and 0.1); 
\draw[densely dashed] (0.5*\R,0) arc (0:180:0.5 and 0.1);
\draw (-0.5*\R,0) arc (180:360:0.5 and 0.1);  
\filldraw[fill=white, draw=black, fill opacity=0] (-0.5*\R,0) circle (0.06);
 \filldraw[fill=white, draw=black, fill opacity=0] (0.5*\R,0) circle (0.06);
 \draw[thick, double] (0.5*\R,0) arc (0:180:0.5*\R);
\end{tikzpicture} 
\end{aligned}
\times\com\times \Ztwo)/rel_{Mor,2}\\
&& \sqcup
\begin{aligned}
\begin{tikzpicture}
 \def\R{1} 
\shadedraw[shading=ball,ball color=red](0.4*\R,0.3*\R) arc (asin(0.6):180-asin(0.6):0.5*\R) arc (180:360:0.4 and 0.06) -- cycle;
 \fill[gray, ultra nearly transparent] (0.4*\R,0.3*\R) arc (asin(0.6):-180-asin(0.6):0.5*\R) arc (-180:0:0.4 and 0.06) -- cycle;
\fill[gray, semitransparent] (0,0.3*\R) ellipse (0.4 and 0.05); 
\filldraw[fill=white, draw=black, fill opacity=0] (0.4*\R,0.3*\R) circle (0.06);
\filldraw[fill=white, draw=black, fill opacity=0] (0,0.5*\R) circle (0.06);
\draw[thick, double] (0,0.5*\R) arc (90:asin(0.6):0.5*\R);
\end{tikzpicture}
\end{aligned}
\times \com\times\Ztwo\  \sqcup
\begin{aligned}
\begin{tikzpicture}
 \def\R{1} 
\shadedraw[shading=ball,ball color=red](0.4*\R,0.3*\R) arc (asin(0.6):180-asin(0.6):0.5*\R) arc (180:360:0.4 and 0.06) -- cycle;
 \fill[gray, ultra nearly transparent] (0.4*\R,0.3*\R) arc (asin(0.6):-180-asin(0.6):0.5*\R) arc (-180:0:0.4 and 0.06) -- cycle;
\fill[gray, semitransparent] (0,0.3*\R) ellipse (0.4 and 0.05); 
\filldraw[fill=white, draw=black, fill opacity=0] (-0.4*\R,0.3*\R) circle (0.06);
\filldraw[fill=white, draw=black, fill opacity=0] (0,0.5*\R) circle (0.06);
\draw[thick, double] (-0.4*\R,0.3*\R) arc (180-asin(0.6):90:0.5*\R);
\end{tikzpicture}
\end{aligned}
\times\com\times\Ztwo\\
\sqcup
\begin{aligned}
\begin{tikzpicture}
 \def\R{1} 
\shadedraw[shading=ball,ball color=red](0.4*\R,-0.3*\R) arc (-asin(0.6):-180+asin(0.6):0.5*\R) arc (180:360:0.4 and 0.06) -- cycle;
 \filldraw[gray, ultra nearly transparent] (0.4*\R,-0.3*\R) arc (-asin(0.6):180+asin(0.6):0.5*\R) arc (180:0:0.4 and 0.06)-- cycle;

\fill[gray, semitransparent] (0,-0.3*\R) ellipse (0.4 and 0.05); 
\end{tikzpicture} 
\end{aligned}
\times \com\times\Ztwo\  \sqcup (
\begin{aligned}
\begin{tikzpicture}
 \def\R{1} 
\shadedraw[shading=ball,ball color=red](0.5*\R,0) arc (0:-180:0.5*\R) arc (180:360:0.5 and 0.1) -- cycle;
  \fill[gray, ultra nearly transparent] (0.5*\R,0) arc (0:180:0.5*\R) arc (180:0:0.5 and 0.1) -- cycle;
\filldraw[fill=gray, draw=black, semitransparent] (0,0) ellipse (0.5 and 0.1); 
 \filldraw[fill=white, draw=black, fill opacity=0] (0.5*\R,0) circle (0.06);
 \filldraw[fill=white, draw=black, fill opacity=0] (0,-0.5*\R) circle (0.06);
\draw[thick, double] (0.5*\R,0) arc (0:-90:0.5*\R);
\end{tikzpicture} 
\end{aligned}
\times\com\times\Ztwo\ && \sqcup
\begin{aligned}
\begin{tikzpicture}
 \def\R{1} 
\shadedraw[shading=ball,ball color=red](0.5*\R,0) arc (0:180:0.5*\R) arc (180:360:0.5 and 0.1) -- cycle;
 \fill[gray, ultra   nearly transparent] (0.5*\R,0) arc (0:-180:0.5*\R) arc (-180:0:0.5 and 0.1)-- cycle;
\filldraw[fill=gray, draw=gray, semitransparent] (0,0) ellipse (0.5 and 0.1); 
\draw[densely dashed] (0.5*\R,0) arc (0:180:0.5 and 0.1);
\draw (-0.5*\R,0) arc (180:360:0.5 and 0.1);  
\filldraw[fill=white, draw=black, fill opacity=0] (0,0.5*\R) circle (0.06);
 \filldraw[fill=white, draw=black, fill opacity=0] (0.5*\R,0) circle (0.06);
 \draw[thick, double] (0.5*\R,0) arc (0:90:0.5*\R);
\end{tikzpicture} 
\end{aligned}
\times\com\times\Ztwo)/rel_{Mor,3}\\
\ \sqcup 
\begin{aligned}
\begin{tikzpicture}
 \def\R{1} 
\shadedraw[shading=ball,ball color=red](0.4*\R,0.3*\R) arc (asin(0.6):180-asin(0.6):0.5*\R) arc (180:360:0.4 and 0.06) -- cycle;
 \fill[gray, ultra nearly transparent] (0.4*\R,0.3*\R) arc (asin(0.6):-180-asin(0.6):0.5*\R) arc (-180:0:0.4 and 0.06) -- cycle;
\fill[gray, semitransparent] (0,0.3*\R) ellipse (0.4 and 0.05); 
\end{tikzpicture}
\end{aligned}
 \times\com \times\Ztwo\sqcup 
(   
\begin{aligned}
\begin{tikzpicture}
\def\R{1} 
\shadedraw[shading=ball,ball color=red](0.5*\R,0) arc (0:-180:0.5*\R) arc (180:360:0.5 and 0.1) -- cycle;
 \fill[gray, ultra nearly transparent] (0.5*\R,0) arc (0:180:0.5*\R) arc (180:0:0.5 and 0.1) -- cycle;
\filldraw[fill=gray, draw=black,  semitransparent] (0,0) ellipse (0.5 and 0.1); 
\filldraw[fill=white, draw=black, fill opacity=0] (-0.5*\R,0) circle (0.06);
\filldraw[fill=white, draw=black, fill opacity=0] (0,-0.5*\R) circle (0.06);
\draw[thick, double] (0,-0.5*\R) arc (-90:-180:0.5*\R);
\end{tikzpicture} 
\end{aligned}
\times  \com\times\Ztwo\  && \sqcup 
\begin{aligned}
\begin{tikzpicture}
 \def\R{1} 
\shadedraw[shading=ball,ball color=red](0.5*\R,0) arc (0:180:0.5*\R) arc (180:360:0.5 and 0.1) -- cycle;
 \fill[gray, ultra   nearly transparent] (0.5*\R,0) arc (0:-180:0.5*\R) arc (-180:0:0.5 and 0.1)-- cycle;
\filldraw[fill=gray, draw=gray, semitransparent] (0,0) ellipse (0.5 and 0.1); 
\draw[densely dashed] (0.5*\R,0) arc (0:180:0.5 and 0.1);
\draw (-0.5*\R,0) arc (180:360:0.5 and 0.1);
\filldraw[fill=white, draw=black, fill opacity=0] (0,0.5*\R) circle (0.06);
\filldraw[fill=white, draw=black, fill opacity=0] (-0.5*\R,0) circle (0.06);
\draw[thick, double] (-0.5*\R,0) arc (-180:-270:0.5*\R);
\end{tikzpicture} 
\end{aligned}
\times\com\times \Ztwo)/ rel_{Mor,4}.
\end{eqnarray*}

For notational convenience, we identify hemispheres with unit disks.

Now we specify the glueing along the boundary:
\begin{itemize}
\item $rel_{Ob,1}$: for $(e^{i\theta},z)$ in the boundary of 
$
\begin{aligned}
\begin{tikzpicture}
 \def\R{1} 
\shadedraw[shading=ball,ball color=red](0.5*\R,0) arc (0:-180:0.5*\R) arc (180:360:0.5 and 0.1) -- cycle;
  \fill[gray, ultra nearly transparent] (0.5*\R,0) arc (0:180:0.5*\R) arc (180:0:0.5 and 0.1) -- cycle;
\filldraw[fill=gray, draw=black,  semitransparent] (0,0) ellipse (0.5 and 0.1); 
 \filldraw[fill=white, draw=black, fill opacity=0] (0.5*\R,0) circle (0.06);
 \filldraw[fill=white, draw=black, fill opacity=0] (0,-0.5*\R) circle (0.06);
\draw[thick, double] (0.5*\R,0) arc (0:-90:0.5*\R);
\end{tikzpicture} 
\end{aligned}
\times\com$
, $(e^{i\theta'},z')$ in the boundary of 
$
\begin{aligned}
\begin{tikzpicture}
 \def\R{1} 
\shadedraw[shading=ball,ball color=red](0.5*\R,0) arc (0:180:0.5*\R) arc (180:360:0.5 and 0.1) -- cycle;
 \fill[gray, ultra   nearly transparent] (0.5*\R,0) arc (0:-180:0.5*\R) arc (-180:0:0.5 and 0.1)-- cycle;
\filldraw[fill=gray, draw=gray, semitransparent] (0,0) ellipse (0.5 and 0.1); 
\draw[densely dashed] (0.5*\R,0) arc (0:180:0.5 and 0.1);
\draw (-0.5*\R,0) arc (180:360:0.5 and 0.1);  
\filldraw[fill=white, draw=black, fill opacity=0] (0,0.5*\R) circle (0.06);
 \filldraw[fill=white, draw=black, fill opacity=0] (0.5*\R,0) circle (0.06);
 \draw[thick, double] (0.5*\R,0) arc (0:90:0.5*\R);
\end{tikzpicture} 
\end{aligned}
\times\com$, $(e^{i\theta},z)\sim (e^{i\theta'},z')$ i.f.f. $\theta'=-\theta$, $z'=e^{-i\frac{\theta}{2}}z$. Note that here $\theta\in(0,2\pi)$ and $\theta'\in (-2\pi,0)$.

\item $rel_{Ob,2}$: for $(e^{i\theta},z)$ in the boundary of 
$
\begin{aligned}
\begin{tikzpicture}
\def\R{1} 
\shadedraw[shading=ball,ball color=red](0.5*\R,0) arc (0:-180:0.5*\R) arc (180:360:0.5 and 0.1) -- cycle;
 \fill[gray, ultra nearly transparent] (0.5*\R,0) arc (0:180:0.5*\R) arc (180:0:0.5 and 0.1) -- cycle;
\filldraw[fill=gray, draw=black,  semitransparent] (0,0) ellipse (0.5 and 0.1); 
\filldraw[fill=white, draw=black, fill opacity=0] (-0.5*\R,0) circle (0.06);
\filldraw[fill=white, draw=black, fill opacity=0] (0,-0.5*\R) circle (0.06);
\draw[thick, double] (0,-0.5*\R) arc (-90:-180:0.5*\R);
\end{tikzpicture} 
\end{aligned}
\times \com$, $(e^{i\theta'},z')$ in the boundary of 
$
\begin{aligned}
\begin{tikzpicture}
 \def\R{1} 
\shadedraw[shading=ball,ball color=red](0.5*\R,0) arc (0:180:0.5*\R) arc (180:360:0.5 and 0.1) -- cycle;
 \fill[gray, ultra   nearly transparent] (0.5*\R,0) arc (0:-180:0.5*\R) arc (-180:0:0.5 and 0.1)-- cycle;
\filldraw[fill=gray, draw=gray, semitransparent] (0,0) ellipse (0.5 and 0.1); 
\draw[densely dashed] (0.5*\R,0) arc (0:180:0.5 and 0.1);
\draw (-0.5*\R,0) arc (180:360:0.5 and 0.1);
\filldraw[fill=white, draw=black, fill opacity=0] (0,0.5*\R) circle (0.06);
\filldraw[fill=white, draw=black, fill opacity=0] (-0.5*\R,0) circle (0.06);
\draw[thick, double] (-0.5*\R,0) arc (-180:-270:0.5*\R);
\end{tikzpicture}
\end{aligned} 
\times \com$,
 $(e^{i\theta},z)\sim (e^{i\theta'},z')$ i.f.f. $\theta'=-\theta$, $z'=e^{-i\frac{\theta}{2}}z$. Note that here $\theta,\theta'\in (-\pi,\pi)$.

 \item $rel_{Mor,i}, \ i=1,2,3,4$: for boundary elements $(e^{i\theta_{1}}\to e^{i\theta_{2}}, \xymatrix{z \ar[r]^{g} & g\cdot z})$ and $(e^{i\theta_{1}'}\to e^{i\theta_{2}'}, \xymatrix{z' \ar[r]^{g'} & g'\cdot z'})$, $(e^{i\theta_{1}}\to e^{i\theta_{2}}, \xymatrix{z \ar[r]^{g} & g\cdot z}) \sim(e^{i\theta_{1}'}\to e^{i\theta_{2}'}, \xymatrix{z' \ar[r]^{g'} & g'\cdot z'})$ if and only if 
$$\theta_{1}=-\theta_{1}',\ \ \theta_{2}=-\theta_{2}',$$ 
$$z'=e^{-i\frac{\theta_{1}}{2}}z,\  g'\cdot z'=e^{-i\frac{\theta_{2}}{2}}g\cdot z.$$
\end{itemize}

We remark that $(e^{i\theta}\to e^{i(\theta+2\pi)}, \xymatrix{z \ar[r]^{g} & g\cdot z}) \sim(e^{-i\theta}\to e^{-i(\theta+2\pi)}, \xymatrix{e^{-i\frac{\theta}{2}}z\ar[r]^{-g} & -g\cdot e^{-i\frac{\theta}{2}}z})$.
 
Then the projection $\pmb{\pi}:\E\to \B$ is defined by the obvious projection to the first component.
\end{example}

For $\pmb{\pi}:\E\to B$ in Example \ref{exmCtwo}, we have the following result about its sectional morphisms:
\begin{prop}\label{constantsection}
 If $\pmb{s}: (S^{2}_{orb},p)\to \E$ is a sectional orbifold morphism from an orbisphere with one orbipoint (possibly a trivial one) to the orbifiber bundle in Example \ref{exmCtwo}, and $\pmb{s}$ lifts the zero section in $|\pmb{\pi}|:|\E|\to S^{2}$, then the orbifold structure group at $p$ is $\Ztwo$. In particular, there is no sectional morphism from the smooth sphere $S^{2}$ to $\E$ which lifts the zero section.
\end{prop}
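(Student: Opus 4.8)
The plan is to reduce $\pmb{s}$ to a morphism of atlas groupoids and then extract the monodromy of the fiber group $\Ztwo$ around the equatorial clutching circle of $S^{2}$. First I would present the domain: choose an atlas groupoid $\gG_{\B}$ for $\B=(S^{2}_{orb},p)$ by covering $S^{2}$ with a disk $D_{N}$ around $p$, uniformized by $(\com_{w},\Zm)$ with $\Zm$ acting by $w\mapsto \zeta_{m}w$, $\zeta_{m}=e^{2\pi i/m}$, together with a smooth disk $D_{S}$ covering the rest of $S^{2}$; the equatorial overlap is an honest annulus carrying no isotropy. By the Dictionary Lemmas of \S\ref{sec:stack_review}, after refining the cover $\pmb{s}$ is represented by a groupoid morphism $\phi\colon\gG_{\B}\to\gG_{\E}$ covering $\mathrm{id}_{S^{2}}$. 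Since $\pmb{s}$ lifts the zero section, $\phi$ sends objects into the locus $\{z=0\}$, over which every fiber has isotropy $\Ztwo$; hence $\phi$ induces a homomorphism $\rho\colon G_{p}\to\Ztwo$ on the isotropy group $G_{p}=\Zm$ at $p$, while it is trivial on the (trivial) isotropy over the smooth part.

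The heart of the argument is the monodromy computation on the target. Transporting a fiber automorphism once around the equator in $\gG_{\E}$ is governed by the gluing data $rel_{Ob}$ and $rel_{Mor}$ of Example~\ref{exmCtwo}: the clutching is the half-rotation $z\mapsto e^{-i\theta/2}z$, which is double-valued on $S^{1}$, the ambiguity $\pm1$ being absorbed precisely by the $\Ztwo$ in the fiber. Concretely, the displayed remark at the end of Example~\ref{exmCtwo} shows that the winding morphism $e^{i\theta}\to e^{i(\theta+2\pi)}$ carries a fiber automorphism $g$ to $-g$; restricting to $z=0$ this says that the groupoid holonomy of the equatorial loop in $\gG_{\E}$ is the nontrivial element $-1\in\Ztwo$, independently of the choices of local transitions. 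On the domain side, lifting the coarse equatorial loop to the chart $\com_{w}$ (an $m$-fold cover of the coarse annulus) returns shifted by $\zeta_{m}$, so the groupoid holonomy of the equatorial loop in $\gG_{\B}$ is exactly the generator of $G_{p}$. Functoriality of $\phi$ then forces $\rho(\zeta_{m})=-1$, so $\rho$ is surjective and $m$ is even.

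Two conclusions follow. If the domain were the smooth sphere, $G_{p}$ would be trivial, its equatorial holonomy would be the identity, and functoriality would demand $-1=\mathrm{id}$ in $\Ztwo$, a contradiction; this proves the final assertion that no sectional morphism lifts the zero section on the smooth $S^{2}$. To see that $G_{p}$ is exactly $\Ztwo$ rather than a larger even cyclic group, I would invoke representability: the sectional morphisms entering the definition of orbifold Gromov--Witten invariants are injective on isotropy (equivalently, $\B$ is taken to be the minimal orbifold structure making $\pmb{s}$ an orbifold morphism), so $\rho\colon\Zm\to\Ztwo$ is injective as well as surjective, whence $\Zm\cong\Ztwo$, i.e. $m=2$.

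I expect the main obstacle to be the monodromy computation of the second paragraph: one must track the half-rotation clutching through the explicit groupoid $\gG_{\E}$ of Example~\ref{exmCtwo}, accounting for both components of each overlap in $Mor(\gG_{\E})$ and for the double-valuedness of $e^{-i\theta/2}$, in order to be certain that the net transport of the fiber automorphism once around the equator is the nontrivial element of $\Ztwo$ and not the identity. Once this $-1$ is firmly established and matched against the $\Zm$-holonomy of the domain, the nontriviality of $\rho$ (hence the nonexistence of a smooth section) is immediate, and representability pins the structure group down to $\Ztwo$.
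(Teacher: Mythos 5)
Your proposal is correct, and it reaches the conclusion by a route that is organized differently from the paper's, although both ultimately rest on the same piece of gluing data: the wrap-around identification in Example \ref{exmCtwo} sending a fiber automorphism $g$ to $-g$. The paper argues by explicit chart computation: it writes down the restriction $\check{\pmb{s}}$ away from $p$ (your transition data $\eta=-1$ on the winding morphisms), exhibits the $\Ztwo$-extension concretely via the angle-doubling chart $\pmb{s}_{0}(re^{i\theta})=(re^{i2\theta},0)$, and then refutes the smooth extension by showing the cocycle condition fails on three explicit arrows $a_{1},a_{2},a_{3}$ near $p$: the composite $\pmb{s}(a_{3})=\pmb{s}(a_{2})\circ\pmb{s}(a_{1})$ would have to take the value $+1$ for $\theta\in(0,\pi)$ and $-1$ for $\theta\in(\pi,2\pi)$, while $\pmb{s}(a_{1})$ and $\pmb{s}(a_{2})$ are each forced to be constant. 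That contradiction is exactly your monodromy equation $\rho(\zeta_{m})=-1$ specialized to $m=1$. What your packaging buys is uniformity: the single constraint $\rho(\zeta_{m})=-1$ simultaneously rules out the smooth sphere and shows $m$ must be even, whereas the paper treats only the trivial structure explicitly and dismisses larger groups with a one-line appeal to uniqueness of the extension; what the paper's version buys is that it never leaves the explicit groupoid $\gG_{\E}$, so no separate justification of the holonomy formalism (maps lifting the zero section as $\Ztwo$-bundles, clutching of the gerbe $\E|_{z=0}$) is needed.

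Your appeal to representability in the final step is not a blemish but a necessity, and it is the one place where your argument is more careful than the paper's. With Definition \ref{orbisection} alone, which does not require representability, the literal statement fails for larger even groups: taking $m=4$ and $\rho\colon\inte_{4}\to\Ztwo$ the surjection, your own gluing criterion $\rho(\zeta_{4})=-1$ is satisfied, so a non-representable sectional orbifold morphism with $\inte_{4}$-structure at $p$ lifting the zero section does exist. Hence the conclusion ``the structure group is exactly $\Ztwo$'' requires either representability or minimality of the domain structure, which is consistent with how the proposition is used downstream (stable orbifold morphisms in the sense of Definition \ref{defStableOrbiMorph} are representable), but is only implicit in the paper's phrase ``by uniqueness of this extension.'' Making that hypothesis explicit, as you do, closes a gap in the paper's own exposition rather than opening one in yours.
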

\begin{proof}
Given any orbifold morphism from a sphere with at most one orbifold point at $p$, it determines an orbifold morphism $\check{\pmb{s}}$ away from $p$ by restriction. Without loss of generality, we may assume $p$ to be the north pole of the sphere. Since there is no other orbifold point on the sphere and $\pmb{s}$ lifts the zero section, $\check{\pmb{s}}$ can be represented by the groupoid morphism:

\begin{eqnarray*}
\check{\pmb{s}}_{0}=(id,0): \ Ob(U_{\check{S}^{2}})=
\begin{aligned}
\begin{tikzpicture}
 \def\R{1} 
\shadedraw[shading=ball,ball color=red](0.4*\R,-0.3*\R) arc (-asin(0.6):-180+asin(0.6):0.5*\R) arc (180:360:0.4 and 0.06) -- cycle;
 \filldraw[gray, ultra nearly transparent] (0.4*\R,-0.3*\R) arc (-asin(0.6):180+asin(0.6):0.5*\R) arc (180:0:0.4 and 0.06)-- cycle;

\fill[gray, semitransparent] (0,-0.3*\R) ellipse (0.4 and 0.05); 
\end{tikzpicture} 
\end{aligned}
 \sqcup  
 \begin{aligned}
\begin{tikzpicture}
 \def\R{1} 
 \shadedraw[shading=ball,ball color=red, white] (0,0) circle (0.5*\R);
 \filldraw[fill=white, draw=black, fill opacity=0] (0,0.5*\R) circle (0.06);
 \filldraw[fill=white, draw=black, fill opacity=0] (0,-0.5*\R) circle (0.06);
\draw[thick, double] (0,0.5*\R) arc (90:-90:0.5*\R);
\end{tikzpicture} 
\end{aligned}
\sqcup 
\begin{aligned}
\begin{tikzpicture}
 \def\R{1} 
 \shadedraw[shading=ball,ball color=red, white] (0,0) circle (0.5*\R);
  \filldraw[fill=white, draw=black, fill opacity=0] (0,0.5*\R) circle (0.06);
 \filldraw[fill=white, draw=black, fill opacity=0] (0,-0.5*\R) circle (0.06);
 \draw[thick, double] (0,-0.5*\R) arc (-90:-270:0.5*\R);
\end{tikzpicture} 
\end{aligned}
\ \ \ \ \ \ \ \ \ \ \ \ \ \ \ \ \ \ \ \ \ \ \ \ \ \ \ \ \ \ \ \ \ \ \ \ \ \ \ \ \ \ \ \ \longrightarrow\ \ \ \ \ \ \ \ \ \ \ \ Ob(\gG_{\E})
\end{eqnarray*}

\begin{eqnarray*}
\check{\pmb{s}}_{1}=(id,0,\eta): \ Mor(U_{\check{S}^{2}})=
\begin{aligned}
\begin{tikzpicture}
 \def\R{1} 
\shadedraw[shading=ball,ball color=red](0.4*\R,-0.3*\R) arc (-asin(0.6):-180+asin(0.6):0.5*\R) arc (180:360:0.4 and 0.06) -- cycle;
 \filldraw[gray, ultra nearly transparent] (0.4*\R,-0.3*\R) arc (-asin(0.6):180+asin(0.6):0.5*\R) arc (180:0:0.4 and 0.06)-- cycle;

\fill[gray, semitransparent] (0,-0.3*\R) ellipse (0.4 and 0.05); 
 \filldraw[fill=white, draw=black, fill opacity=0] (0.4*\R,-0.3*\R) circle (0.06);
 \filldraw[fill=white, draw=black, fill opacity=0] (0,-0.5*\R) circle (0.06);
 \draw[thick, double] (0.4*\R,-0.3*\R) arc (-asin(0.6):-90:0.5*\R);
\end{tikzpicture} 
\end{aligned}
\sqcup    
\begin{aligned}
\begin{tikzpicture}
\def\R{1} 
\shadedraw[shading=ball,ball color=red](0.4*\R,-0.3*\R) arc (-asin(0.6):-180+asin(0.6):0.5*\R) arc (180:360:0.4 and 0.06) -- cycle;
\filldraw[gray, ultra nearly transparent] (0.4*\R,-0.3*\R) arc (-asin(0.6):180+asin(0.6):0.5*\R) arc (180:0:0.4 and 0.06)-- cycle;

\fill[gray, semitransparent] (0,-0.3*\R) ellipse (0.4 and 0.05); 
\filldraw[fill=white, draw=black, fill opacity=0] (-0.4*\R,-0.3*\R) circle (0.06);
\filldraw[fill=white, draw=black, fill opacity=0] (0,-0.5*\R) circle (0.06);
\draw[thick, double] (0,-0.5*\R) arc (-90:-180+asin(0.6):0.5*\R);
\end{tikzpicture} 
\end{aligned}
\sqcup   
\begin{aligned}
\begin{tikzpicture}
\def\R{1} 
\shadedraw[shading=ball,ball color=red, white] (0,0) circle (0.5*\R);
\draw[thick, double] (0,0) circle (0.5*\R);
\end{tikzpicture} 
\end{aligned}
&& \sqcup\\
\begin{aligned}
\begin{tikzpicture}
 \def\R{1} 
\shadedraw[shading=ball,ball color=red](0.4*\R,-0.3*\R) arc (-asin(0.6):-180+asin(0.6):0.5*\R) arc (180:360:0.4 and 0.06) -- cycle;
 \filldraw[gray, ultra nearly transparent] (0.4*\R,-0.3*\R) arc (-asin(0.6):180+asin(0.6):0.5*\R) arc (180:0:0.4 and 0.06)-- cycle;

\fill[gray, semitransparent] (0,-0.3*\R) ellipse (0.4 and 0.05); 
 \filldraw[fill=white, draw=black, fill opacity=0] (0.4*\R,-0.3*\R) circle (0.06);
 \filldraw[fill=white, draw=black, fill opacity=0] (0,-0.5*\R) circle (0.06);
 \draw[thick, double] (0.4*\R,-0.3*\R) arc (-asin(0.6):-90:0.5*\R);
\end{tikzpicture} 
\end{aligned}
\sqcup
\begin{aligned}
\begin{tikzpicture}
\def\R{1} 
\shadedraw[shading=ball,ball color=red](0.4*\R,-0.3*\R) arc (-asin(0.6):-180+asin(0.6):0.5*\R) arc (180:360:0.4 and 0.06) -- cycle;
\filldraw[gray, ultra nearly transparent] (0.4*\R,-0.3*\R) arc (-asin(0.6):180+asin(0.6):0.5*\R) arc (180:0:0.4 and 0.06)-- cycle;

\fill[gray, semitransparent] (0,-0.3*\R) ellipse (0.4 and 0.05); 
\filldraw[fill=white, draw=black, fill opacity=0] (-0.4*\R,-0.3*\R) circle (0.06);
\filldraw[fill=white, draw=black, fill opacity=0] (0,-0.5*\R) circle (0.06);
\draw[thick, double] (0,-0.5*\R) arc (-90:-180+asin(0.6):0.5*\R);
\end{tikzpicture} 
\end{aligned}
&& \sqcup 
\begin{aligned}
\begin{tikzpicture}
\def\R{1} 
\shadedraw[shading=ball,ball color=red, white] (0,0) circle (0.5*\R);
\draw[thick, double] (0,0) circle (0.5*\R);
\end{tikzpicture} 
\end{aligned}
\ \ \ \ \ \ \ \ \ \ \ \ \ \ \ \ \ \ \ \longrightarrow\ \ \ \ \ \ \ \  Mor(\gG_{\E})\\
\sqcup
\begin{aligned}
\begin{tikzpicture}
 \def\R{1} 
\shadedraw[shading=ball,ball color=red](0.4*\R,-0.3*\R) arc (-asin(0.6):-180+asin(0.6):0.5*\R) arc (180:360:0.4 and 0.06) -- cycle;
 \filldraw[gray, ultra nearly transparent] (0.4*\R,-0.3*\R) arc (-asin(0.6):180+asin(0.6):0.5*\R) arc (180:0:0.4 and 0.06)-- cycle;

\fill[gray, semitransparent] (0,-0.3*\R) ellipse (0.4 and 0.05); 
\end{tikzpicture} 
\end{aligned}
&& \sqcup
\begin{aligned}
\begin{tikzpicture}
 \def\R{1} 
 \shadedraw[shading=ball,ball color=red, white] (0,0) circle (0.5*\R);
 \filldraw[fill=white, draw=black, fill opacity=0] (0,0.5*\R) circle (0.06);
 \filldraw[fill=white, draw=black, fill opacity=0] (0,-0.5*\R) circle (0.06);
\draw[thick, double] (0,0.5*\R) arc (90:-90:0.5*\R);
\end{tikzpicture} 
\end{aligned}
\sqcup   
\begin{aligned}
\begin{tikzpicture}
 \def\R{1} 
 \shadedraw[shading=ball,ball color=red, white] (0,0) circle (0.5*\R);
  \filldraw[fill=white, draw=black, fill opacity=0] (0,0.5*\R) circle (0.06);
 \filldraw[fill=white, draw=black, fill opacity=0] (0,-0.5*\R) circle (0.06);
 \draw[thick, double] (0,-0.5*\R) arc (-90:-270:0.5*\R);
\end{tikzpicture} 
\end{aligned}
\end{eqnarray*}
where 
\begin{eqnarray*}
\eta(re^{i\theta}\to re^{i\theta'})= && -1\in \Ztwo \ \ \ \ \ \  for \ \ re^{i\theta}\to re^{i\theta'}\in
\begin{aligned}
\begin{tikzpicture}
 \def\R{1} 
\shadedraw[shading=ball,ball color=red](0.5*\R,0) arc (0:180:0.5*\R) arc (180:360:0.5 and 0.1) -- cycle;
\fill[fill=gray,  semitransparent] (0,0) ellipse (0.5 and 0.1); 
 \fill[gray, ultra nearly transparent] (-0.5*\R,0) arc (-180:0:0.5*\R) arc (0:-180:0.5 and 0.1) -- cycle;
 \draw[gray, loosely dashed] (0.5*\R,0) arc (0:180:0.5 and 0.1);
 \filldraw[fill=white, draw=black, fill opacity=0] (0.5*\R,0) circle (0.06);
 \filldraw[fill=white, draw=black, fill opacity=0] (-0.5*\R,0) circle (0.06);
\draw[thick, double] (0.5*\R,0) arc (0:180:0.5*\R);
\end{tikzpicture} 
\end{aligned}
\ \ \ \ (where \ \theta'=\theta\pm2\pi)\\
&& 1\ \ \ \in \Ztwo \ \ \ \ \ \ \ \ \  everywhere\ \ else.
\end{eqnarray*}

To extend  $\check{\pmb{s}}$ to $p$, we define $\pmb{s}$ on
$$
\begin{aligned}
\begin{tikzpicture}
 \def\R{1} 
\shadedraw[shading=ball,ball color=red](0.4*\R,0.3*\R) arc (asin(0.6):180-asin(0.6):0.5*\R) arc (180:360:0.4 and 0.06) -- cycle;
 \fill[gray, ultra nearly transparent] (0.4*\R,0.3*\R) arc (asin(0.6):-180-asin(0.6):0.5*\R) arc (-180:0:0.4 and 0.06) -- cycle;
\fill[gray, semitransparent] (0,0.3*\R) ellipse (0.4 and 0.05); 
\end{tikzpicture}
\end{aligned}
\times \Ztwo \twoarrows 
\begin{aligned}
\begin{tikzpicture}
 \def\R{1} 
\shadedraw[shading=ball,ball color=red](0.4*\R,0.3*\R) arc (asin(0.6):180-asin(0.6):0.5*\R) arc (180:360:0.4 and 0.06) -- cycle;
 \fill[gray, ultra nearly transparent] (0.4*\R,0.3*\R) arc (asin(0.6):-180-asin(0.6):0.5*\R) arc (-180:0:0.4 and 0.06) -- cycle;
\fill[gray, semitransparent] (0,0.3*\R) ellipse (0.4 and 0.05); 
\end{tikzpicture}
\end{aligned}
$$
as: 
$$\pmb{s}_{0}(re^{i\theta})=(re^{i2\theta},0),$$
$$\pmb{s}_{1}(\xymatrix{re^{i\theta}\ar[r]^{\pm 1} & \pm 1\cdot re^{i\theta}})=\xymatrix{(re^{i2\theta},0)\ar[r]^{\pm1} & (re^{i2\theta},0)}.$$
It is straightforward to verify the morphism above together with $\check{\pmb{s}}$ define a morphism from an orbisphere with $\Ztwo$-orbipoint at $p$ to $\gG_{\E}$. 

By uniqueness of this extension, this is the only morphism satisfying the given condition. Actually, if there is no orbifold structure at $p$, then one restrict $\pmb{s}$ to 
$
\begin{aligned}
\begin{tikzpicture}
 \def\R{1} 
\shadedraw[shading=ball,ball color=red](0.4*\R,0.3*\R) arc (asin(0.6):180-asin(0.6):0.5*\R) arc (180:360:0.4 and 0.06) -- cycle;
 \fill[gray, ultra nearly transparent] (0.4*\R,0.3*\R) arc (asin(0.6):-180-asin(0.6):0.5*\R) arc (-180:0:0.4 and 0.06) -- cycle;
\fill[gray, semitransparent] (0,0.3*\R) ellipse (0.4 and 0.05); 
\end{tikzpicture}
\end{aligned}
$.  Consider arrows:

$a_{1}:=re^{i\theta}\to re^{i\theta}\in
\begin{aligned}
\begin{tikzpicture}
 \def\R{1} 
\shadedraw[shading=ball,ball color=red](0.4*\R,0.3*\R) arc (asin(0.6):180-asin(0.6):0.5*\R) arc (180:360:0.4 and 0.06) -- cycle;
 \fill[gray, ultra nearly transparent] (0.4*\R,0.3*\R) arc (asin(0.6):-180-asin(0.6):0.5*\R) arc (-180:0:0.4 and 0.06) -- cycle;
\fill[gray, semitransparent] (0,0.3*\R) ellipse (0.4 and 0.05); 
\filldraw[fill=white, draw=black, fill opacity=0] (0.4*\R,0.3*\R) circle (0.06);
\filldraw[fill=white, draw=black, fill opacity=0] (0,0.5*\R) circle (0.06);
\draw[thick, double] (0,0.5*\R) arc (90:asin(0.6):0.5*\R);
\end{tikzpicture}
\end{aligned}
=
\begin{aligned}
\begin{tikzpicture}
 \def\R{1} 
\shadedraw[shading=ball,ball color=red](0.4*\R,0.3*\R) arc (asin(0.6):180-asin(0.6):0.5*\R) arc (180:360:0.4 and 0.06) -- cycle;
 \fill[gray, ultra nearly transparent] (0.4*\R,0.3*\R) arc (asin(0.6):-180-asin(0.6):0.5*\R) arc (-180:0:0.4 and 0.06) -- cycle;
\fill[gray, semitransparent] (0,0.3*\R) ellipse (0.4 and 0.05); 
\filldraw[fill=white, draw=black, fill opacity=0] (0.4*\R,0.3*\R) circle (0.06);
\filldraw[fill=white, draw=black, fill opacity=0] (0,0.5*\R) circle (0.06);
\draw[thick, double] (0,0.5*\R) arc (90:asin(0.6):0.5*\R);
\end{tikzpicture}
\end{aligned}
\times_{S^{2}}
\begin{aligned}
\begin{tikzpicture}
 \def\R{1} 
\shadedraw[shading=ball,ball color=red](0.4*\R,0.3*\R) arc (asin(0.6):180-asin(0.6):0.5*\R) arc (180:360:0.4 and 0.06) -- cycle;
 \fill[gray, ultra nearly transparent] (0.4*\R,0.3*\R) arc (asin(0.6):-180-asin(0.6):0.5*\R) arc (-180:0:0.4 and 0.06) -- cycle;
\fill[gray, semitransparent] (0,0.3*\R) ellipse (0.4 and 0.05); 
\end{tikzpicture}
\end{aligned}
, \ \ \theta\in (0,2\pi),$

$a_{2}:=re^{i\theta' }\to re^{i\theta'}\in
\begin{aligned}
\begin{tikzpicture}
 \def\R{1} 
\shadedraw[shading=ball,ball color=red](0.4*\R,0.3*\R) arc (asin(0.6):180-asin(0.6):0.5*\R) arc (180:360:0.4 and 0.06) -- cycle;
 \fill[gray, ultra nearly transparent] (0.4*\R,0.3*\R) arc (asin(0.6):-180-asin(0.6):0.5*\R) arc (-180:0:0.4 and 0.06) -- cycle;
\fill[gray, semitransparent] (0,0.3*\R) ellipse (0.4 and 0.05); 
\filldraw[fill=white, draw=black, fill opacity=0] (-0.4*\R,0.3*\R) circle (0.06);
\filldraw[fill=white, draw=black, fill opacity=0] (0,0.5*\R) circle (0.06);
\draw[thick, double] (-0.4*\R,0.3*\R) arc (180-asin(0.6):90:0.5*\R);
\end{tikzpicture}
\end{aligned}
=\begin{aligned}\begin{tikzpicture}
 \def\R{1} 
\shadedraw[shading=ball,ball color=red](0.4*\R,0.3*\R) arc (asin(0.6):180-asin(0.6):0.5*\R) arc (180:360:0.4 and 0.06) -- cycle;
 \fill[gray, ultra nearly transparent] (0.4*\R,0.3*\R) arc (asin(0.6):-180-asin(0.6):0.5*\R) arc (-180:0:0.4 and 0.06) -- cycle;
\fill[gray, semitransparent] (0,0.3*\R) ellipse (0.4 and 0.05); 
\end{tikzpicture}\end{aligned}
\times_{S^{2}}
\begin{aligned}
\begin{tikzpicture}
 \def\R{1} 
\shadedraw[shading=ball,ball color=red](0.4*\R,0.3*\R) arc (asin(0.6):180-asin(0.6):0.5*\R) arc (180:360:0.4 and 0.06) -- cycle;
 \fill[gray, ultra nearly transparent] (0.4*\R,0.3*\R) arc (asin(0.6):-180-asin(0.6):0.5*\R) arc (-180:0:0.4 and 0.06) -- cycle;
\fill[gray, semitransparent] (0,0.3*\R) ellipse (0.4 and 0.05); 
\filldraw[fill=white, draw=black, fill opacity=0] (-0.4*\R,0.3*\R) circle (0.06);
\filldraw[fill=white, draw=black, fill opacity=0] (0,0.5*\R) circle (0.06);
\draw[thick, double] (-0.4*\R,0.3*\R) arc (180-asin(0.6):90:0.5*\R);
\end{tikzpicture}\end{aligned}
, \ \ \ \ \theta'\in (-\pi,\pi),$

When $re^{i\theta'}=re^{i\theta}$:

$a_{3}:=a_{2}\circ a_{1}\in
\begin{aligned}
\begin{tikzpicture}
 \def\R{1} 
\shadedraw[shading=ball,ball color=red](0.4*\R,0.3*\R) arc (asin(0.6):180-asin(0.6):0.5*\R) arc (180:360:0.4 and 0.06) -- cycle;
 \fill[gray, ultra nearly transparent] (0.4*\R,0.3*\R) arc (asin(0.6):-180-asin(0.6):0.5*\R) arc (-180:0:0.4 and 0.06) -- cycle;
\fill[gray, semitransparent] (0,0.3*\R) ellipse (0.4 and 0.05); 
\filldraw[fill=white, draw=black, fill opacity=0] (0.4*\R,0.3*\R) circle (0.06);
\filldraw[fill=white, draw=black, fill opacity=0] (0,0.5*\R) circle (0.06);
\filldraw[fill=white, draw=black, fill opacity=0] (-0.4*\R,0.3*\R) circle (0.06);
\draw[thick, double] (-0.4*\R,0.3*\R) arc (180-asin(0.6):asin(0.6):0.5*\R);
\end{tikzpicture}\end{aligned}
=
\begin{aligned}
\begin{tikzpicture}
 \def\R{1} 
\shadedraw[shading=ball,ball color=red](0.4*\R,0.3*\R) arc (asin(0.6):180-asin(0.6):0.5*\R) arc (180:360:0.4 and 0.06) -- cycle;
 \fill[gray, ultra nearly transparent] (0.4*\R,0.3*\R) arc (asin(0.6):-180-asin(0.6):0.5*\R) arc (-180:0:0.4 and 0.06) -- cycle;
\fill[gray, semitransparent] (0,0.3*\R) ellipse (0.4 and 0.05); 
\filldraw[fill=white, draw=black, fill opacity=0] (0.4*\R,0.3*\R) circle (0.06);
\filldraw[fill=white, draw=black, fill opacity=0] (0,0.5*\R) circle (0.06);
\draw[thick, double] (0,0.5*\R) arc (90:asin(0.6):0.5*\R);
\end{tikzpicture}
\end{aligned}
\times_{S^{2}}
\begin{aligned}
\begin{tikzpicture}
 \def\R{1} 
\shadedraw[shading=ball,ball color=red](0.4*\R,0.3*\R) arc (asin(0.6):180-asin(0.6):0.5*\R) arc (180:360:0.4 and 0.06) -- cycle;
 \fill[gray, ultra nearly transparent] (0.4*\R,0.3*\R) arc (asin(0.6):-180-asin(0.6):0.5*\R) arc (-180:0:0.4 and 0.06) -- cycle;
\fill[gray, semitransparent] (0,0.3*\R) ellipse (0.4 and 0.05); 
\filldraw[fill=white, draw=black, fill opacity=0] (-0.4*\R,0.3*\R) circle (0.06);
\filldraw[fill=white, draw=black, fill opacity=0] (0,0.5*\R) circle (0.06);
\draw[thick, double] (-0.4*\R,0.3*\R) arc (180-asin(0.6):90:0.5*\R);
\end{tikzpicture}\end{aligned}.
$

Note that there are two choices for $\pmb{s}(a_{1})$ and $\pmb{s}(a_{2})$:
$$\pmb{s}(a_{1})=\xymatrix{(re^{i\theta},0)\ar[r]^{\pm1} & (re^{i\theta},0)},\ \theta\in(0,2\pi)$$
$$\pmb{s}(a_{2})=\xymatrix{(re^{i(\theta')},0)\ar[r]^{\pm1} & (re^{i(\theta')},0)}, \ \theta'\in(-\pi,\pi).$$
But 
$$\pmb{s}(a_{3})=\xymatrix{(re^{i\theta},0)\ar[r]^{1} & (re^{i\theta},0)}\ \ for \ \theta\in (0,\pi)$$
$$\pmb{s}(a_{3})=\xymatrix{(re^{i\theta},0)\ar[r]^{-1} & (re^{i\theta},0)}\ \ for \ \theta\in (\pi,2\pi)$$
Thus it is impossible to choose the values of $\pmb{s}(a_{1})$ and $\pmb{s}(a_{2})$ so that
$$\pmb{s}(a_{3})=\pmb{s}(a_{2})\circ\pmb{s}(a_{1}).$$
\end{proof}

\begin{numrmk}
One can see that the nontrivial orbifold structure of the sectional orbifold morphism comes from the fact that the cocycle condition holds only with respect to injections of different orbifold charts. With respect to embeddings of open sets, the cocycle condition holds only upto a 2-morphism.
\end{numrmk}

\subsection{Hamiltonian Orbifiber Bundles over Sphere}\label{orbifiberbundleS2}

In this section we will construct orbifiber bundles over sphere for any Hamiltonian loop of any symplecitic orbifold generalizing Example \ref{exmCtwo}. 

Let $(\X,\omega)$ be a symplectic orbifold. Identify $S^{2}$ with $\com P^{1}=(\com^{2}- \{0\})/\com^{*}$. For a small positive number $\delta$, denote $D_{-}(1+\delta):=\{[1,z]\in \com P^{1}||z|<1+\delta\}$, $D_{+}(1+\delta):=\{[z,1]\in \com P^{1}||z|<1+\delta\}$. The intersection of $D_{-}(1+\delta)\cap D_{+}(1+\delta)$ is an annulus $Ann=\{[z,1]\in \com P^{1}|1/(1+\delta)<|z|<1+\delta\}$. Denote by $i_{+}$ and $i_{-}$ the inclusions of $Ann$ into $D_{+}(1+\delta)$ and $D_{-}(1+\delta)$ respectively. Denote by $\Theta:Ann\to S^{1}$ the map sending $[re^{i\theta},1]$ to $e^{i\theta}$.
By definition, a smooth Hamiltonian loop $\pmb{\gamma}$ comes with a stack map $\tilde{\gamma}:S^{1}\times \X\to\X$. This determines an open embedding of stacks:
$$
em_{+}:=(i_{+}\circ pr_{Ann})\times\tilde{\gamma}\circ(\Theta,Id_{\X}):Ann\times \X\to D_{+}(1+\delta)\times \X.
$$
Together with the obvious embedding 
$$
em_{-}:=(i_{-}, Id_{\X}):Ann\times \X\to D_{-}(1+\delta)\times \X, 
$$
we glue the two stacks $D_{+}(1+\delta)\times \X$ and $D_{-}(1+\delta)\times \X$, and denote the resulting stack by $\E_{\gamma}$. There is an obvious projection $\pmb{\pi}$ from $\E_{\gamma}$ to $S^{2}$ determined by projection to the first factor on each piece. Thus we get an orbifiber bundle $\pmb{\pi}:\E_{\gamma}\to S^{2}$.

\begin{prop}\label{homotopylem}
If two smooth loops of orbifold diffeomorphisms $\pmb{\gamma}_{0}$ and $\pmb{\gamma}_{1}$ are homotopy equivalent, then there is an orbifiber bundle isomorphism between $\E_{\pmb{\gamma}_{0}}$ and $\E_{\pmb{\gamma}_{1}}$.
\end{prop}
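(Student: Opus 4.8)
The plan is to promote the clutching construction of $\E_{\pmb\gamma}$ to a family over the homotopy parameter and then trivialize that family in the parameter direction by integrating a suitable vector field, exactly as in the manifold case but carried out at the level of stacks.

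First I would replace the given relative homotopy by a smooth one. By Lemma \ref{smthhomotopylem} there is a smooth relative homotopy between $\pmb\gamma_0$ and $\pmb\gamma_1$, i.e.\ a stack map $\tilde\Gamma\colon I\times S^1\times\X\to\X$ with $\tilde\Gamma|_{\{0\}\times S^1\times\X}=\tilde\gamma_0$ and $\tilde\Gamma|_{\{1\}\times S^1\times\X}=\tilde\gamma_1$ (up to $2$-morphisms). Running the gluing of \S\ref{orbifiberbundleS2} with the $s$-family $\tilde\Gamma_s:=\tilde\Gamma|_{\{s\}\times S^1\times\X}$ in place of the single map $\tilde\gamma$ — that is, gluing $I\times D_{-}(1+\delta)\times\X$ and $I\times D_{+}(1+\delta)\times\X$ along $I\times Ann\times\X$ by $(s,i_{-}(a),x)\sim(s,i_{+}(a),\tilde\Gamma_s(\Theta(a),x))$ — produces an orbifiber bundle $\pmb\pi\colon\E_{\Gamma}\to I\times S^2$ whose restriction to $\{s\}\times S^2$ is $\E_{\pmb\gamma_s}$, and in particular $\E_{\Gamma}|_{\{0\}\times S^2}=\E_{\pmb\gamma_0}$ and $\E_{\Gamma}|_{\{1\}\times S^2}=\E_{\pmb\gamma_1}$.

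The core step is to produce a vector field $V$ on $\E_{\Gamma}$ that lifts $\partial_s$ on $I\times S^2$ and has vanishing $S^2$-component, so that its flow covers translation in the parameter $s$ while fixing the $S^2$-coordinate. On each of the two coordinate pieces $I\times D_{\pm}(1+\delta)\times\X$ the naive lift is $V_{\pm}:=\partial_s$ (with no disk- or $\X$-component). These two lifts do not agree on the overlap: transporting the lift from the minus piece to the plus piece through the clutching identification produces an extra, purely vertical, term recording the $s$-derivative of $\tilde\Gamma_s$ along the $\X$-factor. Choosing a partition of unity $\{\chi_{-},\chi_{+}\}$ subordinate to $\{D_{-}(1+\delta),D_{+}(1+\delta)\}$ and setting $V:=\chi_{-}V_{-}+\chi_{+}V_{+}$ (with both summands expressed in a common chart via the gluing) yields a global lift whose only correction to $\partial_s$ is tangent to the fibers. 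The point requiring care is that $V$ must be a genuine vector field on the stack $\E_{\Gamma}$: I would build it on an atlas groupoid presentation of $\E_{\Gamma}$ as an $X_1$-invariant pair $(v_1,v_0)$ in the sense of Proposition \ref{dictionVector}, checking that the correction term, being pulled back from $\tilde\Gamma$, is invariant under the groupoid and hence descends.

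Finally I would integrate $V$. Since $I$, $S^2$ and the fiber $\X$ are all compact, the flow of $V$ exists for all $s\in[0,1]$ by the existence and global-existence results of Hepworth (\cite[Theorems 4.4--4.5 and 4.8]{Hep}), reasoning as in the construction of the integral of a time-dependent vector field in \S\ref{sec:vec_field}. Because $V$ covers $\partial_s$ and kills the $S^2$-direction, its time-$1$ flow restricts to a stack map $\E_{\pmb\gamma_0}=\E_{\Gamma}|_{\{0\}\times S^2}\to\E_{\Gamma}|_{\{1\}\times S^2}=\E_{\pmb\gamma_1}$ commuting with the projections to $S^2$; its inverse is the time-$(-1)$ flow, so it is a diffeomorphism and hence an orbifiber bundle isomorphism in the sense of Definition \ref{orbifibrmorphism}. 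The main obstacle is the middle step: making the patched lift $V$ descend to an honest vector field on the stack $\E_{\Gamma}$ and verifying that its flow respects the bundle structure, since all of this must be done groupoid-equivariantly rather than pointwise.
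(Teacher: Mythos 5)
Your argument is correct in outline, but it takes a genuinely different route from the paper's. The paper never builds a family over $I\times S^2$ nor integrates a vector field: it first reduces, via $\pmb{\gamma}_0^{-1}$ and Lemma \ref{smthhomotopylem}, to a smooth homotopy $\tilde{\Gamma}:I\times S^1\times\X\to\X$ between $\tilde{\gamma}_{01}:=\tilde{\gamma}_1\circ(pr_{S^1}\times\tilde{\gamma}_0^{-1})$ and the constant identity loop, then reparametrizes this homotopy onto an annulus $\check{A}\subset D_{+}(1+\delta)$ by a map $\zeta:\check{A}\to[0,1]\times S^1$ that is collapsing near the two boundary circles, and uses $\check{\Gamma}:=\tilde{\Gamma}\circ(\zeta\times pr_\X)$ to define an explicit ``untwisting'' diffeomorphism $\Psi_{+}$ of $D_{+}(1+\delta)\times\X$; gluing $\Psi_{+}$ with the identity $\Psi_{-}$ on $D_{-}(1+\delta)\times\X$ (the two agree up to $2$-morphism on the overlap) gives the isomorphism $\E_{\pmb{\gamma}_0}\to\E_{\pmb{\gamma}_1}$ directly. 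Your proof is the Ehresmann-type alternative: build the total family $\E_{\Gamma}\to I\times S^2$, lift $\partial_s$ by a partition of unity (the discrepancy of the two naive lifts under the clutching map is indeed purely vertical, since the gluing covers the identity on $I\times Ann$), and flow. This is a valid and more conceptual argument, and it works directly with a homotopy from $\pmb{\gamma}_0$ to $\pmb{\gamma}_1$ without the reduction to the constant loop; but it is technically heavier at the stack level, since it needs Hepworth's existence/uniqueness/global-existence theory of flows (which the paper does import in \S\ref{sec:vec_field}, so this is legitimate), the dictionary of Proposition \ref{dictionVector} to see that the patched lift descends, and uniqueness of integral morphisms to see that the flow covers translation in $s$ and hence respects the fibration. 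Two small points you should make explicit: each slice $\tilde{\Gamma}_s(\theta,\cdot)$ must be a diffeomorphism of $\X$ for the family clutching to produce a bundle (this holds because the homotopy is through Hamiltonian paths), and the interval $I=[0,1]$ has boundary, so to invoke the flow theorems you should arrange the homotopy to be constant in $s$ near $s=0,1$ (the cutoff functions $\xi_k$ in the proof of Lemma \ref{smthhomotopylem} already provide this) and extend by constancy. The paper's proof buys self-containedness — only gluing of stack maps up to $2$-morphism is used — while yours buys generality, e.g.\ it adapts immediately to families of clutching loops over any parameter space along which one can integrate.
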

\begin{proof}
Let $\pmb{\gamma}_{0}^{-1}:I\times \X\to\X$ be the composition of the inverse of $pr_{I}\times \pmb{\gamma}_{0}:I\times \X\to I\times \X$ with the projection $I\times \X\to\X$. Then $\pmb{\gamma}_{0}^{-1}$ represents the inverse of the homotopy class of $\pmb{\gamma}_{0}$. Since $\pmb{\gamma}_{0}$ and $\pmb{\gamma}_{1}$ are homotopy equivalent, $\pmb{\gamma}_{1}\cdot_{cp} \pmb{\gamma}_{0}^{-1}$  is homotopy equivalent to the constant loop $\pmb{\gamma}_{Id}:=\pi_{\X}:I\times \X\to \X$ where $\pi_{\X}$ is the projection to $\X$. By Lemma \ref{smthhomotopylem}, there is smooth relative homotopy between $\pmb{\gamma}_{0}^{-1}\cdot_{cp} \pmb{\gamma}_{0}$ and the constant identity loop $\pmb{\gamma}_{Id}$. 

Recall that the smooth loops $\pmb{\gamma}_{0}$ and $\pmb{\gamma}_{1}$ comes with stack maps $\tilde{\gamma}_{0}:S^{1}\times \X\to \X$ and $\tilde{\gamma}_{1}:S^{1}\times \X\to \X$ respectively. Let $\tilde{\gamma}_{0}^{-1}:S^{1}\times \X\to\X$ be the composition of the inverse of $pr_{S^{1}}\times \tilde{\gamma}_{0}:S^{1}\times \X\to S^{1}\times \X$ with the projection $S^{1}\times \X\to\X$. Then $\pmb{\gamma}_{1}\cdot_{cp} \pmb{\gamma}_{0}^{-1}$ factors through the exponential map $(exp, Id_{\X}):I\times \X\to S^{1}\times\X$ and  $\tilde{\gamma}_{01}:=\tilde{\gamma}_{1}\circ (pr_{S^{1}}\times \tilde{\gamma}_{0}^{-1}): S^{1}\times \X \to \X$.

Therefore there exists $\tilde{\Gamma}: [0,1]\times S^{1} \times \X\to \X$ such that there are 2-morphisms $\tilde{\Gamma}|_{\{0\}\times S^{1}\times \X}\Rightarrow \tilde{\gamma}_{01}$ and $\tilde{\Gamma}|_{\{1\}\times S^{1}\times \X}\Rightarrow \tilde{\gamma}_{Id}$. Here $\tilde{\gamma}_{Id}$ is the projection from $S^{1}\times \X$ to $\X$ which corresponds to the constant identity loop $\pmb{\gamma}_{Id}$.

Denote 
$$\check{A}:=\{[re^{i\theta},1]|\delta< r < 1+\delta \},\ \ \check{A}_{out}:=\{[re^{i\theta},1]|1\le r < 1+\delta \},$$
$$\check{A}_{in}:=\{[re^{i\theta},1]|\delta< r < 2\delta \},\ \ \check{A}_{mid}:=\{[re^{i\theta},1]|2\delta\le r \le 1 \},$$
 and $$D_{+}(2\delta):=\{[re^{i\theta},1]|r <2 \delta \}.$$ 
 It is easy to construct a differentiable map $\zeta:\check{A}\to [0,1]\times S^{1}$  satisfying the following properties:
\begin{enumerate}
\item $\zeta|_{\check{A}_{mid}}$ is a diffeomorphism from $\check{A}_{mid}$ to $[0,1]\times S^{1}$;
\item $\zeta|_{\check{A}_{out}}$ maps $\check{A}_{out}$ to $\{1\}\times S^{1}$;
\item $\zeta|_{\check{A}_{in}}$ maps $\check{A}_{in}$ to $\{0\}\times S^{1}$.
\end{enumerate}

Denote $\check{\Gamma}:=\tilde{\Gamma}\circ (\zeta\times pr_{\X}): \check{A}\times \X\to \X$. Then $pr_{\check{A}}\times \check{\Gamma}:\check{A}\times\X\to \check{A}\times \X$ and $(Id_{D_{+}(2\delta)}, Id_{\X}): D_{+}(2\delta)\times \X\to D_{+}(2\delta)\times \X$ differ by a 2-morphism when restricted to $\check{A}\cap D_{+}(2\delta) \times \X$. So we can glue them to define an orbifold diffeomorphism 
$$\Psi_{+}: D_{+}(1+\delta)\times \X\to  D_{+}(1+\delta)\times \X.$$
Let
$$\Psi_{-}:=(Id_{D_{-}(1+\delta)}, Id_{\X}) : D_{-}(1+\delta)\times \X\to  D_{-}(1+\delta)\times \X.$$

Then the two maps
$$
\xymatrix{
Ann\times \X\ar[r]^{em_{+}} & D_{+}(1+\delta)\times \X \ar[r]^{\Psi_{+}} & D_{+}(1+\delta)\times \X \ar[r] & \E_{\pmb{\gamma}_{1}}
},
$$
$$
\xymatrix{
Ann\times \X\ar[r]^{em_{-}} & D_{-}(1+\delta)\times \X \ar[r]^{\Psi_{-}} & D_{-}(1+\delta)\times \X \ar[r] & \E_{\pmb{\gamma}_{1}}
}
$$
differ by a 2-morphism. Thus $\Psi_{+}$ and $\Psi_{-}$ can be glued into a diffeomorphism $\Psi:\E_{\pmb{\gamma}_{0}}\to\E_{\pmb{\gamma}_{1}}$. From the definition, $\Psi$ commutes with the projections to $S^{2}$, thus it defines an orbifiber bundle isomorphism between $\E_{\pmb{\gamma}_{0}}$ and $\E_{\pmb{\gamma}_{1}}$.
\end{proof}

Since every Hamiltonian loop is homotopy equivalent to a smooth Hamiltonian loop, and homotopy equivalent smooth Hamiltonian loops define isomorphic orbifiber bundles, thus we have constructed a well-defined orbifiber bundle for every Hamiltonian loop.

We remark that the above construction works without assuming the loop satisfying Hamiltonian equation. On the other hand, for Hamiltonian loop, the orbifiber bundle is a Hamiltonian orbifiber bundle which is an analogue of Hamiltonian fibration in the manifold case. The definition is given below:

\begin{defn}\label{Hamorbifiberbundle} 
An orbifiber bundle $\pmb{\pi}:\E\to \B$ with a symplectic form $\Omega$ on $\E$ and a symplectic form on $\B$ is called a {\em Hamiltonian orbifiber bundle} if $\pmb{\pi}^{*}\omega=\Omega$.
\end{defn}
Similar to the manifold case, when $\pmb{\gamma}$ is Hamiltonian, the orbifiber bundle $\E_{\pmb{\gamma}}$ will be a Hamiltonian orbifiber bundle.
The symplectic form on $\E_{\pmb{\gamma}}$ can be defined in a similar fashion. For completeness, we shall explain below.

For  a given open cover of $S^{1}$, represent $S^{2}$ with the groupoid $U_{S^{2}}$ determined by the following atlas: 
\begin{itemize}
\item two small open disks centered at $0$ and $\infty$ with radius $\delta$,
\item for each open set $(e^{ia},e^{ib})$ in the cover of $S^{1}$, the atlas have an open sets $\{z=re^{i\theta}|0<r<\infty, \theta\in(a,b) \}$. 
\end{itemize}

Similar to the manifold case as in \cite{MS}, in polar coordinate $z=e^{s+i t}$, the closed 2-forms $\omega^{+}$ on $D_{+}(1+\delta)\times \X$ and $\omega^{-}$ on $D_{-}(1+\delta)\times \X$, which are given by
$$\omega^{\pm}=\omega-d'F^{\pm}\wedge ds-d'G^{\pm}\wedge dt+(\partial_{t}F^{\pm}-\partial_{s}G^{\pm})ds\wedge dt,$$
glue to a closed 2-form on $\E_{\pmb{\gamma}}$, when $F^{\pm}$ and $G^{\pm}:Ann\times \X\to \real$ satisfy
\begin{equation}\label{matchingcondi}
\begin{aligned}
F_{s,t}^{+}\circ\pmb{\gamma}^{t}+F_{-s,-t}^{-}  =0,\\
G_{s,t}^{+}\circ \pmb{\gamma}^{t}+G_{-s,-t}^{-}  =H_{t}\circ \pmb{\gamma} ^{t},
\end{aligned}
\end{equation}
where $H_{t}$ is the normalized Hamiltonian function of the loop $\pmb{\gamma}$, $\pmb{\gamma} ^{t}:=\pmb{\gamma}(t,\cdot)$.

Denote by $\int^{fiber}:\Omega^{2n+2}(\E_{\pmb{\gamma}})\to \Omega^{2}(S^{2})$ the integration along fiber.
\begin{defn}\label{couplingcl}
When $\Omega$ satisfies $\int^{fiber}\Omega^{n+1}=0$, we denote it as $\Omega_{0}$, and call
$$u_{\gamma} :=  [\Omega_0]$$ the {\em coupling class} of the Hamiltonian orbifiber bundle over sphere. 
\end{defn}
Note that $u_{\gamma}$ does not depends on the choice of $F^{\pm}$ and $G^{\pm}$ as long as they satisfy the required conditions.

The condition $\int^{fiber}\Omega^{n+1}=0$ is satisfied if and only if $F^{\pm}$ and $G^{\pm}$ have mean value zero over $\{e^{s+it}\}\times \X$.

\begin{rmk}\label{couplingformcutoff}
In this paper, we choose $F_{s,t}^{\pm}\equiv 0$ and $G_{s,t}^{-}\equiv 0$. In order to satisfy (\ref{matchingcondi}), let $\rho :D_{+}(1+\delta)\to [0,1]$ be a smooth function such that  $\rho(z)=0$ when $|z|<1/(1+2\delta)$ and  $\rho(z)=1$ when $|z|>1/(1+\delta)$. Define $G_{s,t}^{+}=\rho(e^{s+it})H_{t}\circ \pmb{\gamma} ^{t}$. Then (\ref{matchingcondi}) is satisfied.
\end{rmk}

For sufficiently large $c>0$, 
\begin{equation}\label{fiberbundlesymplecticform}
\Omega_{c}:=\Omega_{0}+c\pmb{\pi}^{*} \omega_{S^{2}}
\end{equation}
 defines a symplectic form on $\E_{\gamma}$.

\section{Seidel Representation}\label{part2}
In this section we construct Seidel representation for effective symplectic orbifolds.
 
\subsection{Review of Kuranishi Structure}\label{secReviewoKura}
In this subsection, we review the notion of Kuranishi structures following closely \cite[Appendix]{FOOOtoric1} and \cite[Sections 5--6]{FO}. 
Let $\mathcal{M}$ be a compact space. 
\begin{defn} A {\em Kuranishi chart} of $\M$ is a quintuple $(V,E,\Gamma,\psi,s)$ satisfying the following conditions:
\begin{enumerate}
\item The finite group $\Gamma$ acts on the smooth manifold (possibly with boundaries or corners) $V$ effectively, i.e., $[V/\Gamma]$ is an effective orbifold.
\item $pr: E \to V$ is a $\Gamma$-equivariant vector bundle, i.e., it defines an orbibundle over $[V/\Gamma]$.
\item The map $s:V\to E$ is a $\Gamma$-equivariant section of the equivariant bundle $pr : E \to V$ and will be called the Kuranishi map of the Kuranishi chart.
\item The map $\psi : s^{-1}(0)/\Gamma \to \M$ is a homeomorphism from the quotient space of the zero set by $\Gamma$ to its image.
\end{enumerate}
\end{defn}

In practice, Kuranishi charts are constructed from lower strata with larger isotropy group to higher strata with smaller isotopy group. This determines a partial order on the collection of Kuranishi charts. In particular, for two Kuranishi charts $(V_{\alpha_{1}},E_{\alpha_{1}},\Gamma_{\alpha_{1}}, \psi_{\alpha_{1}},s_{\alpha_{1}})$ and $(V_{\alpha_{2}},E_{\alpha_{2}},\Gamma_{\alpha_{2}}, \psi_{\alpha_{2}},s_{\alpha_{2}})$ on $\M$, if $\alpha_{1} \prec\alpha_{2}$ then there exists:
\begin{enumerate}
\item a $\Gamma_{\alpha_{1}}$-invariant open subset $V_{\alpha_{2},\alpha_{1}} \subset V_{\alpha_{1}}$,
\item a smooth embedding $\varphi_{\alpha_{2},\alpha_{1}} : V_{\alpha_{2},\alpha_{1}} \to V_{\alpha_{2}}$,
\item a bundle map $\widehat{\varphi}_{\alpha_{2},\alpha_{1}} : E_{\alpha_{1}}\vert_{V_{\alpha_{2},\alpha_{1}}} \to E_{\alpha_{2}}$,  which covers $\varphi_{\alpha_{2},\alpha_{1}}$,
\item  an injective homomorphism $\widehat{\widehat{\varphi}}_{\alpha_{2},\alpha_{1}}: \Gamma_{\alpha_{1}} \to \Gamma_{\alpha_{2}}$.
\end{enumerate}

\begin{defn}(See \cite[Definition 6.1]{FO} or \cite[Lemma A1.11]{FOOObook})
A collection of Kuranishi charts on $\M$, $\{(V_{\alpha},E_{\alpha},\Gamma_{\alpha}, \psi_{\alpha},s_{\alpha})|\alpha\in\frak{A}\}$, is called a {\em Kuranishi structure on $\M$ with a good coordinate system}  if the following conditions hold:
\begin{enumerate}
\item The space $\M$ is covered by the union of $\psi_{\alpha}(s_{\alpha}^{-1}(0)/\Gamma_{\alpha})$ for all $\alpha$.
\item $dim V_{\alpha}- rank E_{\alpha}=n$ is independent of $\alpha$ and will be called the {\em dimension}\footnote{Sometimes this is called the {\em virtual dimension} of $\M$.} of the Kuranishi structure.
\item The partial order ``$\prec$'' on $\mathfrak{A}$ satisfies: $\forall\alpha_{1}, \alpha_{2} \in \mathfrak{A}$, if
$\psi_{\alpha_{1}}(s_{\alpha_{1}}^{-1}(0)/\Gamma_{\alpha_{1}}) \cap\psi_{\alpha_{2}}(s_{\alpha_{2}}^{-1}(0)/\Gamma_{\alpha_{2}}) \ne \emptyset,$
then we have  either $\alpha_{1} \prec \alpha_{2}$ or $\alpha_{2} \prec \alpha_{1}$. 
\item The maps $\varphi_{\alpha_2,\alpha_1}$,
${\widehat{\varphi}}_{\alpha_2,\alpha_1}$ are $(\Gamma_{\alpha_{1}} , \Gamma_{\alpha_{2}})$-equivariant with respect to the group homomorphism $\widehat{\widehat{\varphi}}_{\alpha_2,\alpha_1}$.
\item
The map $\varphi_{\alpha_2,\alpha_1}$ and the group homomorphism $\widehat{\widehat{\varphi}}_{\alpha_2,\alpha_1}$ induce an embedding of orbifold
$\overline{\varphi}_{\alpha_{2},\alpha_{1}} : [V_{\alpha_{2},\alpha_{1}}/\Gamma_{\alpha_{1}}]\to [V_{\alpha_{2}}/\Gamma_{\alpha_{2}}]. $
\item The sections on different Kuranishi charts are compatible: $s_{\alpha_{2}} \circ \varphi_{\alpha_{2},\alpha_{1}}= {\widehat{\varphi}}_{\alpha_{2},\alpha_{1}} \circ s_{\alpha_{1}}.$
\item On $ V_{\alpha_{2},\alpha_{1}} \cap s_{\alpha_{1}}^{-1}(0)/\Gamma_{\alpha_{1}}$, we have $\psi_{\alpha_{2}}\circ \overline{\varphi}_{\alpha_{2},\alpha_{1}}= \psi_{\alpha_{1}}$.
\item If $\alpha_{1} \prec \alpha_{2} \prec \alpha_{3}$ then
$\varphi_{\alpha_3,\alpha_2}\circ \varphi_{\alpha_2,\alpha_1}= \varphi_{\alpha_3,\alpha_1},$
on $\varphi_{\alpha_2,\alpha_1}^{-1}(V_{\alpha_3,\alpha_2})$. Similarly, 
$\widehat{\varphi}_{\alpha_{3},\alpha_{2}}\circ \widehat{\varphi}_{\alpha_{2},\alpha_{1}}
= \widehat{\varphi}_{\alpha_{3},\alpha_{1}}$ and $
\widehat{\widehat{\varphi}}_{\alpha_{3},\alpha_{2}}\circ \widehat{\widehat{\varphi}}_{\alpha_{2},\alpha_{1}}
= \widehat{\widehat{\varphi}}_{\alpha_{3},\alpha_{1}}$.
\item
The space $V_{\alpha_{2},\alpha_{1}}/\Gamma_{\alpha_{1}}$ contains $\psi_{\alpha_1}^{-1}(\psi_{\alpha_1}(s_{\alpha_1}^{-1}(0)/\Gamma_{\alpha_1}) \cap
\psi_{\alpha_2}(s_{\alpha_2}^{-1}(0)/\Gamma_{\alpha_2}))$.
\end{enumerate}
\end{defn}
In addition, we need the Kuranishi structure to have a tangent bundle, which means the following:
\begin{defn}
We say a Kuranishi structure on $\M$ {\em has a tangent bundle} if the differential of
$s_{\alpha_2}$ in the normal direction induces a
bundle isomorphism
\begin{equation}
ds_{\alpha_2} : \frac{{\varphi}_{\alpha_2,\alpha_1}^*TV_{\alpha_2}}{TV_{\alpha_2,\alpha_1}}
\to \frac{\widehat{\varphi}_{\alpha_2,\alpha_1}^*E_{\alpha_2}}{E_{\alpha_1}}.
\nonumber\end{equation}
A Kuranishi structure on $\M$ is called {\em oriented} if $V_{\alpha}$ and $E_{\alpha}$ are oriented,
the $\Gamma_{\alpha}$ action is orientation preserving, and $ds_{\alpha}$ preserves the orientation of bundles.
\end{defn}

\begin{defn}
Let $\M$ be a compact space with oriented Kuranishi structure of dimension $n$, and $Y$ a topological space. A strongly continuous map $f:\M\to Y$ is a system of germs of maps $f_p: U_p\to Y$ for each $p$ such that $f_p\circ\varphi_{pq}=f_q$. If $Y$ is an orbifold, $f$ is said to be strongly smooth if each $f_p$ is smooth. 
\end{defn}

We would like to construct a homology class in $H_n(Y,\mathbb{Q})$. Naively we can take the sum over $p$ of $s_p^{-1}(0)$ pushed-forward by $f_p$. The difficulty is that sections $s_p$ may not be transversal to the zero sections of $E_p$. However, if the Kuranishi structure has a good coordinate system, $s_p$ can be approximated by multisections $s_{p,n}$ which are transversal to the zero section. More precisely, we have the following result.

\begin{thm}[\cite{FO}, Theorem 6.4]
Let $(P,((U_p,\psi_p,s_p):p\in P),\varphi_{pq},\hat{\varphi}_{pq})$ be a good coordinate system of a compact space $M$ with a Kuranishi structure. Suppose the Kuranishi structure has a tangent bundle given by $\Phi_{pq}:N_{U_p}U_q\simeq E_p/E_q$. Then for each $p\in P$ there exists a sequence of smooth multisections $s_{p,n}$ such that
\begin{itemize}
\item $s_{p,n}\circ\varphi_{pq}=\hat{\varphi}_{pq}\circ s_{q,n}$;
\item $lim_{n\to\infty}s_{p,n}=s_p$ in the $C^{\infty}$-topology;
\item $s_{p,n}$ is transversal to $0$;
\item at any point in $U_{pq}$, the restriction of the differential of the composition of any branch of $s_{p,n}$ and the projection $E_p\to E_p/E_q$ coincides with the isomorphism $\Phi_{pq}:N_{U_p}U_q\simeq E_p/E_q$.
\end{itemize}
\end{thm}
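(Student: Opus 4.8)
The plan is to construct the multisections $s_{p,n}$ by induction over the partial order $\prec$ on the (finite) index set $P$, building from the charts that are minimal under $\prec$ — the deepest strata, carrying the largest isotropy — upward to the top charts. The reason an honest single‑valued perturbation cannot work is that each $s_{p,n}$ must remain equivariant under the finite group $\Gamma_p$ while being transversal to the zero section, and at points with nontrivial isotropy an equivariant single section is pinned down and cannot be made transversal. Multisections resolve exactly this tension: a multisection is a $\Gamma_p$‑invariant unordered tuple of local branches, and one is free to perturb the individual branches (subject only to the whole collection staying $\Gamma_p$‑invariant), which liberates enough directions to achieve transversality. The entire argument is therefore carried out in the space of smooth multisections with the $C^\infty$ topology, where ``generic'' always means generic in a finite‑dimensional family of perturbations to which Sard's theorem applies.

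For the base case I would take a minimal chart $U_{p}$ and choose a sequence of $\Gamma_{p}$‑equivariant multisections $s_{p,n}\to s_{p}$ in $C^{\infty}$ transversal to $0$; existence follows from the branch‑by‑branch multisection transversality theorem together with Sard. For the inductive step, suppose $s_{q,n}$ have been built on all $q\prec p$ with every required property. On the part of $U_{p}$ covered by the images $\varphi_{pq}(U_{pq})$ of the lower charts the value of $s_{p,n}$ is forced: the compatibility relation $s_{p,n}\circ\varphi_{pq}=\widehat{\varphi}_{pq}\circ s_{q,n}$ prescribes it in the tangential directions, while the tangent‑bundle datum $\Phi_{pq}\colon N_{U_p}U_q\xrightarrow{\ \sim\ }E_p/E_q$ prescribes its first‑order behaviour in the normal directions. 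Using a $\Gamma_p$‑invariant tubular‑neighbourhood (exponential) construction, these two data define a multisection on an open neighbourhood $W$ of $\bigcup_{q\prec p}\varphi_{pq}(U_{pq})$ in $U_p$. The cocycle identity $\varphi_{pq'}\circ\varphi_{q'q}=\varphi_{pq}$ of the good coordinate system guarantees that prescriptions coming from different lower charts agree on overlaps, so this is well defined, and transversality of $s_{q,n}$ on $U_q$ combined with the normal‑derivative condition $\Phi_{pq}$ forces $s_{p,n}$ to be \emph{already} transversal to $0$ throughout $W$.

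It then remains to extend $s_{p,n}$ from $W$ to all of $U_{p}$ while preserving $\Gamma_p$‑equivariance, $C^\infty$‑closeness to $s_p$, and transversality, and without disturbing the values already fixed on a slightly smaller neighbourhood of the lower charts. This is a relative transversality statement: one perturbs the multisection only on the complement of a closed set on which it is held fixed, and invokes the relative (parametric) Sard argument for multisections to produce a generic transversal extension. Iterating up the order yields the full compatible system $\{s_{p,n}\}$. The main obstacle, and the technical heart of the argument, is precisely this interface between transversality and the rigid compatibility conditions: one must verify that transversality genuinely propagates from the lower charts into the overlap neighbourhood via $\Phi_{pq}$ (so that no fresh perturbation is needed on $W$, which would otherwise break the compatibility relations), and that the relative perturbation filling in the rest of $U_p$ can be chosen simultaneously $\Gamma_p$‑equivariant, arbitrarily $C^\infty$‑small, and fixed on $W$ — all three at once. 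Everything else is bookkeeping dictated by the good‑coordinate‑system axioms.
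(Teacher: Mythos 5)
The paper does not actually prove this statement: it is quoted as background from Fukaya--Ono (\cite{FO}, Theorem 6.4), on which the virtual fundamental cycle machinery used later rests. Your proposal is a correct reconstruction of the argument given in that reference and takes essentially the same approach: induction over the partial order of the good coordinate system starting from the minimal (deepest, largest-isotropy) charts, with the multisection on a neighborhood of the images of lower charts forced by the compatibility relations, transversality there propagating automatically through the tangent-bundle isomorphism $\Phi_{pq}:N_{U_p}U_q\simeq E_p/E_q$, and a relative, $\Gamma_p$-equivariant, $C^\infty$-small transversal extension over the rest of each chart.
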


With suitably defined multiplicities $m_p$, the sum $\sum_{p\in P} m_p\cdot {f_p}_*[s_{p,n}^{-1}(0)]$ forms a chain in $Y$. This chain is closed if the Kuranishi structure is oriented \cite[Lemma 6.11]{FO}. Moreover this chain is independent of the choices of multisections \cite[Theorem 6.12]{FO}. This is the fundamental class of this Kuranishi structure,  denoted by $f_*[M]\in H_n(Y,\mathbb{Q})$.

\subsection{Orbifold Gromov-Witten Invariants}\label{secReviewoGW}
In this subsection, we review orbifold Gromov-Witten invariants constructed in \cite{CR2}. Recall that an orbi-curve  is a complex orbifold of complex dimension $1$ with finitely many orbifold points whose stabilizers groups are cyclic. 
\begin{defn}\label{defNodalOrbicurve}
A {\em nodal orbi-curve with marked points} is a tuple $(\Sigma,j,\vec v,D)$, where $\Sigma=\sqcup \Sigma_{\nu}$ is an orbi-curve with $\Sigma_{\nu}$ its connected components, $j$ is the complex structure on $\Sigma$, $\vec v$ is a finite ordered collection of (orbi)points in $\Sigma$, and $D$ is a finite collection of un-ordered pairs $(w,w')$ of (orbi)points in $\Sigma$ such that $w\neq w'$, $w$ and $w'$ have the same orbifold structure, and two pairs which intersect are identical. The union of all subsets $\{w,w'\}$ of $\Sigma$, denoted by $|D|$, is disjoint from $\vec v$. 

The points in $\vec{v}$ are called marked points, and $D$ is called the set of nodal pairs. The points in $|D|$ are called nodal points (nodes). Denote $\vec{v}_{\nu}=\Sigma_{\nu}\cap\vec{v}$ and $|D|_{\nu}=\Sigma_{\nu}\cap |D|$. The points in $\vec{v}_{\nu}\cup |D|_{\nu}$ are called special points of the component $\Sigma_{\nu}$. Following convention in algebraic geometry, we call a connected component of $\Sigma$ an irreducible component.
\end{defn}
Let $(\X,\omega,J)$ be a compact symplectic orbifold with symplectic form $\omega$ and compatible almost complex structure $J$. 
\begin{defn}\label{defStableOrbiMorph}
A {\em stable orbifold morphism} $\pmb{f}$ from a nodal orbi-curve $(\Sigma, j, \vec z, D)$ to $\X$ is a collection of orbifold morphisms $\{(\pmb{f}_{\nu}:\Sigma_{\nu}\to\X)\}$ such that:
\begin{itemize}
\renewcommand{\labelitemi}{--}
\item orbipoints of $\Sigma$ are contained in $\vec z$ or $|D|$,
\item $\pmb{f}_{\nu}$ is $j$-$J$-holomorphic,
\item representability: $\pmb{f}_{\nu}$ induces injective group homorphism at the orbipoints,
\item balance condition at the nodes: $ev(\pmb{f},w)=\mathcal{I}(ev(\pmb{f},w'))$ for $(w,w')\in D$, where $ev(\pmb{f},w)$, $ev(\pmb{f},w')$ $\in I\X$ are the evaluation\footnote{See \cite{CR2} for definition of the evaluation map.} of the orbifold morphism $\pmb{f}$ at $w$ and $w'$, $\mathcal{I}:I\X\to I\X$ is the involution which sends $(x,(g))\in \X_{(g)}$ to $(x,(g^{-1}))\in \X_{(g^{-1})}$.
\end{itemize}
\end{defn}

Let $\mathbf{x}=(\X_{(g_1)},...,\X_{(g_k)})$ be an $n$-tuple of twisted sectors, and let $\sigma\in H_{2}(|\X|,\inte)$. Consider the moduli space $\overline{\M}_{g,k}(\X,J,\sigma, \mathbf{x})$ of orbifold stable maps $\pmb{f}$ with the following conditions, modding out the automorphisms (biholomorphic diffeomorphisms of $(\Sigma, j, \vec{v}, D)$ that preserve $\pmb{f}$):
\begin{enumerate}
\item domain of $\pmb{f}$ has genus $g$ and $k$ marked points,
\item $|\pmb{f}|$ represents the homology class $\sigma\in H_{2}(|\X|,\inte)$,
\item $\pmb{f}$ satisfies the given boundary condition $\mathbf{x}$, i.e., the image of the evaluation map at the $i$-th marked point $ev_i$ is contained in $\X_{(g_i)}$.
\end{enumerate}
The main task of defining Gromov-Witten invariants is to construct a Kuranishi structure over $\overline{\M}_{g,k}(\X,J,\sigma,\mathbf{x})$ and define a virtual fundamental cycle using it. We now explain the construction of the Kuranishi chart of a stable orbifold map which is carried out in \cite{CR2}. The exposition here is taken from the appendix of \cite{FOOOtoric1} with modification for the orbifold context. 

Consider $ \tau=[\pmb{f}:(\Sigma,j,\vec z, D)\to\X]=[\{(\pmb{f}_{\nu}:\Sigma_{\nu}\to\X)\}]\in \Mbar_{g,k}(\X,J,\sigma, \mathbf{x})$. We first recall the definition of the {\em isotropy group} of $\tau$, as follows. 

First suppose $(\Sigma,j,\vec z, D)$ is an orbi-curve with no nodal point. If $|\pmb{f}|(|\Sigma|)\subset \Sigma\X$ where $\Sigma\X$ denote the set of points in $|\X|$ which are covered by orbipoints in $\X$, there is a set $\vec{z}'$ of (orbi)points on $\Sigma$ with $\vec z\cup D \subset \vec z'$ such that for all $p\in f(\Sigma\setminus\vec z')$ the local group $G_p$ at $p$ is isomorphic to a fixed group $G$.
There is a principal $G$-bundle over $\Sigma\setminus\vec z'$ induced from $\pmb{f}$. Sections of this bundle which extend to $\Sigma$ form a group $G_{\tau}$ which is called the isotropy group of $\tau$ (see \cite{CR2}). If $|\pmb{f}|(|\Sigma|)$ is not contained in $\Sigma\X$, then $|\pmb{f}|(|\Sigma|)$ meets $\Sigma\X$ only at finitely many points $\vec{z}''$. For a point $p\in\pmb{f}(\Sigma\setminus \vec{z}'')$ the group $G_p$ is trivial. Thus in this case we define $G_{\tau}$ to be the trivial group.

In general, we have the following
\begin{defn}
For $\tau$ with possibly reducible $\Sigma$, define the isotropy group of $\tau$ to be
$$G_{\tau}:=\{(g_{\nu})\in\prod_{\nu} G_{\nu}\,|\,g_{\nu}(w)=g_{\omega}(w') \quad {\rm if}\quad (w,w')\in D\},$$
where $G_{\nu}$ is the isotropy group of $\pmb{f}_{\nu}:(\Sigma_{\nu},j,\vec z_{\nu}, D_{\nu})\to \X$ constructed as in the irreducible case.
\end{defn}

Let $(T\X)^{\times}_{f_\nu(z)}$ be the linear subspace of (local group) invariant vectors in $T\X |_{f_{\nu}(z)}$. We define several Sobolev spaces needed in the construction.

\begin{defn}
For $p>2$, define $L^p_{1,\delta}(\pmb{f}_{\nu}^*T\X)$
to be the space of local $L_1^p$ sections of $\pmb{f}_{\nu}^*T\X$ which decay exponentially with weight $\delta$ to elements in $(T\X)^{\times}_{f_\nu(z)}$ for all $z\in  |D|_{\nu}$.
Define $L^p_{\delta}(\pmb{f}_{\nu}^*T\X \otimes \Lambda^{0,1})$ to be the subspace of local $L^p$ sections of $\pmb{f}_{\nu}^*T\X \otimes \Lambda^{0,1}$ which decay exponentially with weight $\delta$ at nodes\footnote{See \cite[(3.2.1)]{CR2} for more precise definition.}.
Define the spaces $$L^p_{1,\delta}(\pmb{f}^*T\X):=\{(u_\nu)\in \oplus_\nu L^p_{1,\delta}(\pmb{f}_{\nu}^*T\X)| u_{\nu}(w)= u_{\omega}(w'),\hspace{2mm} \mbox{if}\hspace{2mm}(w,w')\in D\}, $$ 
and $$L^p_{\delta}(\pmb{f}^*T\X\otimes \Lambda^{0,1})= \{(u_\nu)\in\oplus_\nu L^p_{\delta}(\pmb{f}_{\nu}^*T\X \otimes\Lambda^{0,1})\}.$$
\end{defn}

The group $G_{\tau}$ acts linearly on $L^p_{1,\delta}(\pmb{f}^*T\X)$ and  $L^p_{\delta}(\pmb{f}^*T\X\otimes \Lambda^{0,1})$ since each $G_{\nu}$ acts linearly on $L^p_{1,\delta}(\pmb{f}_{\nu}^*T\X)$ and  $L^p_{\delta}(\pmb{f}_{\nu}^*T\X\otimes \Lambda^{0,1})$. Moreover, the automorphism group $Aut(\tau)$ of $\tau$ acts linearly on $L^p_{1,\delta}(\pmb{f}^*T\X)$ and  $L^p_{\delta}(\pmb{f}^*T\X\otimes \Lambda^{0,1})$ covering the action on the domain orbicurve. Let $g\to \gamma^{*}(g)$ be the automorphism on $G_{\tau}$ induced by pull-back via $\gamma\in Aut(\tau)$, then for any section $u$ in $L^p_{1,\delta}(\pmb{f}^*T\X)$ and  $L^p_{\delta}(\pmb{f}^*T\X\otimes \Lambda^{0,1})$, we have $$(\gamma_{*})^{-1}\circ g\circ \gamma_{*}(u)=\gamma^{*}(g)(u).$$ 
Then define $\Gamma_{\tau} $ to be the group generated by $Aut(\tau)$ and $G_{\tau}$ with the above relation. Consequently there is a short exact sequence 
$$ 1\to G_{\tau}\to\Gamma_{\tau}\to Aut(\tau)\to 1,$$
and $\Gamma_{\tau}$ acts on $L^p_{1,\delta}(\pmb{f}^*T\X)$ and  $L^p_{\delta}(\pmb{f}^*T\X\otimes \Lambda^{0,1})$ linearly.

If every component of $(\Sigma,j,\vec{z}, D)$ is stable, then by forgetting the map of $\tau$, we get an element in the moduli space of stable nodal orbicurves $\Mbar_{g,k,\vec{m}}$, where $\vec{m}=\{m_{i}\}$ records the orbifold structures $\inte_{m_{i}}$ at the $i$-th marked point in $\vec{v}$. We remark that the compactification used here depends on the target orbifold $\X$: the local groups of $\X$ provide a bound of the complexity of the orbifold structure at the nodal points. Otherwise, if one allows all possible orbifold structures at the nodal points, the moduli space will be non-compact. In this case, we can pick a neighborhood $\frak U$ of $[\Sigma,j,\vec z, D]$ inside $\Mbar_{g,k,\vec{m}}$, and there exists a manifold $\frak{V}$ such that $\frak{U}=\frak{V}/Aut(\Sigma)$, where $Aut(\Sigma)$ denotes the automorphism group of $[\Sigma,j,\vec z, D]$.

Otherwise if there exists unstable component $\Sigma_{\nu}$, then pick points $z_{\nu,i}\in \Sigma_{\nu}$ ($i=1,\cdots ,d_{\nu}$) with the following properties:
\begin{conds}\label{addmarked}
\begin{enumerate}
\item $\pmb{f}$ is immersed at $z_{\nu,i}$ for all $\nu$ and $i$;
\item $z_{\nu,i}\neq z_{\nu,j}$ for $i\neq j$ and $z_{\nu,i}\notin \vec z$;
\item $(\Sigma_{\nu};\vec{z}_{\nu}\cup |D|_{\nu}\cup (z_{\nu,1},\cdots,z_{\nu,d_{\nu}}))$ is stable;
\item \label{nodalconsistent}
if $\gamma \in \Gamma_{\tau}$ such that $\gamma(\Sigma_{\nu}) = \Sigma_{\nu'}$, 
then $d_{\nu} = d_{\nu'}$  and
$$\{\gamma(z_{\nu,i})\mid i=1,\cdots,d_{\nu}\} = \{z_{\nu',i'}\mid i'=1,\cdots,d_{\nu}\};$$
\item\label{orbittype} the orbit type of $\pmb{f}(z_{\nu,i})$ is the same as its nearby points.
\end{enumerate}
\end{conds}

By adding extra marked points $z_{\nu,i}$ satisfying the above condition to each unstable branch component of $(\Sigma,j,\vec z, D)$, we obtain a stable nodal orbicurve, which will be denoted by $(\Sigma,j,\vec z^{+}, D)$. Here $\vec{z}^{+}=\vec{z} \sqcup \{z_{\nu,i}|\nu,i\}$.

For each $\nu,i$ we choose a submanifold $N_{\nu,i}$ of codimension $2$ in $\X_{(g)}$ that transversely intersects with $\pmb{f}_{\nu}$ at $ev(\pmb{f}_{\nu},z_{\nu,i})$, where $\X_{(g)}$ is the twisted sector containing the evaluation of $\pmb{f}_{\nu}$ at $z_{\nu,i}$. Corresponding to Condition \ref{addmarked} (\ref{nodalconsistent}) above we choose $N_{\nu,i} = N_{\nu',i'}$ if $\gamma(z_{\nu,i}) = z_{\nu',i'}$. Note that Condition \ref{addmarked} (\ref{orbittype}) indicates that $dim \X_{(g)}\ge 2$.

Let $k'$ be the number of points in $\vec{z}^{+}$. Consider a neighborhood $\frak U_{0}$ of $[\Sigma,j,\vec{z}^+, D]$ in $\Mbar_{g,k',\vec{m}^{+}}$ as in the stable case discussed before, where $\vec{m}^{+}$ records $\vec{m}$ the orbifold structures of the added marked points as well. Let $Aut(\Sigma^{+})$ be the group of automorphisms of $[\pmb{f}:(\Sigma,j,\vec z^+,D)\to \X]$. Then  $Aut(\Sigma^{+})$ is a subgroup of $Aut(\Sigma)$. Then the neighborhood $\frak{U}_0$ is covered by an orbifold chart $\frak{V}_0/Aut(\Sigma^{+})$.

An element $\gamma \in Aut(\Sigma)$ defines an homeomorphism $\gamma_{*}$ of $[\Sigma,j,\vec z^+,D]$ to itself. In particular, $\gamma_{*}$ maps $[(\Sigma,j,\vec z^+,D)]$ to $[\gamma_*(\Sigma,j,\vec z^+,D)]$ which differs from $[\Sigma,j,\vec z^+,D]$ only by reordering of the added marked points by Condition \ref{addmarked} (\ref{nodalconsistent}). The open set $\gamma_{*}\frak{U}_{0}$ is an open neighborhood of $[\gamma_*(\Sigma,j,\vec z^+,D)]$. Let $\frak{U}=\cup_{\gamma\in Aut(\Sigma)} \gamma_{*}\frak{U}_{0}$. Then there exists a manifold $\frak{V}$ on which $Aut(\Sigma)$ acts such that $\frak{V}/Aut(\Sigma^{+}) \simeq  \frak{U}$ and $\frak{V}/Aut(\Sigma) \simeq \frak U_{0}$.

For $\frak{V}$ constructed as above, there is a universal family $\frak{M} \to \frak{V}$ where the fiber $\Sigma(\text{\bf v})$ over $\text{\bf v} \in \frak{V}$ is identified with the marked stable orbicurve that represents the element $[\text{\bf v}] \in \frak{V}/Aut(\Sigma)\subset \Mbar_{g,k+1,\vec{m}}$ (or in $\frak{V}/Aut(\Sigma^{+})\subset \Mbar_{g,k+1,\vec{m}^{+}}$ if $(\Sigma,j,\vec{z}, D)$ is unstable and marked points were added). There is an $Aut(\Sigma)$ action on $\frak{M}$ such that $\frak{M} \to \frak{V}$ is $Aut(\Sigma)$ equivariant.

By the construction, each fiber $\Sigma(\text{\bf v})$ of the  universal family is diffeomorphic to $\Sigma$ away from the special points. More precisely, let $\Sigma_0 = \Sigma \setminus S$ where $S$ is a small neighborhood of the special point set. Following \cite{CR2}, orbipoints are assumed to be either marked or nodal, thus $\Sigma_{0}$ is smooth, i.e. with no orbipoint. Then $\forall\text{\bf v}\in\frak{V}$ there exists a smooth embedding $i_{\text{\bf v}} : \Sigma_{0} \to \Sigma(\text{\bf v})$, which need not be biholomorphic. The defect for $i_{\text{\bf v}}$ to be biholomorphic tends to $0$ as $\text{\bf v}$ goes to $\text{\bf v}_{0}$, where $\text{\bf v}_{0}$ corresponds to the orbicurve $[\Sigma,j,\vec{z}, D]\in\Mbar_{g,k',\vec{m}}$ or $[\Sigma,j,\vec{z}^+, D]\in\Mbar_{g,k',\vec{m}^{+}}$ if additional marked points are needed. Moreover, we may assume the map $\text{\bf v} \mapsto i_{\text{\bf v}}$ is $Aut(\Sigma)$ (or $Aut(\Sigma^{+})$) invariant and $i_{\text{\bf v}}$ depends smoothly on $\text{\bf v}$.

For each $\nu$ we may choose $W_{\nu}$ so that $W_{\nu} \subset \Sigma_0$ and the closure of $W_\nu$ in $\Sigma$ is disjoint from the special points (singular or marked). By the unique continuation theorem, we can choose a finite-dimensional subset $E_{0,\nu}$ of the space $\mathcal{C}_{0}^{\infty}(W_{\nu};\pmb{f}_{\nu}^{*}T\X)$ of smooth sections of $\pmb{f}_{\nu}^{*}T\X$ with compact supports contained in $W_\nu$ such that 
\begin{equation*}
ImD_{\pmb{f},\nu}\bar{\partial}+E_{0,\nu}=L^p_{\delta}(\pmb{f}_{\nu}^*T\X \otimes \Lambda^{0,1}).
\end{equation*}
Moreover we assume that $\bigoplus_{\nu} E_{0,\nu}$ is invariant under the $\Gamma_{\tau}$ action in the following sense: if $\gamma \in \Gamma_{\tau}$ and $\Sigma_{\nu'}= \gamma(\Sigma_{\nu})$ then the isomorphism induced by $\gamma$ maps $E_{0,\nu}$ to $E_{0,\nu'}$.

Now we consider a pair $(\text{\bf v},\pmb{f}')$ where $\text{\bf v}\in \frak{V}$ and $\pmb{f}' : \Sigma(\text{\bf v}) \to \X.$ We assume:
\begin{conds}\label{condnbh}
There exists a scalar $\epsilon > 0$ depending only on $\tau$ with the following properties.
\begin{enumerate}
\item $\sup_{x \in \Sigma_0} \text{\rm dist} (\pmb{f}'(i_{\text{\bf v}}(x)),\pmb{f}(x))\le \epsilon$.
\item Let $D_c$ be a connected component of $\Sigma(\text{\bf v}) \setminus Im(i_{\text{\bf v}})$, then the diameter\footnote{This is measured with a Riemannian metric on $\X$, which we choose throughout the paper.} of $\pmb{f}'(D_c)$ in $\X$ is smaller than $\epsilon$.
\end{enumerate}
\end{conds}
For $x \in W_{\nu}$ we have an identification
$$
T_{\pmb{f}(x)} \X \otimes \Lambda^{0,1}_x(\Sigma) \cong T_{\pmb{f}'(i_{\text{\bf v}}(x))} \X \otimes \Lambda^{0,1}_{i_{\text{\bf v}}(x)}(\Sigma(\text{\bf v})),
$$
given by parallel transport. This identification gives an embedding
$$
I_{(\text{\bf v},\pmb{f}')} : \bigoplus_{\nu} E_{0,\nu}
\longrightarrow \pmb{f}^{\prime *}(T\X) \otimes \Lambda^{0,1}
(\Sigma(\text{\bf v})).
$$
Then we consider the equation
\begin{equation}\label{perturbedeq}
\overline{\partial} \pmb{f}' \equiv 0
\mod \bigoplus_{\nu} I_{(\text{\bf v},\pmb{f}')}(E_{0,\nu}),
\end{equation}
and the additional conditions
\begin{equation}\label{markedadd}
\pmb{f}'(z_{\nu,i}) \in N_{\nu,i},\ \ \ \ \ \ \ \text{for\ all\ added\ marked\ points}\ z_{\nu,i}.
\end{equation}
Let $V_{\tau}$ be the set of solutions of (\ref{perturbedeq}) subject to the condition (\ref{markedadd}). It follows from the implicit function theorem and a glueing argument  that $V_\tau$ is a smooth manifold, see \cite{CR2} for the orbifold case, \cite{FO} and \cite[Section A1.4]{FOOObook} for the manifold case. We can make all the construction above invariant under the $Aut(\Sigma)$ or $Aut(\Sigma^{+})$ action. Note that $Aut(\tau)\subset Aut(\Sigma)$ or $\Aut(\Sigma^{+})$, and the space $V_{\tau}$ has a $Aut(\tau)$ action. Together with the $G_{\tau}$ action on $L^p_{1,\delta}(\pmb{f}^*T\X)$ and  $L^p_{\delta}(\pmb{f}^*T\X\otimes \Lambda^{0,1})$, this defines a $\Gamma_{\tau}$ action on $V_{\tau}$ (See \cite[Proposition 3.2.5]{CR2} for more details).

We define the obstruction bundle $E$ as follows: the fiber of $E$ at $\tau$ is defined to be the space $\bigoplus_{\nu} E_{0,\nu}$, and the fiber of $E$ at $(\text{\bf v},\pmb{f}')$ is defined to be $\bigoplus_{\nu} I_{(\text{\bf v},\pmb{f}')}(E_{0,\nu})$.

\begin{rmk}
An alternative way of constructing Kuranishi chart is given in \cite{CLSZ} using Banach groupoid. Our proofs of the properties of Seidel representation are not sensitive to the way how the Kuranishi charts are constructed and can be easily adapted into their setting.
\end{rmk}

We omit the construction of coordinate changes, which is similar to \cite[Section 15]{FO}. We remark that there are obvious coordinate changes from a Kuranishi chart around a stable map in a lower stratum to the main stratum, because the Kuranishi structure is constructed inductively.

Now we define the Kuranishi map by
\begin{equation}\label{kuramap}
(\pmb{f}':(\Sigma,j,\vec{z},D)\to \X) \mapsto\overline{\partial} \pmb{f}' \in E.
\end{equation}
This completes the review of the construction of Kuranishi charts. 

The evaluation map 
$$ev:=(ev_1,...,ev_n): \overline{\M}_{g,n}(\X,J,\sigma,\mathbf{x})\to \X_{(g_1)}\times...\times\X_{(g_n)},$$ is strongly continuous. The standard machinery of Kuranishi structures developed in \cite{FO} applies to the Kuranishi structure on $\overline{\M}_{g,n}(\X,J,\sigma, \mathbf{x})$, giving the fundamental class 
$$ev_*[\overline{\M}_{g,n}(\X,J,\sigma, \mathbf{x})]\in H_*(\overline{\M}_{g,n}(\X,J,\sigma, \mathbf{x}),\mathbb{Q}),$$
which is shown in \cite{CR2} to depend only on $g$, $n$, $\beta$, $\{(g_i)\}$ and the symplectic structure on $\X$.
For $\alpha_i\in H^*(\X_{(g_i)})\subset H^*(I\X), i=1,..., n$, the {\em orbifold Gromov-Witten invariant} $\<\alpha_1,...,\alpha_n\>_{g,n,\sigma}^\X$ is defined by 
$$\<\alpha_1,...,\alpha_n\>^\X_{g,n,\sigma}:=\int_{ev_*[\overline{\M}_{g,n}(\X,J,\sigma,\mathbf{x})]} pr_1^*\alpha_1\cup...\cup pr_n^*\alpha_n,$$
where $pr_i: \X_{(g_1)}\times...\times\X_{(g_n)}\to \X_{(g_i)}$ is the $i$-th projection.

One of the important properties of $\<\alpha_1,...,\alpha_n\>_{g,n,\sigma}^\X$ is that it is an invariant of the symplectic structure, and does not depend on the choice of almost complex structures.

To organize these invariants in a more informative way, we use the universal Novikov ring:

\begin{defn}
Define a ring $\Lambda^{univ}$ as $$\Lambda^{univ}=\left\{\sum_{k\in\mathbb{R}}r_kt^k\,\,\, \vert \,\,\, r_k\in\mathbb{Q}, \#\{k<c|r_k\neq 0\}<\infty \,\,\forall c\in\mathbb{R}\right\},$$ and equip it with a grading given by $deg(t)=0$. Define $\Lambda:=\Lambda^{univ}[q,q^{-1}]$  with the grading given  by $deg(q)=2$. 
\end{defn}
Genus zero $3$-point orbifold Gromov-Witten invariants can be used to define an associative multiplication $*$ on the cohomology group $H^{*}(I\X,\mathbb{Q})\otimes \Lambda$: for $\alpha_{1}, \alpha_2, \alpha_3\in H^{*}(I\X,\mathbb{Q})$, define
$$\<\alpha_{1}*\alpha_{2},\alpha_{3}\>_{orb}:=\<\alpha_{1},\alpha_{2},\alpha_{3} \>:= \sum_{A\in H_{2}(|\X|,\inte)}\<\alpha_{1},\alpha_{2},\alpha_{3} \>^\X_{0,3,A}q^{c_{1}[A]} t^{\omega[A]}.$$
Here $\<-,-\>_{orb}$ is the {\em orbifold Poincar\'e pairing} (see \cite{CR1}). The product $*$ is called the {\em (small) quantum cup product}. The ring $(H^{*}(I\X,\mathbb{Q})\otimes \Lambda, *)$ is called the {\em (small) orbifold quantum cohomology ring} of $(\X,\omega)$, and is often denoted by $QH_{orb}^{*}(\X,\Lambda)$. 

Let $$QH^*_{orb}(\X,\Lambda)^\times\subset QH^*_{orb}(\X,\Lambda)$$ be the set of elements invertible with respect to the quantum product $*$. The group $(QH^*_{orb}(\X,\Lambda)^\times, *)$ plays an important role in Seidel representations.

\subsection{$J$-holomorphic Curves in Hamiltonian Orbifiber Bundle}\label{secCurvHamBundle}
Given a Hamiltonian orbifiber bundle $$(\pmb{\pi}:\E\to S^2, \Omega)$$ as in Definition \ref{orbifiberbundle}, we consider the orbifold Gromov-Witten invariants of the total orbifold $\E$. Since orbifold Gromov-Witten invariants do not depend on choice of almost complex structures, we can use the following particular class of almost complex structures on $\E$:
\begin{defn}
Let $j$ be a complex structure on $S^{2}$, an almost complex structure $J$ on $\E$ is called $j$-compatible with the Hamiltonian orbifiber bundle if 
\begin{enumerate}
\item $\Omega(\cdot, J\cdot)|_{T^{vert}_{x}\E}$ is symmetric and positive definite (here $T^{vert}\E:=ker(d\pmb{\pi}: T\E\to TS^2)$ is the vertical tangent bundle of $\pmb{\pi}:\E\to S^2$);
\item the projection $\E\to S^2$ is $j$-$J$-holomorphic.
\end{enumerate}
\end{defn} 

To prove the existence of such almost complex structures, one can follow the proof for the manifold case word for word. We refer the readers to \cite{Se}.

From now on we fix the complex structure $j_{0}$ on $S^{2}$ which identifies it with $\com P^{1}$ and choose an almost complex structure $J$ which is $j_{0}$-compatible with $(\E,\Omega)$. Then we consider $J$-holomorphic orbifold morphisms to $\E$ which represent a section class $\sigma\in H^{sec}_{2}(|\E|,\inte)\subset H_{2}(|\E|,\inte)$, where $H^{sec}_{2}(|\E|,\inte)$ consists of homological classes which can be represented by a section of $|\pmb{\pi}|:|\E|\to S^{2}$. Such kind of orbifold morphisms are sectional orbifold morphisms in the sense of Definition \ref{orbisection} up to reparametrizations of the domains.  

More precisely, let $\M_{0,k}(\E,J,\sigma, \mathbf{x})$ be the moduli space of $J$-holomorphic orbifold morphisms that represent $\sigma$, and $\M^{sec}_{0,k}(\E,J,\sigma, \mathbf{x})$ be the moduli space of $J$-holomorphic sectional orbifold morphisms representing $\sigma$. Consider the following equivalence relation: $(\pmb{u}_1:(S^2_{orb},\bz)\to\E)\sim(\pmb{u}_2:(S^2_{orb},\bz')\to \E)$ if and only if there is a biholomorphism $\pmb{\phi}:(S^2_{orb},\bz)\to(S^2_{orb},\bz') $ s.t. $\pmb{u}_2=\pmb{u}_1\circ \pmb{\phi}$. Here $(S^{2}_{orb},\bz)$  and $(S^2_{orb},\bz')$ are orbispheres with orbipoints at $\bz=\{z_{0},...,z_{k-1}\}$ and $\bz'=\{z'_{0},...,z'_{k-1}\}$ respectively.

\begin{lem}
There is a $1$-$1$ correspondence between $\M_{0,k}(\E,J,\sigma, \mathbf{x})/\sim$ and 
$\M^{sec}_{0,k}(\E,J,\sigma, \mathbf{x})$.
\end{lem}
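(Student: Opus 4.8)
The plan is to exhibit explicit maps in both directions and verify that they are mutually inverse. The reverse direction is essentially tautological: a $J$-holomorphic sectional orbifold morphism $\pmb{s}:\B\to\E$, where $\B$ is an orbisphere structure on $S^{2}$ and $|\pmb{\pi}\circ\pmb{s}|=\mathrm{id}_{S^{2}}$, is in particular a $J$-holomorphic orbifold morphism from an orbisphere into $\E$ representing the section class $\sigma$; it therefore determines an element of $\M_{0,k}(\E,J,\sigma,\mathbf{x})$ and hence a $\sim$-class. All the content is thus in the forward direction and in checking that the two assignments are inverse to one another.

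\emph{Forward direction.} Given a representative $\pmb{u}:(S^{2}_{orb},\bz)\to\E$, the key geometric input is that $J$ is $j_{0}$-compatible, so $\pmb{\pi}$ is $j_{0}$-$J$-holomorphic and the composite $\pmb{\pi}\circ\pmb{u}:(S^{2}_{orb},\bz)\to S^{2}$ is a holomorphic orbifold morphism into the manifold $S^{2}$. Its underlying coarse map $\psi:=|\pmb{\pi}\circ\pmb{u}|\colon S^{2}\to S^{2}$ is a holomorphic self-map of $\com P^{1}$ whose degree equals $\<[\omega_{S^{2}}],|\pmb{\pi}|_{*}\sigma\>$; since $\sigma\in H^{sec}_{2}(|\E|,\inte)$ is a section class, $|\pmb{\pi}|_{*}\sigma=[S^{2}]$ and this degree is $1$. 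A degree-one holomorphic self-map of $\com P^{1}$ is a M\"obius biholomorphism. I would then take $\pmb{\phi}$ to be the biholomorphism of orbispheres whose coarse map is $\psi^{-1}$, obtained by pulling back the orbifold structure of $(S^{2}_{orb},\bz)$ along $\psi^{-1}$; the orbipoints lie over $\bz$ and $\psi$ carries them to the image marked points, so this pullback is unambiguous. Setting $\pmb{s}:=\pmb{u}\circ\pmb{\phi}$ we obtain $|\pmb{\pi}\circ\pmb{s}|=\psi\circ\psi^{-1}=\mathrm{id}_{S^{2}}$, so $\pmb{s}$ lifts the honest section $|\pmb{s}|\colon S^{2}\to|\E|$ of $|\pmb{\pi}|$ and is a sectional orbifold morphism in the sense of Definition \ref{orbisection}. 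We send $[\pmb{u}]$ to $\pmb{s}$.

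\emph{Well-definedness and inverse.} A second representative of the same $\sim$-class has the form $\pmb{u}'=\pmb{u}\circ\pmb{\chi}$ for a biholomorphism $\pmb{\chi}$ of the domain orbisphere, so its coarse projection is $\psi\circ|\pmb{\chi}|$ and the associated reparametrization satisfies $\pmb{\phi}'=\pmb{\chi}^{-1}\circ\pmb{\phi}$ coarsely; hence $\pmb{s}'=\pmb{u}'\circ\pmb{\phi}'=\pmb{u}\circ\pmb{\chi}\circ\pmb{\chi}^{-1}\circ\pmb{\phi}=\pmb{s}$, and the forward assignment is independent of the representative. Composing forward then reverse returns the original $\sim$-class since $\pmb{s}\sim\pmb{u}$, while composing reverse then forward is the identity because a sectional morphism already has $|\pmb{\pi}\circ\pmb{s}|=\mathrm{id}$, forcing $\psi=\mathrm{id}$ and $\pmb{\phi}=\mathrm{id}$. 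This yields the claimed bijection.

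\emph{Main obstacle.} I expect the genuinely delicate step to be the orbifold-theoretic construction of $\pmb{\phi}$: one must verify that the coarse biholomorphism $\psi^{-1}$ lifts to a biholomorphism of orbispheres matching the isotropy data at the marked and orbifold points, using representability of $\pmb{u}$ and the compatibility of the injections on local groups induced at orbipoints, and that the entire construction is equivariant for the automorphism groups quotiented out in the definition of $\M_{0,k}$. Once the degree-one observation identifies the projection as a M\"obius map, the remaining identities are formal, but carrying the orbifold structure and the relevant $2$-morphisms through the reparametrization rigorously in the stack language is where the care is needed.
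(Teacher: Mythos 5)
Your proof is correct and follows essentially the same route as the paper: compose with $\pmb{\pi}$, note that a section class forces the resulting holomorphic self-map of $S^{2}$ to have degree one and hence be a biholomorphism, and reparametrize by its inverse (transporting the orbifold structure) to obtain the sectional representative. The paper's own proof is just a terser version of this; your additional checks of well-definedness under the equivalence relation and of the two compositions being identities fill in exactly what the paper dismisses as ``easy to see.''
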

\begin{proof}
For any $J$-holomorphic orbifold morphism $(\pmb{s}:(S^{2}_{orb},\bz)\to \E)$ that represents a section class $\sigma$, $\pmb{\pi}\circ \pmb{s}:(S^{2}_{orb},\bz)\to S^2$ is holomorphic and determines a biholomorphism $\pmb{\phi}$ from $(S^{2}_{orb},\bz)$ to $(S^{2}_{orb},\pmb{\pi}\circ \pmb{s}(\bz))$ by giving orbifold structure on the target $S^2$. Then $\pmb{s}\circ \pmb{\phi}^{-1}\in\M^{sec}_{0,k}(\E,J,\sigma, \mathbf{x})$. It is easy to see this is a 1-1 correspondence.
\end{proof}

\begin{numrmk}
\begin{enumerate}
\item For $\pmb{s}\in\M^{sec}_{0,k}(\E,J,\sigma, \mathbf{x})$, let $$D_{\pmb{s}}\partial_{J}: \Omega^{0}(\pmb{s}^{*}T^{vert}\E)\to \Omega^{0,1}(\pmb{s}^{*}T^{vert}\E)$$ be the linearization of $\partial_{J}$ at $\pmb{s}$. Here $\Omega^{0}(\pmb{s}^{*}T^{vert}\E)$ is the Banach space of $\Wkp$ sections in $\pmb{s}^{*}T^{vert}\E$,  $\Omega^{0,1}(\pmb{s}^{*}T^{vert}\E)$ is the Banach space of  $\pmb{s}^{*}T^{vert}\E$ valued 1-form with $\mathcal{W}^{k-1,p}$ smoothness. Using the proof of \cite[Proposition 4.1.4]{CR1}, we compute the Fredholm index of  $D_{\pmb{s}}\bar{\partial}_{J}$ as:
$$index(D_{\pmb{s}}\bar{\partial}_{J})=dim_{\mathbb{R}}\X+2c_{1}(T^{vert}\E)(\sigma)+2k-2\iota(\bf{x}).$$
Here $\iota(\mathbf{x})$ is the degree shifting number associated to the twisted sector $\mathbf{x}$ of $\E$ (see \cite[Lemma 3.2.4]{CR2} and \cite[Section 3.2]{CR1} for details). From the above lemma, one can see that $$dim\M^{sec}_{0,k}(\E,J,\sigma, \mathbf{x})=dim\M_{0,k}(\E,J,\sigma, \mathbf{x})-6.$$
On the other hand, assuming $J$ is regular, i.e. the dimension of moduli spaces equal the Fredholm index, one can see that 
\begin{eqnarray*}
\ \ \ \ dim\M_{0,k}(\E,J,\sigma, \mathbf{x})-6 & & =dim_{\mathbb{R}}\E+2c_{1}(T\E)(\sigma)+2k-2\iota(\mathbf{x})-6\\
& & =dim_{\mathbb{R}}\X+2+2(c_{1}(T^{vert}\E)(\sigma)+2)+2k-2\iota(\mathbf{x})-6\\
& & =dim_{\mathbb{R}}\X+2c_{1}(T^{vert}\E)(\sigma)+2k-2\iota(\mathbf{x})\\
& & =dim_{\mathbb{R}}\M^{sec}_{0,k}(\E,J,\sigma, \mathbf{x}).
\end{eqnarray*}
Therefore we see that the expected dimension formula is consistent with the above lemma.
\item 
By Gromov compactness of $J$-holomorphic orbifold morphisms, a sequence of $J$-holomorphic orbifold morphisms with bounded energy has a subsequence converging to a stable orbifold map (\cite{CR2}). Note that the proof of Gromov convergence indicates that there is a component of the stable orbifold map, such that the subsequence of $J$-holomorphic orbifold morphisms $C^{\infty}$ converges to that component on any compact subset of $S^2$ with finite points removed. If the sequence of $J$-holomorphic orbifold morphisms are sectional, this component is also sectional. Thus the Gromov compactification $\Mbar^{sec}_{0,k}(\E,J,\sigma,\mathbf{x})$ of $\M^{sec}_{0,k}(\E,J,\sigma, \mathbf{x})$ is well-defined. 
\end{enumerate}
We will not need the above result in this paper, thus will not pursue a detailed proof here.
\end{numrmk}

Now we consider the structure of orbifold stable morphisms into $\E$.  Since $J$ on $T\E$ restricts to an almost complex structure on the vertical tangent bundle $T^{vert}\E:=ker (d\pi) \subset T\E$, a component $\pmb{f}_{\nu}:(\Sigma_{\nu}, \bz_{\nu})\to\E$  of an orbifold stable morphism $\pmb{f}$ is either a $J$-holomorphic sectional orbifold morphism (up to a domain reparametrization) or a $J$-holomorphic orbifold morphism with image contained in a fiber. Furthermore, since it represents a section class, there is only one sectional component, i.e.
\begin{itemize}
\item There is an irreducible component $\Sigma_0$ such that the composition $S^2 \simeq |\Sigma_0|\overset{|\pmb{f}|}{\to} |\E| \overset{|\pmb{\pi}|}{\to} S^2$ is surjective and the induced map $H_2(|\Sigma_0|,\mathbb{Z})\to H_2(S^2,\mathbb{Z})$ is an isomorphism.
\item All other components are mapped into fibers of $\E\to S^2$.
\end{itemize}
Any $J$-holomorphic stable orbifold map representing a section class has exactly one sectional component (up to a domain reparametrization). This component is called the {\em stem component} of the orbifold stable morphism, all other components are called {\em branch components}. 

In the Hamiltonian orbifiber bundle case, we can construct Kuranishi structure as in Section \ref{secReviewoGW}.  Moreover we can choose the obstruction bundle to satisfy additional properties. 

\begin{lem}\label{lemvertobs}
Let $\sigma$ be a section class. Then the cokernel $cokerL_{\pmb{f}}$ of an orbifold stable morphism $\pmb{f}:(\Sigma,j,\vec z, D)\to\E$ representing the class $\sigma$ is spanned by elements in $L^p_{\delta}(\pmb{f}^{*}T^{vert}\E\otimes\Lambda^{0,1})$.
\end{lem}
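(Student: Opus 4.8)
The plan is to exploit the holomorphic splitting of $T\E$ furnished by the $j_{0}$-compatibility of $J$. Since $\pmb{\pi}:\E\to S^{2}$ is $(J,j_{0})$-holomorphic, the differential $d\pmb{\pi}:T\E\to \pmb{\pi}^{*}TS^{2}$ is complex linear with kernel exactly the vertical bundle $T^{vert}\E$; pulling back along $\pmb{f}$ gives a short exact sequence of complex vector bundles over the domain $\Sigma$,
\[
0\to \pmb{f}^{*}T^{vert}\E\to \pmb{f}^{*}T\E\xrightarrow{\,d\pmb{\pi}\,} (\pmb{\pi}\circ\pmb{f})^{*}TS^{2}\to 0.
\]
First I would record the structure of the base map $\pmb{\pi}\circ\pmb{f}:\Sigma\to S^{2}$: it is $j_{0}$-holomorphic, has degree $1$ on the stem component and is constant on every branch component, and (since we are in the genus-zero theory) its domain is a tree of orbispheres.

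Next I would establish that $L_{\pmb{f}}$ is \emph{upper triangular} with respect to this filtration. Because $\pmb{\pi}$ is holomorphic one has the exact identity $d\pmb{\pi}\circ\bar{\partial}_{J}\pmb{f}'=\bar{\partial}_{j_{0}}(\pmb{\pi}\circ\pmb{f}')$ for every nearby map $\pmb{f}'$; differentiating at $\pmb{f}$ shows that the quotient projection $d\pmb{\pi}$ intertwines $L_{\pmb{f}}$ with the linearized operator $C:=D_{\pmb{\pi}\circ\pmb{f}}\bar{\partial}_{j_{0}}$ on $(\pmb{\pi}\circ\pmb{f})^{*}TS^{2}$. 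In particular $L_{\pmb{f}}$ carries vertical sections in $L^{p}_{1,\delta}(\pmb{f}^{*}T^{vert}\E)$ into vertical forms in $L^{p}_{\delta}(\pmb{f}^{*}T^{vert}\E\otimes\Lambda^{0,1})$, its restriction being the vertical operator $L^{vert}_{\pmb{f}}$, and the induced operator on the quotient is precisely $C$. This yields a short exact sequence of Fredholm complexes $0\to (L^{vert}_{\pmb{f}})\to (L_{\pmb{f}})\to (C)\to 0$, hence a six-term exact sequence of kernels and cokernels; since the whole splitting is canonical, every map here is $\Gamma_{\tau}$-equivariant.

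The heart of the argument is to prove $\mathrm{coker}\,C=0$. Since $j_{0}$ is integrable and $\pmb{\pi}\circ\pmb{f}$ is holomorphic, $(\pmb{\pi}\circ\pmb{f})^{*}TS^{2}$ is a genuine holomorphic line bundle, and under the weighted Sobolev norms together with the node-matching conditions $\mathrm{coker}\,C$ is identified with the sheaf cohomology $H^{1}(\Sigma,(\pmb{\pi}\circ\pmb{f})^{*}TS^{2})$ of the nodal genus-zero domain. On the stem this pullback is $\mathcal{O}(2)$ and on each branch it is $\mathcal{O}(0)$, so its restriction to every irreducible component has non-negative degree on $\com P^{1}$. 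Feeding the normalization sequence of the tree-like domain, tensored with this bundle, into the long exact cohomology sequence, all the component $H^{1}$'s vanish, while the evaluation-difference map $\bigoplus_{\nu}H^{0}\to\bigoplus_{\text{nodes}}\com$ is surjective by induction over the leaves of the dual tree (each $\mathcal{O}(d)$ with $d\ge 0$ on $\com P^{1}$ being base-point free). Hence $H^{1}=0$ and $C$ is surjective; the decisive positivity input is $c_{1}(TS^{2})=2>0$.

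Finally, with $\mathrm{coker}\,C=0$, the six-term sequence forces the inclusion-induced map $\mathrm{coker}\,L^{vert}_{\pmb{f}}\to\mathrm{coker}\,L_{\pmb{f}}$ to be surjective, so every cokernel class of $L_{\pmb{f}}$ is represented by a vertical form lying in $L^{p}_{\delta}(\pmb{f}^{*}T^{vert}\E\otimes\Lambda^{0,1})$, which is the assertion. I expect the main obstacle to be the third step: carefully matching the analytic cokernel in the weighted Sobolev setting with $H^{1}$ of the nodal orbicurve — tracking the decay weights $\delta$ and the orbifold structure at nodes and marked points — and verifying the tree-surjectivity $\Gamma_{\tau}$-equivariantly, so that the conclusion descends correctly to the obstruction bundle of the Kuranishi chart.
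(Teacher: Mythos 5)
Your proof is correct, but it takes a genuinely different route from the paper's. The paper argues componentwise: on the stem component it uses the explicit direct-sum decomposition $\pmb{f}_{\nu}^*T\E=\pmb{f}_{\nu}^*T(\pmb{f}_{\nu}(\Sigma_{\nu}))\oplus\pmb{f}^{*}T^{vert}\E$ (the image of the stem is itself a $J$-holomorphic section, so its tangent bundle supplies a horizontal complement), on branch components the decomposition $\pmb{f}_{\nu}^*T\E=\underline{\com}\oplus\pmb{f}_{\nu}^*T^{vert}\E$, and then asserts by ``a direct calculation'' that $L_{\pmb{f}}$ maps horizontal sections onto horizontal-valued $(0,1)$-forms, with the matching across nodes left as ``straightforward to check'' from the definitions of the weighted Sobolev spaces. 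You instead keep the global short exact sequence $0\to\pmb{f}^{*}T^{vert}\E\to\pmb{f}^{*}T\E\to(\pmb{\pi}\circ\pmb{f})^{*}TS^{2}\to 0$, observe that $L_{\pmb{f}}$ is upper triangular with quotient operator $C$, kill $\mathrm{coker}\,C$ by identifying it with $H^{1}$ of a line bundle of non-negative degree on a tree of spheres (normalization sequence plus leaf induction), and conclude by the snake lemma. Both arguments rest on exactly the same positivity input --- degree $2$ on the stem, degree $0$ on the branches --- so your $H^{1}$-vanishing is the global form of the paper's componentwise ``direct calculation''. What your version buys: upper-triangularity of $L_{\pmb{f}}$ is a weaker and more honestly verifiable statement than the paper's implicit claim that $L_{\pmb{f}}$ preserves the horizontal summand, and the node-matching, which the paper waves through, is handled rigorously by your tree induction. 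What it costs: the analytic identification $\mathrm{coker}\,C\cong H^{1}(\Sigma,(\pmb{\pi}\circ\pmb{f})^{*}TS^{2})$ in the weighted setting, which you rightly flag; note that since the local groups at the special points act trivially on the pullback of $TS^{2}$ (the map to $S^{2}$ factors through the coarse curve), the bundle and its cohomology descend to the coarse nodal curve, so this reduces to the standard weighted-$\bar{\partial}$ computation and your argument is complete.
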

\begin{proof} 
As observed above, a component $\pmb{f}_{\nu}:\Sigma_{\nu}\to \E$ of $\pmb{f}$ is either a stem component which is a sectional morphism or a branch component contained in a fiber. If $\pmb{f}_{\nu}$ is a stem component, then we have
$$\pmb{f}_{\nu}^*T\E=\pmb{f}_{\nu}^*T(\pmb{f}_{\nu}(\Sigma_{\nu}))\oplus \pmb{f}_{\nu}^*T^{vert}\E=\pmb{f}_{\nu}^*T(\pmb{f}_{\nu}(\Sigma_{\nu}))\oplus \pmb{f}^{*}T^{vert}\E.$$
Hence we have $$L_{\delta}^p(\pmb{f}^{*}T\E\otimes\Lambda^{0,1})=L_{\delta}^p(\pmb{f}_{\nu}^*T(\pmb{f}_{\nu}(\Sigma_{\nu}))\otimes\Lambda^{0,1})\oplus L_{\delta}^p(\pmb{f}^{*}T^{vert}\E \otimes\Lambda^{0,1}).$$ A direct calculation shows that $L_{\pmb{f}}$ maps $L_{1,\delta}^p(\pmb{f}_{\nu}^*T(\pmb{f}_{\nu}(\Sigma_{\nu})))$ onto $L_{\delta}^p(\pmb{f}_{\nu}^*T(\pmb{f}_{\nu}(\Sigma_{\nu}))\otimes\Lambda^{0,1})$.

If $\Sigma_{\nu}$ is mapped into a fiber, then $\pmb{f}_{\nu}^*T\E=\underline{\mathbb{C}}\oplus \pmb{f}_{\nu}^*T^{vert}\E$ and clearly $L_{\pmb{f}}$ maps $L_{1,\delta}^p(\underline{\mathbb{C}})$ onto $L_{\delta}^p(\underline{\mathbb{C}}\otimes\Lambda^{0,1})$.

Therefore, over each component $\Sigma_{\nu}$, $cokerL_{\pmb{f}}$ is spanned by elements in $L^p_{\delta}(\pmb{f}^{*}T^{vert}\E\otimes\Lambda^{0,1})$. From the definitions of these Sobolev spaces it is straightforward to check that the assertion holds for the whole $\Sigma$.
\end{proof}
Observe that if there is a subbundle $V$ of $T\E$ such that the cokernel $cokerL_{\pmb{f}}$ is spanned by elements in $L_{\delta}^p(\pmb{f}^*V\otimes\Lambda^{0,1})$, then we can choose the fiber of the obstruction bundle over $\pmb{f}$ to be contained in $L_{\delta}^p(\pmb{f}^*V\otimes\Lambda^{0,1})$. By Lemma \ref{lemvertobs} we can choose the obstruction bundle so that its fiber over $\pmb{f}$ is contained in $L^p_{\delta}(\pmb{f}^*T^{vert}\E\otimes\Lambda^{0,1})$. Although such kind of obstruction bundle is not necessary to define Seidel representation, it will be useful when we prove its properties. In what follows we choose the obstruction bundles this way.

\subsection{Definition of Seidel Representations}\label{secdefineSeidel}
In this subsection we give the definition of Seidel representation for symplectic orbifolds. Let $(\X, \omega)$ be a compact symplectic orbifold. Given a homotopy class $a\in \pi_{1}(Ham(\X,\omega))$, represent it by a Hamiltonian loop $\gamma$, then we can construct Hamiltonian orbifiber bundle $(\E_{\gamma},\Omega_{c})$ as in Section \ref{orbifiberbundleS2}. Denote $c_1(T^{vert}\E)$ by $c_1^{v}$. Let $\iota$ be an inclusion of a fiber over a point\footnote{Unless otherwise stated, we choose the north pole.} in $S^2$ into $\E_\gamma$. There is a Gysin map induced by this inclusion: $\iota_{*}: H^{*}(I\X,\mathbb{Q})\to H^{*+2}(I\E_{\gamma},\mathbb{Q})$. One can think of this map as a union of maps  from $H^{*}(\X_{(g)},\mathbb{Q})$ to $H^{*+2}(\E_{\gamma,(g)},\mathbb{Q})$, which makes sense because there is no orbifold structures along the horizontal direction.

\begin{defn}\label{defseidel}
Given a symplectic orbifold $(\X,\omega)$. The {\em Seidel element} for a homotopy class $a\in \pi_{1}(Ham(\X,\omega))$ is defined to be
$$
\mathcal{S}(a):=\sum_{\sigma\in H_2^{sec}(|\E_{\gamma}|)} (\sum_{i} \< \iota_*f_i\>^{\E_{\gamma}}_{0,1,\sigma}f^i) \otimes q^{c_1^{v}(\sigma)}t^{u_{\gamma}(\sigma)},
$$
where $\{f_i\}\subset H^*(I\X,\mathbb{Q})$ is an additive basis, and $\{f^i\}\subset H^*(I\X,\mathbb{Q})$ its dual basis with respect to the orbifold Poincar\'e pairing. 
\end{defn}
Because of Proposition \ref{homotopylem}, $\mathcal{S}(a)$ does not depend on the choice of Hamiltonian loop representing the homotopy class $a$, thus we have a well-defined map of sets $$\mathcal{S}: \pi_1(Ham(\X,\omega))\to QH^*_{orb}(\X, \Lambda)^\times,$$ called the {\em Seidel representation} of $(\X,\omega)$. The use of the term `` representation'' is justified by the following important properties of the map $\mathcal{S}$.

\begin{thm}\label{SeidelAxiom}
\hfill
\begin{enumerate}
\item {\bf Triviality property}:
Let $e\in \pi_1(Ham(\X, \omega))$ be the identity loop, then 
\begin{equation}\label{triviality}
S(e)=1.
\end{equation}
\item {\bf Composition property}: Let $a, b\in \pi_1(Ham(\X,\omega))$ and let $a\cdot b\in \pi_1(Ham(\X, \omega))$ be their product defined by loop composition (see Definition \ref{defn_fund_gp_Ham}). Then
\begin{equation}\label{composition}
\mathcal{S}(a\cdot b)=\mathcal{S}(a)*\mathcal{S}(b).
\end{equation}
\end{enumerate}
\end{thm}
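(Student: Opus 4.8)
The plan is to treat the two properties separately, since they call for rather different techniques.

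\textbf{Triviality.} When $\gamma$ is the identity loop, the bundle $\E_\gamma$ is the trivial product $S^2\times\X$, so $c_1^v$ and the coupling class vanish on the horizontal section class $\sigma_0:=[S^2\times\mathrm{pt}]$, and the only monomial that can survive there carries $q^0t^0$. First I would isolate the contribution of $\sigma_0$: the moduli space $\M^{sec}_{0,1}(\E_e,J,\sigma_0,\mathbf{x})$ consists of sections that are constant along the fiber $\X$, hence is identified with the untwisted sector $\X_{(0)}\subset I\X$ carrying the single marked point, and the one-pointed invariant $\langle\iota_*f_i\rangle^{\E_e}_{0,1,\sigma_0}$ computes $\langle f_i,1\rangle_{orb}$, so that $\sum_i\langle f_i,1\rangle_{orb}f^i=1$. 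The substantive content is the vanishing of $\langle\iota_*f_i\rangle^{\E_e}_{0,1,\sigma}$ for every other section class $\sigma=\sigma_0+A$ with $A\neq0$, which is what Proposition \ref{ModuliofTrivial} should record. To prove this vanishing I would exploit the circle symmetry rotating the $S^2$ factor, for which the trivial bundle is equivariant; this induces an $S^1$-action on the moduli space. Any stable sectional morphism in a class with nonzero fiber component $A$ must carry fiber bubbles located over definite points of the base, so the circle acts with positive-dimensional orbits on the relevant stratum. Implementing this through a parametrized $S^1$-equivariant Kuranishi structure (the construction of \S\ref{secEqKura}) forces the virtual fundamental cycle to have dimension strictly below its expected dimension, whence the zero-dimensional invariant must vanish.

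\textbf{Composition.} Here I would argue by degeneration. The geometric input is a symplectic orbifiber fibration over the total space of a degenerating family of curves $\{C_t\}_{t\in[0,1]}$ with $C_1\cong S^2$ and $C_0$ a nodal union of two spheres, restricting to $\E_{a\cdot b}$ over $C_1$ and to the fiber connected sum $\E_a\#_{\X}\E_b$ (glued along a copy of the fiber $\X$ over the node) over $C_0$. By deformation invariance of orbifold Gromov-Witten invariants inside the Kuranishi framework, the invariants $\langle\iota_*f_i\rangle^{\E_{a\cdot b}}_{0,1,\sigma}$ equal those computed on the degenerate fiber. On the nodal side a section splits into a sectional morphism of $\E_a$ and one of $\E_b$ matched at the node by the balance condition $ev(w)=\mathcal{I}(ev(w'))$; inserting a dual basis $\{f_j\},\{f^j\}$ at the node and summing over all splittings $\sigma=\sigma_a\#\sigma_b$ yields a gluing formula expressing the invariant of $\E_{a\cdot b}$ as a sum of products of invariants of $\E_a$ and $\E_b$. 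I would then pair both sides against an arbitrary class, rewrite the node sum through the orbifold Poincaré pairing, and recognize the result as the matrix coefficient of $\mathcal{S}(a)*\mathcal{S}(b)$; since $c_1^v$ and the coupling classes add under the fiber sum, the Novikov weights $q^{c_1^v}t^{u_\gamma}$ multiply correctly, giving (\ref{composition}).

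\textbf{Main obstacles.} For triviality the delicate point is execution rather than idea: one must build the parametrized $S^1$-equivariant Kuranishi structure compatibly with the inductive chart construction of \S\ref{secReviewoGW} and verify that the dimension drop persists after perturbing to transverse multisections. For composition the hard part is the gluing analysis for the degenerating family --- showing that the Kuranishi structures on $\E_{a\cdot b}$ and on $\E_a\#_{\X}\E_b$ are cobordant through the family, and matching orbifold structures at the node so that the balance condition produces exactly the diagonal insertion that governs the quantum product. The orbifold bookkeeping at the node (twisted sectors, the involution $\mathcal{I}$, and degree-shifting numbers) is where the argument departs most sharply from the manifold case, and where I expect the most care to be required.
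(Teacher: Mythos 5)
Your overall skeleton for the triviality part matches the paper's (reduce to Proposition \ref{ModuliofTrivial}, then kill the classes $\sigma_0+\iota_*B$, $B\neq 0$, by a symmetry-induced dimension drop), but your key assertion --- that any stable map with nonzero fiber part $A$ has positive-dimensional orbits, so the action is (locally) free on ``the relevant stratum'' --- is false, and the failure locus is exactly where the paper does all its work. The locus $\M^{1fib}$ of configurations whose stem component is constant and whose entire fiber bubble tree is attached at the anchor point of the marked point is fixed pointwise by the parametrized action (Lemma \ref{paraActionLemma}), and it is nonempty precisely when the fiber-class invariants you want to kill are nontrivial. In a virtual/Kuranishi setting this fixed locus cannot be discarded because it is ``small'': equivariant transverse multisections cannot even be extended across it (Remark \ref{nonextension}; the slices get squeezed and derivatives blow up), so the naive ``equivariant Kuranishi structure $\Rightarrow$ dimension drop'' conclusion does not go through. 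The paper's actual mechanism uses a parametrized $\mathbb{A}=\{az+b\}$-action with its two subactions playing different roles: the parametrized $\com$-action (no nontrivial finite subgroups, hence free off $\M^{1fib}$) builds equivariant multisections away from the fixed locus, while the parametrized $S^1$-action builds a disk-bundle neighborhood of $\check{V}^{1fib}$ whose boundary circles are orbits (Lemma \ref{tubnbd}), allowing the evaluation map to be replaced by a homotopic map $Ev$ that collapses those disks (Lemma \ref{evhomotopy}), so the fixed-locus neighborhood's image is absorbed into the lower-dimensional image of the free part. Note also that a single global rotation of the $S^2$ factor is not enough as stated: the group must vary with the position of the marked point (this is why the action is parametrized by an anchor map to $\com P^1$), otherwise one does not get invariance of the evaluation constraint together with a manageable fixed locus.

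For the composition property, your geometric input (a family whose generic fiber is $\E_{a\cdot b}$ and whose nodal fiber is $\E_a\cup_\X\E_b$) is exactly the paper's $pr:\Y\to\com P^1$, but the way you extract the identity is genuinely different and rests on a theorem that is not available. ``Deformation invariance'' only equates invariants of the smooth fibers; the invariants ``computed on the degenerate fiber'' are undefined, and converting them into a sum over splittings is the content of a degeneration/symplectic-sum formula, which in general produces \emph{relative} Gromov--Witten invariants of the pairs $(\E_a,\X)$ and $(\E_b,\X)$ (contact orders, expanded degenerations), not the absolute invariants with a diagonal insertion appearing in Proposition \ref{compGW}; no such orbifold formula exists in the Kuranishi framework used here. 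The paper deliberately avoids this: it treats $\Y$ as one closed symplectic orbifold, observes that $\M^{\Delta}(\Y)$ (maps into the nodal fiber, which split as a section of $\E_\alpha$, a bubble in the node fiber $\X$ carrying the class $A$ and the insertion $f$, and a section of $\E_\beta$) and $\M^{\E_\varpi}(\Y)$ are disjoint closed subsets of $\Mbar_{0,1}(\Y,\mathfrak{c}^{\varpi}_{*}\sigma)$, chooses a single Kuranishi structure restricting to the fiber-product structures on each (Lemma \ref{virtualY}), and then pairs the one virtual class against two homologous constraint cycles, $i^{nd}_{*}f$ at the node and $\iota^{\varpi}_{*}f$ in a generic fiber (Lemmas \ref{identity1} and \ref{identity2}). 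This replaces the gluing analysis you flag as the ``hard part'' by standard intersection arguments on a compact space; as written, your proposal has a genuine gap at exactly that point.
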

Theorem \ref{SeidelAxiom} will be proved in the next two sections.

\begin{cor}\label{seidel_hom}
The map $$\mathcal{S}: \pi_1(Ham(\X,\omega))\to QH^*_{orb}(\X, \Lambda)^\times$$ is a group homomorphism.
\end{cor}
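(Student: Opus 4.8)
The plan is to deduce the Corollary formally from Theorem \ref{SeidelAxiom}, which supplies precisely the two relations needed. First I would record that $\mathcal{S}$ is already a well-defined map of sets: by Proposition \ref{homotopylem} the Hamiltonian orbifiber bundle $\E_{\gamma}$, and hence the Seidel element of Definition \ref{defseidel}, depends only on the homotopy class $a\in \pi_{1}(Ham(\X,\omega))$ and not on the chosen representing loop $\gamma$. Thus it remains only to verify that $\mathcal{S}$ respects the group structures and that it takes values in the group of multiplicatively invertible elements. The homomorphism identity is exactly the Composition property (\ref{composition}): for all $a,b\in \pi_{1}(Ham(\X,\omega))$ one has $\mathcal{S}(a\cdot b)=\mathcal{S}(a)*\mathcal{S}(b)$.

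The one point requiring a short argument is that $\mathcal{S}$ actually lands in $QH^*_{orb}(\X,\Lambda)^{\times}$, rather than merely in the ring $QH^*_{orb}(\X,\Lambda)$; a priori Definition \ref{defseidel} only produces a ring element. Here the two axioms combine. Given $a$, let $a^{-1}$ be its inverse in the group $\pi_{1}(Ham(\X,\omega))$ (which exists, since every Hamiltonian loop was shown to be invertible), so that $a\cdot a^{-1}=a^{-1}\cdot a=e$. Applying the Composition property (\ref{composition}) and then the Triviality property (\ref{triviality}) gives
\[
\mathcal{S}(a)*\mathcal{S}(a^{-1})=\mathcal{S}(a\cdot a^{-1})=\mathcal{S}(e)=1,
\]
and symmetrically $\mathcal{S}(a^{-1})*\mathcal{S}(a)=1$. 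Hence $\mathcal{S}(a^{-1})$ is a two-sided $*$-inverse of $\mathcal{S}(a)$, so $\mathcal{S}(a)\in QH^*_{orb}(\X,\Lambda)^{\times}$ and in fact $\mathcal{S}(a^{-1})=\mathcal{S}(a)^{-1}$. This shows the map is well-defined into the group of units, and together with (\ref{composition}) that it is a group homomorphism.

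From the vantage point of the Corollary itself there is no genuine obstacle: all of the content is carried by Theorem \ref{SeidelAxiom}, whose proof occupies Sections \ref{trivaxiom} and \ref{compaxiom}. The real difficulties lie there, not in the formal deduction above — namely, establishing the vanishing of the relevant Gromov--Witten invariants attached to the identity loop through a dimension count carried out in the Kuranishi setting (for the Triviality property), and comparing the invariants of $\E_{a\cdot b}$ with those of $\E_{a}$ and $\E_{b}$ via the degeneration argument (for the Composition property). The present statement is best viewed as the clean packaging of those two technical theorems into the single assertion that $\mathcal{S}$ is a homomorphism.
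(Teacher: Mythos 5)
Your proposal is correct and follows essentially the same route as the paper: the paper's proof of Corollary \ref{seidel_hom} likewise reduces everything to Theorem \ref{SeidelAxiom}, showing the image lies in $QH^*_{orb}(\X,\Lambda)^{\times}$ by computing $\mathcal{S}(a)*\mathcal{S}(a^{-1})=\mathcal{S}(a\cdot a^{-1})=\mathcal{S}(e)=1$ via the composition and triviality properties. Your additional remarks (well-definedness via Proposition \ref{homotopylem}, the two-sided inverse identity) are consistent with what the paper establishes in Section \ref{secdefineSeidel} and add nothing that conflicts with its argument.
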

\begin{proof}
In view of Theorem \ref{SeidelAxiom}, it remains to show that the image of $\mathcal{S}$ is contained in $QH^*_{orb}(\X, \Lambda)^\times$. For $a\in \pi_{1}(Ham(\X,\omega))$, let $a^{-1}\in \pi_{1}(Ham(\X,\omega))$ be the inverse loop. Then $a\cdot a^{-1}=e$ in $\pi_1(Ham((\X,\omega))$. By (\ref{triviality})--(\ref{composition}), we calculate $\mathcal{S}(a)*\mathcal{S}(a^{-1})=\mathcal{S}(a\cdot a^{-1})=\mathcal{S}(e)=1$. Thus $\mathcal{S}(a)^{-1}=\mathcal{S}(a^{-1})$. This completes the proof.
\end{proof}
\section{Triviality property}\label{trivaxiom}
The purpose of this section is to prove the triviality property (\ref{triviality}).
\subsection{Proof of Triviality: set-up}\label{trivideal}
We can choose the constant loop to represent the trivial element $e\in \pi_{1}(Ham(\X,\omega))$, and the corresponding Hamiltonian orbifiber bundle is $\XxS\to S^2$. Let $c_{1}^{v}$ be the first Chern class of the vertical subbundle of $T(\XxS)$, $u_{e}$ the coupling class (see Definition \ref{couplingcl}) of this trivial Hamiltonian orbifiber bundle, $\sigma_{0}$ the section class determined by the constant section. Let $\tilde{J}_{0}$ be the direct sum of an almost complex structure $J$ on $\X$ and the complex structure $j_{0}$ on $S^{2}$. Then by definition the triviality property \eqref{triviality} may be written as 
\begin{equation}
\sum_{i}(\!\!\! \sum_{B\in H_{2}(|\X|,\mathbb{Z})}\int_{ev_{*}[\Mbar_{0,1}(\XxS,\sigma_{0}+\iota_{*}B,\tJ_{0},(g))]^{vir}} \iota_{*}f_{i})\cdot f^{i}\cdot q^{c_{1}^{v}(\sigma_{0}+\iota_{*}B)}t^{u_{e}(\sigma_{0}+\iota_{*}B)}=1\ (=PD_{\X_{(0)}}([\X_{(0)}])).
\end{equation}
Here $\X_{(0)}$ is the untwisted sector, $\X_{(g)}$ is the twisted sector supporting $\iota_{*}f_{i}$.

Note that $c_{1}^{v}(\sigma_{0})=0$ and $u_{e}(\sigma_{0})=0$, so to prove (\ref{triviality}) it is enough to show:
\begin{prop}\label{ModuliofTrivial}
\begin{itemize}
\item Case 1: If $B=0$ and the twisted sector $\mathbf{x}=(g)$ is nontrivial, then $$\Mbar_{0,1}(\XxS,\sigma_{0}+\iota_{*}B,\tJ_{0},(g))=\emptyset;$$
\item Case 2: If $B=0$ and the twisted sector $(g)=(0)$ is trivial, then $$ev_{*}[\Mbar_{0,1}(\XxS,\sigma_{0}+\iota_{*}B,\tJ_{0},(g))]^{vir}=[\X_{(0)}];$$
\item Case 3: If $B\neq 0$, then $$\int_{ev_{*}[\Mbar_{0,1}(\XxS,\sigma+\iota_{*}B,\tJ_{0},(g))]^{vir}} \iota_{*}f_{i}=0 \,\,\text{ for any }\,\, i.$$
\end{itemize}
\end{prop}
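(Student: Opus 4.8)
The plan is to exploit the fact that the chosen almost complex structure $\tJ_{0}=J\oplus j_{0}$ is a \emph{product}, so that every $\tJ_{0}$-holomorphic stable orbifold morphism $\pmb{f}=(\pmb{g},\pmb{h})\colon(\Sigma,j,\vec z,D)\to\XxS$ splits into a $J$-holomorphic map $\pmb{g}\colon\Sigma\to\X$ of class $B$ and a $j_{0}$-holomorphic map $\pmb{h}\colon\Sigma\to S^{2}$ of degree $1$. As recalled in \S\ref{secCurvHamBundle}, there is then a unique stem component $\Sigma_{0}$ on which $\pmb{h}$ has coarse degree $1$, and all other (branch) components are contracted into fibers. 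The first observation I would isolate is that \emph{no orbifold marked point or orbifold node can lie on the stem}: near an orbipoint of order $m$ the local model $[\com/\Zm]$ maps to the smooth base $S^{2}$ by a $\Zm$-invariant function, hence through $w\mapsto w^{m}$, which would ramify the coarse projection to order $m$ and contradict $\deg\pmb{h}=1$. Consequently the stem is a smooth orbicurve, and any nontrivial orbifold structure of the domain is supported on branch components, which by Definition \ref{defStableOrbiMorph} are contained in fibers.

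With this in hand, Cases 1 and 2 are essentially structural. For Case 1, a nontrivial twisted sector $(g)$ means the unique marked point is an orbipoint with $g\neq 1$, so by the above it must lie on a branch component contained in a fiber. But when $B=0$ every component has zero symplectic area, so all branches are ghost (constant) components; a tree of ghost components carrying only a single marked point necessarily has an unstable leaf, forcing the domain to be the smooth stem alone. Then the marked point is a smooth point and $(g)$ is trivial, a contradiction, so the moduli space is empty. For Case 2, $B=0$ again forbids branches, so the moduli space consists exactly of the constant sections $z\mapsto(x_{0},z)$ with $x_{0}\in\X$. Here I would check regularity directly: the vertical linearization is $\bar\partial$ on the \emph{trivial} bundle $\underline{T_{x_{0}}\X}$ over $S^{2}$, whose cokernel $H^{0,1}(S^{2})\otimes T_{x_{0}}\X$ vanishes, so with the vertical choice of obstruction bundle from Lemma \ref{lemvertobs} the Kuranishi obstruction is zero. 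Thus the moduli space is unobstructed and canonically identified with $\X_{(0)}$, and $ev$ realizes this identification, giving $ev_{*}[\,\cdot\,]^{vir}=[\X_{(0)}]$.

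Case 3, where $B\neq 0$, is the genuine content and the step I expect to be the main obstacle. Now branch components may be nonconstant and carry orbifold marked points, so the moduli space is generally nonempty and of the full expected dimension; one cannot argue by emptiness. Following the outline, I would introduce the $S^{1}$-action on $\XxS$ that rotates the $S^{2}$-factor fixing the two poles and acts trivially on $\X$, and promote the Kuranishi structure to an $S^{1}$-equivariant (parametrized) one, as set up in \S\ref{secParaAction}--\S\ref{secEqKura}. The crucial point is that an $S^{1}$-fixed stem must be a rotation-invariant holomorphic section, which forces its $\X$-component to be constant; all of the class $B\neq 0$ is therefore pushed onto branch bubbles sitting over the fixed poles. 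This concentrates the relevant equivariant virtual cycle on a subset of strictly smaller dimension than the virtual dimension, so that after an equivariant transverse perturbation the pushed-forward cycle $ev_{*}[\Mbar_{0,1}(\XxS,\sigma_{0}+\iota_{*}B,\tJ_{0},(g))]^{vir}$ has dimension below $\deg(\iota_{*}f_{i})$ and the integral vanishes for every $i$.

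The hardest part is making this dimension drop rigorous \emph{within} the Kuranishi framework: one must construct the parametrized $S^{1}$-equivariant Kuranishi structure compatibly with the product splitting and the vertical obstruction spaces, verify that it has a tangent bundle and is oriented, and control the multisection perturbation so that equivariance is preserved while the zero locus stays inside the expected low-dimensional stratum. This is exactly the analysis deferred to \S\ref{secEqKura} and completed in \S\ref{wholetriv}; the remaining bookkeeping, matching $c_{1}^{v}$ and $u_{e}$ via Definition \ref{couplingcl} against the degree of $\iota_{*}f_{i}$, is routine once the equivariant cycle is shown to be degenerate.
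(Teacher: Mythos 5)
Your ``first observation'' --- that no orbipoint can lie on the stem because it would force the coarse projection to $S^{2}$ to ramify --- is false, and this matters. Near an orbipoint of order $m$, an orbifold morphism to the smooth base is given in a chart by a $\Zm$-invariant holomorphic map, i.e.\ by $w\mapsto F(w^{m})$; the induced map on coarse spaces is $F$, which can be unramified, so orbifold structure on the domain is invisible to the degree of the coarse projection. Indeed, Proposition \ref{constantsection} of the paper exhibits a sectional orbifold morphism of coarse degree $1$ whose domain \emph{must} carry a $\Ztwo$-orbipoint; and when $B\neq 0$ the stem of a stable map to $\XxS$ can be the graph of a nonconstant representable map to $\X$ and hence can genuinely carry orbipoints, so your structural picture fails exactly in Case 3. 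What saves Cases 1 and 2 is not ramification but monodromy: when $B=0$ the stem's vertical part is constant (your energy argument, which matches the paper's), and a lift of a constant section over an orbisphere with at most one orbipoint has trivial holonomy around that point because the complement is simply connected, so representability forces the trivial sector. This is Lemma \ref{constantsectiontwistsector} (\ref{1markpoint}), proved by the cocycle computation of Proposition \ref{constantsection}; with that substitution your Cases 1 and 2 agree with the paper.

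The genuine gap is in Case 3, in two places. First, your group is wrong for the purpose: under the global $S^{1}$-action rotating the $S^{2}$-factor about two fixed poles, the evaluation map is only \emph{equivariant}, not invariant, so a perturbed zero locus foliated by orbits gives no drop in the dimension of its image under $ev$ (orbits map to circles in $I\X\times S^{2}$). The paper instead builds the parametrized $\mathbb{A}$-action of Definition \ref{defparaAction}, whose group bundle $P\to\com P^{1}$ has fiber the M\"obius maps fixing the image of the marked point; precisely because every acting element fixes the anchor point and acts only on the $S^{2}$-factor, $ev$ \emph{is} invariant (Lemma \ref{paraActionLemma}), and this invariance is the entire source of the dimension drop. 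Second, the phrase ``after an equivariant transverse perturbation'' assumes something that is impossible: equivariant multisections cannot be made transverse near the fixed locus $\check{V}^{1fib}$, since the isotropy there has positive dimension (Remark \ref{nonextension}). The paper is forced into a two-step workaround --- equivariant transverse multisections only away from the fixed locus (Lemmas \ref{free_equmultisection} and \ref{extendmultisection}), plus a homotopy replacing $ev$ by a map $Ev$ that collapses a disk-bundle neighborhood of $\check{V}^{1fib}$ onto orbit values on its boundary (Lemmas \ref{tubnbd} and \ref{evhomotopy}). Your claim that the equivariant virtual cycle ``concentrates on'' the fixed locus is also backwards: the vanishing comes from the locally free directions away from the fixed locus, and the fixed locus is exactly where the naive argument breaks down and must be handled by hand. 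Deferring that analysis to the paper's own sections leaves the main content of Case 3 unproved.
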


Case 1 of Proposition \ref{ModuliofTrivial} is a corollary of the following Lemma \ref{constantsectiontwistsector} (\ref{1markpoint}). 

\begin{lem}\label{constantsectiontwistsector} Let $\pmb{s}$ be a sectional orbifold morphism lifting a constant section of $|\pmb{\pi}|:|\XxS|\to S^{2}$,
\begin{enumerate}
\item \label{1markpoint} if $\pmb{s}$ allows at most one orbipoint on the domain, then it has trivial twisted sector, i.e. there is no orbipoint on the domain;
\item  if $\pmb{s}$ allows at most two orbipoints on the domain, then the two twisted sectors at the two orbipoints (if any), denoted by $(g_{1})$ and $(g_{2})$, are inverse to each other: $(g_{1})=(g_{2}^{-1})$.
\end{enumerate}
\end{lem}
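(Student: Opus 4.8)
The plan is to reduce the statement to a computation with the local monodromy of a flat bundle over an orbisphere. Write $\pmb{s}=(\pmb{s}_{\X},\pmb{s}_{S^{2}})$ for the two components of the sectional morphism into $\XxS$, where $\pmb{s}_{S^{2}}\colon S^{2}_{orb}\to S^{2}$ presents the orbifold structure on the base and $\pmb{s}_{\X}\colon S^{2}_{orb}\to\X$ carries the orbifold information. Since $\pmb{s}$ lifts a constant section, the underlying map $|\pmb{s}_{\X}|$ is the constant map to a single point $x_{0}\in|\X|$. Let $G:=G_{x_{0}}$ be the isotropy group of $\X$ at $x_{0}$, so that the residual substack over $x_{0}$ is $[pt/G]$. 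First I would argue, working in an atlas groupoid $[\tilde U/G]$ of a neighborhood of $x_{0}$ (with $x_{0}$ a $G$-fixed point $\tilde x_{0}\in\tilde U$) and using the First and Second Dictionary Lemmas, that $\pmb{s}_{\X}$ factors up to $2$-isomorphism through the inclusion $[pt/G]\hookrightarrow\X$; hence $\pmb{s}_{\X}$ is a morphism $S^{2}_{orb}\to[pt/G]$.

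Next I would use that a morphism from a connected orbifold to the classifying stack $[pt/G]$ is the same datum as a principal $G$-bundle on $S^{2}_{orb}$, equivalently (up to conjugation) a homomorphism $\rho\colon\pi_{1}^{orb}(S^{2}_{orb})\to G$ from the orbifold fundamental group. Concretely this is the $G$-valued cocycle attached to the groupoid presentation of $S^{2}_{orb}$, exactly as in the explicit computation carried out in Proposition \ref{constantsection}. For the orbisphere with orbipoints $z_{1},\dots,z_{k}$ of orders $m_{1},\dots,m_{k}$ one has the presentation
\begin{equation*}
\pi_{1}^{orb}(S^{2}_{orb})=\langle\, \gamma_{1},\dots,\gamma_{k}\ \mid\ \gamma_{i}^{m_{i}}=1,\ \gamma_{1}\gamma_{2}\cdots\gamma_{k}=1\,\rangle,
\end{equation*}
where $\gamma_{i}$ is the class of a small loop around $z_{i}$. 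By the orbifold Gromov--Witten conventions the twisted sector at $z_{i}$ is the conjugacy class $(g_{i})$ with $g_{i}:=\rho(\gamma_{i})$, and the representability condition (Definition \ref{defStableOrbiMorph}) forces $\rho$ to restrict to an injection $\mathbb{Z}_{m_{i}}\cong\langle\gamma_{i}\rangle\hookrightarrow G$.

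With this dictionary in hand both assertions follow immediately from the relation $\gamma_{1}\cdots\gamma_{k}=1$. For part (1), at most one orbipoint means $k\le 1$; if $k=1$ the relation degenerates to $\gamma_{1}=1$, so $g_{1}=\rho(\gamma_{1})=1$, and injectivity of $\langle\gamma_{1}\rangle\hookrightarrow G$ gives $m_{1}=1$, i.e. there is in fact no orbipoint and the twisted sector is trivial. For part (2), at most two orbipoints means $k\le 2$; the cases $k\le 1$ are covered by part (1), and for $k=2$ the relation $\gamma_{1}\gamma_{2}=1$ gives $g_{2}=\rho(\gamma_{2})=\rho(\gamma_{1})^{-1}=g_{1}^{-1}$, whence $(g_{1})=(g_{2}^{-1})$.

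The step I expect to be the main obstacle is the second paragraph: rigorously matching the monodromy $\rho(\gamma_{i})$ of the flat $G$-bundle with the twisted-sector label $g_{i}$ that enters the definition of the evaluation map, together with a careful treatment of the factorization through $[pt/G]$ in the stack language (the gluing and cocycle bookkeeping over an atlas, parallel to Proposition \ref{constantsection}). Once the correspondence between constant sectional morphisms and $G$-representations of $\pi_{1}^{orb}(S^{2}_{orb})$ is set up correctly---taking proper care of the orientation convention that fixes the preferred generator of each local group $\mathbb{Z}_{m_{i}}$---the group-theoretic conclusions require no further analysis.
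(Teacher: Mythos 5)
Your proof is correct, and it reaches the conclusion by a more conceptual route than the paper does. The paper's own proof of this lemma is a one-line reduction to the method of Proposition \ref{constantsection}: one writes the section explicitly in an atlas groupoid away from the orbipoint(s), where lifting the constant section forces the morphism on objects, and then checks by hand which assignments of arrows can extend over the orbipoint(s) compatibly with composition --- the failure of the cocycle condition being exactly a nontrivial monodromy. You instead factor $\pmb{s}_{\X}$ through the residual gerbe $[pt/G_{x_0}]$ and invoke the classification of principal $G$-bundles over an orbisphere by conjugacy classes of homomorphisms $\pi_{1}^{orb}(S^{2}_{orb})\to G$, after which both statements are immediate from the relation $\gamma_{1}\cdots\gamma_{k}=1$. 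The trade-off: the paper's computation is self-contained within its stack/groupoid formalism and requires no external classification theorem, while your argument isolates the group theory cleanly, generalizes at once to any number of orbipoints (the product of the monodromies is trivial), and makes transparent why representability kills the single-orbipoint case. The cost is precisely the step you flag: identifying the twisted-sector label at $z_i$ with $\rho(\gamma_i)$ and the induced local-group homomorphism with $\mathbb{Z}_{m_i}\ni 1\mapsto \rho(\gamma_i)$, which, if written out in an atlas, \emph{is} the paper's explicit cocycle computation.

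One small imprecision to fix in the write-up: you state that representability gives an injection $\mathbb{Z}_{m_i}\cong\langle\gamma_i\rangle\hookrightarrow G$, but the subgroup $\langle\gamma_i\rangle\subset\pi_{1}^{orb}(S^{2}_{orb})$ need not be isomorphic to $\mathbb{Z}_{m_i}$ (in the teardrop case it is trivial --- this is the very content of part (1)). The correct statement, and the one your argument actually uses, is that the induced homomorphism from the \emph{local} isotropy group $\mathbb{Z}_{m_i}$ of the domain into $G$, which sends the canonical generator to $g_i=\rho(\gamma_i)$, is injective; equivalently $g_i$ has order exactly $m_i$. With that rephrasing the deductions $k=1\Rightarrow m_1=1$ and $k=2\Rightarrow g_2=g_1^{-1}$ go through as you wrote them.
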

\begin{proof}
The proof is a straightforward calculation as in the proof of Proposition \ref{constantsection}.
\end{proof}

Case 2 of Proposition \ref{ModuliofTrivial} is true because of the following Lemma:
\begin{lem}
For any $[(\pmb{s},S^{2}, z)]\in \M_{0,1}(\XxS,\sigma_{0},\tJ_{0},(0))$, $\pmb{s}$ is a constant section of $\XxS$ upto an automorphism of $(S^{2},j_{0})$.
\end{lem}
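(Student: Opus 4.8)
The plan is to exploit the product form of the almost complex structure $\tJ_{0}=J\oplus j_{0}$. Since the marked point carries the trivial twisted sector $(0)$, the domain has no orbipoints and is a smooth sphere, which I may take to be $(S^{2},j_{0})$ after choosing a biholomorphism. By the universal property of the product $\XxS$ in the $2$-category of stacks, the morphism $\pmb{s}$ is the same datum as a pair $(\pmb{u},\phi)$ with $\pmb{u}:=pr_{\X}\circ\pmb{s}:(S^{2},j_{0})\to\X$ and $\phi:=pr_{S^{2}}\circ\pmb{s}:(S^{2},j_{0})\to S^{2}$; because $\tJ_{0}$ is block diagonal, $\pmb{s}$ is $\tJ_{0}$-holomorphic if and only if $\pmb{u}$ is $J$-holomorphic and $\phi$ is $j_{0}$-holomorphic. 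Thus the problem reduces to analyzing the two factors separately.

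First I would pin down $\phi$. As $\pmb{s}$ represents the section class $\sigma_{0}$, applying $(pr_{S^{2}})_{*}$ gives $\phi_{*}[S^{2}]=[S^{2}]$, so the holomorphic self-map $\phi$ of $\com P^{1}$ has degree one. A degree-one holomorphic endomorphism of $\com P^{1}$ is a biholomorphism, i.e. an automorphism of $(S^{2},j_{0})$. Next I would show $\pmb{u}$ is constant. Projecting $\sigma_{0}$ to the $\X$-factor gives $(pr_{\X})_{*}\sigma_{0}=0\in H_{2}(|\X|,\inte)$, since $\sigma_{0}$ is represented by a constant section. Hence the energy of the $J$-holomorphic orbicurve $\pmb{u}$ is $E(\pmb{u})=\int\pmb{u}^{*}\omega=\langle[\omega],|\pmb{u}|_{*}[S^{2}]\rangle=0$, and nonnegativity of the energy with equality exactly for constant maps forces $|\pmb{u}|$ to be a single point $|x_{0}|$.

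To finish, I would promote this to a statement about the orbifold morphism itself: since the domain is a smooth, simply connected sphere with no orbipoints and $\pmb{u}$ is representable, the principal-bundle datum underlying $\pmb{u}$ over the point $|x_{0}|$ is trivial, so $\pmb{u}$ is isomorphic to the constant morphism at some $x_{0}\in\X$. Writing $s_{0}:w\mapsto(x_{0},w)$ for the constant section, I then conclude $\pmb{s}(w)=(x_{0},\phi(w))=s_{0}(\phi(w))$, that is $\pmb{s}=s_{0}\circ\phi$ with $\phi$ an automorphism of $(S^{2},j_{0})$; precomposition by $\phi$ is precisely the reparametrization equivalence defining the moduli space, so $\pmb{s}$ is a constant section up to an automorphism of $(S^{2},j_{0})$, as claimed.

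The degree and energy computations are routine. The only point demanding care is the orbifold bookkeeping in the last step, namely verifying that a zero-energy representable morphism from a smooth sphere into the orbifold $\X$ is genuinely a constant morphism rather than carrying hidden monodromy or a nontrivial gerbe; I expect this to be the main, though mild, obstacle, and it is exactly where the triviality of the twisted sector $(0)$ and the simple-connectivity of $S^{2}$ enter.
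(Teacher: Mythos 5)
Your proof is correct and rests on the same core mechanism as the paper's: compatibility-positivity of the energy combined with the fact that the $\X$-component of the class $\sigma_{0}$ vanishes, so the fiber part of the map must be constant. The paper packages this as a single area comparison for the total-space form $\Omega_{c}$ (showing $\Omega_{c}([\pmb{s}])>\Omega_{c}(\sigma_{0})$ unless $u$ is constant), whereas you subtract the $S^{2}$-contribution from the outset and apply the energy identity to $\pmb{u}=pr_{\X}\circ\pmb{s}$ alone; your explicit degree-one argument for $\phi$ and the zero-energy orbifold endgame (trivial twisted sector, simply connected smooth domain, hence no hidden monodromy) supply details the paper leaves implicit, where it instead defers to the manifold-case lemma of McDuff--Tolman.
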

\begin{proof}
By Lemma \ref{constantsectiontwistsector} (\ref{1markpoint}), $z$ is not an orbipoint. So this is actually a special case of \cite[Lemma 3.1]{MT}. Recall the definition of symplectic form $\Omega_{c}$ in (\ref{fiberbundlesymplecticform}), in the special case of trivial orbifiber bundle, $\Omega_{c}=\omega_{\X}+c\omega_{S^{2}}$, $c>0$.
For $\xi =h+v \in T_{(x,z)}(\XxS)$, 
\begin{equation}\label{positivityofnonconstant}
\Omega_{c}(\xi,\tilde{J}_{0}\xi)  =  (\omega_{\X} + c\omega_{S^{2}})(h+v,j\cdot h+Jv)  = \omega_{\X}(v,Jv)+c\omega_{S^{2}}(h,j_{0}\cdot h)  \ge c\omega_{S^{2}}(h,j_{0}\cdot h)
\end{equation}
The last identity holds if and only if $v=0$. 

Choose an open cover $\{U_{\alpha}\}$ of $\Sigma$ so that on each open set, we can choose a conformal coordinate $z=s+i\,t$. Denote $\pmb{s}=(u,\phi):S^{2}\to \XxS$.  On each open set $U$ we have
  
 \begin{eqnarray*}
  \int_{U} \Omega_{c}(d\pmb{s}\frac{\partial}{\partial s},d\pmb{s}\frac{\partial}{\partial t})ds\wedge dt & & \negthickspace\negthickspace  =\int_{U} \Omega_{c}(d\pmb{s}\frac{\partial}{\partial s},d\pmb{s}\cdot i\frac{\partial}{\partial s})ds\wedge dt =\int_{U} \Omega_{c}(d\pmb{s}\frac{\partial}{\partial s},\tilde{J}_{0}\cdot d\pmb{s}\cdot\frac{\partial}{\partial s})ds\wedge dt\\
  & & \negthickspace\negthickspace =\int_{U} \Omega_{c}(\frac{\partial\pmb{s}}{\partial s},\tilde{J}_{0}\cdot\frac{\partial\pmb{s}}{\partial s})ds\wedge dt=\int_{U} \Omega_{c}((\frac{\partial u}{\partial s},\frac{\partial \phi}{\partial s}),\tilde{J}_{0}\cdot(\frac{\partial u}{\partial s},\frac{\partial \phi}{\partial s}))ds\wedge dt\\
 & & \negthickspace\negthickspace \ge \int_{U}c\omega_{S^{2}}(\frac{\partial \phi}{\partial s},i\cdot \frac{\partial \phi}{\partial s}).
\end{eqnarray*}
 
By (\ref{positivityofnonconstant}), the last equality holds if and only if $\frac{\partial u}{\partial s}=0$ which means $u$ is constant on $U$ since we have the freedom of choice of $s$.

 If $u$ is not a constant map, then there exists at least one open set over which 
 $$\int_{U} \Omega_{c}(d\pmb{s}\frac{\partial}{\partial s},d\pmb{s}\frac{\partial}{\partial t})ds\wedge dt > \int_{U}c\omega_{S^{2}}(\frac{\partial \phi}{\partial s},i\cdot \frac{\partial \phi}{\partial s}).$$
Thus $\Omega_{c}([\pmb{s}])>\Omega_{c}(\sigma_{0})$. This contradicts the fact that $[\pmb{s}]=\sigma_{0}$ and $\Omega_{c}([\pmb{s}])=\Omega_{c}(\sigma_{0})$. Therefore $u$ is a constant map and $\pmb{s}$ is a constant section of $\XxS$ upto an automorphism of $(S^{2},j_{0})$.
\end{proof}

From now on we focus on the remaining Case 3. The idea of our proof of Case 3 is that there is a ``Lie group action'' on $\Mbar_{0,1}(\XxS,\sigma+\iota_{*}B,\tJ_{0},(g))$ under which the evaluation map is invariant. This ``Lie group action'' has two additional properties:
\begin{enumerate}
\item away from the fixed loci, the ``group action'' is  locally free, and preserves the evaluation map;
\item near the fixed loci, the evaluation map can be perturbed to another map which is homotopy equivalent to the original one, and whose image of a neighborhood of the fixed loci has dimension lower the the virtual dimension.
\end{enumerate}

Because of the above two properties, the virtual fundamental cycle has lower dimension than its expected dimension. Thus a generic choice of a cycle representing the Poincare dual of $\iota_{*} a$ will not intersect the virtual fundamental cycle. This proves Case 3.

However to make the idea rigorous, there are two points one needs to be careful. First, the ``group action'' is actually a parametrized group action, which we explain in the next subsection. Second, all the statements above should be understood in the context of Kuranishi spaces rather than topological spaces. In this context, a lower dimensional subset in the moduli space may have nontrivial contribution. So the contribution from the fixed loci needs to be understood carefully. That is why we need to modify the evaluation map. The purpose of the rest of this section is to treat these points.

\subsection{Parametrized Group Action on Manifolds and Orbifolds}\label{secParaAction}
We first recall the definition of group bundle, which is an analogue of group scheme in the topological/smooth setting.
\begin{defn}\label{groupbundle}
Let $G$ be a Lie group, a fiber bundle $\pi: P\to B$ with fiber the manifold $G$ is called a {\em $G$-group bundle} if there is a product operation on each fiber and the local trivialization maps are fiberwise group isomorphisms.
\end{defn}
Note that a $G$-group bundle has more structure than a principal $G$-bundle. It is the counterpart of group scheme in the differential topology context.

\begin{defn}\label{paraaction}
Let $G$ be a Lie group, $\pi: P\to B$ a $G$-group bundle, $M$ a topological space (respectively smooth manifold/orbifold). Then a \emph{parametrized $G$-action on $M$} consists of the following data:
\begin{enumerate}
\item an surjective continuous map (respectively submersive smooth map/orbifold morphism) $a: M\to B$, called the anchor map;
\item a continuous map (respectively smooth map/orbifold morphism) $\alpha: P  {}_{\pi}  \! \times_{a} M\to M$, called the action map, such that $\forall b\in B$, the restriction of $\alpha$ to $\pi^{-1}(b)\times a^{-1}(b)$ defines a group action on $a^{-1}(b)$.
\end{enumerate}
We will also say $M$ has a $G$-action parametrized by $B$.
\end{defn}

\begin{rmk}
\begin{enumerate}
\item Note that if the group bundle is trivial, then the parametrized group action is actually a group action.
\item A group bundle $\pi:P\to B$ is a groupoid whose objects are $B$, morphisms are $P$, and both source and target maps send $p\in P$ to $\pi(p)$. Then our definition of parametrized group action on manifold is a special case of a groupoid action on a manifold. (See \cite[Definition 3.16]{Le}.)
\end{enumerate}
\end{rmk}

Similar to group action, we have the following definitions:
\begin{defn}
For $b\in B$ and $x\in a^{-1}(b)$, $\pi^{-1}(b)\cdot x$ is called an {\em orbit} of the parametrized group action. The {\em fixed locus} of a parametrized $G$-action on $M$ is defined as  $$ \cup_{b\in B} Fix(\pi^{-1}(b)),$$ where $Fix(\pi^{-1}(b))$ is the fixed locus of the group action of $\pi^{-1}(b)$ on $M$.

A parametrized group action is called {\em free} if for any $x$ in $M$, $\alpha(p,x) =\alpha(p',x)$ implies $p=p'$. A parametrized group action is called {\em locally free} if there is a small neighborhood $U_{b}$ of the identity in $\pi^{-1}(b)$ such that the restriction of the action to $U_{b}$ is free.

A parametrized group action is said to {\em have finite stabilizer everywhere} if for any $x$ in $M$, the set $\{p|\alpha(p,x) =x\}$ is finite.
\end{defn}
Note that if the group is compact, a locally free parametrized group action always has finite stabilizers everywhere. But if the group is not compact, this may not be true.

The following toy example of parametrized $G$-actions will be used later in this paper.
\begin{exm}\label{paraAction}
We construct a parametrized group action on the manifold $\com P^{1}\times \com P^{1}$. We start with the trivial principal $PSL(2,\com)$-bundle  $\com P^{1}\times PSL(2,\com)\to \com P^{1}$ with the projection to the first component $p_{1}:\com P^{1}\times \com P^{1} \to \com P^{1}$ as anchor map, and the action map is given by: $\forall z\in \com P^{1}$,
\begin{equation*}
\begin{split}
&\alpha: \{z\}\times PSL(2,\com)\times \{z\}\times \com P^{1} \to \{z\}\times \com P^{1}\\
&\quad \quad \alpha((z,g);(z,w)):=(z,g\cdot w).
\end{split}
\end{equation*}
This parametrized action itself is not interesting. We consider its restriction to the subbundle 
$$P:=\{(z,g)|z\in \com P^{1},\ g\in PSL(2,\com),\ g\cdot z=z\}\to \com P^1.$$ 
The fiber of the group bundle $P\to \com P^{1}$ is the subgroup of $PSL(2,\com)$ which fixes one point on $\com P^{1}$. It is isomorphic to  $\mathbb{A}:=\{f:\com\to \com| f(z)=az+b\}$.
Then $\alpha$ restricts to a parametrized $\mathbb{A}$-action on $\com P^{1}\times \com P^{1}$.

The fixed locus of this parametrized $\mathbb{A}$-action is the diagonal $\Delta$ of $\com P^{1}\times \com P^{1}$.

The group bundle $P\to \com P^{1}$ has two subbundles: one principal $S^{1}$-bundle whose fiber at $z$ consists of rotations around $z$, fixing $z$ and its antipodal, is denoted by $P^{S^{1}}\to \com P^{1}$; one principal $\com$-bundle whose fiber at $z$ consists of translations of $\com P^{1}\setminus\{z\}$, is denoted by $P^{\com}\to \com P^{1}$. Restriction of the parametrized $\mathbb{A}$-action $\alpha$ to either of the two subbundles defines a new parametrized group action. The fixed locus of the parametrized $\mathbb{C}$-action is again the diagonal $\Delta$ of $\com P^{1}\times \com P^{1}$, and the parametrized $\mathbb{C}$-action is free on $\com P^{1}\times \com P^{1}\setminus \Delta$.
\end{exm}

\begin{rmk}
In Example \ref{paraAction}, since $P\to \com P^{1}$ is not a trivial group bundle, there is no canonical way of identifying the fibers with $\mathbb{A}$, so it cannot be understood as an $\mathbb{A}$ action on $\com P^{1}$. This is the reason we need to introduce parametrized group action. Note that if we work with a contractible parameter space, then the parametrized group action can always be identified with a group action.
\end{rmk}
\begin{defn}\label{actioncommute}
If $M$ has a group action by $\Gamma$, and a parametrized $G$-action by $\pi: P\to B$, then we say the two actions {\em commute} if the $\Gamma$-action commute with the group action determined by the parametrized $G$-action on each fiber. 
\end{defn}

\begin{defn}\label{paraequivmap}
Let $M$, $N$ be two manifolds with parametrized $G$-action by $\pi: P\to B$. A map $f:M\to N$ is called parametrized $G$-equivariant if $f$ is $\pi^{-1}(b)$-equivariant for every $b\in B$. 
\end{defn}

\begin{defn}\label{paraequivbundle}
Let $pr:E\to M$ be a vector bundle,  $(a_{E},\alpha_{E})$ and $(a_{M},\alpha_{M})$ two parametrized $G$-action on $E$ and $M$ respectively by $\pi:P\to B$. Then $(pr:E\to M, a_{E}, a_{M},\alpha_{E},\alpha_{M})$ is called a parameterized $G$-equivariant vector bundle if $a_{E}=a_{M}\circ pr$ and the following diagram is commutative for any $b\in B$:
$$
\xymatrix{
 \pi^{-1}(b)\times a_{E}^{-1}(b)  \ar[d]_{(id_{P},pr)}\ar[r]^{\ \ \ \alpha_{E}} &a_{E}^{-1}(b) \ar[d]^{pr}\\
 \pi^{-1}(b)\times a_{M}^{-1}(b) \ar[r]^{\ \ \ \alpha_{M}} & a_{M}^{-1}(b).
}
$$
\end{defn}

\subsection{Parametrized Equivariant Kuranishi Structure}\label{secEqKura}
In \cite{FOOOtoric1},  Kuranishi structure with a group action, called  equivariant  Kuranishi structure, was considered. In this paper, we need the generalization of Kuranishi structures with parametrized group actions.

\begin{defn}\label{eqkurastr}
Let  $\M$ be a topological space with a parametrized $G$-action by a group bundle $P\to B$. A Kuranishi structure $\{(V_{\alpha},E_{\alpha}, \Gamma_{\alpha},\psi_{\alpha},s_{\alpha})\}_{\alpha \in \mathfrak A}$ on $\mathcal M$ is called {\em  parametrized $G$-equivariant in the strong sense} if the following holds :
\begin{enumerate}
\item $V_{\alpha}$ has a parametrized $G$-action  which commutes with the $\Gamma_{\alpha}$-action.
\item $E_{\alpha}$ is a parametrized $G$-equivariant bundle.
\item The Kuranishi map $s_{\alpha}$
is parametrized $G$-equivariant and $\psi_{\alpha}$ is a parametrized $G$-equivariant map.
\item
The coordinate changes $\varphi_{\alpha_2,\alpha_1}$ are parametrized $G$-equivariant.
\end{enumerate}
\end{defn}

The reason we use ``in the strong sense'' here is the same as in \cite[Remark B.5]{FOOOtoric1}: the parametrized $G$-action is on $V$ instead of on the orbifold $[V/\Gamma]$. Hereafter, we refer it as  parametrized $G$-equivariant for simplicity.

The main purpose of this subsection is to construct a parametrized $\mathbb{A}$-equivariant Kuranishi structure on $\Mbar_{0,1}(\XxS,\sigma_{0}+\iota_{*}B,\tJ_{0},(g))$. More precisely, let $\pi: P\to \com P^{1}$ be the group bundle in Example \ref{paraAction} with fiber $\mathbb{A}$. We will define a parametrized $\mathbb{A}$-action on the topological space $\Mbar_{0,1}(\XxS,\sigma_{0}+\iota_{*}B,\tJ_{0},(g))$ by $P\to \com P^1$. 

For a stable orbifold map representing $\tau=[\pmb{f}]\in\Mbar_{0,1}(\XxS,\sigma_{0}+\iota_{*}B,\tJ_{0},(g))$, let $z$ be its only marked point, we define $p(z)$ as follows:
\begin{itemize}
\item  $p(z)=z$ if it lies on the stem component;
\item if $z$ is not on the stem component, then let $p(z)$ be the nodal point on the stem component to which the branch containing the marked point $z$ is attached.
\end{itemize}

Note that there is an obvious $PSL(2,\com)$ action on the moduli space by post-composing $g\in PSL(2,\com)$ to the second component of $\pmb{f}$. This $PSL(2,\com)$-action induces a parametrized $\mathbb{A}$-action on the moduli space. More precisely:
\begin{defn}\label{defparaAction} 
The parametrized $\mathbb{A}$-action on $\Mbar_{0,1}(\XxS,\sigma_{0}+\iota_{*}B,\tJ_{0},(g))$ is defined by:
\begin{enumerate}
\item the anchor map $a: \Mbar_{0,1}(\XxS,\sigma_{0}+\iota_{*}B,\tJ_{0},(g))\to \com P^{1}$ is the evaluation of $\pmb{\phi}$ at $p(z)$, for $\tau=[\{(\pmb{f_{\nu}}=(\pmb{u}_{\nu},w_{\nu})\ or\ (\pmb{u}_{stm},\pmb{\phi}))\}]\in\Mbar_{0,1}(\XxS,\sigma_{0}+\iota_{*}B,\tJ_{0},(g))$, and $z$ the marked point of a stable orbifold map representing $\tau$. 
\item the action map $\alpha$ is defined by: for $(w,h)\in P\subset \com P^{1}\times PSL(2,\com)$ and $\tau\in a^{-1}(w)$, $$\alpha((w,h), \tau):=(w,h)\cdot \tau:=[(h\cdot\pmb{f}_{\nu})],$$ where $h\cdot\pmb{f}_{\nu}$ is defined by $h$ post-composed to the horizontal component ($S^{2}$-component) of $\pmb{f}_{\nu}$. More explicitly, we define $h\cdot \tau$ as follows:
\begin{itemize}
\item the stem component $(\pmb{u}_{stm},\pmb{\phi})$ of $\tau$ is mapped to $(\pmb{u}_{stm},h\cdot \pmb{\phi})$;
\item branch components $(\pmb{u}_{\nu},w_{\nu})$ (if any) are mapped to $(\pmb{u}_{\nu},h\cdot w_{\nu})$.
\end{itemize}
\end{enumerate}
\end{defn}

Note that since a branch component only lies in a fiber, one can also think of the anchor map as evaluating at the marked point first, and then projecting to $S^{2}$. So the anchor map does not depend on the choice of the stable orbifold map representing $\tau$ because evaluation maps are well-defined for equivalence classes of stable orbifold maps.

\begin{defn}\label{defsubparaAction}
Similar to Example \ref{paraAction}, the parametrized $\mathbb{A}$-action in Definition \ref{defparaAction} restricts to a parametrized $S^{1}$-action by $P^{S^{1}}$ and a parametrized $\com$-action by $P^{\com}$.
\end{defn}

We denote $\Mbar_{0,2}(\XxS,\iota_{*}B,\tJ_{0},(g,0)) {}_{ev_{1}}  \! \times_{ev_{0}} \Mbar_{0,1}(\XxS,\sigma_{0},\tJ_{0},(0))$ by $\M^{1fib}$, where $0$ stands for the untwisted sector, and $B\in H_{2}(|\X|,\mathbb{Z})$, and use the notation $\M^{2fib}$ for $$\Mbar_{0,2}(\XxS,\iota_{*}B_{1},\tJ_{0},(g,0)) {}_{ev_{1}}  \! \times_{ev_{0}} \Mbar_{0,2}(\XxS,\sigma_{0},\tJ_{0},(0)) {}_{ev_{1}}  \! \times_{ev} \Mbar_{0,1}(\XxS,\iota_{*}B_{2},\tJ_{0},(g,0)). $$ 
Let $\M^{freak}:=\M^{1fib}\cup \M^{2fib}$. 

We remark that while $\M^{1fib}$ is compact, $\M^{2fib}$ is not compact. The space $\Mbar^{2fib}\cap\M^{1fib}$ contains stable maps with one fiber component and for which the sphere attached to the stem component is a ghost component.

\begin{lem}\label{paraActionLemma}
\begin{enumerate}
\item\label{welldefined} The parametrized $\mathbb{A}$-action in Definition \ref{defparaAction} is well-defined, and thus so are the parametrized $S^{1}$-action and parametrized $\com$-action in Definition \ref{defsubparaAction};
\item \label{fixedloc} The fixed locus of the parametrized $\mathbb{A}$-action is $\M^{1fib}$, and thus $\M^{1fib}$ is also fixed by the parametrized $S^{1}$-action and parametrized $\com$-action;
\item \label{2fibfree}  The restriction of the parametrized $\com$-action on $\M^{2fib}$ is free;
\item \label{locfree}  The restriction of the parametrized $\mathbb{A}$-action on $$\Mbar_{0,1}(\XxS,\sigma_{0}+\iota_{*}B,\tJ_{0},(g))\setminus \M^{freak}$$ is locally free.
\end{enumerate}
\end{lem}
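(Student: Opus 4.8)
The plan is to reduce all four assertions to a single elementary mechanism. For $\tau=[\pmb{f}]$ with stem component $(\pmb{u}_{stm},\pmb{\phi})$, where $\pmb{\phi}$ is a biholomorphism of $(S^{2},j_{0})$, post-composition by $h\in\pi^{-1}(w)=\mathbb{A}_{w}$ satisfies $h\cdot\tau=\tau$ if and only if there is a reparametrization of the domain which, on the stem, is the automorphism $\psi:=\pmb{\phi}^{-1}\circ h\circ\pmb{\phi}$ of $(S^{2},j_{0})$, and which permutes the nodes where branch components are attached while preserving the marked point $z$. Since every $h\in\mathbb{A}_{w}$ fixes $w=a(\tau)$ we have that $\psi$ fixes $p(z)=\pmb{\phi}^{-1}(w)$, and since each branch component lies in a single fibre, the whole analysis comes down to how the finite set of \emph{base points} of the attaching nodes (their images under $\pmb{\phi}$ in $S^{2}$) is moved by the affine action of $\mathbb{A}_{w}$ on $S^{2}\setminus\{w\}$. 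Note $h\cdot\tau=(\mathrm{id}_{\X}\times h)\circ\pmb{f}$.

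For (\ref{welldefined}) I would first check the anchor $a$ is well defined: $p(z)$ lies over $a(\tau)$ and, because a branch component lies in a fibre, $a(\tau)$ equals the $S^{2}$-projection of $ev(\pmb{f},z)$, an invariant of the equivalence class. The action map $\alpha$ preserves the class $\sigma_{0}+\iota_{*}B$ (as $\mathrm{id}_{\X}\times h$ is isotopic to the identity, $PSL(2,\com)$ being connected), preserves $\tJ_{0}$-holomorphicity (since $h$ is $j_{0}$-biholomorphic and $\tJ_{0}=J\oplus j_{0}$), and respects the group law fibrewise; it descends to equivalence classes because post-composition on the target commutes with reparametrization of the domain. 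The restrictions to $P^{S^{1}}$ and $P^{\com}$ are then automatically parametrized actions.

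For (\ref{fixedloc}), if $\tau\in\M^{1fib}$ the stem is the constant section and all branch components lie in the single fibre over $w$; then $\psi$ fixes $p(z)$, $\pmb{u}_{stm}$ is constant, and $h$ fixes $w$ so it acts trivially on that fibre, whence $h\cdot\tau=\tau$ for every $h\in\mathbb{A}_{w}$. Conversely, if $\tau$ is fixed by all of $\mathbb{A}_{w}$ then the stem must be constant (a nonconstant $J$-holomorphic $\pmb{u}_{stm}$ satisfies $\pmb{u}_{stm}\circ\psi=\pmb{u}_{stm}$ only for $\psi$ in its finite automorphism group), and since $\mathbb{A}_{w}$ acts transitively on $S^{2}\setminus\{w\}$ any attaching node over a point $w'\neq w$ would be carried to a point supporting no branch component, a contradiction; hence all branches sit over $w$ and $\tau\in\M^{1fib}$. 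Assertion (\ref{2fibfree}) is the same computation with the translation subgroup: over $\M^{2fib}$ the stem is constant and there are two branch configurations over distinct base points $w=w_{1}$ and $w_{2}$, and a nontrivial translation fixing $w$ fixes no other point of $S^{2}$, so $h\cdot\tau=\tau$ forces $h$ to fix $w_{2}$ and hence $h=\mathrm{id}$, giving freeness.

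For (\ref{locfree}) I would show the identity component of the stabilizer of any $\tau$ equals the subgroup of $\mathbb{A}_{w}$ fixing every attaching base point, because a one-parameter family of permutations of a finite set is constant. Off $\M^{freak}$ there are two cases: if the stem is nonconstant the stabilizer already lies in the finite group $\Aut(\pmb{u}_{stm})$ of automorphisms fixing $p(z)$; if the stem is constant then, $\M^{1fib}$ and $\M^{2fib}$ being exactly the strata whose branch and marked-point data sit over at most two base points, there are at least two base points distinct from $w$, and an element of the affine group $\mathbb{A}_{w}$ on $S^{2}\setminus\{w\}$ fixing two further points is the identity. In either case the identity component of the stabilizer is trivial, so the action is locally free. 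The main obstacle will be (\ref{locfree}) together with the converse in (\ref{fixedloc}): one must match the combinatorial ``number of base points'' stratification precisely with the fibre-product definitions of $\M^{1fib}$ and $\M^{2fib}$, including the ghost-bubble configurations (an unstable class-$0$ sphere carrying the untwisted marked point) that live in $\Mbar^{2fib}\cap\M^{1fib}$. Keeping this bookkeeping honest, and observing that in the twisted case Lemma \ref{constantsectiontwistsector} forces $z$ onto a branch component, is where the real care is needed; throughout, the $S^{2}$-direction carries no orbifold structure, so $\mathbb{A}_{w}\subset PSL(2,\com)$ acts on domains cleanly and representability enters only through the fixed stem value $x_{0}$.
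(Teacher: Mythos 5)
Your items (\ref{welldefined})--(\ref{2fibfree}) essentially follow the paper's route (your converse direction in (\ref{fixedloc}), ruling out nodes over $w'\neq w$ because $\mathbb{A}_{w}$ moves them, is if anything spelled out more carefully than in the paper), but item (\ref{locfree}) has a genuine gap. You conclude ``the identity component of the stabilizer is trivial, so the action is locally free.'' That inference does not match the definition of locally free used here: one needs a \emph{single} neighborhood $U_{w}$ of the identity in $\pi^{-1}(w)$ on which the action is free, i.e.\ a lower bound on $\parallel h-\mathrm{id}\parallel$ for nontrivial stabilizer elements that is \emph{uniform over all} $\tau$ in the (non-compact) set $a^{-1}(w)\setminus\M^{freak}$. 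Pointwise discreteness of stabilizers gives a neighborhood depending on $\tau$, and a priori a sequence $\tau_{n}$ could have nontrivial stabilizer elements $h_{n}\to\mathrm{id}$. The paper closes exactly this hole with a quantitative argument: writing $h(z)=az+b$ in a coordinate with $w=\infty$, any stabilizer element must permute the $N$ attaching points $w_{1},\dots,w_{N}$ of the branch components, and the telescoping-product computation of Lemma \ref{permutationmobius} shows the multiplier satisfies $a^{K}=1$ with $1<K\le N$; since $N+1$ is bounded by $\Omega_{c}(\sigma_{0}+\iota_{*}B)/C$ (minimal energy of nonconstant spheres), this yields the uniform radius $\epsilon=\tfrac12|e^{2\pi i/N}-1|$. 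Your observation that ``a one-parameter family of permutations of a finite set is constant'' kills only the identity component; it produces no such uniform $\epsilon$, and your nonconstant-stem case has the same defect (finiteness of $\Aut(\pmb{u}_{stm})$ must be upgraded to a uniform order bound, again via the energy bound on the covering degree).

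Two smaller points. In (\ref{welldefined}) you verify only the algebraic requirements (descent to equivalence classes, preservation of class and holomorphicity, fiberwise group law) but omit continuity of the action map in the Gromov topology, which is the main content of the paper's proof of that item and is argued there by checking that Gromov-convergent sequences $\tau_{k}\to\tau$ satisfy $(w,h)\cdot\tau_{k}\to(w,h)\cdot\tau$ componentwise through the reparametrizations furnished by Gromov compactness; surjectivity of the anchor also needs a word. Finally, you correctly flag that the untwisted case $(g)=(0)$, where the marked point may lie on a constant stem, is where the matching of the ``number of base points'' stratification with $\M^{1fib}\cup\M^{2fib}$ becomes delicate, but you do not resolve it — your claim that off $\M^{freak}$ a constant-stem configuration has at least two attaching points distinct from $w$ fails precisely there; be aware that this configuration is a problem for the $\mathbb{A}$-action (it is stabilized by the $\com^{*}$ fixing $w$ and the single attaching point), though not for the parametrized $\com$-action by parabolic elements, which is what the later sections actually use.
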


\begin{proof} 
We prove the statements one by one:
\begin{itemize}
\item[(\ref{welldefined})] 
It is well-known that evaluation map is continuous, so the anchor map is continuous. By moving around the marked point or the branch component containing the marked point on the stem component, one can see the anchor map is surjective. 

Next we show that the map $\alpha((w,h),\tau):=(w,h)\cdot \tau$ defined in Definition \ref{defparaAction} is continuous. It is obvious that $\alpha$ is continuous with respect to $(w,h)$. To see it is also continuous with respect to $\tau$, take any sequence of $\{t_{k}\}$ in $\Mbar_{0,1}(\XxS,\sigma_{0}+\iota_{*}B,\tJ_{0},(g))$ which converges to $\tau$ in the sense of Gromov convergence. For simplicity, we assume $\tau_{k}$'s are represented by $J$-holomorphic maps with irreducible domains, i.e. $\tau_{k}=[\pmb{s}_{k}]=[(\pmb{u}_{k},\pmb{\phi}_{k})]$. More complicated cases follow from the same argument with messier notation. Let $\tau=[\{(\pmb{f_{\nu}}=(\pmb{u}_{\nu},w_{\nu})\ or\ (\pmb{u}_{stm},\pmb{\phi}))\}]$, then by the definition of Gromov compactness, for each $\nu$,  there is a subsequence of $\pmb{s}_{k_{j}}$ and a sequence of automorphisms $\xi_{\nu}^{j}$ of $S^{2}$ such that $\pmb{s}_{k_{j}}\circ\xi_{\nu}^{j}\to \pmb{f}_{\nu}$. For $(w,h)\in P$, $(w,h)\cdot \tau_{n}=[(\pmb{u}_{k},h\cdot \pmb{\phi}_{k})]$. For each $\nu$, we have $ h\cdot \pmb{s}_{k_{j}}\circ\xi_{\nu}^{j} \to h\cdot \pmb{f}_{\nu}$. Thus $(w,h)\cdot \tau_{k}$ converge to $(w,h)\cdot \tau=[\{(h\cdot\pmb{f}_{\nu}\}]$.

It is straightforward to check that Definition \ref{defparaAction} defines a group action when restricted to $\pi^{-1}(w)$, for any $w\in \com P^{1}$. So the parametrized group action is well-defined.

\item[(\ref{fixedloc})]
Now we examine the fixed locus of $\pi^{-1}(w)$. For $\tau\in \Mbar_{0,1}(\XxS,\sigma_{0}+\iota_{*}B,\tJ_{0},(g))$, if $\forall (w,h)\in \pi^{-1}(w)\subset P$,  $(w,h)\cdot \tau=\tau$, then there exists $\psi_{(w,h)}\in PSL(2,\com)$ such that: 
\begin{equation}
(u_{stm},h\cdot \pmb{\phi})=(u_{stm}\circ \psi_{(w,h)},\pmb{\phi}\circ\psi_{(w,h)})
\end{equation}
and 
$$\psi_{(w,h)}(p(z))=p(z).$$ 
Here $\pmb{\phi}\circ\psi_{(w,h)}$ should be understood in an orbifold fashion. Namely lift $\psi_{(w,h)}:S^{2}\to S^{2}$ to an orbifold diffeomorphism $\pmb{\psi}_{(w,h)}:S^{2}_{orb}\to S^{2}_{orb}$, and then precompose $\pmb{\psi}_{(w,h)}$ to $\pmb{\phi}$.

Evaluate the identity of the second components $h\cdot \pmb{\phi}=\pmb{\phi}\circ\psi_{(w,h)}$ at $p(z)$, we get $h\cdot \pmb{\phi}(p(z))=\pmb{\phi}(p(z))$, therefore $w=\pmb{\phi}(p(z))$. Note that the group $$\{\psi_{(w,h)}=\pmb{\phi}^{-1}\circ h\cdot \pmb{\phi}|\forall (w,h)\in \pi^{-1}(w)\}$$ acts transitively on $\com P^{1}\setminus \{w\}$. Since the identity of the first components $u_{stm}=u_{stm}\circ \psi_{(w,h)}$ holds for all $(w,h)\in \pi^{-1}(w)$, $u_{stm}$ has to be a constant map. We have shown 
$$Fix\ \pi^{-1}(w)= \{\tau\in \Mbar_{0,1}(\XxS,\sigma_{0}+\iota_{*}B,\tJ_{0},(g))|u_{stm}\ \text{of}\ \tau\ \text{is} \ \text{constant};\ \pmb{\phi}(p(z))=w. \}$$
Let $w$ run over $\com P^{1}$, we get the fixed locus of the parametrized $\mathbb{A}$-action:
\begin{equation*}
\begin{split}
\cup_{w\in \com P^{1}} Fix\ \pi^{-1}(w)&=\{\tau\in \Mbar_{0,1}(\XxS,\sigma_{0}+\iota_{*}B,\tJ_{0},(g))|u_{stm}\ \text{of}\ \tau\ \text{is} \ \text{constant}. \}\\
&=\Mbar_{0,2}(\XxS,\iota_{* }B,\tJ_{0},(g,0)) {}_{ev_{1}}  \! \times_{ev_{0}} \Mbar_{0,1}(\XxS,\sigma_{0},\tJ_{0},(0)).
\end{split}
\end{equation*}

\item[(\ref{2fibfree})] 
Let $\pi_{\com}:P^{\com}\to \com P^{1}$ be the bundle projection map. If $\tau\in \M^{2fib}$ is fixed by an element  $(w,h)\in\pi_{\com}^{-1}(w)\subset P^{\com}$ then $ (w,h)$ fixes the branch component which does not contain $w$. The only such element is the identity in $\pi_{\com}^{-1}(w)$. So the parametrized $\com$-action on $\M^{2fib}$ is free.

\item[(\ref{locfree})]
Note that $\tau\in\Mbar_{0,1}(\XxS,\sigma_{0}+\iota_{*}B,\tJ_{0},(g))\setminus \M^{freak}$ is fixed by an element  $ (w,h)\in\pi^{-1}(w)$ if and only if $ (w,h)$ fixes the stem component and permutes the branch components which do not contain $w$. By the same argument as above, the map on the stem component has to be constant. We denote by $w_{1},...w_{N}$ the points on the stem component where the branch components are attached. Since $\tau\notin \M^{freak}$, $N>1$. The number of branch components $N+1$ is bounded by $\Omega_{c}([\sigma_{0}+\iota_{*}B])/C$, where $C$ denotes the minimal energy of non-constant pseudoholomorphic spheres in $\XxS$. 
 
Without loss of generality, we assume $w=\infty$, and other situations can be adapted to this case by an automorphism of $\com P^{1}$. Then $f\in\pi^{-1}(w)$ can be written as $f(z)=az+b$. We use the obvious norm on this Lie group $\parallel f \parallel :=|a|+|b|$. If $f$ fixes $w_{1},...,w_{N}$, then there exists a $\sigma\in S_{N}$ (with $S_{N}$ being the symmetric group on $n$ letters), such that:
 \begin{equation}\label{permutingeqn}
f(w_{i})=w_{\sigma(i)},\ \ \ \ \ i.e. \ \ aw_{i}+b=w_{\sigma(i)}.
\end{equation}
\begin{lem}\label{permutationmobius}
If $a$ satisfies (\ref{permutingeqn}), then there exists $K$ such that $a^{K}=1$ and $1<K\le N$.
\end{lem}
\begin{proof}[Proof of Lemma \ref{permutationmobius}]
Let $\bar{\sigma}$ be a non-trivial cycle in $\sigma$. Without loss of generality, we assume $\bar{\sigma}=(12...K)$, $1<K\le N$. By taking difference of the $(i+1)$-th equation and the $i$-th equation in (\ref{permutingeqn}), we get
$$
a=\frac{w_{2}-w_{1}}{w_{1}-w_{K}}=\frac{w_{3}-w_{2}}{w_{2}-w_{1}}=...=\frac{w_{1}-w_{K}}{w_{K}-w_{K-1}}.
$$
Thus 
$$
a^{K}=\frac{w_{2}-w_{1}}{w_{1}-w_{K}}\cdot\frac{w_{3}-w_{2}}{w_{2}-w_{1}}\cdot...\cdot\frac{w_{1}-w_{K}}{w_{K}-w_{K-1}}=1.
$$
\end{proof}
By Lemma \ref{permutationmobius}, the action on $\Mbar_{0,1}(\XxS,\sigma_{0}+\iota_{*}B,\tJ_{0},(g))\setminus \M^{freak}$ has finite stabilizers everywhere, and $\parallel f-id \parallel\ge|e^{i\frac{2\pi}{K}}-1 |$. If we choose $\epsilon=\frac{1}{2} |e^{i\frac{2\pi}{N}}-1 |$, then on the small neighborhood $U_{\epsilon}:=\{f\in\pi^{-1}(w) |\parallel f-id \parallel <\epsilon\}$ of the identity in $\pi^{-1}(w)$, the restriction of the action  is free. Therefore the parametrized group action is locally free on $\Mbar_{0,1}(\XxS,\sigma_{0}+\iota_{*}B,\tJ_{0},(g))\setminus \M^{freak}.$
\end{itemize}
The proof of Lemma \ref{paraActionLemma} is thus complete.
\end{proof}

Next we lift the above parametrized group action to a parametrized action on the Kuranishi spaces.
\begin{prop}\label{constructequivkura}
There exists a parametrized $\mathbb{A}$-equivariant Kuranishi structure $$\{(V_{\alpha},E_{\alpha},
\Gamma_{\alpha},\psi_{\alpha},s_{\alpha})|\alpha \in \mathfrak A\}$$ on $\Mbar_{0,1}(\XxS,\sigma_{0}+\iota_{*}B,\tJ_{0},(g))$ Êwith the following properties:
\begin{enumerate}
\item  The parametrized $\mathbb{A}$-action on the topological space $\Mbar_{0,1}(\XxS,\sigma_{0}+\iota_{*}B,\tJ_{0},(g))$ is given by Definition \ref{defparaAction};

\item \label{kuracompatibility} The restriction of the Kuranishi structure to $\M^{freak}$ coincides with the fiber product of the Kuranishi structures on  $\M^{1fib}$ and $\M^{2fib}$;

\item \label{locallyfree} The parametrized $\mathbb{A}$-action on the Kuranishi charts which do not cover any element in $\M^{freak}$ is locally free. If a Kuranishi chart $V_{\alpha}$ contains $\tau\in \M^{freak}$, then there is a Kuranishi chart $\check{V}_{\tau}$ in the fiber product Kuranishi structure of $\M^{freak}$, such that the parametrized $\mathbb{A}$-action on $V_{\alpha}\setminus \check{V}_{\alpha}$ is locally free and has finite stabilizers everywhere.

\item \label{vertobscond} A fiber of the obstruction bundles $E_{\alpha}$ over any $(\text{\bf v},\pmb{f}')\in V_{\alpha}$ is contained in $L^p_{\delta}(\pmb{f}'^{*}T^{vert}(\XxS)\otimes\Lambda^{0,1})$.
\end{enumerate}
\end{prop}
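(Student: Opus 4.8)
The plan is to combine the inductive construction of Kuranishi charts recalled in Section~\ref{secReviewoGW} with the method for producing equivariant Kuranishi structures from \cite[Appendix]{FOOOtoric1}, adapted from an honest group action to the parametrized action of Definition~\ref{defparaAction}. The observation that drives the whole argument is that this action moves only the horizontal ($\com P^{1}$) component of a stable map: for $h$ in a fiber of $P$, the stem $(\pmb{u}_{stm},\pmb{\phi})$ goes to $(\pmb{u}_{stm},h\cdot\pmb{\phi})$ and each branch $(\pmb{u}_{\nu},w_{\nu})$ to $(\pmb{u}_{\nu},h\cdot w_{\nu})$, so the domain orbicurve, the vertical components $\pmb{u}_{\nu}$, and hence the pulled-back vertical bundle $\pmb{f}^{*}T^{vert}(\XxS)=\pmb{u}^{*}T\X$ are all unchanged. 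Since $\tJ_{0}$ is a product almost complex structure and $h$ is $j_0$-holomorphic, $\bar\partial$ is equivariant. Consequently, if the obstruction spaces are chosen to lie in the vertical direction — possible by Lemma~\ref{lemvertobs} — they are literally the same vertical sections over the fixed domain at $\tau$ and at $h\cdot\tau$, hence automatically invariant, and no averaging over the non-compact group $\mathbb{A}$ is needed. This is precisely the point at which the non-compactness of $\mathbb{A}$, which would obstruct a naive imitation of the compact-group construction of \cite{FOOOtoric1}, is circumvented; it simultaneously secures property~(\ref{vertobscond}).

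First I would build the Kuranishi structure on the special locus $\M^{freak}=\M^{1fib}\cup\M^{2fib}$. Each of $\M^{1fib}$ and $\M^{2fib}$ is by definition a fiber product of moduli spaces $\Mbar_{0,k}(\XxS,\cdot,\tJ_{0},\cdot)$ over evaluation maps landing in twisted sectors of the vertical factor $\X$. Equipping each factor with the vertical-obstruction Kuranishi structure of Section~\ref{secReviewoGW} and forming the fiber-product Kuranishi structure (a standard operation, \cite[Section 6]{FO}) yields a Kuranishi structure on $\M^{freak}$ whose obstruction fibers lie in $L^p_{\delta}(\pmb{f}'^{*}T^{vert}(\XxS)\otimes\Lambda^{0,1})$; this is the structure named in property~(\ref{kuracompatibility}). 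Care is needed because $\M^{2fib}$ is non-compact and meets $\M^{1fib}$ along configurations where the sphere attached to the constant stem is a ghost component; there I would choose the obstruction data compatibly, using the inductive order on strata so that the charts on $\M^{1fib}$ and $\M^{2fib}$ agree on $\Mbar^{2fib}\cap\M^{1fib}$.

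Next I would extend the structure to all of $\Mbar_{0,1}(\XxS,\sigma_{0}+\iota_{*}B,\tJ_{0},(g))$ by transporting charts along orbits. Working stratum by stratum from deepest isotropy to shallowest as in Section~\ref{secReviewoGW}, for $\tau\notin\M^{freak}$ I would pick a chart $(V_{\tau},E_{\tau},\Gamma_{\tau},\psi_{\tau},s_{\tau})$ with vertical obstruction space, then define the charts at nearby points $h\cdot\tau$ as the image $h\cdot V_{\tau}$; by the first paragraph this image is again a chart with the same vertical obstruction data, the Kuranishi map is intertwined with the action, and the coordinate changes are equivariant, giving the topological action of Definition~\ref{defparaAction} (the first clause of the Proposition) and the equivariance clauses of Definition~\ref{eqkurastr}. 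Because $P\to\com P^{1}$ is non-trivial one cannot spread a single chart globally, so I would carry this out over a local trivialization of $P$ near each point of $\com P^{1}$ and glue using the inductively constructed coordinate changes, arranging throughout that the finite groups $\Gamma_{\alpha}$ commute with the action. Local freeness for charts disjoint from $\M^{freak}$, the first half of property~(\ref{locallyfree}), then follows from Lemma~\ref{paraActionLemma}(\ref{locfree}).

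The delicate point, and the main obstacle, is property~(\ref{locallyfree}) for charts $V_{\alpha}$ meeting $\M^{freak}$. On the fixed locus $\M^{1fib}$ the action is trivial by Lemma~\ref{paraActionLemma}(\ref{fixedloc}), so local freeness necessarily fails there; the assertion is that this failure is confined to a sub-chart $\check V_{\alpha}$ coming from the fiber-product structure of the second paragraph, and that the action is locally free with finite stabilizers on the complement $V_{\alpha}\setminus\check V_{\alpha}$. To achieve this I would construct the chart at $\tau\in\M^{freak}$ so that its restriction to $\M^{freak}$ is exactly the fiber-product chart, yielding property~(\ref{kuracompatibility}), while the transverse directions — deforming the constant stem to a genuine section, or resolving the node joining a branch to the stem, as in the $\com$-free behavior of Lemma~\ref{paraActionLemma}(\ref{2fibfree}) — carry a free M\"obius deformation whose stabilizer finiteness is controlled by the cyclic-root estimate $a^{K}=1$ of Lemma~\ref{permutationmobius}. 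Threading the equivariant chart construction through the stratification while simultaneously respecting the fiber-product structure on $\M^{freak}$, the vertical obstruction condition~(\ref{vertobscond}), and local freeness on the complement is the technically demanding step; once it is in place, the verification that the resulting data satisfy Definition~\ref{eqkurastr} is routine.
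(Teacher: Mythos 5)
Your proposal is correct, and its engine is the same as the paper's: since the parametrized action moves only the $S^{2}$-component of a stable map while the obstruction spaces can be chosen vertical (Lemma \ref{lemvertobs}), those spaces are automatically invariant, so no averaging over the non-compact group $\mathbb{A}$ is needed; the fiber products on $\M^{freak}$, the induction over the partial order making coordinate changes equivariant, and property (\ref{vertobscond}) are handled exactly as you describe. The one real difference is the mechanism for making individual charts invariant. You transport a base chart along orbits and glue over local trivializations of $P$; the paper instead builds each chart invariant from the outset, over the entire $PSL(2,\com)$-orbit of $\tau$: Condition \ref{condnbh} is replaced by Condition \ref{condequnbh}, which measures the distance of $\pmb{f}'$ to $g\cdot\pmb{f}$ with $g\in PSL(2,\com)$ read off from the stem of $\pmb{f}'$; the obstruction spaces are pushed forward by $g_{*}$ (the identity on vertical parts) before parallel transport in the $\X$-direction; and the added-marked-point constraints become $\pmb{f}'(z_{\nu,i})\in g(N_{\nu,i})$. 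Since (\ref{eqperturbedeq}), (\ref{markedaddeq}) and Condition \ref{condequnbh} are then $PSL(2,\com)$-invariant, so is $V_{\tau}$, and the parametrized $\mathbb{A}$-structure of Definition \ref{eqkurastr} is obtained simply by restricting this honest group action to $P$. This also shows that your worry about the non-triviality of $P\to\com P^{1}$ is moot: the parametrized action of Definition \ref{defparaAction} is the restriction of a global $PSL(2,\com)$-action (post-composition on the $S^{2}$-factor), so one can work $PSL(2,\com)$-equivariantly throughout and never needs local trivializations; unwinding your transport-and-glue construction (whose overlaps match precisely because of the verticality observation) recovers the paper's chart, so your variant costs an extra gluing step without buying additional generality. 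Finally, for property (\ref{locallyfree}) the paper pins down $\check{V}_{\tau}$ concretely as $\{f'\mid \mathfrak{forget}(f')=\mathfrak{forget}(\tau)\}$, the locus in $V_{\tau}$ whose domain stable curve equals that of $\tau$; off this locus the stem is non-constant, so local freeness and finiteness of stabilizers follow as in Lemma \ref{paraActionLemma} and Lemma \ref{permutationmobius}. You should sharpen your phrase ``restriction to the fiber-product chart'' to this definition, since it is exactly what makes the freeness claim on $V_{\tau}\setminus\check{V}_{\tau}$ immediate.
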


\begin{proof}
We need to modify the construction of Kuranishi structures in Section \ref{secReviewoGW} so that these required properties are satisfied. 

Consider an element $\tau=[\pmb{f}] \in \Mbar_{0,1}(\XxS,\sigma_{0}+\iota_{*}B,\tJ_{0},(g)).$ We construct a  Kuranishi chart over the $PSL(2,\com)$-orbit $\mathcal{O}_{\tau}$ of $\tau$. In particular, the chart is parametrized $\mathbb{A}$-equivariant.

Let $\text{\bf v} \in \frak V$ and approximated solutions $\pmb{f}' : \Sigma(\text{\bf v}) \to \XxS$ be as in Section \ref{secReviewoGW} (assume additional marked points are added if neccessary). By Lemma \ref{lemvertobs}, we do not need to perturb a stable map along the $S^{2}$ direction, thus we may assume the composition $\Sigma(\text{\bf v}) \xrightarrow{\pmb{f}} \XxS\xrightarrow{\pi_{S^{2}}} S^{2}$ to be holomorphic. 
The stem component of $\pmb{f}'$ determines an automorphism $g\in PSL(2,\com)$ via the above composition. Then we consider the pairs $(\text{\bf v},\pmb{f}')$ satisfying the following condition:
\begin{conds}\label{condequnbh}
There exists $\epsilon > 0$ depending only on $\tau$, with the following properties.
\begin{enumerate}
\item $\sup_{x \in \Sigma_0} \text{\rm dist} (\pmb{f}'(i_{\text{\bf v}}(x)),g\cdot \pmb{f}(x)) \le \epsilon$.
\item
Let $D_{c}$ be a connected component of $\Sigma(\text{\bf v}) \setminus \text{\rm Im}(i_{\text{\bf v}})$, the diameter of $\pmb{f}'(D_c)$
in $\X$ is smaller than $\epsilon$.
\end{enumerate}
\end{conds}

Let $E_{0,\nu}$'s be as in Section \ref{secReviewoGW}. Define an embedding
$$
I_{(\text{\bf v},\pmb{f}')} : \bigoplus_{\nu} E_{0,\nu}
\longrightarrow \pmb{f}^{\prime *}(T (\XxS)) \otimes \Lambda^{0,1}
(\Sigma(\text{\bf v}))
$$
as follows: We first push $E_{0,\nu}$'s by $ g$ to get obstruction bundle at $g\cdot \pmb{f}$, and then use parallel transport to define obstruction bundle at $\pmb{f}'$. 

The action of $g$ on $\pmb{f}=(\pmb{u}, \pmb{\phi})$ induces an isomorphism
$$
g_{*} : T_{\pmb{u}(x)} \X \oplus T_{\pmb{\phi}(x)}S^{2} \otimes \Lambda^{0,1}_x(\Sigma)
\cong T_{\pmb{u}(x)} \X \oplus T_{g\cdot\pmb{\phi}(x)}S^{2} \otimes \Lambda^{0,1}_x(\Sigma).
$$
which restricts to identity on $T_{\pmb{u}(x)} \X \otimes \Lambda^{0,1}_x(\Sigma)$. 
By Lemma \ref{lemvertobs}, $E_{0,\nu}$'s evaluate in the vertical direction $T\X$. 

For $\pmb{f}'$ satisfying Condition \ref{condequnbh}, we use the parallel transport along the $\X$ direction to define
$$
\bigoplus_{\nu} g_*(E_{0,\nu})
\longrightarrow \pmb{f}^{\prime *}(T\X) \otimes \Lambda^{0,1}
(\Sigma(\text{\bf v})).
$$

Then $I_{(\text{\bf v},\pmb{f}')}$ is defined as the composition of the above map with $g_{*}$. Now we consider the equation
\begin{equation}\label{eqperturbedeq}
\overline{\partial} \pmb{f}' \equiv 0
\mod \bigoplus_{\nu} I_{(\text{\bf v},\pmb{f}')}(E_{0,\nu}),
\end{equation}
together with the additional conditions
\begin{equation}\label{markedaddeq}
\pmb{f}'(z_{\nu,i}) \in g(N_{\nu,i}), \ \ \ \ for\ all\ added\ marked\ points\ z_{\nu,i}.
\end{equation}
as before. The set of solutions of these equations is denoted by $V_{\tau}$ which will be the Kuranishi chart over $\mathcal{O}_{\tau}$. Note that $V_{\tau}$ is $\Gamma_{\tau}$ invariant since the equation and conditions are $\Gamma_{\tau}$ invariant. 

The equation (\ref{eqperturbedeq}), (\ref{markedaddeq}), and Condition \ref{condequnbh} are $PSL(2,\com)$ invariant, thus $V_{\tau}$ is $PSL(2,\com)$ invariant. Use evaluation map composed with the projection of the Hamiltonian orbifiber bundle as anchor map, and restrict the $PSL(2,\com)$ action to $$P=\{(z,g)|z\in \com P^{1},\ g\in PSL(2,\com),\ g\cdot z=z\}$$ as in Defintion \ref{defparaAction}, then we define a paremetrized $\mathbb{A}$-action on the Kuranishi chart.

Moreover $Aut(\tau)$ acts on the domain of $f'$, $G_{\tau}$ acts on the $\X$-component of the target $\XxS$, while the parametrized $\mathbb{A}$-action acts on the $S^{2}$-component of the target $\XxS$. Hence the action of the automorphism group $\Gamma_{\tau}$, which is generated by $Aut(\tau)$ and $G_{\tau}$,  commutes with the parametrized $\mathbb{A}$-action. By construction, $PSL(2,\com)$ acts on the obstruction bundle and by restricting to  $P$ one gives the obstruction bundle the structure of a parametrized $\mathbb{A}$-bundle. Furthermore the Kuranishi map (\ref{kuramap}) is  parametrized $\mathbb{A}$-equivariant. 

To construct the entire Kuranishi structure, one chooses a partial order $\prec$ (see \cite{FO}) of combinatorial types of stable maps, constructs Kuranishi charts for moduli spaces with low order (and thus has low dimension), takes fiber products of Kuranishi structures of low-ordered moduli spaces to get Kuranishi structure on lower dimensional strata of a higher-ordered moduli space, and then extends it to moduli spaces of higher order with respect to $\prec$. By induction according to the partial order $\prec$, one gets Kuranishi structures for all moduli spaces. Thus by construction the coordinate changes of the Kuranishi structure are parametrized $\mathbb{A}$-equivariant.

For $\tau\notin\M^{freak}$, the parametrized $\mathbb{A}$-action on $V_{\tau}$ is locally free because of the same reason as Lemma \ref{paraActionLemma}. For $\tau\in \M^{freak}$, let $\check{V}_{\tau}:=\{f'|\mathfrak{forget}(f')=\mathfrak{forget}(\tau)\}$ where $\mathfrak{forget}$ forgets the map $f'$ and remembers the domain stable curve. Then $V_{\tau}\setminus\check{V}_{\tau}$ contains no element with non-constant stem component. Thus the parametrized $\mathbb{A}$-action on $V_{\tau}\setminus\check{V}_{\tau}$ is free.

Finally (\ref{vertobscond}) follows from Lemma \ref{lemvertobs}.
\end{proof}

For a Kuranishi structure $\{(V_{\alpha},E_{\alpha},\Gamma_{\alpha},\psi_{\alpha},s_{\alpha})\mid \alpha \in \frak A\}$ over $\Mbar_{0,1}(\XxS,\sigma_{0}+B,\tJ_{0},(g))$, denote 
$$
\mathfrak{A}^{1fib}:=\{\alpha\in\mathfrak{A} \ \text{s.t.}\ s_{\alpha}^{-1}(0)\cap\M^{1fib}\neq \emptyset\}
$$
$$
 V:=\bigsqcup_{\alpha\in \mathfrak{A}}V_{\alpha}\ , \ \ \ \ \ \  \check{V}^{1fib}:=\bigsqcup_{\alpha\in\mathfrak{A}^{1fib}}\check{V}_{\alpha}.
 $$
The $\mathbb{A}$-group bundle has two subbundle $P^{\com}$ and $P^{S^{1}}$ as mentioned in Example \ref{paraAction}, thus induces a parametrized $\com$-action and a parametrized $S^{1}$-action on $V$. 

\begin{cor}\label{Cequivkura}
Restriction of the parametrized $\mathbb{A}$-action to $P^{\com}$ defines a parametrized $\com$-equivariant Kuranishi structure over $\Mbar_{0,1}(\XxS,\sigma_{0}+B,\tJ_{0},(g))$. Moreover, the parametrized $\mathbb{C}$-action on $V\setminus\check{V}^{1fib}$  is locally free and has finite stabilizer everywhere. 
\end{cor}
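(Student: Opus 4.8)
The plan is to read off the statement from Proposition \ref{constructequivkura} and Lemma \ref{paraActionLemma}, the only genuinely new input being that the translation subgroup $\com\subset\mathbb{A}$ acts more freely near $\M^{2fib}$ than the full affine group does.

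For the first assertion I would argue by pure restriction. By Example \ref{paraAction} the group bundle $P\to\com P^{1}$ (with fiber $\mathbb{A}$) contains the subbundle $P^{\com}\to\com P^{1}$ whose fiber over $z$ is a subgroup of $P_{z}\cong\mathbb{A}$, and the parametrized $\com$-action is by definition (Definition \ref{defsubparaAction}) the restriction of the parametrized $\mathbb{A}$-action to $P^{\com}$. Each of the four conditions in Definition \ref{eqkurastr} is stated fiberwise over $\com P^{1}$ and asserts equivariance of $s_\alpha$, $\psi_\alpha$, $E_\alpha$ and the coordinate changes with respect to the fiber group action. Since these hold for the fiber $\mathbb{A}$ by Proposition \ref{constructequivkura} and $P^{\com}_{z}\subset P_{z}$ is a subgroup, they continue to hold for $P^{\com}$. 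Thus the restricted data defines a parametrized $\com$-equivariant Kuranishi structure, with no computation required.

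For the second assertion I would decompose $V\setminus\check V^{1fib}$ into two kinds of points and treat each. First, on every piece where Proposition \ref{constructequivkura}(\ref{locallyfree}) already provides local freeness of the full $\mathbb{A}$-action --- namely the non-freak charts, and the loci $V_\alpha\setminus\check V_\alpha$ for freak charts --- the $\com$-action is locally free because it is the restriction to a subgroup, and each $\com$-stabilizer, being a subgroup of the finite $\mathbb{A}$-stabilizer guaranteed there, is finite (indeed trivial, $\com$ being torsion-free). Since $\check V^{1fib}=\bigsqcup_{\alpha\in\mathfrak A^{1fib}}\check V_\alpha$ only removes the charts meeting $\M^{1fib}$, the remaining points of $V\setminus\check V^{1fib}$ not yet covered lie in $\check V_\alpha$ for charts $\alpha\in\mathfrak A^{2fib}\setminus\mathfrak A^{1fib}$, i.e. charts meeting $\M^{2fib}$ but not $\M^{1fib}$. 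On such a locus I would invoke Proposition \ref{constructequivkura}(\ref{kuracompatibility}) to identify $\check V_\alpha$ with the fiber-product Kuranishi model built from $\M^{2fib}$, on which the action is still the geometric post-composition on the $S^{2}$-factor of Definition \ref{defparaAction}; the freeness computation of Lemma \ref{paraActionLemma}(\ref{2fibfree}) --- a nontrivial translation fixing the anchor point $w=p(z)$ cannot fix the second, distinct attachment point of the two-fiber configuration --- then applies verbatim at the chart level, giving a free (hence locally free, trivially-stabilized) $\com$-action there.

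The step I expect to require the most care is this last one, reconciling the excision of $\check V^{1fib}$ alone with the structure of $\M^{2fib}$. The subtlety is the overlap stratum $\M^{1fib}\cap\Mbar^{2fib}$, where one fiber bubble degenerates to a ghost and the two attachment points collide; there the translation-freeness argument breaks down, but precisely such charts meet $\M^{1fib}$, hence lie in $\mathfrak A^{1fib}$, so their $\check V_\alpha$ is already contained in $\check V^{1fib}$ and removed. Consequently the charts in $\mathfrak A^{2fib}\setminus\mathfrak A^{1fib}$ meet only the genuine stratum of $\M^{2fib}$, with two distinct non-ghost attachment points, and the argument is clean; the non-compactness of $\M^{2fib}$ is irrelevant since local freeness and finiteness of stabilizers are pointwise conditions. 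Assembling the two cases shows the parametrized $\com$-action on $V\setminus\check V^{1fib}$ is locally free with finite stabilizer everywhere, completing the proof.
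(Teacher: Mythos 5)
Your proof is correct and follows exactly the route the paper intends: the corollary is stated there without proof, as an immediate consequence of Proposition \ref{constructequivkura} and Lemma \ref{paraActionLemma}, and your argument---restriction of the equivariance conditions of Definition \ref{eqkurastr} to the subgroup bundle $P^{\com}$, Proposition \ref{constructequivkura}(\ref{locallyfree}) off the freak loci (with $\com$-stabilizers finite, indeed trivial, as subgroups of the finite $\mathbb{A}$-stabilizers), and the translation-freeness computation of Lemma \ref{paraActionLemma}(\ref{2fibfree}) transported to the charts $\check{V}_{\alpha}$ over $\M^{2fib}$---is precisely the filling-in of that deduction. Your treatment of the overlap stratum $\M^{1fib}\cap\Mbar^{2fib}$, whose charts belong to $\mathfrak{A}^{1fib}$ and are therefore excised together with $\check{V}^{1fib}$, is the one point the paper leaves entirely implicit, and you resolve it correctly.
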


We will  use the parametrized $\com$-action to construct parametrized equivariant multi-sections, and use the parametrized $S^{1}$-action to perturb the evaluation map.

\begin{lem}\label{free_equmultisection}
There is a parametrized $\com$-equivariant Kuranishi structure over $\Mbar_{0,1}(\XxS,\sigma_{0}+B,\tJ_{0},(g))$ whose restriction to $V\setminus \check{V}^{1fib}$ has a system of multi-sections $\ms^{free}$ such that:
\begin{enumerate}
\item They are transversal to 0;
\item They are close to the original Kuranishi map s;
\item They are parametrized equivariant under the parametrized $\mathbb{C}$-action.
\end{enumerate}
\end{lem}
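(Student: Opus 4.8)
The plan is to run the inductive construction of transversal multisections of \cite[Theorem 6.4]{FO} in the parametrized-equivariant setting, using the local freeness established in Corollary \ref{Cequivkura}. The essential point making this feasible is that, although $\com$ is noncompact and so equivariance cannot be forced by averaging over the group, the parametrized $\com$-action on $V\setminus\check{V}^{1fib}$ is locally free with finite (possibly trivial) stabilizers, and therefore admits local slices transverse to its orbits. Equivariance will thus be produced by a slice construction, averaging only over the finite stabilizer groups, which is harmless.

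Concretely, I would fix the partial order $\prec$ on the charts $\{(V_{\alpha},E_{\alpha},\Gamma_{\alpha},\psi_{\alpha},s_{\alpha})\}$ of the parametrized $\com$-equivariant Kuranishi structure of Corollary \ref{Cequivkura} and induct over it exactly as in \cite{FO}: constructing $\ms^{free}$ first on the lowest strata and extending over neighborhoods of higher strata while maintaining transversality and compatibility with the coordinate changes $\varphi_{\alpha_{2},\alpha_{1}}$. The new ingredient, equivariance, I would build in chart by chart. Around a point $x\in V_{\alpha}\setminus\check{V}_{\alpha}$ with finite stabilizer $H_{x}\subset\pi^{-1}(a(x))$, local freeness yields an $H_{x}$-invariant slice $S_{x}\subset V_{\alpha}$ transverse to the $\com$-orbit, so that the action identifies an $H_{x}$-quotient of $\com\times S_{x}$ with a $\com$-saturated neighborhood of the orbit of $x$, and $E_{\alpha}$, $s_{\alpha}$ restrict to $S_{x}$. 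On $S_{x}$ I would invoke the usual non-equivariant existence of a transversal multisection close to $s_{\alpha}|_{S_{x}}$, average it over the finite group $H_{x}$ to make it $H_{x}$-equivariant, and then propagate it over the whole orbit using the action; since $E_{\alpha}\to V_{\alpha}$ is a parametrized $\com$-equivariant bundle, this produces a transversal multisection on the saturated neighborhood that is by construction parametrized $\com$-equivariant and $C^{0}$-close to $s_{\alpha}$. Finitely many such neighborhoods, glued with $\com$-invariant cutoff functions pulled back from the orbit space, then patch the local multisections together within each chart, and the inductive step carries this compatibly across coordinate changes.

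The main obstacle will be maintaining transversality simultaneously with equivariance across overlaps and coordinate changes: an interpolation between two $\com$-equivariant transversal multisections need not itself be transversal, so on each overlap one must perturb again, and every such perturbation must be kept $\com$-equivariant through the slice construction rather than by a naive convex combination. Controlling this bookkeeping along the entire partial order $\prec$ — while staying $C^{0}$-close to $s$ and, near $\check{V}^{1fib}$, remaining consistent with the fiber-product Kuranishi data on $\M^{1fib}$ and $\M^{2fib}$ from property~(\ref{kuracompatibility}) of Proposition \ref{constructequivkura} — is the delicate part. Once the transverse slice replaces the group-averaging, however, the argument is formally parallel to the compact-group equivariant case of \cite{FOOOtoric1}, and I expect no further conceptual difficulty.
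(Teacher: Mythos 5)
Your proposal is correct and follows essentially the same route as the paper: since averaging over the noncompact group $\com$ is impossible, one constructs slices transverse to the parametrized $\com$-orbits, builds transversal multisections on the slices, and pushes them out along the orbits to obtain equivariance, with compatibility handled by the usual inductive scheme of \cite{FO}. Two refinements in the paper would simplify your write-up: the stabilizers on $V\setminus\check{V}^{1fib}$ are actually \emph{trivial} rather than merely finite (as $\com$ has no nontrivial finite subgroups), so no finite-group averaging is needed at all; and instead of gluing local equivariant multisections with $\com$-invariant cutoff functions (which forces the repeated re-perturbation you worry about), the paper \emph{modifies the Kuranishi structure itself} --- replacing each chart by a finite union of (closures of) orbits of slices, which is precisely why the lemma asserts the existence of a parametrized $\com$-equivariant Kuranishi structure rather than just of multisections --- and then runs the standard construction of \cite[Theorem 3.11]{FO} and \cite[Theorem A1.23]{FOOObook}, allowing on each orbit only those multisections obtained by pushing out transversal multisections on the slice.
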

\begin{proof}
Although the parametrized $\com$-action on $V\setminus\check{V}^{1fib}$ is not proper due to the non-compactness of the group $\com$, we can directly construct slices transversal to the parametrized $\mathbb{C}$-action, then push transversal multi-sections on the slices out along the orbits. 

For  $f\in V\setminus\check{V}^{1fib}$, denote its orbit under the parametrized $\com$-action by $\com_{f}$. We have shown that the parametrized $\com$-action has finite stabilizers on $V\setminus\check{V}^{1fib}$. Since there is no non-trivial finite subgroup of $\com$, the stabilizer of $f$ under the parametrized $\com$-action is trivial.

The metric on $\X$ induces a metric on $V$ and denote by $Exp: TV\to V$ the exponential map determined by the metric. For a small positive number $\epsilon>0$, a neighborhood $B_{\epsilon}(f)$ of $f$ can be identified with a neighborhood $D$ of $0\in T_{f}V$ under $Exp$. Let $\mathcal{N}_{\com_{f}}V\subset T_{f}V$ be the orthogonal complement of $T_{f}\com_{f}$. Denote $\mathcal{S}_{f}:=Exp(f,\mathcal{N}_{\com_{f}}V\cap D)$. 

Now we need to modify the Kuranishi structure constructed in Proposition \ref{constructequivkura}. 
\begin{enumerate}
\item For $\alpha\in \mathfrak{A}^{1fib}$, let $\mathfrak{B}_{\alpha}$ be the set of $f$ such that the orbit  $\com\cdot \mathcal{S}_{f}\subset V_{\alpha}$ and its closure $\overline{\com\cdot \mathcal{S}_{f}}$ in $V_{\alpha}$ intersects $V^{1fib}$. Then there exists a finite subset $$\mathfrak{B}^{1fib}\subset \cup_{\alpha\in \mathfrak{A}^{1fib}}\mathfrak{B}_{\alpha}$$ such that $\{\overline{\com\cdot \mathcal{S}_{f}}|f\in \mathfrak{B}^{1fib}\}$ covers a neighborhood of $\M^{1fib}$. This follows from the compactness of the boundary of a tubular neighborhood of $\M^{1fib}$ and the fact that every orbit $\com\cdot \mathcal{S}_{f}$ for $f\in \mathfrak{B}_{\alpha}$ passes through the boundary. We replace $V_{\alpha}$ for $\alpha\in \mathfrak{A}^{1fib}$ by the union of $\overline{\com\cdot \mathcal{S}_{f}}$ for $f\in \mathfrak{B}^{1fib}$.

\item For $\alpha\in \mathfrak{A}^{1fib}$, let $\mathfrak{C}_{\alpha}$ be the set of $f$ such that the orbit $\com\cdot \mathcal{S}_{f}\subset V_{\alpha}$ and its closure $\overline{\com\cdot \mathcal{S}_{f}}$ do not intersect $V^{1fib}$. We replace $V_{\alpha}$ for $\alpha\in \mathfrak{A}^{1fib}$ by the union of $\com\cdot \mathcal{S}_{f}$ for $f\in \cup_{\alpha\in \mathfrak{A}}\mathfrak{C}_{\alpha}$ and the Kuranishi charts constructed in Case (1). Note that the Kuranishi structure neighborhood constructed in this case does not cover $\M^{1fib}$ any more.

\item For the Kuranishi structure $\{V_{\alpha},\}$ modified as in Case (1) and Case (2), if $\alpha\in \mathfrak{A}\setminus \mathfrak{A}^{1fib}$ (i.e. $V_{\alpha}$ is disjoint from $\M^{1fib}$), shrink $V_{\alpha}$ slightly to a compact subset $\hat{V}_{\alpha}$ so that such compact subsets $\hat{V}_{\alpha}$ together with $\{V_{\alpha}|\alpha\in \mathfrak{A}\setminus \mathfrak{A}^{1fib}\}$ still cover the moduli space $\Mbar_{0,1}(\XxS,\sigma_{0}+B,\tJ_{0},(g))$. On the other side, these compact sets $\hat{V}_{\alpha}$ can be covered by finite orbits $\com\cdot \mathcal{S}_{f}$. Then for $\alpha\in \mathfrak{A}\setminus\mathfrak{A}^{1fib}$, replace $V_{\alpha}$ by the union of such orbits $\com\cdot \mathcal{S}_{f}$ which are contained in $V_{\alpha}$. \footnote{This is a standard trick for Kuranishi structure which takes advantage of the compactness of the moduli space without gluing together the Kuranishi charts $V_{\alpha}$ and worrying about compactness of the glued space. This technique was used again and again by \cite{FO} and other papers by Fukaya-Oh-Ohta-Ono.} As a result, there exist finitely many slices $\mathcal{S}_{f}$'s in $V_{\alpha}$ such that their orbits $\com\cdot \mathcal{S}_{f}$ together with $\{V_{\alpha}|\alpha\in \mathfrak{A}\setminus \mathfrak{A}^{1fib}\}$ cover the entire moduli space $\Mbar_{0,1}(\XxS,\sigma_{0}+B,\tJ_{0},(g))$. 
\end{enumerate}

Now we have modified the Kuranishi structure so that the Kuranishi charts $V_{\alpha}$ are covered by a finite set of orbits of slices. Then we can use the method of \cite[Theorem 3.11]{FO} and \cite[Theorem A1.23]{FOOObook} to construct multi-sections with the required properties. Note that on each orbit we only allow multi-sections which are given by pushing out transversal multi-sections on the slice.
\end{proof}

\begin{rmk}\label{nonextension}
From the construction of multi-sections we can see that a multi-section on a slice may be squeezed to a smaller and smaller set when the orbit goes toward the fixed locus $\check{V}^{1fib}$, thus its derivative will blow up when going to $\check{V}^{1fib}$. So the parametrized equivariant multi-sections cannot be extended to the fixed locus while maintaining transversality. A similar situation appears in \cite{FOOOtoric3}. As remarked there, when the group action has isotropy groups of positive dimension, the quotient space is neither a manifold nor an orbifold, so the method in the above proof does not work.
\end{rmk}
As a consequence of Remark \ref{nonextension}, we drop the equivariant condition near the fixed locus $\check{V}^{1fib}$, and construct a system of multi-sections for the entire Kuranishi structure as follows:

\begin{lem}\label{extendmultisection}
For any compact subset $V^{c}\subset V\setminus\check{V}^{1fib}$, the parametrized equivariant Kuranishi structure in in Corollary \ref{Cequivkura} has a system of multi-sections $\ms$ on $V$ such that 
\begin{enumerate}
\item They are transversal to 0 on $V$;
\item They are close to the original Kuranishi map s;
\item $\ms=\ms^{free}$ on $V^{c}$.
\end{enumerate}
\end{lem}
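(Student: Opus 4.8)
The plan is to build the desired system of multi-sections on all of $V$ by gluing the parametrized-equivariant multi-section $\ms^{free}$, which we already have on a neighborhood of $V^{c}$ by Lemma \ref{free_equmultisection}, with an ordinary (non-equivariant) transversal multi-section supported near the fixed locus $\check{V}^{1fib}$, using a partition of unity subordinate to a suitable cover. The point is that near $\check{V}^{1fib}$ we are \emph{allowed} to forget equivariance (as Remark \ref{nonextension} explains, equivariance is exactly what fails at the fixed locus), so there is no obstruction to transversality there coming from positive-dimensional stabilizers; we only need genuine transversality plus agreement with $\ms^{free}$ on the prescribed compact set $V^{c}$.

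First I would fix an open neighborhood $W$ of $\check{V}^{1fib}$ in $V$ whose closure is disjoint from the given compact set $V^{c}\subset V\setminus\check{V}^{1fib}$; this is possible since $V^{c}$ is compact and $\check{V}^{1fib}$ is closed. On the complement, near $V^{c}$, I retain $\ms^{free}$, which is already transversal to $0$ and equivariant. Next I would invoke the standard existence theorem for transversal multi-sections on a Kuranishi structure with a good coordinate system (\cite[Theorem 3.11]{FO}, equivalently \cite[Theorem A1.23]{FOOObook}), applied over $W$ but \emph{without} any equivariance requirement, to produce a transversal multi-section $\ms'$ there that is $C^{0}$-close to the original Kuranishi map $s$. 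The final multi-section $\ms$ is then obtained by interpolating: on the region where only $\ms^{free}$ is defined set $\ms=\ms^{free}$, on a collar region between $W$ and $V^{c}$ take a convex combination (legitimate since multi-sections form an affine space under averaging of branches, compatibly with the coordinate changes $\widehat{\varphi}_{\alpha_{2},\alpha_{1}}$), and near $\check{V}^{1fib}$ use $\ms'$. Property (3), $\ms=\ms^{free}$ on $V^{c}$, holds by construction; property (2), $C^{0}$-closeness to $s$, follows because both $\ms^{free}$ and $\ms'$ are chosen close to $s$ and closeness is preserved under convex combination.

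The main obstacle — and the step requiring genuine care rather than routine gluing — is property (1), transversality across the interpolation collar. A generic convex combination of two transversal multi-sections need not be transversal, so the standard device is to perform the interpolation \emph{inductively over the partial order} $\prec$ of the good coordinate system, perturbing in each stratum by a small generic section of $E_{\alpha}$ to restore transversality after gluing, exactly as in the cited transversality theorems; one checks that the perturbation can be taken supported away from $V^{c}$ (so property (3) survives) and small enough to preserve property (2). Because we have dropped equivariance on $W$, the Sard–Smale argument underlying the existence theorem applies unobstructed, so transversality can always be achieved on the collar. I would therefore structure the argument as: (i) set up the cover $\{V\setminus\overline{W},\,W\}$ and a subordinate partition of unity respecting the coordinate changes; (ii) extend $\ms^{free}$ off $V^{c}$ into the collar as an arbitrary multi-section close to $s$; (iii) run the inductive transversalization of \cite[Theorem 3.11]{FO} with the constraint that the multi-section is held fixed and equal to $\ms^{free}$ on $V^{c}$; (iv) verify the three listed properties. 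The verification is then straightforward; the only subtlety is bookkeeping the support of the perturbations so that (3) is not disturbed.
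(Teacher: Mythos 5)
Your proposal is correct and amounts to the same argument as the paper's: the paper disposes of this lemma by citing the \emph{relative} version of the existence theorem for transversal multi-sections (\cite[Lemma 3.14]{FO}), whose content is precisely what you reconstruct — extend $\ms^{free}$ off $V^{c}$ arbitrarily and run the inductive, stratum-by-stratum transversalization while holding the multi-section fixed on $V^{c}$, with all perturbations supported away from $V^{c}$. Your partition-of-unity/convex-combination elaboration is just the standard proof of that relative lemma (non-equivariance near $\check{V}^{1fib}$ removes any obstruction, exactly as Remark \ref{nonextension} anticipates), so there is no gap.
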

\begin{proof}
This is a direct application of the relative version of existence of transversal multi-sections \cite[Lemma 3.14]{FO}. 
\end{proof}

We remark that if $V^{c}$ is not parametrized $\com$-invariant, then $\ms$ may not be equivariant since orbits may run out of $V^{c}$. But this is enough for our purpose as one will see in the next subsection.

\subsection{Proof of Triviality: conclusion}\label{wholetriv}
In this subsection we finish the proof of the triviality property (\ref{triviality}). 

We will change the evaluation map homotopically  in a small neighborhood of $\check{V}^{1fib}$ so that the virtual cycle defined by the new evaluation map has a dimension lower than the expected virtual dimension.

Note that the parametrized $\mathbb{A}$-action on $V$ restricted to $P^{S^{1}}$ defines a parametrized $S^{1}$-action on $V$. We use this action to perturb the evaluation map. 
\begin{lem}\label{tubnbd}
Fix a metric on $\X_{(g)}$. For any $\epsilon>0$, there exists an open neighborhood $U_{V}(\check{V}^{1fib})$ of $\check{V}^{1fib}$ in $V$, and a (closed) disk bundle $\pi_{1fib}: \bar{U}_{V}(\check{V}^{1fib}) \to \check{V}^{1fib}$, where $\bar{U}_{V}(\check{V}^{1fib})$ is the closure of $U_{V}(\check{V}^{1fib})$ in $V$, such that:
\begin{enumerate}
\item \label{circleinv} For any $\check{f}\in \check{V}^{1fib}$, $\partial \pi_{1fib}^{-1}(\check{f}):=\pi_{1fib}^{-1}(\check{f})\cap \partial \bar{U}_{V}(\check{V}^{1fib})$, is an orbit of the parametrized $S^{1}$-action on $V$. The evaluation map $ev$ is constant on $\partial \pi_{1fib}^{-1}(\check{f})$.

\item \label{imagesmall} Let $v(\check{f}):=ev(\partial \pi_{1fib}^{-1}(\check{f}))$, then $ev(\pi_{1fib}^{-1}(\check{f}))\subset B_{\epsilon}(v(\check{f}))$ where $B_{\epsilon}(v(\check{f}))$ is a ball neighborhood of $v(\check{f})$ with radius $\epsilon$.
\end{enumerate}

\end{lem}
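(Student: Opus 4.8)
The plan is to realize $\check{V}^{1fib}$ as the fixed-point locus of the parametrized $S^{1}$-action on a neighborhood in $V$ and then produce $\pi_{1fib}$ as the normal disk bundle of this fixed locus. First I would observe that, by the computation in Lemma \ref{paraActionLemma}(2) carried out on the level of approximate solutions (rather than only on the topological moduli space), a chart element $\pmb{f}'$ is fixed by the parametrized $S^{1}$-action precisely when its stem component is constant, i.e. precisely when $\pmb{f}'\in\check{V}_{\alpha}$; thus $\check{V}^{1fib}$ is exactly the $S^{1}$-fixed locus inside the union of charts $V_{\alpha}$, $\alpha\in\mathfrak{A}^{1fib}$. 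Averaging the metric on $V$ over $S^{1}$ makes it $S^{1}$-invariant, and the associated exponential map yields an $S^{1}$-equivariant tubular neighborhood identifying a neighborhood of $\check{V}^{1fib}$ with a neighborhood of the zero section of the normal bundle $\mathcal{N}\to\check{V}^{1fib}$, carrying the linearized $S^{1}$-action.

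Next I would pin down the normal structure. Condition (1) forces each disk fiber to have a single $S^{1}$-orbit as boundary, hence to be two-dimensional, so I must check that $\check{V}^{1fib}$ has real codimension $2$ with normal bundle a complex line on which $S^{1}$ acts by rotation. This is the local model of smoothing the node that separates the (constant) stem from the fiber bubble carrying the class $B$: the gluing parameter is a single complex coordinate, and rotating the $S^{2}$-factor about the attaching point $w=\pmb{\phi}(p(z))$ rotates the phase of this gluing parameter, exactly the picture of the diagonal $\Delta\subset\com P^{1}\times\com P^{1}$ in Example \ref{paraAction}. Since $\check{V}^{1fib}$ is the fixed locus, the normal $S^{1}$-representation has no trivial summand, and by codimension $2$ it is a single complex line $L$ with nonzero weight; local freeness of the parametrized $\com$-action off $\check{V}^{1fib}$ (Corollary \ref{Cequivkura}) confirms the weight is nonzero. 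I would then define $\bar{U}_{V}(\check{V}^{1fib})$ to be the closed radius-$r$ disk bundle of $L$ transported into $V$, with $\pi_{1fib}$ its projection. By construction the boundary circle bundle consists of $S^{1}$-orbits, and since the parametrized $S^{1}$-action moves only the horizontal $S^{2}$-factor of $\XxS$ while fixing the vertical $\X$-factor, the evaluation into $\X_{(g)}$ is $S^{1}$-invariant and therefore constant on each such orbit; this gives a well-defined $v(\check{f})$ and establishes (1).

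For (2) I would argue by continuity and compactness. The space $\M^{1fib}$ is compact, so $ev$ has a uniform modulus of continuity near $\check{V}^{1fib}$; choosing the disk radius $r$ small enough (uniformly in $\check{f}$) makes the diameter of $ev(\pi_{1fib}^{-1}(\check{f}))$ less than $\epsilon$ for every $\check{f}$. Since $v(\check{f})=ev(\partial\pi_{1fib}^{-1}(\check{f}))$ is itself one of these values, $ev(\pi_{1fib}^{-1}(\check{f}))\subset B_{\epsilon}(v(\check{f}))$, which is (2). Shrinking $r$ does not disturb (1), since the boundary circles remain $S^{1}$-orbits at every radius.

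The hard part will be the normal-bundle identification in the second paragraph: verifying, in the Kuranishi-chart model of Section \ref{secEqKura}, that the node-smoothing direction is genuinely a single complex line, that the linearized parametrized $S^{1}$-action rotates it with the expected weight, and that these fit together into a smooth complex line sub-bundle over $\check{V}^{1fib}$. The equivariant tubular neighborhood theorem is standard once an $S^{1}$-invariant metric is fixed, but because the parametrized group bundle $P^{S^{1}}\to\com P^{1}$ is nontrivial the ``$S^{1}$-action'' is only fiberwise, so I would phrase the averaging and the exponential-map construction fiberwise over the anchor map, exactly as the parametrized actions were set up in Definitions \ref{paraaction}--\ref{defparaAction}.
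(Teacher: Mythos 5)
Your geometric picture agrees with the paper's: $\check{V}^{1fib}$ sits in $V$ in real codimension two, the normal direction is the node-smoothing (gluing) parameter, the parametrized $S^{1}$ rotates it, and condition (2) is a compactness-plus-uniform-continuity statement. Indeed your argument for (2) is essentially identical to the paper's (pointwise smallness of $\delta_{\check f}$ on neighborhoods $N(\check f)$, then a finite subcover after shrinking $\check{V}^{1fib}$ to a compact set). But the paper never invokes an equivariant tubular neighborhood theorem: it uses the gluing map $Glue:\check{V}^{1fib}\times \tilde{D}\to V$ supplied by the Kuranishi construction, sets $\mathcal{R}_{\delta}(\check f)=\{Glue(\check f,re^{i0})\mid 0\le r\le \delta\}$, and simply \emph{defines} the disk fibers to be the sweeps $S^{1}\cdot \mathcal{R}_{\delta}(\check f)$ under the parametrized action, so that the boundary circle is an orbit by construction. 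This only ever uses orbits of individual points, which are always well defined for a parametrized action.

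The step of your proposal that does not go through as written is the averaging/equivariant-exponential step, and this is a genuine gap rather than a cosmetic one. The parametrized $S^{1}$-action is a groupoid action: an element of $\pi^{-1}(w)\subset P^{S^{1}}$ acts only on the anchor fiber $a^{-1}(w)$, not on any open subset of $V$. Hence there is no action of a fixed circle on $T_{\check f}V$, ``$S^{1}$-invariant metric on $V$'' has no meaning beyond fiberwise invariance, there is no equivariant exponential map on $V$, and the normal bundle of $\check{V}^{1fib}$ in $V$ carries no natural linear circle action. The difficulty is not notational: every orbit lies inside a single anchor fiber $a^{-1}(w)$, while the gluing disks cross anchor fibers (the anchor of $Glue(\check f,re^{i\theta})$ need not stay equal to $a(\check f)$ for $r>0$), so ``averaging and exponentiating fiberwise over the anchor,'' as you propose, does not by itself produce a tube around $\check{V}^{1fib}$ in $V$. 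A correct repair along your lines is to apply the honest equivariant tubular neighborhood theorem inside each fixed anchor fiber $a^{-1}(w)$ — there $\pi^{-1}(w)\cong S^{1}$ genuinely acts, and $\check{V}^{1fib}\cap a^{-1}(w)$ is its codimension-two fixed locus — and then assemble these fiberwise disk bundles smoothly over $w\in \com P^{1}$; but the input needed to run this (that near $\check{V}^{1fib}$ the chart is a product with a $2$-disk which the action rotates) is exactly what the paper's $Glue$ coordinates provide directly, making the tubular-neighborhood machinery redundant. Finally, a smaller imprecision: $\check{V}_{\alpha}$ is defined in the paper via the forgetful map (nodal domain), not as the locus of constant stem, and the paper nowhere asserts that $\check{V}^{1fib}$ is literally the chart-level fixed locus; elements of $\check{V}_{\alpha}$ whose stem has nonconstant vertical part (solutions of the perturbed, not the honest, Cauchy--Riemann equation) make your claimed identification ``fixed locus $=\check{V}_{\alpha}$'' something that would itself require proof.
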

\begin{proof}
Recall that by construction of Kuranishi structure, a neighborhood of $\check{V}^{1fib}$ in $V$ can be identified with $\check{V}^{1fib}\times \tilde{D}$ via a glueing map $Glue: \check{V}^{1fib}\times \tilde{D}\to V$, where $\tilde{D}\subset \com$ is a 2-disk centered at zero parametrizing the resolution of the singular point between the stem component and the branch component.

For $\delta>0$, let  
\begin{equation*}
\begin{split}
&\mathcal{S}_{\delta}:=\{Glue(\check{f},re^{i0})|\check{f}\in \check{V}^{1fib}, 0<r<\delta\},\\
&\tilde{\mathcal{S}}_{\delta}=\{Glue(\check{f},re^{i0})|\check{f}\in \check{V}^{1fib}, 0\leq r< \delta\},\\ 
&\bar{\mathcal{S}}_{\delta}=\{Glue(\check{f},re^{i0})|\check{f}\in \check{V}^{1fib}, 0\leq r\leq \delta\},\\ 
&\partial\bar{\mathcal{S}}_{\delta}=\{Glue(\check{f},re^{i0})|\check{f}\in \check{V}^{1fib},  r=\delta\}. 
\end{split}
\end{equation*}
By making $\delta$ small enough, we may assume that the orbit of $\bar{\mathcal{S}}_{\delta}$, denoted by $S^{1}\cdot \bar{\mathcal{S}}_{\delta}$, is contained in $Glue(\check{V}^{1fib}\times \tilde{D})$. 
Then the orbit $U_{\delta}:=S^{1}\cdot \tilde{\mathcal{S}}_{\delta}$ is an open neighborhood of $\check{V}^{1fib}$ in $V$. 

We define $\pi_{1fib}:S^{1}\cdot \bar{\mathcal{S}}_{\delta}\to \check{V}^{1fib}$ by $$\pi_{1fib}(f):=\check{f},\ \ \ \ \ \ \ \text{for} \ \ f\in S^{1}\cdot \mathcal{R}_{\delta}(\check{f}),$$
 where $\mathcal{R}_{\delta}(\check{f}):=\{Glue(\check{f},re^{i0})|0\leq r\leq \delta \}$.

By construction, $\partial \pi_{1fib}^{-1}(\check{f})=\pi_{1fib}^{-1}(\check{f})\cap S^{1}\cdot \partial \bar{\mathcal{S}}_{\delta}$ is an orbit of the parametrized $S^{1}$-action on $V$. The evaluation map $ev$ is constant on such an orbit. Thus $v(\check{f}):=ev(\partial \pi_{1fib}^{-1}(\check{f}))$ is well-defined. The fiber $S^{1}\cdot \mathcal{R}_{\delta}(\check{f})$ is a closed 2-disk swept out by an interval under $S^{1}$ action. Thus  for every small enough $\delta$, we get a tubular neighborhood $U_{\delta}:=S^{1}\cdot \mathcal{S}_{\delta}$ of $\check{V}^{1fib}$ satisfying (\ref{circleinv}).

For $\check{f}\in  \check{V}^{1fib}$, consider a ball neighborhood $B_{\frac{\epsilon}{2}}(v(\check{f}))$ of $v(\check{f})$ with radius $\frac{\epsilon}{2}$. Then choose small enough $\delta_{\check{f}}$ and a small enough  open neighborhood $N(\check{f})$ of $\check{f}$ in $\check{V}^{1fib}$, so that $\bar{\mathcal{S}}_{\delta_{\check{f}}}\cap \pi_{1fib}^{-1}(N(\check{f}))\subset ev^{-1}(B_{\frac{\epsilon}{2}}(ev(\check{f})))$. By construction of $\check{V}^{1fib}$, one can always shrink it slightly and make it compact. Thus there is a finite set $\{\check{f}_{i}\}$, such that the $\cup_{\{\check{f}_{i}\}}N(\check{f}_{i})$ covers $\check{V}^{1fib}$.  Let $\delta=min\{\delta_{\check{f}_{i}}\}$, define $U_{\delta}:=S^{1}\cdot \bar{\mathcal{S}}_{\delta}$. Then $ev(\pi_{1fib}^{-1}(\check{f}))\subset B_{\frac{\epsilon}{2}}(ev(\check{f}))$, in particular $v(\check{f})=ev(\partial \pi_{1fib}^{-1}(\check{f}))\in B_{\frac{\epsilon}{2}}(ev(\check{f}))$. So $ev(\pi_{1fib}^{-1}(\check{f}))\subset B_{\epsilon}(v(\check{f}))$.  Therefore $U_{V}(\check{V}^{1fib}):=U_{\delta}$ is a tubular neighborhood of $\check{V}^{1fib}$ satisfying both (\ref{circleinv}) and (\ref{imagesmall}). 
\end{proof}

We choose $\epsilon<injrad(\X_{(g)})$, where $injrad(\X_{(g)})$ is the injective radius of $\X_{(g)}$. Consider $U_{V}(\check{V}^{1fib})$ and $\phi$ as in Lemma \ref{tubnbd}. We define $Ev:V\to\X_{(g)}$ as follows:
\begin{itemize}
\item For $\ f\in V\setminus   U_{V}(\check{V}^{1fib})$, $Ev(f):= ev(f)$;
\item For $\ f \in U_{V}(\check{V}^{1fib})$, there is a unique $\check{f}\in \check{V}^{1fib}$ such that $\ f \in \pi_{1fib}^{-1}(\check{f})$, then define $Ev(f):=  v(\check{f})$.
\end{itemize}

\begin{lem}\label{evhomotopy}
The two maps $Ev$ and $ev$ are homotopy equivalent.
\end{lem}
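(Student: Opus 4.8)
The plan is to construct an explicit homotopy by geodesic interpolation, exploiting the fact that $Ev$ differs from $ev$ only on the tubular neighborhood $U_{V}(\check{V}^{1fib})$, where by Lemma \ref{tubnbd} both maps take values in balls of radius $\epsilon$ smaller than the injectivity radius of $\X_{(g)}$. First I would record the two facts furnished by Lemma \ref{tubnbd}: for each $\check{f}\in\check{V}^{1fib}$ the evaluation map $ev$ is constant along the boundary orbit $\partial\pi_{1fib}^{-1}(\check{f})$ with common value $v(\check{f})$, and the entire fiber satisfies $ev(\pi_{1fib}^{-1}(\check{f}))\subset B_{\epsilon}(v(\check{f}))$. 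Since $\epsilon<injrad(\X_{(g)})$, the point $ev(f)$ lies inside the injectivity ball centered at $v(\check{f})$ for every $f\in\pi_{1fib}^{-1}(\check{f})$, so $\exp_{v(\check{f})}^{-1}(ev(f))$ is a well-defined tangent vector depending continuously on $f$.

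Next I would define $H\colon V\times[0,1]\to\X_{(g)}$ by setting $H(f,s)=ev(f)$ (independent of $s$) for $f\in V\setminus U_{V}(\check{V}^{1fib})$, and for $f\in U_{V}(\check{V}^{1fib})$ with $f\in\pi_{1fib}^{-1}(\check{f})$ by
$$H(f,s):=\exp_{v(\check{f})}\big((1-s)\,\exp_{v(\check{f})}^{-1}(ev(f))\big).$$
Evaluating at $s=0$ returns $ev(f)$ and at $s=1$ returns $v(\check{f})=Ev(f)$, so $H$ interpolates between $ev$ and $Ev$. The crucial continuity check is along $\partial U_{V}(\check{V}^{1fib})$: there $ev(f)=v(\check{f})$, hence $\exp_{v(\check{f})}^{-1}(ev(f))=0$ and $H(f,s)\equiv v(\check{f})=ev(f)$ for all $s$, matching the definition on the exterior region. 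Continuity in the interior then follows from the continuous dependence of $\check{f}=\pi_{1fib}(f)$, of $v(\check{f})$, and of $ev(f)$ on $f$, together with the smoothness of $\exp$ on the injectivity ball.

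The main point requiring care is that this must be a homotopy of \emph{strongly continuous} maps of the underlying Kuranishi structure, i.e. $H$ must commute with the coordinate changes $\varphi_{\alpha_{2},\alpha_{1}}$. I expect this to be automatic, because every ingredient of $H$ is canonical: $ev$ is strongly continuous by construction, the disk-bundle projection $\pi_{1fib}$ and the value $v(\check{f})$ are defined compatibly over $\check{V}^{1fib}$ in Lemma \ref{tubnbd}, and geodesic interpolation introduces no chart-dependent choices, so $H_{\alpha_{2}}\circ\varphi_{\alpha_{2},\alpha_{1}}=H_{\alpha_{1}}$ follows from the corresponding identity for $ev$. The only genuine obstacle is ensuring continuity across the boundary of the tubular neighborhood, and the degeneration of the interpolating geodesic to a constant there resolves this cleanly; I would also remark that the homotopy is supported in $U_{V}(\check{V}^{1fib})$, which is what makes it compatible with the dimension-reduction argument used afterward.
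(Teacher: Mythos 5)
Your proof is correct, and it uses exactly the same two inputs as the paper -- properties (1) and (2) of Lemma \ref{tubnbd} together with the choice $\epsilon<\mathrm{injrad}(\X_{(g)})$ -- but it packages the homotopy differently. The paper proceeds indirectly: it collapses each boundary circle $\partial\pi_{1fib}^{-1}(\check{f})$ to a point to obtain a sphere bundle $Sp\to \check{V}^{1fib}$, notes that $ev$ and $Ev$ descend to maps $\bar{ev},\bar{Ev}$ on $Sp$ (this is where constancy of $ev$ on the boundary orbits is used), and then compares the graph maps $(\bar{ev},\pi)$ and $(\bar{Ev},\pi)$ into $\X_{(g)}\times \check{V}^{1fib}$: the image of the first lies in a tubular neighborhood of the section traced out by the second, and the tubular-neighborhood retraction supplies the homotopy. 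Your explicit fiberwise geodesic contraction $H(f,s)=\exp_{v(\check{f})}\bigl((1-s)\exp_{v(\check{f})}^{-1}(ev(f))\bigr)$ is precisely what that retraction amounts to, so the geometric mechanism is identical; what your version buys is that the continuity across $\partial\bar{U}_{V}(\check{V}^{1fib})$ becomes a transparent limit computation (the interpolating geodesic degenerates to a constant there), and one avoids introducing the auxiliary sphere bundle and graph construction altogether, which in the paper's write-up is somewhat garbled (as stated, $gr_{\bar{ev}}(Z)$ and $gr_{\bar{Ev}}(Z)$ coincide; what is meant is that $gr_{\bar{ev}}(Sp)$ lies in a tubular neighborhood of the section $gr_{\bar{Ev}}(Sp)$). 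One point you should phrase carefully: $\X_{(g)}$ is an orbifold, so $\exp_{v(\check{f})}^{-1}$ is not literally a single-valued map when $v(\check{f})$ is a singular point; the fix is to perform the contraction equivariantly in a uniformizing chart $\tilde{B}/\Gamma_{v}$ centered at a lift of $v(\check{f})$ and descend. This is a cosmetic repair, not a gap -- the paper's tubular-neighborhood step requires the same care -- and your observation that the homotopy is supported in $U_{V}(\check{V}^{1fib})$ and chart-independent (hence a homotopy of strongly continuous maps) is exactly what the subsequent dimension-counting argument needs.
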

\begin{proof}
Since the two maps coincide on $V\setminus   U_{V}(\check{V}^{1fib})$, it is enough to show they are homotopy equivalent on $U_{V}(\check{V}^{1fib})$.

By collapsing $\partial \pi_{1fib}^{-1}(\check{f})\subset\pi_{1fib}^{-1}(\check{f})$ to a point for every $\check{f}\in \check{V}^{1fib}$, we get a sphere bundle $\pi:Sp\to V^{1fib}$. Denote the quotient map by $pr$, and the image of $\partial \bar{U}_{V}(\check{V}^{1fib})$ under the quotient map by $Z$. Then $Z$ is a section of the sphere bundle. The map $ev$ induces a map $\bar{ev}: Sp \to \X_{(g)}$ which satisfies $ev=\bar{ev}\circ pr$. Similarly $Ev$ induces a map $\bar{Ev}: Sp \to \X_{(g)}$ which satisfies $Ev=\bar{Ev}\circ pr$. 

Let $gr_{\bar{ev}}:=(\bar{ev},\pi): Sp\to \X_{(g)}\times V^{1fib}$, and $gr_{\bar{Ev}}:=(\bar{Ev},\pi):Sp\to \X_{(g)}\times V^{1fib}$, then $gr_{\bar{ev}}(Z)$ and $gr_{\bar{Ev}}(Z)$ are sections of the bundle $\X_{(g)}\times V^{1fib}\to V^{1fib}$. By Lemma \ref{tubnbd} and the choice of $\epsilon$, $gr_{\bar{ev}}(Z)$ is contained in a tubular neighborhood of $gr_{\bar{Ev}}(Z)$. Thus there is a homotopy equivalence between $gr_{\bar{ev}}$ and $gr_{\bar{Ev}}$, which gives a homotopy equivalence between $\bar{ev}$ and $\bar{Ev}$, which further determines a homotopy equivalence between $ev$ and $Ev$ on $U_{V}(\check{V}^{1fib})$. So $ev$ and $Ev$ are homotopy equivalent.
\end{proof}
Now we are ready to prove Case 3 of Proposition \ref{ModuliofTrivial}:
\begin{proof}[Proof of Proposition \ref{ModuliofTrivial}, Case 3]
We choose an open neighborhood $U$ of $\check{V}^{1fib}$ such that $U\subset U_{V}(\check{V}^{1fib})$. For this $V^{c}=V\setminus U $ we get a system of multi-sections $\ms$ as in Lemma \ref{extendmultisection}. Since $ev$ and $Ev$ are homotopy equivalent by Lemma \ref{evhomotopy}, it is enough to consider the cycle $(\ms^{-1}(0),Ev)$. By suitably refining a given triangulation, we may assume that any top dimensional simplex $ \Delta$ in the triangulation of $\ms^{-1}(0)$ satisfies one of the followings:
\begin{enumerate}
\item $\Delta\subset  V\setminus  U_{V}(\check{V}^{1fib})$;
\item  $\Delta\subset  \bar{U}_{V}(\check{V}^{1fib})$.
\end{enumerate}

For the first case, since $V\setminus  U_{V}(\check{V}^{1fib})\subset V^{c}$, by Corollary \ref{Cequivkura} and Lemma \ref{extendmultisection}, $\Delta$ is foliated by orbits of the  parametrized $\com$-action, and the evaluation map is constant on each orbit. Thus the singular simplex $(\Delta,Ev)=(\Delta,ev)$ is of dimension at most $vdim\Mbar_{0,1}(\XxS,\sigma_{0}+A,\tJ_{0},(g))-dim\mathbb{C}=vdim\Mbar_{0,1}(\XxS,\sigma_{0}+A,\tJ_{0},(g))-2$. 
 
 For the second case, by the construction of $Ev$, the singular simplex $(\Delta,Ev)$ is contained in the image of the first case, thus has dimension at most $vdim\Mbar_{0,1}(\XxS,\sigma_{0}+A,\tJ_{0},(g))-2$ as well.
 
 Therefore, in any case the pseudo manifold $(\Delta,Ev)$ has dimension strictly less than the expected virtual dimension. Thus a generic cycle representing $\iota_{*}\alpha$ does intersect with $(\ms^{-1}(0),Ev)$. So Case 3 of Proposition \ref{ModuliofTrivial} is proven. 
\end{proof}
The proof of Proposition \ref{ModuliofTrivial} and the proof of Triviality property (\ref{triviality}) are now complete.

\section{Composition property}\label{compaxiom}
In this section we prove the composition property (\ref{composition}) of Seidel representation:
$$
\mathcal{S}(a\cdot b)=\mathcal{S}(a)*\mathcal{S}(b), \ \ \ \ a,b\in \pi_{1}(Ham(\X, \omega)).
$$
Let $\alpha$, $\beta$ be two Hamiltonian loops representing $a$ and $b$ respectively. Let $\varpi=\alpha\circ \beta$ be their composition. Then $\varpi$ represents $a\cdot b$. Denote by $\E_{\alpha}$, $\E_{\beta}$ and $\E_{\varpi}$ the corresponding Hamiltonian orbifiber bundles constructed as in Section \ref{orbifiberbundleS2}. Denote by $\iota^{\alpha}:\X\to \E_{\alpha}$ the inclusion of the fiber over $0\in\com P^1$ into $\E_{\alpha}$, and $\iota^{\beta}:\X\to \E_{\beta}$ the inclusion of the fiber over $\infty\in \com P^1$ into $\E_{\beta}$. The inclusions $\iota^{\alpha}$ and $\iota^{\beta}$ also induce inclusions of the corresponding inertia orbifolds, we also denote them by $\iota^{\alpha}$ and $\iota^{\beta}$ as well when there is no confusion. Let $\{f_{i}\}$ be an additive basis of $H^{*}(I\X,\ration)$, and $\{f^{i}\}$ its dual basis with respect to the orbifold Poincar\'e pairing. We calculate
\begin{eqnarray*}
& & \mathcal{S}(a)*\mathcal{S}(b)\\
= & & \negthickspace\negthickspace\negthickspace\negthickspace\negthickspace  \sum_{\sigma_{\alpha}\in H_2^{sec}(|\E_{\alpha}|)}\negthickspace \sum_{j} \< \iota^{\alpha}_*f_j\>^{\E_{\alpha}}_{0,1,\sigma_{\alpha}}f^j\otimes q^{c_{1,\alpha}^{v}(\sigma_{\alpha})}t^{u_{\alpha}(\sigma_{\alpha})} * \negthickspace\negthickspace\sum_{\sigma_{\beta}\in H_2^{sec}(|\E_{\beta}|)} \sum_{k} \< \iota^{\beta}_*f_k\>^{\E_{\beta}}_{0,1,\sigma_{\beta}}f^k\otimes q^{c_{1,\beta}^{v}(\sigma_{\beta})}t^{u_{\beta}(\sigma_{\beta})}\\
=& &  \negthickspace\negthickspace\negthickspace\negthickspace\negthickspace\sum_{\sigma_{\alpha}\in H_2^{sec}(|\E_{\alpha}|)} \negthickspace(\sum_{j} \< \iota^{\alpha}_*f_j\>^{\E_{\alpha}}_{0,1,\sigma_{\alpha}}f^j) * \negthickspace\negthickspace\sum_{\sigma_{\beta}\in H_2^{sec}(|\E_{\beta}|)} (\sum_{k} \< \iota^{\beta}_*f_k\>^{\E_{\beta}}_{0,1,\sigma_{\beta}}f^k)\otimes q^{c_{1,\alpha}^{v}(\sigma_{\alpha})+c_{1,\beta}^{v}(\sigma_{\beta})}t^{u_{\alpha}(\sigma_{\alpha})+u_{\beta}(\sigma_{\beta})}\\
=& & \negthickspace\negthickspace\negthickspace\negthickspace\negthickspace\sum_{\sigma_{\alpha}\in H_2^{sec}(|\E_{\alpha}|)}\sum_{\sigma_{\beta}\in H_2^{sec}(|\E_{\beta}|)}  \sum_{j,k} \< \iota^{\alpha}_*f_j\>^{\E_{\alpha}}_{0,1,\sigma_{\alpha}} \< \iota^{\beta}_*f_k\>^{\E_{\beta}}_{0,1,\sigma_{\beta}} (f^j * f^k)\otimes q^{c_{1,\alpha}^{v}(\sigma_{\alpha})+c_{1,\beta}^{v}(\sigma_{\beta})}t^{u_{\alpha}(\sigma_{\alpha})+u_{\beta}(\sigma_{\beta})}\\
=& & \negthickspace\negthickspace \negthickspace\negthickspace\negthickspace\sum_{\substack{\sigma_{\alpha}\in H_2^{sec}(|\E_{\alpha}|)\\\sigma_{\beta}\in H_2^{sec}(|\E_{\beta}|)}}  \sum_{i,j,k} \< \iota^{\alpha}_*f_j\>^{\E_{\alpha}}_{0,1,\sigma_{\alpha}} \< \iota^{\beta}_*f_k\>^{\E_{\beta}}_{0,1,\sigma_{\beta}} \<f^j * f^k,f_{i}\>_{orb}f^{i}\otimes q^{c_{1,\alpha}^{v}(\sigma_{\alpha})+c_{1,\beta}^{v}(\sigma_{\beta})}t^{u_{\alpha}(\sigma_{\alpha})+u_{\beta}(\sigma_{\beta})}\\
= & & \negthickspace\negthickspace\negthickspace\negthickspace\negthickspace \sum_{\substack{\sigma_{\alpha}\in H_2^{sec}(|\E_{\alpha}|)\\\sigma_{\beta}\in H_2^{sec}(|\E_{\beta}|)\\ i,j,k}} \negthickspace\negthickspace\negthickspace \negthickspace\< \iota^{\alpha}_*f_j\>^{\E_{\alpha}}_{0,1,\sigma_{\alpha}} \< \iota^{\beta}_*f_k\>^{\E_{\beta}}_{0,1,\sigma_{\beta}} \<f^j , f^k,f_{i}\>^{\X}_{0,3,A}f^{i}\otimes q^{c_{1,\alpha}^{v}(\sigma_{\alpha})+c_{1,\beta}^{v}(\sigma_{\beta})+c_{1}(A)}t^{u_{\alpha}(\sigma_{\alpha})+u_{\beta}(\sigma_{\beta})+\omega(A)}.
\end{eqnarray*}

In view of the definition of $\mathcal{S}(a\cdot b)$, (\ref{composition}) holds true if for $\sigma_{\alpha}\in H_2^{sec}(|\E_{\alpha}|,\inte)$ , $\sigma_{\beta}\in H_2^{sec}(|\E_{\beta}|,\inte)$ and $A\in H_{2}(|\X|,\inte)$ such that $c_{1,\alpha}^{v}(\sigma_{\alpha})+c_{1,\beta}^{v}(\sigma_{\beta})+c_{1}(A)=c_{1,\varpi}^{v}(\sigma)$ and $u_{\alpha}(\sigma_{\alpha})+u_{\beta}(\sigma_{\beta})+\omega(A)=u_{\varpi}(\sigma)$, the following holds for any $i$:
$$
 \sum_{\sigma_{\alpha},\sigma_{\beta}} \sum_{j,k} \< \iota^{\alpha}_*f_j\>^{\E_{\alpha}}_{0,1,\sigma_{\alpha}} \< \iota^{\beta}_*f_k\>^{\E_{\beta}}_{0,1,\sigma_{\beta}} \<f^j , f^k,f_{i}\>^{\X}_{0,3,A}=\< \iota_*f_i\>^{\E_{\varpi}}_{0,1,\sigma}.
$$
We may reformulate this as follows. By considering the connected sum of the underlying topological fiber bundles, we have a connected sum operation on $H_2^{sec}(|\E_{\alpha}|,\inte)$ , $ H_2^{sec}(|\E_{\beta}|,\inte)$ as in \cite[Page 435]{MS}, which is denoted by
$$H_2^{sec}(|\E_{\alpha}|,\inte)\times H_2^{sec}(|\E_{\beta}|,\inte)\to H_2^{sec}(|\E_{\varpi}|,\inte),\,\,\, (\sigma_{\alpha},\sigma_{\beta})\mapsto\sigma_{\alpha}\sharp\sigma_{\beta} .$$
The operation ``$\sharp$'' satisfies:
$$c_{1,\varpi}^{v}(\sigma_{\alpha}\sharp\sigma_{\beta} )=c_{1,\alpha}^{v}(\sigma_{\alpha})+c_{1,\beta}^{v}(\sigma_{\beta}),\ \ \ \ \ \omega_{\varpi}(\sigma_{\alpha}\sharp\sigma_{\beta} )=\omega_{\alpha}(\sigma_{\alpha})+\omega_{\beta}(\sigma_{\beta}).$$

Let $\iota^{\varpi}:\X\to \E_{\varpi}$ be the inclusion of the fiber over $\infty\in \com P^1$ into $\E_{\varpi}$. By the above discussion, (\ref{composition}) is equivalent to the following: 
\begin{prop}\label{compGW} For $\sigma\in H_2^{sec}(|\E_{\varpi}|,\inte)$, $f\in H^{*}(\X_{(g)},\ration)\subset H^{*}(I\X,\ration)$,
 \begin{equation}\label{compGWid}
 \sum_{\sigma_{\alpha}\sharp\sigma_{\beta}+\iota^{\varpi}_*A=\sigma} \sum_{j,k} \< \iota^{\alpha}_*f_j\>^{\E_{\alpha}}_{0,1,\sigma_{\alpha}} \< \iota^{\beta}_*f_k\>^{\E_{\beta}}_{0,1,\sigma_{\beta}} \<f^j , f^k,f\>^{\X}_{0,3,A}=\< \iota^{\varpi}_*f\>^{\E_{\varpi}}_{0,1,\sigma}
\end{equation}
\end{prop}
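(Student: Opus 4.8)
The plan is to prove the Gromov--Witten identity (\ref{compGWid}) by a degeneration argument for orbifold Gromov--Witten invariants, following the strategy of McDuff \cite{M} in the manifold case but executed in the Kuranishi framework set up in Section \ref{secReviewoGW}. The essential tool is the deformation invariance of orbifold Gromov--Witten invariants: if I can place $\E_{\varpi}$ and the broken fibration $\E_{\alpha}\#_{\X}\E_{\beta}$ as fibers of a single symplectic orbifiber bundle over a connected base, then the virtual counts of sections on the two ends must agree, and the count on the broken end will factor into the product on the left-hand side of (\ref{compGWid}). First I would construct this bigger symplectic fibration.

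The geometric construction proceeds by degenerating the \emph{base}. I would build a family of Hamiltonian orbifiber bundles over a disk of smoothing parameters whose generic fiber is $\pmb{\pi}_{\varpi}:\E_{\varpi}\to \com P^{1}$ and whose central fiber is the fibration over the nodal base $\com P^{1}\cup_{\mathrm{node}}\com P^{1}$ obtained by gluing $\E_{\alpha}$ and $\E_{\beta}$ along a common fiber $\X$ over the node, carrying the symplectic forms $\Omega_{c}$ of (\ref{fiberbundlesymplecticform}) fiberwise. Since $\varpi=\alpha\circ\beta$, the loop-composition description of $\pi_1(Ham(\X,\omega))$ shows that smoothing the node of the base recovers exactly $\E_{\varpi}$, so such a family exists and is compatible with the constructions of Section \ref{orbifiberbundleS2}. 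This realizes the ``bigger symplectic fibration'' advertised in the introduction.

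The second step is the moduli-theoretic analysis. By the structure theorem for $J$-holomorphic sectional morphisms in Section \ref{secCurvHamBundle}, a stable sectional orbifold morphism into the central fiber representing $\sigma$ must split into a stem over the first $\com P^{1}$ (a section of $\E_{\alpha}$ in a class $\sigma_{\alpha}$), a stem over the second $\com P^{1}$ (a section of $\E_{\beta}$ in a class $\sigma_{\beta}$), and fiber components lying in the node-fiber $\X$ carrying a class $A$, subject to $\sigma_{\alpha}\sharp\sigma_{\beta}+\iota^{\varpi}_{*}A=\sigma$. The matching of the two stems with the fiber bubbles at the node is exactly the balance condition $ev(\pmb{f},w)=\mathcal{I}(ev(\pmb{f},w'))$ of Definition \ref{defStableOrbiMorph}, which is the condition encoded by the orbifold Poincar\'e pairing. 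I would therefore identify the moduli space of sections of the central fiber, set-theoretically and then as a Kuranishi space, with a fiber product
\[
\Mbar^{sec}_{0,1}(\E_{\alpha},J,\sigma_{\alpha})\ {}_{ev}\!\times_{ev}\ \Mbar_{0,3}(\X,J,A)\ {}_{ev}\!\times_{ev}\ \Mbar^{sec}_{0,1}(\E_{\beta},J,\sigma_{\beta}),
\]
where the three-point fiber factor hosts the external insertion $f$ together with the two gluing points matched to the stems. Using Lemma \ref{lemvertobs} to choose vertical obstruction bundles, I would build a Kuranishi structure on the total-family moduli space restricting on each fiber to the fiberwise structure and, on the central fiber, to the fiber-product Kuranishi structure. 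Deformation invariance of the virtual cycle then equates $\<\iota^{\varpi}_{*}f\>^{\E_{\varpi}}_{0,1,\sigma}$ with the virtual count on the central fiber; applying the splitting axiom and inserting the diagonal $\sum_{j}f_{j}\otimes f^{j}$ at the node (the duality of $\{f_i\}$ and $\{f^i\}$) decomposes this count into the product summed over $\sigma_{\alpha}\sharp\sigma_{\beta}+\iota^{\varpi}_{*}A=\sigma$, which is precisely the left-hand side of (\ref{compGWid}).

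The hard part will be making the degeneration rigorous at the level of virtual fundamental cycles. This requires a gluing theorem for orbicurves at nodes carrying prescribed twisted sectors: one must control the exponentially weighted Sobolev neighborhoods used in Section \ref{secReviewoGW} across the degenerating neck and show that the Kuranishi structure on $\Mbar^{sec}_{0,1}(\E_{\varpi},J,\sigma)$ glues from the fiber-product Kuranishi structure on the central fiber, compatibly with the automorphism groups $\Gamma_{\tau}$ and the orbifold structures at the node. In the orbifold setting there is the additional bookkeeping that the twisted sectors matched by the involution $\mathcal{I}$ at the node are exactly those over which $\<f^{j},f^{k},f\>^{\X}_{0,3,A}$ is supported, so that the summation over twisted sectors on the two sides is consistent; verifying this, and confirming that the two ends of the family carry the same virtual count despite the non-compactness phenomena near the broken stratum, is where the main technical effort lies.
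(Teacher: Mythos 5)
Your geometric setup matches the paper's quite closely: the paper also builds one big Hamiltonian orbifiber bundle (the total space $\Y$ over the blow-up $\mathcal{T}$ of $\com P^{1}\times\com P^{1}$ at two points, with $pr=\kappa\circ\pmb{\pi}:\Y\to\com P^{1}$) whose smooth fibers are $\E_{\varpi}$ and whose singular fibers are $\E_{\alpha}$ and $\E_{\beta}$ glued along a copy of $\X$, and it likewise identifies the left-hand side of (\ref{compGWid}) with a virtual count on the fiber product $\M^{\Delta}(\sigma_{\alpha},\sigma_{\beta},A)$ (Lemma \ref{diagonal}). The gap is your third step: ``deformation invariance of the virtual cycle then equates $\<\iota^{\varpi}_{*}f\>^{\E_{\varpi}}_{0,1,\sigma}$ with the virtual count on the central fiber.'' The central fiber $\E_{\alpha}\#_{\X}\E_{\beta}$ is a singular space; the theory of Section \ref{secReviewoGW} defines no Gromov--Witten invariant for it, and deformation invariance in the Kuranishi framework covers only variations of $(J,\omega)$ on a \emph{fixed} compact orbifold, not degenerations of the target. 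What you are actually invoking is a symplectic-sum/degeneration formula for orbifold Gromov--Witten invariants together with a gluing theorem across the degenerating neck --- which you yourself flag as ``the hard part'' but do not supply, and which is not available in the paper's framework (it exists only in the separate relative theory of \cite{CLSZ}). Two further conflations compound this: the ``splitting axiom'' concerns degenerations of the \emph{domain} curve, not of the target, so it cannot be cited here; and your family over a disk of smoothing parameters has a non-compact total space, exactly the situation the paper says it must avoid because Gromov--Witten theory for non-compact orbifolds has no available foundation.

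The paper's proof is engineered to sidestep precisely this. Since $\Y$ is a compact symplectic orbifold, $\Mbar_{0,1}(\Y,\mathfrak{c}^{\varpi}_{*}\sigma)$ carries a Kuranishi structure by the standard machinery, and Lemma \ref{virtualY} arranges its charts to be of product type near the two disjoint closed loci $\M^{\Delta}(\Y)$ (configurations through the node fiber) and $\M^{\E_{\varpi}}(\Y)$ (maps into one chosen smooth fiber). Both sides of (\ref{compGWid}) are then realized as pairings of the \emph{single} virtual class $ev_{*}[\Mbar_{0,1}(\Y,\mathfrak{c}^{\varpi}_{*}\sigma)]^{v}$ against two homologous insertion cycles: the class $i^{nd}_{*}f$ supported on the node fiber forces the marked point, hence (via the product charts) the whole configuration, into the fiber-product locus and yields the left-hand side (Lemma \ref{identity1}), while the same constraint class represented on a smooth fiber yields the right-hand side (Lemma \ref{identity2}). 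In other words, the degeneration formula you would need is replaced by the elementary fact that a Gromov--Witten pairing is unchanged when the constraint cycle is moved within its homology class. If you want to repair your plan, that is the missing idea: do not compare invariants of different (and possibly singular) fibers via a cobordism of targets --- compare two representatives of one constraint class inside one compact moduli problem on the total space.
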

The proof of Proposition \ref{compGW} will occupy the remainder of this section.

\subsection{Degeneration of Hamiltonian Orbifiber Bundles}
To prove Proposition \ref{compGW}, we need to put $\E_{\alpha},\E_{\beta}$ and $\E_{\varpi}$ into a bigger orbifold, and consider the moduli space of $J$-holomophic orbifold morphisms into this big orbifold. We do this by modifying the construction of fibration given in \cite[Section 2.3.2]{M}. The main differences are the following: 
\begin{enumerate}
\item Similar to Section \ref{orbifiberbundlegeneral}, the fibers are orbifolds;
\item The total space is a closed orbifold, and the base is a manifold. 
\end{enumerate}
While the first modification is obviously necessary, the second one might be avoided if one is willing to work with Gromov-Witten theory for open manifolds/orbifolds. Since treatments of foundations of Gromov-Witten theory for non-compact orbifolds are not available in literatures at the moment, we do not use that approach.  

 Let $D$ be the open unit disks in $\com$. Define 
$$\mathcal{T}:=\{([z_{1},z_{2}],[w_{1},w_{2}],[v_{1},v_{2}])\in \com P^{1}\times \com P^{1}\times\com P^{1}|z_{1}w_{2}v_{2}=z_{2} v_{1}w_{1}\}.$$
This is the blow-up of $\com P^{1}\times \com P^{1}$ at $([1,0],[1,0])$ and $([0,1],[0,1])$. Projection to the first component defines a Lefschetz  fibration whose fibers over $[1,0]$ and $[0,1]$ are two copies of $\com P^{1}$ glued together along $[0,1]$ and $[1,0]$. The fibers over other points are $\com P^{1}$.
The following three open sets cover $\mathcal{T}$.
$$B_{a}=\{([z_{1},z_{2}],[1,w],[v,1])\in \mathcal{T}||w|<1,|v|<1\}=\{([z_{1},z_{2}],w,v)\in \com P^{1}\times D\times D|z_{1}w=z_{2} v\};$$
$$B_{b}=\{([z_{1},z_{2}],[w,1],[1,v])\in \mathcal{T}||w|<1,|v|<1\}=\{([z_{1},z_{2}],w,v)\in \com P^{1}\times D\times D|z_{1}v=z_{2}w\};$$
$$B_{0}
=\{([z_{1},z_{2}],[w_{1},w_{2}],[v_{1},v_{2}])\negthickspace \in \negthickspace \com P^{1}\times \com P^{1}\times \com P^{1}|z_{1}w_{2}v_{2}\negthickspace =\negthickspace z_{2} v_{1}w_{1}, (w_{2},\negthickspace v_{1})\negthickspace\neq \negthickspace (0,0), (w_{1},\negthickspace v_{2})\negthickspace\neq\negthickspace (0,0)\}.$$
Then
\begin{eqnarray*}
B_{a}\cap B_{0} & & =\{([z_{1},z_{2}],[1,w],[v,1])\in \com P^{1}\times D\times D|0<|w|<1,0<|v|<1, z_{1}w=z_{2}v\},\\
B_{b}\cap B_{0} & & =\{([z_{1},z_{2}],[w,1],[1,v])\in \com P^{1}\times D\times D|0<|w|<1,0<|v|<1, z_{1}v=z_{2}w\}.
\end{eqnarray*}
The projections from $B_{a}\cap B_{0}$ and $B_{b}\cap B_{0}$ to their first components determine two (non-trivial) annulus bundles. Let $D_{0}:=\{[1,z]\in \com P^{1}||z|\leq 1\}$ and $D_{\infty}:=\{[z,1]\in \com P^{1}||z|\leq 1\}$, then $\com P^{1}=\bar{D}_{0}\cup_{S^{1}} \bar{D}_{\infty}$. Trivialize the two annulus bundles over $D_{0}$ and $D_{\infty}$ by:
$$\psi_{a0,0}:B_{a}\cap B_{0}|_{\bar{D}_{0}}\to \bar{D}_{0}\times S^{1}\times (0,1),\ \psi_{a0,0}([1,z],[1,ze^{s+it}],[e^{s+it},1]):=([1,z],e^{it},e^{s}),$$
$$\psi_{a0,\infty}:B_{a}\cap B_{0}|_{\bar{D}_{\infty}}\to \bar{D}_{\infty}\times S^{1}\times (0,1),\ \psi_{a0,\infty}([z,1],[1,e^{s+it}],[ze^{s+it},1]):=([z,1],e^{it},e^{s});$$
$$\psi_{b0,0}:B_{b}\cap B_{0}|_{\bar{D}_{0}}\to \bar{D}_{0}\times S^{1}\times (0,1),\ \psi_{b0,0}([1,z],[e^{s+it},1],[1,ze^{s+it}]):=([1,z],e^{it},e^{s}),$$
$$\psi_{b0,\infty}:B_{b}\cap B_{0}|_{\bar{D}_{\infty}}\to \bar{D}_{\infty})\times S^{1}\times (0,1,\ \psi_{b0,\infty}([z,1],[ze^{s+it},1],[1,e^{s+it}]):=([z,1],e^{it},e^{s}).$$
Post-compose $\psi_{a0,0}$ and $\psi_{a0,\infty}$ with the projection to the third component $(0,1)$, we get a continuous function $l_{a}:B_{a}\cap B_{0}\to (0,1)$. Note that the two maps are consistent on the boundaries because $|z|=1$ for $z\in \partial \bar{D}_{0}$ or $\partial \bar{D}_{\infty}$. Similarly there is a map $l_{b}:B_{b}\cap B_{0}\to (0,1)$.

To make the glueing of groupoids simple, we need to shrink $B_{a}$, $B_{0}$ and $B_{b}$ slightly. Denote 
$$\check{B}_{a}=B_{a}\setminus l_{a}^{-1}( [\frac{1}{2},1)); $$
$$\check{B}_{b}=B_{b}\setminus l_{b}^{-1}([\frac{1}{2},1)); $$
$$\check{B}_{0}=B_{0}\setminus(l_{a}^{-1}((0,\frac{1}{N}])\cup l_{b}^{-1}((0,\frac{1}{N}])). $$

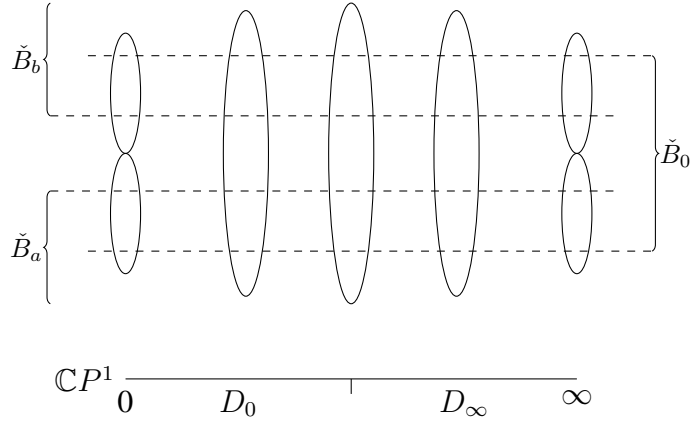
\begin{figure}[htb!]\label{fig_decomposition}
\centering
\begin{tikzpicture}
 \draw (0,0) node [below]{0}  -- (6,0) node [below]{$\infty$};
 \node at (0,0) [left] {$\com P^{1}$};
 \draw (3,0) -- (3,-0.2);
 \node at (1.5,0) [below] {$D_{0}$};
 \node at (4.5,0) [below] {$D_{\infty}$};
\draw (3,3) ellipse (0.3  and 2);
\draw (1.6,3) ellipse (0.3  and 1.9);
\draw (4.4,3) ellipse (0.3  and 1.9);
\draw (6,2.2) ellipse (0.2  and 0.8);
\draw (0,2.2) ellipse (0.2  and 0.8);
\draw (0,3.8) ellipse (0.2  and 0.8);
\draw (6,3.8) ellipse (0.2  and 0.8);
 \draw [dashed] (-0.5,1.7)  -- (7,1.7);
 \draw [dashed] (-1,3.5)  -- (6.5,3.5);
 \draw [dashed] (-0.5,4.3)  -- (7,4.3);
 \draw [dashed] (-1,2.5)  -- (6.5,2.5);
 \draw [decorate,decoration={brace,mirror}] (7,1.7) -- (7,4.3)node [black,midway,xshift=9pt] {\footnotesize $\check{B}_{0}$};
 \draw [decorate,decoration={brace,mirror}] (-1,2.5) -- (-1,1)node [black,midway,xshift=-9pt] {\footnotesize $\check{B}_{a}$};
 \draw [decorate,decoration={brace}] (-1,3.5) -- (-1,5)node [black,midway,xshift=-9pt] {\footnotesize $\check{B}_{b}$};
\end{tikzpicture} 
\caption{Decomposition of $\mathcal{T}$.}
\end{figure}

Thus $\psi_{a0,0}$ restricts to a diffeomorphism from $\check{B}_{a}\cap \check{B}_{0}|_{\bar{D}_{0}}$ to $\bar{D}_{0}\times S^{1}\times (\frac{1}{N},\frac{1}{2})$, $\psi_{a0,\infty}$ restricts to a diffeomorphism from $\check{B}_{a}\cap \check{B}_{0}|_{\bar{D}_{\infty}}$ to $\bar{D}_{\infty}\times S^{1}\times (\frac{1}{N},\frac{1}{2})$, $\psi_{b0,0}$ restricts to a diffeomorphism from $\check{B}_{b}\cap \check{B}_{0}|_{\bar{D}_{0}}$ to $\bar{D}_{0}\times S^{1}\times (\frac{1}{N},\frac{1}{2})$, and $\psi_{b0,\infty}$ restricts to a diffeomorphism from $\check{B}_{b}\cap \check{B}_{0}|_{\bar{D}_{\infty}}$ to $\bar{D}_{\infty}\times S^{1}\times (\frac{1}{N},\frac{1}{2})$.

Let $\vartheta:[0,1]\to [0,1]$ be a smooth function such that for a small $\delta>0$, $\vartheta(r)=1$ when $r>1-\delta$, $\vartheta(r)=0$ when $r<1-2\delta$. Let $\gamma_{\alpha}, \gamma_{-\beta}$ be two smooth Hamiltonian loops representing $\alpha,\beta^{-1}\in \pi_{1}(Ham(\X,\omega))$ respectively. Recall that there are stack maps $\tilde{\gamma}_{\alpha}, \tilde{\gamma}_{-\beta}:S^{1}\times \X\to\X$ associated to $\gamma_{\alpha}, \gamma_{-\beta}$. Denote $\pi_{S^{1}}:\bar{D}_{0}\times S^{1}\times (\frac{1}{N},\frac{1}{2})\to S^{1}$ the projection to the $S^{1}$ component. Define $\pi^{\vartheta}_{S^{1}}:\bar{D}_{\infty}\times S^{1}\times (\frac{1}{N},\frac{1}{2})\to S^1$ by $\pi^{\vartheta}_{S^{1}}([re^{i\theta},1],e^{it},e^{s})= e^{i(t+\vartheta(r)\theta)}$. Then the following two stack maps:
$$
(\psi^{-1}_{a0,0})\times (\tilde{\gamma}_{\alpha}\circ (\pi_{S^{1}}, Id_{\X})): \bar{D}_{0}\times S^{1}\times (\frac{1}{N},\frac{1}{2})\times \X\to \check{B}_{0}\times \X
$$
$$
(\psi^{-1}_{a0,\infty})\times (\tilde{\gamma}_{\alpha}\circ (\pi^{\vartheta}_{S^{1}}, Id_{\X})): \bar{D}_{\infty}\times S^{1}\times (\frac{1}{N},\frac{1}{2})\times \X \to \check{B}_{0}\times \X
$$
differ by a 2-morphism on their overlap. Thus they can be glued into an embedding $\check{B}_{0}\cap \check{B}_{a} \times \X\to\check{B}_{0}\times \X$. Together with the trivial embedding $\check{B}_{0}\cap \check{B}_{a} \times \X\to\check{B}_{a}\times \X$ we can glue $\check{B}_{a}\times \X$ with $\check{B}_{0}\times \X$. Similarly $\check{B}_{b}\times \X$ can be glued to $\check{B}_{0}\times \X$ . We denote the resulting stack as $\Y$. From the construction there is an obvious projection $\pmb{\pi}:\Y\to \mathcal{T}$.

Let $U_{S^{1}}$ be the cover groupoid given by an atlas $\{U'_{\tilde{\tau}}\}$ of $S^{1}$. If $\gamma_{\alpha}$ can be realized by a groupoid morphism $\gamma^{\alpha}: U_{S^{1}}\times \gG_{\X}\to \gG_{\X}$ such that $\forall g\in Mor(U_{S^{1}})$, the restriction $\gamma^{\alpha}|_{g}^{s}:=(\gamma^{\alpha}_{0}|_{s(g)},\gamma^{\alpha}_{1}|_{g}):\gG_{X}\to \gG'_{X}$ and $\gamma^{\alpha}|_{g}^{t}:=(\gamma^{\alpha}_{0}|_{t(g)},\gamma^{\alpha}_{1}|_{g}):\gG_{X}\to \gG_{X}$  are groupoid isomorphisms, and similarly $\gamma_{-\beta}$ can be realized by a groupoid morphism $\gamma^{-\beta}: U_{S^{1}}\times \gG_{\X}\to \gG_{\X}$ with the same property, then the above glueing of stacks can be realized by glueing of groupoids as follows. This is the case for all circle actions of toric orbifolds, thus will be useful in \cite{TW}.

 We introduce the following notation:
$$\bar{D}_{\infty}\times_{\vartheta} U'_{\tilde{\tau}}:=\{([re^{i\theta},1],e^{i(t-\vartheta(r)\theta)})\in \bar{D}_{\infty}\times S^{1}| e^{it}\in U'_{\tilde{\tau}} \}.$$
From the definition, we have
\begin{equation}\label{boundaryconsist}
\begin{aligned}
\psi_{a0,\infty}^{-1}(\partial\bar{D}_{\infty}\times_{\vartheta} U'_{\tilde{\tau}}\times(0,1)) & =\psi_{a0,\infty}^{-1}(\{([e^{i\theta},1],e^{i(t-\theta)})\in \bar{D}_{\infty}\times S^{1}| e^{it}\in U'_{\tilde{\tau}} \}\times (0,1))\\
& =\{([e^{i\theta},1],[1,e^{s+i(t-\theta)}],[e^{s+i(t-\theta+\theta}),1])|e^{it}\in U'_{\tilde{\tau}} \}\\
& =\{([1,e^{-i\theta}],[1,e^{s+i(t-\theta)}],[e^{s+it},1])|e^{it}\in U'_{\tilde{\tau}} \}\\
& = \psi_{a0,0}^{-1}(\partial\bar{D}_{0}\times U'_{\tilde{\tau}}\times(0,1)).
\end{aligned}
\end{equation}
Similarly $$\psi_{b0,\infty}^{-1}(\partial\bar{D}_{\infty}\times_{\vartheta} U'_{\tilde{\tau}}\times(0,1))=\psi_{b0,0}^{-1}(\partial\bar{D}_{0}\times U'_{\tilde{\tau}}\times(0,1)).$$

\begin{lem}\label{Tatlas}
There exists an atlas $\{U_{\tau}\}$ of $\mathcal{T}$, such that
\begin{enumerate}
\item $\{U_{\tau}\cap \check{B}_{a}\cap \check{B}_{0}|_{\bar{D}_{0}}\}$ is an atlas of $\check{B}_{a}\cap \check{B}_{0}|_{\bar{D}_{0}}$, which coincides with $\{\bar{D}_{0}\times U'_{\tilde{\tau}}\times(\frac{1}{N},\frac{1}{2})\}$ under $\psi_{a0,0}$.
\item $\{U_{\tau}\cap \check{B}_{a}\cap \check{B}_{0}|_{\bar{D}_{\infty}}\}$ is an atlas of $\check{B}_{a}\cap \check{B}_{0}|_{\bar{D}_{\infty}}$, which coincides with $\{\bar{D}_{\infty}\times_{\vartheta} U'_{\tilde{\tau}}\times(\frac{1}{N},\frac{1}{2})\}$ under $\psi_{a0,\infty}$.
\item $\{U_{\tau}\cap \check{B}_{b}\cap \check{B}_{0}|_{\bar{D}_{0}}\}$ is an atlas of $\check{B}_{b}\cap \check{B}_{0}|_{\bar{D}_{0}}$, which coincides with $\{ \bar{D}_{0}\times U'_{\tilde{\tau}}\times(\frac{1}{N},\frac{1}{2})\}$ under $\psi_{b0,0}$.
\item $\{U_{\tau}\cap \check{B}_{b}\cap \check{B}_{0}|_{\bar{D}_{\infty}}\}$ is an atlas of $\check{B}_{b}\cap \check{B}_{0}|_{\bar{D}_{\infty}}$, which coincides with $\{ \bar{D}_{\infty}\times_{\vartheta} U'_{\tilde{\tau}}\times(\frac{1}{N},\frac{1}{2})\}$ under $\psi_{b0,\infty}$.
\end{enumerate}
\end{lem}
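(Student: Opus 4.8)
The plan is to produce the atlas $\{U_{\tau}\}$ by \emph{prescribing} the charts over the two transition regions $\check{B}_{a}\cap\check{B}_{0}$ and $\check{B}_{b}\cap\check{B}_{0}$, so that conditions (1)--(4) hold by construction, and then filling in the rest of $\mathcal{T}$ with auxiliary charts that do not disturb these four regions. The first observation is that $\check{B}_{a}$ and $\check{B}_{b}$ are disjoint: a point of $B_{a}\cap B_{b}$ would have second coordinate both $[1,w]$ with $|w|<1$ and $[w',1]$ with $|w'|<1$, which is impossible. Hence $\check{B}_{a}\cap\check{B}_{0}$ and $\check{B}_{b}\cap\check{B}_{0}$ are disjoint, and the $a$- and $b$-sides may be treated independently and symmetrically; I describe only the $a$-side.

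For each index $\tilde{\tau}$ of the fixed atlas $\{U'_{\tilde{\tau}}\}$ of $S^{1}$, I would define an open set $U^{a}_{\tilde{\tau}}\subset\mathcal{T}$ by gluing the two product pieces
$$\psi_{a0,0}^{-1}(\bar{D}_{0}\times U'_{\tilde{\tau}}\times(\tfrac{1}{N},\tfrac{1}{2}))\quad\text{and}\quad \psi_{a0,\infty}^{-1}(\bar{D}_{\infty}\times_{\vartheta} U'_{\tilde{\tau}}\times(\tfrac{1}{N},\tfrac{1}{2}))$$
along the common locus over $\partial\bar{D}_{0}=\partial\bar{D}_{\infty}=S^{1}$. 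The identity \eqref{boundaryconsist} is precisely the statement that these two pieces agree there, so $U^{a}_{\tilde{\tau}}$ is a well-defined open subset of $\mathcal{T}$; since $\{U'_{\tilde{\tau}}\}$ covers $S^{1}$, the family $\{U^{a}_{\tilde{\tau}}\}_{\tilde{\tau}}$ covers $\check{B}_{a}\cap\check{B}_{0}$. Intersecting $U^{a}_{\tilde{\tau}}$ with $\check{B}_{a}\cap\check{B}_{0}|_{\bar{D}_{0}}$ returns exactly $\psi_{a0,0}^{-1}(\bar{D}_{0}\times U'_{\tilde{\tau}}\times(\tfrac{1}{N},\tfrac{1}{2}))$, because the $\bar{D}_{\infty}$-piece meets $\bar{D}_{0}$ only over $\partial\bar{D}_{0}$ where it coincides with the $\bar{D}_{0}$-piece by \eqref{boundaryconsist}; this is condition (1), and likewise the $\bar{D}_{\infty}$-restriction yields (2), with the $b$-side giving (3)--(4).

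It remains to cover $\mathcal{T}\setminus(\check{B}_{a}\cap\check{B}_{0}\cup\check{B}_{b}\cap\check{B}_{0})$ — the cores of $\check{B}_{a},\check{B}_{b}$ together with the central part of $\check{B}_{0}$ — with auxiliary charts. The clean way is to enlarge the transition charts $U^{a}_{\tilde{\tau}}$ to the full radial interval $(0,1)$ (they remain well defined over $\com P^{1}$ by the same identity \eqref{boundaryconsist}, which is stated for range $(0,1)$), so that they already exhaust $B_{a}\cap B_{0}$; the auxiliary charts then need only cover the cores and central fibre, and I would keep each of them, wherever it meets $B_{a}\cap B_{0}$, inside $\{l_{a}<\tfrac{1}{N}\}\cup\{l_{a}>\tfrac{1}{2}\}$ (and symmetrically for $l_{b}$). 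Since the four prescribed regions all lie in $\{l_{a}\in(\tfrac{1}{N},\tfrac{1}{2})\}$ or $\{l_{b}\in(\tfrac{1}{N},\tfrac{1}{2})\}$, these auxiliary charts are disjoint from them, and the union of $\{U^{a}_{\tilde{\tau}}\}$, $\{U^{b}_{\tilde{\tau}}\}$ and the auxiliary charts is the desired atlas.

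The main obstacle is not existence of a cover but the \emph{exactness} demanded by ``coincides with'': the restricted families must equal the prescribed product families on the nose, so no stray chart may cross a transition region in an uncontrolled way. This is exactly what the two structural inputs above handle — the disjointness of the $a$- and $b$-transitions (so the two $\vartheta$-twisted gluings never interact) and the radial separation between the transition range $(\tfrac{1}{N},\tfrac{1}{2})$ and the ranges $\{l<\tfrac{1}{N}\}\cup\{l>\tfrac{1}{2}\}$ used for the core and central charts. Granting these, the only genuinely nontrivial point is the well-definedness of $U^{a}_{\tilde{\tau}}$ across the $\bar{D}_{0}/\bar{D}_{\infty}$ seam, which is supplied verbatim by \eqref{boundaryconsist} and its $b$-analogue, so no further computation is required.
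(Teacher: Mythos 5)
Your proposal is correct and takes essentially the same route as the paper's own proof: both build the charts over the transition regions by gluing the two product trivializations along the equator via (\ref{boundaryconsist}), extend them radially beyond the range $(\frac{1}{N},\frac{1}{2})$ so that intersecting with $\check{B}_{a}\cap\check{B}_{0}$ and $\check{B}_{b}\cap\check{B}_{0}$ truncates exactly to the prescribed product families, and then complete the atlas with a few auxiliary charts kept disjoint from those two regions. The only differences are cosmetic — the paper uses the radial range $(\frac{1}{N+1},1)$ and merely asserts the existence of three auxiliary open sets, whereas you use the full range $(0,1)$ and spell out the auxiliary charts and the disjointness of $B_{a}$ and $B_{b}$ via the functions $l_{a},l_{b}$.
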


\begin{proof}
For each open set $U'_{\tilde{\tau}}=\{e^{i\theta}|\theta\in(c,d)\}$, by (\ref{boundaryconsist})
$$U_{\tau}:=\psi_{a0,0}^{-1}(\bar{D}_{0}\times U'_{\tilde{\tau}}\times(\frac{1}{N+1},1))\cup \psi_{a0,\infty}^{-1}(\bar{D}_{\infty}\times_{\vartheta} U'_{\tilde{\tau}}\times(\frac{1}{N+1},1))$$
defines an open subset of $\mathcal{T}$. Similarly, we define 
$$U_{\tau'}:=\psi_{b0,0}^{-1}(\bar{D}_{0}\times U'_{\tilde{\tau}}\times(\frac{1}{N+1},1))\cup \psi_{b0,\infty}^{-1}(\bar{D}_{\infty}\times_{\vartheta} U'_{\tilde{\tau}}\times(\frac{1}{N+1},1)).$$
The union of the two kinds of open set covers $\check{B}_{a}\cap \check{B}_{0}$ and $\check{B}_{b} \cap \check{B}_{0}$, and they satisfy the conditions (1)--(4). It is easy to see that there are three open sets such that
\begin{itemize}
\item they are disjoint from $\check{B}_{a}\cap \check{B}_{0}$ and $\check{B}_{b} \cap \check{B}_{0}$;
\item these three open sets together with the collection of open sets constructed above form an atlas of $\mathcal{T}$.
\end{itemize}
The proof of the lemma is thus complete.
\end{proof}

Denote by $\gG_{\mathcal{T}}$ the cover groupoid of $\mathcal{T}$ determined by the above atlas. The maps $\psi_{a0,0},\psi_{a0,\infty},\psi_{b0,0}$, and $\psi_{b0,\infty}$ induce four groupoid isomorphisms $\pmb{\psi}_{a0,0}=(\psi_{a0,0}^{0}$, $\psi_{a0,0}^{1}), \pmb{\psi}_{a0,\infty}=(\psi_{a0,\infty}^{0}$, $\psi_{a0,\infty}^{1}),\pmb{\psi}_{b0,0}=(\psi_{b0,0}^{0},\psi_{b0,0}^{1})$, and $\pmb{\psi}_{b0,\infty}=(\psi_{b0,\infty}^{0},\psi_{b0,\infty}^{1})$.

Now define $$\gG_{\Y}:=(\gG_{\mathcal{T}}|_{\check{B}_a}\times\gG_{\X})\sqcup(\gG_{\mathcal{T}}|_{\check{B}_0}\times \gG_{\X})\sqcup (\gG_{\mathcal{T}}|_{\check{B}_b}\times \gG_{\X})/\sim,$$ where $\sim$ is given by the glueing maps 
$$\pmb{gl}_{a0}=(gl^{0}_{a0},gl^{1}_{a0}):\gG_{\mathcal{T}}|_{\check{B}_a\cap\check{B}_{0}} \times \gG_{\X}\subset \gG_{\mathcal{T}}|_{\check{B}_a}\times\gG_{\X}\to \gG_{\mathcal{T}}|_{\check{B}_a\cap\check{B}_{0}} \times \gG_{\X}\subset \gG_{\mathcal{T}}|_{\check{B}_0}\times\gG_{\X},$$
$$\pmb{gl}_{b0}=(gl^{0}_{b0},gl^{1}_{b0}):\gG_{\mathcal{T}}|_{\check{B}_b\cap\check{B}_{0}} \times \gG_{\X}\subset \gG_{\mathcal{T}}|_{\check{B}_b}\times\gG_{\X}\to \gG_{\mathcal{T}}|_{\check{B}_b\cap\check{B}_{0}} \times \gG_{\X}\subset \gG_{\mathcal{T}}|_{\check{B}_0}\times\gG_{\X},$$
defined below
:
\begin{defn}\label{glueY} The glueing map $\pmb{gl}_{a0}$ is defined by:
\begin{enumerate}
\item  Restricted to $\gG_{\mathcal{T}}|_{\check{B}_a\cap\check{B}_{0}|\bar{D}_{0}} \times \gG_{\X}$:
\begin{itemize}
\item[Ob:]For $\xi \in Ob(\gG_{\mathcal{T}})$ s.t. $\psi_{a0,0}^{0}(\xi)=([1,z],e^{it},e^{s})$ and $x \in Ob(\gG_{\X})$, set $gl^{0}_{a0}(\xi,x):= (\xi,x')$ where  $x'=\gamma_{0}^{\alpha} (t, x)$.
\item[Mor:]
For $\eta \in Mor(\gG_{\mathcal{T}})$ s.t. $\psi_{a0,0}^{1}(\xi)=([1,z],e^{i\hat{t}},e^{s})$ and $y \in Mor(\gG_{\X})$, set $gl^{1}_{a0}(\eta,y):= (\eta,y')$ where  $y'=\gamma_{1}^{\alpha} (\hat{t},y)$.
\end{itemize}

\item  Restricted to $\gG_{\mathcal{T}}|_{\check{B}_a\cap\check{B}_{0}|\bar{D}_{\infty}} \times \gG_{\X}$:
\begin{itemize}
\item[Ob:]For $\xi \in Ob(\gG_{\mathcal{T}})$ s.t. $\psi_{a0,\infty}^{0}(\xi)=([re^{i\theta},1],e^{it},e^{s})$ and $x \in Ob(\gG_{\X})$, set $gl^{0}_{a0}(\xi,x):= (\xi,x')$ where  $x'=\gamma_{0}^{\alpha} (t+\vartheta(r)\theta, x)$.
\item[Mor:]
For $\eta \in Mor(\gG_{\mathcal{T}})$ s.t. $\psi_{a0,\infty}^{1}(\xi)=([re^{i\theta},1],e^{i\hat{t}},e^{s})$ and $y \in Mor(\gG_{\X})$, set $gl^{1}_{a0}(\eta,y):= (\eta,y')$ where  $y'=\gamma_{1}^{\alpha} (\hat{t}+\vartheta(r)\theta,y)$.
\end{itemize}
\end{enumerate}

The glueing map $\pmb{gl}_{b0}$ is defined by:
\begin{enumerate}
\item  Restricted to $\gG_{\mathcal{T}}|_{\check{B}_b\cap\check{B}_{0}|\bar{D}_{0}} \times \gG_{\X}$:
\begin{itemize}
\item[Ob:]For $\xi \in Ob(\gG_{\mathcal{T}})$ s.t. $\psi_{b0,0}^{0}(\xi)=([1,z],e^{it},e^{s})$ and $x \in Ob(\gG_{\X})$, set $gl^{0}_{b0}(\xi,x):= (\xi,x')$ where  $x'=\gamma_{0}^{-\beta} (t, x)$.
\item[Mor:]
For $\eta \in Mor(\gG_{\mathcal{T}})$ s.t. $\psi_{b0,0}^{1}(\xi)=([1,z],e^{i\hat{t}},e^{s})$ and $y \in Mor(\gG_{\X})$, set $gl^{1}_{b0}(\eta,y):= (\eta,y')$ where  $y'=\gamma_{1}^{-\beta} (\hat{t},y)$.
\end{itemize}

\item  Restricted to $\gG_{\mathcal{T}}|_{\check{B}_b\cap\check{B}_{0}|\bar{D}_{\infty}} \times \gG_{\X}$:
\begin{itemize}
\item[Ob:]For $\xi \in Ob(\gG_{\mathcal{T}})$ s.t. $\psi_{b0,\infty}^{0}(\xi)=([re^{i\theta},1],e^{it},e^{s})$ and $x \in Ob(\gG_{\X})$, set $gl^{0}_{b0}(\xi,x):= (\xi,x')$ where  $x'=\gamma_{0}^{-\beta} (t+\vartheta(r)\theta, x)$.
\item[Mor:]
For $\eta \in Mor(\gG_{\mathcal{T}})$ s.t. $\psi_{b0,\infty}^{1}(\xi)=([re^{i\theta},1],e^{i\hat{t}},e^{s})$ and $y \in Mor(\gG_{\X})$, set $gl^{1}_{b0}(\eta,y):= (\eta,y')$ where  $y'=\gamma_{1}^{-\beta} (\hat{t}+\vartheta(r)\theta,y)$.
\end{itemize}
\end{enumerate}

\end{defn}

By the construction, one can check that the above definition is consistent on overlaps. Thus the two glueing maps $\pmb{gl}_{a0}$ and $\pmb{gl}_{b0}$ are well-defined.

Now we obtain from $\gG_{\Y}$ an orbifiber bundle $\pmb{\pi}:\Y\to\mathcal{T}$ with fiber $\X$. Denote by $\kappa: \mathcal{T}\to \com P^{1}$ the projection to the first component. The composition $pr:=\kappa\circ\pmb{\pi}:\Y\to \com P^{1}$ is again an orbifiber bundle whose fiber is $\E_{\varpi}$ away from $[0,1]$ and $[1,0]$, while over $[0,1]$ and $[1,0]$ we get two singular fibers which are unions of $\E_{\alpha}$ and $\E_{\beta}$ meeting along a fiber $\X$. By construction, $\Y$ carries a doubly fibered almost complex structure $\mathbb{J}$: restricted to a fiber of $pr$, it is an almost complex structure of $\E_{\varpi}$ (or $\E_{\alpha}$ and $\E_{\beta}$); restricted to each fiber $\X$, it is the almost complex structure $J$ of $\X$. Let $j$ be the complex structure on $\mathcal{T}$, then $\pmb{\pi}$ is $j$-$J$-holomorphic.
 
Recall the notation from Definition \ref{couplingcl}, for $i=\alpha, \beta, \varpi$, let $\Omega^{i}_{0}$ be the coupling form of $\E_{i}$.
\begin{lem}
There is a closed 2-form $ \Omega_{0}$ on $\Y$, such that 
\begin{enumerate}
\item  for any $[z_{1},z_{2}]\neq [0,1]$ or $[1,0] \in \com P^{1}$, there is a diffeomorphism $\Phi: pr^{-1}([z_{1},z_{2}]) \to \E_{\varpi}$ such that $\Omega_{0}|_{pr^{-1}([z_{1},z_{2}])}=\Phi^{*}\Omega_{0}^{\varpi}$;
\item for $[z_{1},z_{2}]= [0,1]$ or $[1,0] \in \com P^{1}$, there are two diffeomorphisms $\Psi_{i}: pr^{-1}([z_{1},z_{2}])|_{\com P^{1}_{i}} \to \E_{i}$, $i=\alpha$ or $\beta$, such that $ \Omega_{0}|_{pr^{-1}([z_{1},z_{2}])|_{\com P^{1}_{i}}}=\Psi^{*}\Omega_{0}^{i}$. Here $|_{\com P^{1}_{\alpha}}$ and $|_{\com P^{1}_{\beta}}$ are used to denote the restriction over the two copies of $\com P^{1}$ of the singular fibers of $\kappa:\mathcal{T}\to \com P^{1}$.
\end{enumerate}
\end{lem}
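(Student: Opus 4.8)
The plan is to build $\Omega_0$ by carrying out the fiberwise coupling-form construction of \S\ref{orbifiberbundleS2} in families over $\mathcal{T}$, following the manifold model of \cite[Section 2.3.2]{M}. The key point is that the coupling form of each $\E_i$ is assembled from the fiber symplectic form $\omega$ together with a correction term of the shape $-d'F\wedge ds-d'G\wedge dt+(\partial_t F-\partial_s G)\,ds\wedge dt$, and this same recipe can be written on the three charts $\check{B}_a\times\X$, $\check{B}_0\times\X$, $\check{B}_b\times\X$ that assemble $\Y$, now using the $\com P^1$-fiber coordinates of $\kappa$ as the $(s,t)$ variables. First I would fix $\Omega_0^\alpha,\Omega_0^\beta,\Omega_0^\varpi$ using the explicit normalization of Remark \ref{couplingformcutoff}, namely $F\equiv 0$, $G^-\equiv 0$, $G^+_{s,t}=\rho(e^{s+it})H_t\circ\gamma^t$. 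On $\check{B}_a\times\X$ and $\check{B}_b\times\X$ I would take the pullback of $\omega$, and on $\check{B}_0\times\X$ I would place the Hamiltonian correction term, reading the required $(s,t)$ off the trivializations $\psi_{a0,*}$, $\psi_{b0,*}$ and letting the interpolating function $\vartheta$ of $\pi^\vartheta_{S^1}$ govern how the correction turns on near $\bar D_\infty$.

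Second, I would check that these locally defined closed $2$-forms agree on the overlaps $\check B_a\cap\check B_0$ and $\check B_b\cap\check B_0$. This is exactly where the matching conditions (\ref{matchingcondi}) enter: the gluing maps $\pmb{gl}_{a0}$ and $\pmb{gl}_{b0}$ of Definition \ref{glueY} act on the fiber $\X$ through the Hamiltonian flows $\gamma^t_\alpha$ and $\gamma^t_{-\beta}$, so the pullback of the trivial form on $\check B_a\times\X$ differs from $\omega$ on $\check B_0\times\X$ precisely by the exact Hamiltonian correction, reproducing $F^+_{s,t}\circ\gamma^t+F^-_{-s,-t}=0$ and $G^+_{s,t}\circ\gamma^t+G^-_{-s,-t}=H_t\circ\gamma^t$. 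Since each chart carries a closed form and these are governed by the very formula shown in \S\ref{orbifiberbundleS2} to glue over $S^2$, the pieces assemble into a single closed $2$-form $\Omega_0$ on $\Y$.

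Third, for the fiberwise assertions I would restrict $\Omega_0$ along $pr^{-1}([z_1,z_2])$. For $[z_1,z_2]\neq[0,1],[1,0]$ the $\kappa$-fiber is a smooth $\com P^1$, and the three charts restrict to the disk-plus-annulus decomposition used to build $\E_\varpi$; the induced orbifiber bundle isomorphism is the desired $\Phi$, under which the restricted correction is the $G^+$-term of the composite loop $\varpi$, so $\Omega_0|_{pr^{-1}([z_1,z_2])}=\Phi^*\Omega_0^\varpi$. Over $[0,1]$ and $[1,0]$ the $\kappa$-fiber degenerates into two copies of $\com P^1$ meeting at a point, and restricting to each copy reproduces the single-loop construction for $\gamma_\alpha$ (respectively $\gamma_{-\beta}$), giving $\Psi_i^*\Omega_0^i$.

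The main obstacle I anticipate is the bookkeeping near the degenerate $\kappa$-fibers: one must show that the correction term on $\check B_0\times\X$, written in the $\bar D_0$ and $\bar D_\infty$ trivializations and twisted by $\vartheta$, is consistent on the overlap (\ref{boundaryconsist}) while \emph{simultaneously} restricting to $\Omega_0^\varpi$ on the generic fiber and to the pair $\Omega_0^\alpha,\Omega_0^\beta$ on the two components of each singular fiber. Equivalently, the monodromy encoded by the two Hamiltonian loops and the interpolation $\vartheta$ must be reconciled with the normalization $\int^{fiber}\Omega_0^{n+1}=0$ on every fiber at once; verifying this family-wise compatibility, rather than closedness of any individual chart, is the crux of the argument.
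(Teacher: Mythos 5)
Your proposal is correct and takes essentially the same route as the paper: the paper likewise runs the cutoff coupling-form recipe of \S\ref{orbifiberbundleS2} chart-by-chart over $\mathcal{T}$, using the trivializations $\psi_{a0,\ast},\psi_{b0,\ast}$ (with the $\vartheta$-twisted angular variable over $\bar{D}_{\infty}$) to write closed forms $\omega-d'G\wedge dt-\partial_{s}G\,ds\wedge dt$ that glue across $\pmb{gl}_{a0},\pmb{gl}_{b0}$ exactly by the mechanism of (\ref{matchingcondi}), and then restricts to the $\kappa$-fibers. The only difference is the (equivalent) placement of the corrections: the paper puts the cutoff terms $G^{\mp}$ on $\check{B}_{a}\times\X$ and $\check{B}_{b}\times\X$, extending by $\omega$ across the two sections, and keeps $\omega^{0}=\omega$ on $\check{B}_{0}\times\X$, whereas you reverse this; since the matching conditions can be satisfied with the correction on either side of each gluing annulus, both choices produce the required $\Omega_{0}$.
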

\begin{proof} Let $\gamma_{\alpha}$ and $\gamma_{\beta}$ be two Hamiltonian loops representing $\alpha$ and $\beta$ respectively. Denote by $H^{\alpha}$ and $H^{\beta}$ the Hamiltonian functions of $\gamma_{\alpha}$ and $\gamma_{\beta}$. Let 
\begin{eqnarray*}
G_{0}^{-} & & : \bar{D}_{0} \times S^{1} \times (\frac{1}{N},\frac{1}{2})\times \X \to \real,\ \ G_{0}^{-}([1,z],e^{it},e^{s}, x):=\rho(e^{s})H^{\alpha}_{t}\circ \gamma^{t}_{\alpha}(x),\\
G_{\infty}^{-} & & : \bar{D}_{\infty} \times S^{1} \times (\frac{1}{N},\frac{1}{2})\times \X \to \real, \ \  G_{\infty}^{-}([re^{i\theta},1],e^{it},e^{s}, x) :=\rho(e^{s})H^{\alpha}_{t+\vartheta(r)\theta}\circ \gamma_{\alpha}^{t+\vartheta(r)\theta} (x),\\
G_{0}^{+} & & : \bar{D}_{0} \times S^{1} \times (\frac{1}{N},\frac{1}{2})\times \X \to \real, \ \ 
G_{0}^{+} ([1,z],e^{it},e^{s}, x) :=\rho(e^{s})H^{\beta}_{t}\circ \gamma_{\beta}^{t}(x),\\
G_{\infty}^{+} & & : \bar{D}_{\infty} \times S^{1} \times (\frac{1}{N},\frac{1}{2})\times \X \to \real, \ \  G_{\infty}^{+}([re^{i\theta},1],e^{it},e^{s}, x) :=\rho(e^{s})H^{\beta}_{t+\vartheta(r)\theta}\circ \gamma_{\beta}^{t+\vartheta(r)\theta} (x),
\end{eqnarray*}
where $\rho :D_{+}(1+\delta)\to [0,1]$ is the same cutoff function as in Remark \ref{couplingformcutoff} of Section \ref{orbifiberbundleS2}, and $\gamma^t_\bullet(-)=\gamma_\bullet(t,-)$.  

On $\bar{D}_{0} \times S^{1} \times (\frac{1}{N},\frac{1}{2})\times \X$, define 
$$
\omega^{a,0}=\omega-d' G_{0}^{-}\wedge dt -\partial_{s} G_{0}^{-} ds \wedge dt.
$$
 On $\bar{D}_{\infty} \times S^{1} \times (\frac{1}{N},\frac{1}{2})\times \X$, define
$$
\omega^{a,\infty}=\omega-d' G_{\infty}^{-}\wedge dt -\partial_{s} G_{\infty}^{-} ds \wedge dt.
$$
It is a direct computation to check that $\omega^{a,0}$ and $\omega^{a,\infty}$ together define a closed 2-form $\omega^{a}$ on $(\check{B}_{a}\cap \check{B}_{0})\times \X$. Moreover, when $|z|<1-2\delta$, $\omega^{a}=\omega$. Thus $\omega^{a}$ can be extended to $\check{B}_{a} \times \X$ by defining it as $\omega$ on $\check{B}_{a}\setminus \check{B}_{0}$. We still denote this 2-form by $\omega^{a}$.

Similarly using $G_{0}^{+}$ and $G_{\infty}^{+}$, we can define a closed 2-form $\omega^{b}$ on $\check{B}_{b}\times \X$. On $\check{B}_{0}\times \X$, let $\omega^{0}:=\omega$.

Then as in Section \ref{orbifiberbundleS2}, the three 2-forms $\omega^{a}$, $\omega^{b}$ and $\omega^{0}$ can be glued into a closed 2-form on $\Y$ which satisfies the required conditions.
\end{proof}
On the other hand it is an exercise to show:
\begin{lem}
There is a closed 2-form $ \omega_{\mathcal{T}}$ on $\mathcal{T}$, such that 
\begin{enumerate}
\item  for any $[z_{1},z_{2}]\neq [0,1]$ or $[1,0] \in \com P^{1}$, $\omega_{\mathcal{T}}|_{\kappa^{-1}([z_{1},z_{2}])}=\omega_{S^{2}}$;
\item for $[z_{1},z_{2}]= [0,1]$ or $[1,0] \in \com P^{1}$, $\omega_{\mathcal{T}}|_{\kappa^{-1}([z_{1},z_{2}])|\com P^{1}_{i}}=\omega_{S^{2}}$, $i=\alpha$ or $\beta$.
\end{enumerate}
\end{lem}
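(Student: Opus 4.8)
The plan is to realize $\omega_{\mathcal{T}}$ as a fiberwise area form for the holomorphic Lefschetz fibration $\kappa:\mathcal{T}\to\com P^1$ and then fix its normalization. Write $\mathrm{pr}_1,\mathrm{pr}_2,\mathrm{pr}_3$ for the projections of $\com P^1\times\com P^1\times\com P^1$ onto its factors, so that $\kappa=\mathrm{pr}_1|_{\mathcal{T}}$, and let $\omega_{FS}$ be the Fubini--Study form on $\com P^1$. First I would take
$$\mathfrak{o}:=\bigl(\mathrm{pr}_1^{*}\omega_{FS}+\mathrm{pr}_2^{*}\omega_{FS}+\mathrm{pr}_3^{*}\omega_{FS}\bigr)\big|_{\mathcal{T}},$$
the restriction of the product Fubini--Study form. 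Being the restriction of a Kähler form to the complex submanifold $\mathcal{T}$, $\mathfrak{o}$ is closed and Kähler, hence restricts to a strictly positive area form on every complex curve contained in a fiber. Concretely, a generic fiber $\kappa^{-1}([z_1,z_2])$ is a $(1,1)$-curve mapping isomorphically under both $\mathrm{pr}_2$ and $\mathrm{pr}_3$ onto a factor, while each component $\com P^1_\alpha,\com P^1_\beta$ of a singular fiber over $[0,1]$ or $[1,0]$ maps isomorphically under exactly one of $\mathrm{pr}_2,\mathrm{pr}_3$; in every case the corresponding summand of $\mathfrak{o}$ already gives positivity. (Note $\mathrm{pr}_1$ is constant on any fiber, so it contributes nothing to the fiber integrals but makes $\mathfrak o$ positive on all of $\mathcal{T}$.)

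Next I would match $\mathfrak{o}$ to $\omega_{S^2}$ on the fibers. On the generic locus $\com P^1\setminus\{[0,1],[1,0]\}$ the fibration is trivialized by $\psi_{a0,\cdot},\psi_{b0,\cdot}$ over $\bar{D}_0$ and $\bar{D}_\infty$; over the product charts each fiber is literally $\com P^1\times\{\mathrm{pt}\}$, where one may pull back $\omega_{S^2}$ directly, and these local fiberwise forms are patched with a partition of unity subordinate to the cover $\{U_\tau\}$ of Lemma~\ref{Tatlas}. Since $\kappa$ admits holomorphic sections (for instance $\mathrm{pr}_2^{-1}([w_0])$ for generic $[w_0]$, which meets each fiber exactly once), the restriction $H^2(\mathcal{T};\real)\to H^2(\mathrm{fiber})$ is surjective and the fiber class extends; Thurston's averaging argument then lets one correct the patched form by a $\kappa$-horizontal exact term so that the result is globally closed while its restriction to each generic fiber is unchanged and equal to $\omega_{S^2}$.

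The main obstacle is the two nodal fibers and the compatibility they force on the areas. Cohomologically the generic fiber is homologous to $[\com P^1_\alpha]+[\com P^1_\beta]$, so for any closed form the generic-fiber integral equals the sum of the two component integrals; this is precisely the additivity $u_{\varpi}(\sigma_\alpha\sharp\sigma_\beta)=u_\alpha(\sigma_\alpha)+u_\beta(\sigma_\beta)$ recorded above, and it is what makes the prescribed restrictions on the generic fiber and on the singular components mutually consistent once the normalizations of $\omega_{S^2}$ are chosen accordingly. To arrange the match near a node I would work in the normal-crossing model coming from the blow-up, where in local coordinates $\kappa$ looks like $(x,y)\mapsto xy$ with branches $\{x=0\}$ and $\{y=0\}$; there $\mathfrak{o}$ already restricts to a positive form on each branch, and I would deform it to $\omega_{S^2}$ on $\com P^1_\alpha$ and on $\com P^1_\beta$ by a fiberwise Moser isotopy supported away from the node, gluing this to the patching of the previous paragraph. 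Verifying that these branchwise modifications assemble into a single closed form on all of $\mathcal{T}$ and that positivity survives through the node is the only delicate point; once $\mathfrak{o}$ is in hand, everything else is routine.
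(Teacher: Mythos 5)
The paper offers no argument to compare against here (the lemma is stated with ``it is an exercise to show''), so your proposal has to stand on its own, and it does not: it founders on a genuine obstruction that your own setup makes visible. As you note, the generic fiber $F$ of $\kappa$ is homologous in $\mathcal{T}$ to the sum of the two components of a singular fiber (in blow-up notation, $[F]=[\tilde F]+[E]$ with $\tilde F$ the proper transform and $E$ the exceptional sphere). Hence \emph{every} closed $2$-form $\omega_{\mathcal{T}}$ satisfies $\int_{F}\omega_{\mathcal{T}}=\int_{\com P^1_\alpha}\omega_{\mathcal{T}}+\int_{\com P^1_\beta}\omega_{\mathcal{T}}$. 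The lemma prescribes that all three restrictions equal one and the same area form $\omega_{S^2}$, so this identity forces $\int_{S^2}\omega_{S^2}=2\int_{S^2}\omega_{S^2}$, i.e. $\int_{S^2}\omega_{S^2}=0$, which is absurd. Your claim that the homology relation ``makes the prescribed restrictions mutually consistent once the normalizations of $\omega_{S^2}$ are chosen accordingly'' is therefore backwards: since the lemma fixes a single $\omega_{S^2}$, the relation shows the prescription is \emph{inconsistent}, and no Thurston patching or fiberwise Moser isotopy can produce such a form. Indeed your Kähler form $\mathfrak{o}$ already exhibits the forced discrepancy: a generic fiber maps with degree one under both $\mathrm{pr}_2$ and $\mathrm{pr}_3$, so it has $\mathfrak{o}$-area $2A$ (where $A=\int_{\com P^1}\omega_{FS}$), while each singular component, on which one of $\mathrm{pr}_2,\mathrm{pr}_3$ is constant, has area $A$; your final Moser step would have to change the total area of every generic fiber, which a fiberwise isotopy (or any correction by a form restricting to zero in fiber cohomology) cannot do.

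What this reveals is that the statement itself must be weakened before it can be proved: the correct assertion is, for example, that $\omega_{\mathcal{T}}$ restricts to $\omega_{S^2}$ on both components of each singular fiber and to an area form of total area $2\int_{S^2}\omega_{S^2}$ on each generic fiber. Your form $(\mathrm{pr}_2^*\omega_{FS}+\mathrm{pr}_3^*\omega_{FS})|_{\mathcal{T}}$, followed by exactly the Thurston-type and Moser-type normalizations you describe (now with matching areas, so Moser applies), proves this corrected statement. The corrected statement also suffices for everything downstream: $\Omega^{\Y}_{c,c'}=\Omega_0+c\pmb{\pi}^*\omega_{\mathcal{T}}+c'\,pr^*\omega_{\com P^1}$ is still symplectic for $c,c'\gg0$, and its restrictions to the fibers of $pr$ are symplectic forms deformation equivalent to $(\E_\varpi,\Omega^\varpi_c)$, respectively to the union of $(\E_\alpha,\Omega^\alpha_c)$ and $(\E_\beta,\Omega^\beta_c)$ along a fiber, which is all the Gromov--Witten comparison uses, by deformation invariance of the invariants. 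So the missing idea is not a cleverer gluing; it is recognizing that the equal-area prescription is homologically impossible and relaxing the normalization accordingly.
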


Thus we have a symplectic form on $\Y$ defined by $\Omega^{\Y}_{c,c'}=\Omega_{0}+c\pmb{\pi}^{*} \omega_{\mathcal{T}}+c' pr^{*} \omega_{\com P^{1}}$ for $c$ as in Section \ref{orbifiberbundleS2}, and $c'>0$ large enough. A fiber of $pr:\Y\to \com P^{1}$ together with the restriction of $\Omega^{\Y}_{c,c'}$ to the fiber is symplectomorphic to either $(\E_{\varpi},\Omega_{c}^{\varpi})$ or the union of $(\E_{\alpha},\Omega_{c}^{\alpha})$ and $(\E_{\beta},\Omega_{c}^{\beta})$ with one fiber $\X$ identified.

Let $\mathfrak{c}^{\varpi}: \E_{\varpi}\to \Y$ be the inclusion of the fiber at $[z_{1},z_{2}]\neq [0,1]$ or $[1,0] \in \com P^{1}$, and let $\sigma\in H^{sec}(|\E_{\varpi}|,\inte)$. The main reason for constructing $\Y$ is to use the moduli space $\M_{0,1}(\Y,\mathfrak{c}^{\beta}_{*}\sigma)$ and its compactification in the proof of Proposition \ref{compGW}.

\subsection{Proof of Proposition \ref{compGW}}
From now on we use cap product ``$\smallfrown$'' between homology and cohomology instead of integral in the definition of Gromov-Witten invariants to emphasize the role of evaluation maps. Note that this is actually how we understand integrals over Kuranishi spaces. 

For twisted sectors $\X_{(g_{1})}, \X_{(g_{2})},\X_{(g)}$ of $\X$, we write $ev_{*}\M_{\sigma_{\alpha},(g_{1})}^{v}$, $ev_{*}\M_{\sigma_{\beta},(g_{2})}^{v}$, and $Ev_{*}\M_{\X,A,(g_{1}^{-1},g,g_{2}^{-1})}^{v}$ for the virtual fundamental classes associated to the moduli spaces
$\Mbar_{0,1}(\E_{\alpha},\sigma_{\alpha},(g_{1}))$,
$\Mbar_{0,1}(\E_{\beta},\sigma_{\beta},(g_{2}))$, and
$\Mbar_{0,3}(\X,A,(g_{1}^{-1},g,g_{2}^{-1}))$ respectively. Now the left-hand side of (\ref{compGWid}) may be written as
\begin{eqnarray*}
\negthickspace  & & \negthickspace\negthickspace\negthickspace\negthickspace\negthickspace\negthickspace\sum_{\sigma_{\alpha}\sharp\sigma_{b}+\iota^{\varpi}_*A=\sigma} \sum_{j,k} (ev_{*}\M_{\sigma_{\alpha},(g_{1})}^{v} \smallfrown \iota^{\alpha}_*f_j) \cdot (ev_{*}\M_{\sigma_{\beta},(g_{2})}^{v}\smallfrown\iota^{\beta}_*f_k) \cdot (Ev_{*}\M_{\X,A,(g_{1}^{-1},g,g_{2}^{-1})}^{v})\smallfrown (f^j \otimes f \otimes f^k) \\
\negthickspace = & &  \negthickspace\negthickspace\negthickspace\negthickspace\negthickspace\negthickspace \sum_{\sigma_{\alpha}\sharp\sigma_{\beta}+\iota^{\varpi}_*A=\sigma} \sum_{j,k}  (ev_{*}\M_{\sigma_{\alpha},(g_{1})}^{v} \otimes  ev_{*}\M_{\sigma_{\beta},(g_{2})}^{v} \otimes Ev_{*}\M_{\X,A,(g_{1}^{-1},g,g_{2}^{-1})}^{v}) \smallfrown (\iota^{\alpha}_*f_j  \otimes  \iota^{\beta}_*f_k  \otimes f^j \otimes f \otimes f^k)\\
\negthickspace =& & \negthickspace \negthickspace\negthickspace\negthickspace\negthickspace\negthickspace\negthickspace\sum_{\sigma_{\alpha}\sharp\sigma_{\beta}+\iota^{\varpi}_*A=\sigma} \negthickspace\negthickspace\negthickspace\negthickspace (ev_{*}\M_{\sigma_{\alpha},(g_{1})}^{v} \otimes  ev_{*}\M_{\sigma_{\beta},(g_{2})}^{v} \otimes Ev_{*}\M_{\X,A,(g_{1}^{-1},g,g_{2}^{-1})}^{v})\smallfrown ((\sum_{j}   \iota^{\alpha}_*f_j  \otimes f^j) \otimes (\sum_{k} \iota^{\beta}_*f_k  \otimes  f^k)\otimes f)\\
\negthickspace =& &  \negthickspace\negthickspace\negthickspace\negthickspace\negthickspace\negthickspace\sum_{\sigma_{\alpha}\sharp\sigma_{\beta}+\iota^{\varpi}_*A=\sigma} (ev_{*}\M_{\sigma_{\alpha},(g_{1})}^{v} \otimes  ev_{*}\M_{\sigma_{\beta},(g_{2})}^{v} \otimes Ev_{*}\M_{\X,A,(g_{1}^{-1},g,g_{2}^{-1})}^{v}) \smallfrown (PD_{1}(\Delta_{\alpha}) \otimes PD_{2}(\Delta_{\beta}) \otimes f).
\end{eqnarray*}

In the last term, $PD_{1}$ and $PD_{2}$ denote the  orbifold Poincar\'{e} duals in $\E_{\alpha,(g_{1})}\times \X_{(g_{1}^{-1})}$ and $\E_{\beta,(g_{2})}\times \X_{(g_{2}^{-1})}$ respectively. Let $\mathcal{I}:{I}\X\to {I}\X$ be the involution which send $(x, (g)) \in\X_{(g)}$ to $(x,(g^{-1}))\in\X_{(g^{-1})}$. Then $\Delta_{\alpha}$, $\Delta_{\beta}$ are defined to be
$$\Delta_{\alpha}:=\{(p,x)\in \E_{\alpha,(g_{1})}\times \X_{(g_{1}^{-1})}|p=\mathcal{I}\circ \iota^{\alpha}(x)\},$$
$$\Delta_{\beta}:=\{(p,x)\in \E_{\beta,(g_{2})}\times \X_{(g_{2}^{-1})}|p=\mathcal{I}\circ \iota^{\beta}(x)\}.$$
Let $$\M^{\Delta}(\sigma_{\alpha},\sigma_{\beta}, A)$$ be defined as
\begin{equation*}
\begin{split}
\{(\tau_{\alpha},\tau_{\X},\tau_{\beta})\in\Mbar_{0,1}(\E_{\alpha},\iota^\alpha_{*}\sigma_{\alpha}) \times  &\Mbar_{0,3}(\X,A)  \times \Mbar_{0,1}(\E_{\beta},\iota^\beta_{*}\sigma_{\beta})\\
&|\,\, ev(\tau_{\alpha})=\mathcal{I}\circ\iota^{\alpha}( ev_{0}(\tau_{\X})), \ ev(\tau_{\beta})=\mathcal{I}\circ\iota^{\beta} (ev_{2}(\tau_{\X}))\}. 
\end{split}
\end{equation*}
Note that $\M^{\Delta}(\sigma_{\alpha},\sigma_{\beta}, A)$ is a fiber product of moduli spaces. By \cite[Remark A1.44(1)]{FOOObook}, we may assume the evaluation maps are weakly subversive. Then the fiber product Kuranishi structure on $\M^{\Delta}(\sigma_{\alpha},\sigma_{\beta}, A)$ is well-defined. Let $ev_{*}[\M^{\Delta} (\sigma_{\alpha},\sigma_{\beta}, A)]^{v}\in H_{*}(\X_{(g)},\ration)$ be the virtual fundamental class defined by this Kuranishi space together with the evaluation map at the only marked point where we do not take fiber product. 

\begin{lem}\label{diagonal} 
\begin{eqnarray*}
& & (ev_{*}\M_{\sigma_{\alpha},(g_{1})}^{v} \otimes  ev_{*}\M_{\sigma_{\beta},(g_{2})}^{v} \otimes Ev_{*}\M_{\X,A,(g_{1}^{-1},g,g_{2}^{-1})}^{v}) \ \ \smallfrown \ \ (PD_{1}(\Delta_{\alpha}) \otimes PD_{2}(\Delta_{\beta}) \otimes f)\\
 = & & ev_{*}[\M^{\Delta} (\sigma_{\alpha},\sigma_{\beta}, A)]^{v}\smallfrown f.
\end{eqnarray*} 
\end{lem}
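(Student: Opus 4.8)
The plan is to recognize the left-hand side as the virtual fundamental class of an iterated fiber product, using the standard principle that capping a product of virtual classes with the Poincar\'e dual of a diagonal (or graph) realizes the fiber product of the underlying Kuranishi spaces. First I would record the general fact from the theory of fiber products of Kuranishi structures (see \cite[A1.39--A1.44]{FOOObook}): if $\M_{1}$ and $\M_{2}$ carry oriented Kuranishi structures with strongly smooth evaluation maps $e_{1}\colon \M_{1}\to N$ and $e_{2}\colon \M_{2}\to N$ into a closed oriented orbifold $N$, and at least one of $e_{1},e_{2}$ is weakly submersive, then the fiber product $\M_{1}\times_{N}\M_{2}$ carries a well-defined oriented Kuranishi structure and
$$
(e_{1*}[\M_{1}]^{v}\otimes e_{2*}[\M_{2}]^{v})\smallfrown PD_{N}(\Delta_{N})=(e_{1}\times e_{2})_{*}[\M_{1}\times_{N}\M_{2}]^{v},
$$
where $\Delta_{N}\subset N\times N$ is the diagonal and $PD_{N}$ the orbifold Poincar\'e dual. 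This is exactly the Kuranishi-level incarnation of the Künneth identity $PD_{1}(\Delta_{\alpha})=\sum_{j}\iota^{\alpha}_{*}f_{j}\otimes f^{j}$ already invoked in the computation preceding the lemma.

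Then I would apply this formula twice, peeling off the two matching conditions one at a time. The class $\Delta_{\alpha}\subset \E_{\alpha,(g_{1})}\times \X_{(g_{1}^{-1})}$ is the graph of the embedding $\mathcal{I}\circ\iota^{\alpha}\colon \X_{(g_{1}^{-1})}\to \E_{\alpha,(g_{1})}$; since a graph of an embedding into a closed oriented orbifold is itself a closed suborbifold whose Poincar\'e dual pulls back from $PD(\Delta_{N})$ along $\mathrm{id}\times(\mathcal{I}\circ\iota^{\alpha})$, capping $ev_{*}\M^{v}_{\sigma_{\alpha},(g_{1})}$ and the first evaluation of $Ev_{*}\M^{v}_{\X,A,(g_{1}^{-1},g,g_{2}^{-1})}$ with $PD_{1}(\Delta_{\alpha})$ imposes precisely the constraint $ev(\tau_{\alpha})=\mathcal{I}\circ\iota^{\alpha}(ev_{0}(\tau_{\X}))$ from the definition of $\M^{\Delta}(\sigma_{\alpha},\sigma_{\beta},A)$. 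Here the role of $\mathcal{I}$ is exactly to encode the balance condition at the node being formed, matching the twisted sector $(g_{1})$ on the $\E_{\alpha}$ side with $(g_{1}^{-1})$ on the $\X$ side. Capping with $PD_{2}(\Delta_{\beta})$ likewise imposes $ev(\tau_{\beta})=\mathcal{I}\circ\iota^{\beta}(ev_{2}(\tau_{\X}))$. The two applications combine to produce the iterated fiber product, which is by definition $\M^{\Delta}(\sigma_{\alpha},\sigma_{\beta},A)$.

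Finally I would check that the evaluation map surviving both fiber products, namely the evaluation at the unique marked point on which no diagonal constraint is placed (the second marked point of $\M_{\X}$, landing in $\X_{(g)}$), is exactly the map $ev$ appearing on the right-hand side, so that capping the resulting virtual class with $f\in H^{*}(\X_{(g)},\ration)$ yields $ev_{*}[\M^{\Delta}(\sigma_{\alpha},\sigma_{\beta},A)]^{v}\smallfrown f$. With orientations reconciled this gives the asserted identity.

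The main obstacle I expect is not the formal diagonal manipulation but the Kuranishi-theoretic bookkeeping surrounding it: verifying that the relevant evaluation maps can be arranged to be weakly submersive so that the fiber-product Kuranishi structure genuinely exists (invoking \cite[Remark A1.44(1)]{FOOObook} as cited), that this fiber-product structure is compatible with the separately constructed Kuranishi structures on the three factors, and that the orientation and multiplicity conventions built into $[\M^{\Delta}(\sigma_{\alpha},\sigma_{\beta},A)]^{v}$ agree with those implicit in the Poincar\'e-dual-of-diagonal formula. The orbifold features must be tracked with particular care: the degree-shifting numbers, the involution $\mathcal{I}$ relating the sectors $(g)$ and $(g^{-1})$, and the fact that $PD_{1},PD_{2}$ and the surviving pairing are all the Chen--Ruan orbifold dualities rather than ordinary ones, so that the twisted-sector labels $(g_{1}^{-1},g,g_{2}^{-1})$ and the fiber inclusions $\iota^{\alpha},\iota^{\beta}$ remain consistent across all three moduli spaces.
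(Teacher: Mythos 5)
Your proposal is correct, and it rests on the same underlying idea as the paper's proof --- that capping with the Poincar\'e duals of $\Delta_{\alpha}$ and $\Delta_{\beta}$ realizes the fiber product $\M^{\Delta}(\sigma_{\alpha},\sigma_{\beta},A)$ --- but you execute it differently. You invoke the general Kuranishi-level diagonal/fiber-product formula from \cite[A1.39--A1.44]{FOOObook} as a black box and apply it twice (for graphs of $\mathcal{I}\circ\iota^{\alpha}$ and $\mathcal{I}\circ\iota^{\beta}$ rather than honest diagonals), then identify the surviving evaluation map. The paper instead proves the identity directly and self-containedly: it takes Kuranishi charts $V_{\alpha}$, $V_{\X}$, $V_{\beta}$ with transversal multi-sections $\ms_{\alpha}$, $\ms_{\X}$, $\ms_{\beta}$, observes that the fiber-product chart $V^{\Delta}$ carries the restricted multi-section $\ms_{\Delta}:=(\ms_{\alpha},\ms_{\beta},\ms_{\X})|_{V^{\Delta}}$, writes both sides of the lemma as intersection numbers of the pushed-forward zero sets against $\Delta_{\alpha}\times\Delta_{\beta}\times\hat{f}$ (respectively against $\hat{f}$), and exhibits a sign-preserving bijection between the two sets of intersection points by unwinding the matching conditions. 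Your route buys modularity, but its soundness depends on the cited formula holding in the precise generality needed here --- orbifold targets with Chen--Ruan duality, graphs of embeddings in place of diagonals, and an iterated application requiring the induced evaluation on the intermediate fiber product to remain weakly submersive; these are exactly the points you flag as ``bookkeeping,'' and they are genuine hypotheses, not formalities. The paper's route avoids that dependence: since both sides are reduced to point-set intersections of multi-section zero sets, the orbifold features and the two constraints are handled simultaneously and automatically, at the cost of a less conceptual argument. Both proofs defer the existence of the fiber-product Kuranishi structure to the weak-submersivity assumption via \cite[Remark A1.44(1)]{FOOObook}, so on that point you and the paper are at the same level of rigor.
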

\begin{proof} 
Denote the Kuranishi charts of $\Mbar_{0,1}(\E_{\alpha},\iota^{\alpha}_{*}\sigma_{\alpha})$, $  \Mbar_{0,3}(\X,A)$ and  $\Mbar_{0,1}(\E_{\beta},\iota^{\beta}_{*}\sigma_{\beta})$ by $V_{\alpha}$, $V_{\X}$ and $V_{\beta}$ respectively. Let $\ms_{\alpha},\ms_{\beta}$ and $\ms_{\X}$ be their transversal multi-sections. The Kuranishi chart of the fiber product is by definition:
$$V^{\Delta}:=\{(\eta_{\alpha},\eta_{\X},\eta_{\beta})\in V_{\alpha}\times V_{\X} \times V_{\beta}|ev(\eta_{\alpha})=\mathcal{I}\circ\iota^{\alpha}( ev_{0}(\eta_{\X})), \ ev(\eta_{\beta})=\mathcal{I}\circ\iota^{\beta} (ev_{2}(\eta_{\X}))\}.$$
Moreover $\ms_{\Delta}:=(\ms_{\alpha},\ms_{\beta},\ms_{\X})|_{V^{\Delta}}$ is a transversal multi-section of the fiber product Kuranishi space.

Let $\cap_{\E^{2}\X^{3}}$ be the intersection product in $\E_{\alpha,(g_{1})}\times \E_{\beta,(g_{2})}\times \X_{(g_{1}^{-1})}\times \X_{(g_{2}^{-1})}\times \X_{(g)}$, $\cap_{\X_{(g)}}$ the intersection product in $\X_{(g)}$, $\hat{f}$ the Poincar\'{e} dual of $f$ in $\X_{(g)}$. Then we have 
\begin{equation}\label{LHSgraphId}
\begin{aligned}
& & (ev_{*}\M_{\sigma_{\alpha},(g_{1})}^{v} \otimes  ev_{*}\M_{\sigma_{\beta},(g_{2})}^{v} \otimes Ev_{*}\M_{\X,A,(g_{1}^{-1},g,g_{2}^{-1})}^{v}) \ \ \smallfrown \ \ (PD_{1}(\Delta_{\alpha}) \otimes PD_{2}(\Delta_{\beta}) \otimes f)\\
= & & ev(\ms^{-1}_{\alpha}(0))\times ev_{0}(\ms^{-1}_{\X}(0))\times ev(\ms^{-1}_{\beta}(0)) \times ev_{2}^{-1}(\ms^{-1}_{\X}(0))\times ev_{1}(\ms^{-1}_{\X}(0))\ \ \ \cap_{\E^{2}\X^{3}}\ \ \ (\Delta_{\alpha} \times \Delta_{\beta} \times \hat{f}).
\end{aligned}
\end{equation}
On the other hand
\begin{equation}\label{RHSgraphId}
 ev_{*}[\M^{\Delta} (\sigma_{\alpha},\sigma_{\beta}, A)]^{v}\smallfrown f\ \
=  \ \ ev_{1}(\ms_{\Delta}^{-1}(0))\ \ \ \cap_{\X_{(g)}} \ \ \ \hat{f}.
\end{equation}
Note that $$(ev(\eta_{\alpha}),ev_{0}(\eta_{\X}), ev(\eta_{\beta}) , ev_{2}(\eta_{\X}), ev_{1}(\eta_{\X}))\in \Delta_{\alpha} \times \Delta_{\beta} \times \hat{f},$$ if and only if 
$$ev(\eta_{\alpha})=\mathcal{I}\circ ev_{0}(\eta_{\X}),\ ev(\eta_{\beta})=\mathcal{I}\circ ev_{2}(\eta_{\X})\ \text{ and } \ ev_{1}(\eta_{\X})\in \hat{f},$$ which is equivalent to $$(\eta_{\alpha},\eta_{\X},\eta_{\beta})\in V^{\Delta}\ \text{ and }  \ ev_{1}(\eta_{\alpha},\eta_{\X},\eta_{\beta})\in \hat{f}.$$
Thus
\begin{equation*}
\begin{split}
ev(\ms^{-1}_{\alpha}(0))\times ev_{0}(\ms^{-1}_{\X}(0))\times ev(\ms^{-1}_{\beta}(0)) \times ev_{2}^{-1}(\ms^{-1}_{\X}(0))\times ev_{1}(\ms^{-1}_{\X}(0))\ \ \cap_{\E^{2}\X^{3}}\ \ (\Delta_{\alpha} \times \Delta_{\beta}\times \hat{f})\\
 = ev_{1}(\ms_{\Delta}^{-1}(0))\ \ \cap_{\X} \ \ \hat{f}.
\end{split}
\end{equation*}
Together with (\ref{LHSgraphId}) and (\ref{RHSgraphId}), the proof is complete.
\end{proof}

Now we shall put the moduli spaces $\M^{\Delta}(\sigma_{\alpha},\sigma_{\beta}, A)$ and $\Mbar_{0,1}(\E_{\varpi},\sigma,(g))$ into a moduli space associated to the big orbifold $\Y$. Let  $i^{nd}: \X \to \Y$ be the inclusion of the fiber of the orbifiber bundle $\Y\to \mathcal{T}$ at the nodal point of the degenerate fiber $\com P^{1} \sharp \com P^{1}$. Recall that $\mathfrak{c}^{\varpi}: \E_{\varpi} \to \Y$ is the inclusion of a fiber over $[z_{1},z_{2}]\neq [0,1]\ or\ [1,0]$. We need to construct a suitable Kuranishi structure on $\Mbar_{0,1}(\Y,\mathfrak{c}^{\varpi}_{*}\sigma)$, and a suitable system of multi-sections.

 Denote $\M^{\E_{\varpi}}(\Y):=\{\tau\in\Mbar_{0,1}(\Y,\mathfrak{c}^{\varpi}_{*}\sigma)|pr \circ ev(\tau)=[0,1]\}$. Because of the choice of the almost complex structures, this space can be identified with $\Mbar_{0,1}(\E_{\varpi},\sigma,(g))$. Choose a Kuranishi structure  on $\Mbar_{0,1}(\E_{\varpi},\sigma,(g))$ whose obstruction bundle is contained in the vertical direction as in Section \ref{secCurvHamBundle}, denoted its induced Kuranishi structure on $\M^{\E_{\varpi}}(\Y)$ by $\{(\check{V}^{\varpi}_{i},E^{\varpi}_{i}, \Gamma^{\varpi}_{i},\psi^{\varpi}_{i},s^{\varpi}_{i})|i \in \mathfrak A^{\varpi}\}$. Let $\ms_{\varpi}$ be a system of multi-sections of this Kuranishi structure. Put $\check{V}^{\varpi}=\cup_{i} \check{V}^{\varpi}_{i}$.
 
Let 
$$\M^{\Delta}(\Y):= \negthickspace\negthickspace\bigcup_{\sigma_{\alpha}\sharp\sigma_{\beta}+\iota^{\varpi}_*A=\sigma}\Mbar_{0,1}(\Y,\mathfrak{c}^{\alpha}_{*}\sigma_{\alpha}) {}_{ev} \!\times_{ev_{0}} \Mbar_{0,3}(\Y,i^{nd}_{*}A) {}_{ev_{2}}\!\times_{ev}  \Mbar_{0,1}(\Y,\mathfrak{c}^{\beta}_{*}\sigma_{\beta})\subset \Mbar_{0,1}(\Y,\mathfrak{c}^{\varpi}_{*}\sigma).$$ 
It is easy to see that $\M^{\Delta}(\Y)$ can be identified with $ \bigcup_{\sigma_{\alpha}\sharp\sigma_{\beta}+\iota^{\varpi}_*A=\sigma} \M^{\Delta}(\sigma_{\alpha},\sigma_{\beta}, A)$. The fiber product Kuranishi structures on $\M^{\Delta}(\sigma_{\alpha},\sigma_{\beta}, A)$ induce a Kuranishi structure on $\M^{\Delta}(\Y)$. We denote this Kuranishi structure by
$\{(\check{V}^{\Delta}_{j},E^{\Delta}_{j},
\Gamma^{\Delta}_{j},\psi^{\Delta}_{j},s^{\Delta}_{j}|j \in \mathfrak A^{\Delta}\}$, 
and let $\ms_{\Delta}$ be a system of multi-sections of it. Denote $\check{V}^{\Delta}=\cup_{j} \check{V}^{\Delta}_{j}$.

\begin{lem}\label{virtualY}
The moduli space $\Mbar_{0,1}(\Y,\mathfrak{c}^{\varpi}_{*}\sigma)$ has a Kuranishi structure 
$\{(V_{k},E_{k},
\Gamma_{k},\psi_{k},s_{k})|k \in \mathfrak A\}$
 such that:
\begin{enumerate}
\item \label{Xcompatible} if $s_{k}^{-1}(0)\cap \M^{\Delta}(\Y)\neq \emptyset$, then $V_{k}=D\times D\times \check{V}^{\Delta}_{j}$ and $E_{k}=D\times D\times E^{\Delta}_{j}$ for some $j \in \mathfrak A^{\Delta}$;
\item \label{Ecompatible} if $s_{k}^{-1}(0)\cap \M^{\E_{\varpi}}(\Y)\neq \emptyset$, then $V_{k}=D\times \check{V}^{\varpi}_{i}$  and $E_{k}=D\times E^{\varpi}_{i}$ for some $i \in \mathfrak A^{\varpi}$.
\end{enumerate}
 The above Kuranishi structure has a system of multi-sections $\ms$ such that $\ms|_{\check{V}^{\Delta}}=\ms_{\Delta}$ and $\ms|_{\check{V}^{\varpi}}=\ms_{\varpi}$.
\end{lem}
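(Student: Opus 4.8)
The plan is to construct the Kuranishi structure on $\Mbar_{0,1}(\Y,\mathfrak{c}^{\varpi}_{*}\sigma)$ by the standard inductive procedure reviewed in Section \ref{secReviewoGW}, but with the inductive data near the two distinguished strata $\M^{\Delta}(\Y)$ and $\M^{\E_{\varpi}}(\Y)$ pre-determined so that the required product forms $D\times D\times \check{V}^{\Delta}_j$ and $D\times \check{V}^{\varpi}_i$ emerge automatically. The geometric source of the two extra disk factors is the gluing/deformation parameters: a stable map whose image lies in a smooth fiber $\E_{\varpi}$ over $[z_1,z_2]\neq[0,1],[1,0]$ can be deformed by moving the base point, contributing one factor $D$ (the normal direction to $pr^{-1}([0,1])$ inside $\com P^1$); a stable map lying in the degenerate fiber $\E_\alpha\cup_\X\E_\beta$ additionally carries a smoothing parameter for the node of $\com P^1\sharp\com P^1$, contributing the second factor $D$. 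So first I would identify, for each $\tau$ in one of the two strata, these normal deformation parameters intrinsically and check that the local universal deformation of the domain and the base point splits off precisely a $D$ (respectively $D\times D$) factor from the deformation already recorded by $\check V^\varpi_i$ (respectively $\check V^\Delta_j$).

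Next I would run the construction of Kuranishi charts exactly as in Section \ref{secReviewoGW}, but choosing the obstruction spaces $E_{0,\nu}$ on the stem and branch components to be pulled back from the obstruction spaces already fixed for $\Mbar_{0,1}(\E_{\varpi},\sigma,(g))$ and for the fiber-product data on $\M^{\Delta}(\sigma_\alpha,\sigma_\beta,A)$. Because the almost complex structure $\mathbb{J}$ on $\Y$ is doubly fibered and $\pmb\pi$ is $j$-$J$-holomorphic, the obstruction bundle can be taken to lie in the vertical direction $T^{vert}$, exactly as in Lemma \ref{lemvertobs}; this guarantees that the solution space $V_\tau$ over a stratum point factors as the product of the $\E_{\varpi}$- or $\E_\alpha\cup\E_\beta$-solution space with the free base/smoothing deformations, yielding $V_k=D\times \check V^\varpi_i$ or $V_k=D\times D\times \check V^\Delta_j$, and likewise $E_k=D\times E^\varpi_i$ or $E_k=D\times D\times E^\Delta_j$ since the obstruction bundle is constant along the contractible $D$ factors. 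I would then extend these charts to the remaining combinatorial types by the usual induction on the partial order $\prec$, and invoke the coordinate-change constructions of \cite[Section 15]{FO}; compatibility of coordinate changes with the product decompositions follows because the distinguished strata are unions of lower-order strata and the inductively glued data restrict to the prescribed fiber-product data there.

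Finally, for the system of multi-sections, I would first fix $\ms_\varpi$ on $\check V^\varpi$ and $\ms_\Delta$ on $\check V^\Delta$ (these exist by the transversality theorem \cite[Theorem 6.4]{FO}), pull them back along the projections $D\times\check V^\varpi_i\to \check V^\varpi_i$ and $D\times D\times\check V^\Delta_j\to\check V^\Delta_j$ to get transversal multi-sections on charts meeting the two strata, and then apply the relative version of the existence of transversal multi-sections \cite[Lemma 3.14]{FO} to extend these over the whole Kuranishi structure while keeping them equal to the prescribed ones on $\check V^\Delta$ and $\check V^\varpi$. The compatibility on the overlap $\Mbar^{\Delta}(\Y)\cap\M^{\E_\varpi}(\Y)$ — which consists of stable maps having a ghost sphere bubble at the node so that they lie simultaneously in both product neighborhoods — must be arranged to be consistent, and I expect this to be the main obstacle: one must verify that the two prescribed product structures induce the same data along this shared locus and that the obstruction spaces chosen on the $\E_{\varpi}$ side restrict to those chosen on the $\M^{\Delta}$ side. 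I would handle this by choosing the obstruction data on the overlapping combinatorial type once and for all at the start of the induction, so that both strata inherit it by restriction rather than by two independent choices; with that synchronization, properties (\ref{Xcompatible}) and (\ref{Ecompatible}) and the matching conditions on $\ms$ hold by construction.
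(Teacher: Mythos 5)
Your overall scaffolding (product charts near the two distinguished loci, vertical obstruction spaces, pulled-back multi-sections extended by the relative existence theorem \cite[Lemma 3.14]{FO}) is reasonable, but the step you single out as the main obstacle rests on a misidentification of the two subsets, and this is precisely where your argument departs from what is true. Every stable map in the class $\mathfrak{c}^{\varpi}_{*}\sigma$ has image in a \emph{single} fiber of $pr=\kappa\circ\pmb{\pi}$: the class pushes forward to zero in $H_{2}(\com P^{1})$, and since the obstruction spaces are taken in the vertical direction, even the thickened (approximate) solutions project holomorphically, hence constantly, to $\com P^{1}$. Consequently $\M^{\Delta}(\Y)$ lies in the singular fiber of $pr$, while $\M^{\E_{\varpi}}(\Y)$ is cut out by the closed condition that the curve lies in the \emph{fixed} smooth fiber which $\mathfrak{c}^{\varpi}$ embeds (the condition printed in its definition contains a typo; the identification with $\Mbar_{0,1}(\E_{\varpi},\sigma,(g))$ pins down the intent). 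The two loci are therefore closed and \emph{disjoint}, and the paper's entire proof is this one observation: the prescribed charts never meet, so they can be chosen independently and there is nothing to synchronize. The ``overlap consisting of stable maps with a ghost bubble at the node'' that you propose to handle does not exist. What you have implicitly done is replace $\M^{\E_{\varpi}}(\Y)$ by the non-closed union of maps into \emph{all} smooth fibers; under that reading the statement itself would be in trouble, since a chart meeting the closure of that union as well as $\M^{\Delta}(\Y)$ would have to be simultaneously of the form $D\times D\times \check{V}^{\Delta}_{j}$ and $D\times \check{V}^{\varpi}_{i}$, and no choice of ``synchronized'' obstruction data reconciles these two product structures. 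So your fix addresses a problem that the correct definitions remove, and would not suffice for the problem your reading creates.

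A second gap is your identification of the two disk factors over $\M^{\Delta}(\Y)$. For a configuration in the singular fiber, ``moving the base point'' and ``smoothing the node of $\com P^{1}\sharp\com P^{1}$'' are one and the same deformation, not two, and neither is available as an independent parameter while the map stays in the singular fiber. The correct picture is that the two disks are the gluing parameters $\epsilon_{1},\epsilon_{2}$ at the two \emph{domain} nodes, where the $\E_{\alpha}$-stem and the $\E_{\beta}$-stem attach to the middle ($\X$-fiber) piece: because the middle piece is contracted by $\pmb{\pi}$, these are genuinely independent, and the base parameter is recovered as (essentially) their product, $t\approx\epsilon_{1}\epsilon_{2}$; in local coordinates where $\kappa=xy$, the glued horizontal map is $w\mapsto(\epsilon_{1}/w,\;\epsilon_{2}w)$, which lands in the fiber over $\epsilon_{1}\epsilon_{2}$ for every $(\epsilon_{1},\epsilon_{2})$. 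This is exactly what makes the claim $E_{k}=D\times D\times E^{\Delta}_{j}$, with no increase of rank, consistent with the local structure of $\Mbar_{0,1}(\Y,\mathfrak{c}^{\varpi}_{*}\sigma)$, and what makes the dimension count close up (the virtual dimension over $\Y$ exceeds that of $\M^{\Delta}$ by $4$ and that of $\Mbar_{0,1}(\E_{\varpi},\sigma,(g))$ by $2$). With your parametrization, the splitting check you propose to perform at the start would fail, or would force you back to the description above.
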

\begin{proof}This is because $\M^{\Delta}(\Y)$ and $\M^{\E_{\varpi}}(\Y)$ are closed and disjoint in $\Mbar_{0,1}(\Y,\mathfrak{c}^{\varpi}_{*}\sigma)$, so the Kuranishi charts over each can be chosen independently.
\end{proof}

\begin{lem}\label{identity1}
$\sum_{\sigma_{\alpha}\sharp\sigma_{\beta}+\iota^{\varpi}_*A=\sigma} ev_{*}[\M^{\Delta}(\sigma_{\alpha},\sigma_{\beta}, A)]^{v} \smallfrown f=ev_{*}[\Mbar_{0,1}(\Y,\mathfrak{c}^{\varpi}_{*}\sigma)]^{v}  \smallfrown i^{nd}_{*}f.$
\end{lem}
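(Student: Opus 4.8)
The plan is to compute the right-hand side $ev_{*}[\Mbar_{0,1}(\Y,\mathfrak{c}^{\varpi}_{*}\sigma)]^{v}\smallfrown i^{nd}_{*}f$ by choosing a geometric representative of the orbifold Poincar\'e dual of $i^{nd}_{*}f$, identifying exactly which stable maps contribute, and then matching virtual classes chart by chart via Lemma \ref{virtualY}. First I would record the dimension count: since $\dim_{\real}\Y=\dim_{\real}\X+4$, the node fiber $\X$ has real codimension $4$ in $\Y$ (its normal bundle is pulled back from $T\mathcal{T}$ at the nodal point, which is real rank $4$). Hence the Gysin map $i^{nd}_{*}$ raises cohomological degree by $4$, and $PD(i^{nd}_{*}f)$ may be represented by a cycle $Z$ supported inside $I\X\subset I\Y$, namely $i^{nd}_{*}\hat f$ where $\hat f$ is the Poincar\'e dual of $f$ inside the single fiber $\X$ over the node. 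Capping $ev_{*}[\Mbar_{0,1}(\Y,\mathfrak{c}^{\varpi}_{*}\sigma)]^{v}$ with $i^{nd}_{*}f$ is then computed by the fiber product of the evaluation map with $Z$, i.e.\ by virtually counting stable maps whose marked point lands on the node fiber $\X$ and meets $\hat f$.

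Next I would show that every contributing map lies in $\M^{\Delta}(\Y)$. A stable map representing $\mathfrak{c}^{\varpi}_{*}\sigma$ projects under $pr$ to a holomorphic map $\com P^{1}\to\com P^{1}$ of degree $0$ (the class is a section class of the $pr$-fibers, so $pr_{*}\mathfrak{c}^{\varpi}_{*}\sigma=0$); such a map is constant, so the whole stable map lies in a single fiber of $pr$. If its marked point lies on the node fiber $\X$ over $[0,1]$, then the entire image lies in the singular fiber $\E_{\alpha}\cup_{\X}\E_{\beta}$. Because $\mathbb{J}$ is doubly fibered, such a map decomposes into one sectional component in $\E_{\alpha}$, fiber components contained in the node $\X$ carrying the incidence marked points, and one sectional component in $\E_{\beta}$, with the node-matching conditions via $\mathcal{I}$; these are precisely the elements of $\bigcup_{\sigma_{\alpha}\sharp\sigma_{\beta}+\iota^{\varpi}_{*}A=\sigma}\M^{\Delta}(\sigma_{\alpha},\sigma_{\beta},A)\cong\M^{\Delta}(\Y)$.

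Then I would match the virtual contributions. By Lemma \ref{virtualY}(\ref{Xcompatible}), a Kuranishi chart meeting $\M^{\Delta}(\Y)$ has the form $V_{k}=D\times D\times\check{V}^{\Delta}_{j}$ with obstruction bundle $D\times D\times E^{\Delta}_{j}$, and the chosen multisection $\ms$ restricts to $\ms_{\Delta}$ on $\check{V}^{\Delta}$. The two disk factors are the smoothing parameters of the two nodes attaching the $\X$-bubble to the $\E_{\alpha}$- and $\E_{\beta}$-sectional components. The incidence constraint defining $Z$ forces the marked point to remain on the node fiber, a condition transverse to these two smoothing directions and cutting out exactly the central slice $\{0\}\times\{0\}\times\check{V}^{\Delta}_{j}$; its real codimension $4$ matches $\dim_{\real}(D\times D)$ from the first paragraph. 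Hence, chart by chart, $ev_{*}[\Mbar_{0,1}(\Y,\mathfrak{c}^{\varpi}_{*}\sigma)]^{v}\smallfrown i^{nd}_{*}f$ reduces to $ev_{*}[\M^{\Delta}(\Y)]^{v}\smallfrown f$, which by definition equals $\sum_{\sigma_{\alpha}\sharp\sigma_{\beta}+\iota^{\varpi}_{*}A=\sigma}ev_{*}[\M^{\Delta}(\sigma_{\alpha},\sigma_{\beta},A)]^{v}\smallfrown f$, giving the claimed identity.

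The hard part will be making this last step rigorous in the Kuranishi framework: one must verify that capping with the localized cycle $Z$ genuinely integrates out the $D\times D$ factor with the correct multiplicity and orientation, that the multisection compatibility $\ms|_{\check{V}^{\Delta}}=\ms_{\Delta}$ of Lemma \ref{virtualY} makes the reduction hold branch-by-branch, and that no spurious contributions survive from stable maps lying over base points near $[0,1]$ (whose marked points, once constrained to the node fiber, fail to meet $Z$ after a generic perturbation). This is precisely where the compatibility clauses of Lemma \ref{virtualY} and the transversality of $Z$ to $ev$ enter; the remaining verification is a careful but standard fiber-product/excision computation for virtual fundamental cycles, together with the orientation bookkeeping analogous to \cite[Section 2.3.2]{M}.
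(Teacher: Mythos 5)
Your proposal is correct and follows essentially the same route as the paper's proof: represent the Poincar\'e dual of $i^{nd}_{*}f$ by a cycle $i^{nd}_{*}\hat{f}$ supported in the node fiber, observe that the only configurations whose marked point can meet it are those of $\M^{\Delta}(\Y)$, and invoke the chart-and-multisection compatibility of Lemma \ref{virtualY}. The only real difference is that the paper's final step is more direct than your ``hard part'' suggests: since every zero of $\ms$ whose evaluation lands in the node fiber is forced into the central slice $\check{V}^{\Delta}$, where $\ms=\ms_{\Delta}$, the intersection points (with signs) on the two sides coincide literally, so no integration over the $D\times D$ factor or multiplicity bookkeeping is ever needed.
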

\begin{proof}
Let $\hat{f}:S\to \X_{(g)}$ be a representative of the Poincar\'{e} dual of $f$ in $\X_{(g)}$, then the identity becomes:
$$\sum_{\sigma_{\alpha}\sharp\sigma_{\beta}+\iota^{\varpi}_*A=\sigma} ev_{*}[\M^{\Delta}(\sigma_{\alpha},\sigma_{\beta}, A)]^{v} \cap_{\X_{(g)}} \hat{f}=ev_{*}[\Mbar_{0,1}(\Y,\mathfrak{c}^{\varpi}_{*}\sigma)]^{v}  \cap_{\Y_{(g)}} i^{nd}_{*}\hat{f}.$$
Here $\cap_{\X_{(g)}}$ and $\cap_{\Y_{(g)}}$ are the intersection products in $\X_{(g)}$ and $\Y_{(g)}$ respectively.

An intersection on the right hand side is a pair $(x,t)$ for $x\in\ms^{-1}(0)$, $t\in S$ such that $ev(x)=i^{nd}\circ\hat{f}(t)$. Then $ev(x)\in i^{nd}(\X_{(g)})$. By Lemma \ref{virtualY}, $x\in \ms_{\Delta}^{-1}(0)$. Therefore we have a corresponding intersection on the left hand side.

Conversely it is obvious that any  intersection on the left hand side gives an intersection on the right hand side.

Moreover the 1-1 correspondence preserves sign. Thus the identity holds.
\end{proof}

\begin{lem}\label{identity2}
$ev_{*}[\Mbar_{0,1}(\Y,\mathfrak{c}^{\varpi}_{*}\sigma,(g))]^{v}  \smallfrown i^{nd}_{*}f=ev_{*}[\Mbar_{0,1}(\E_{\varpi},\sigma,(g))]^{v} \smallfrown \iota^{\varpi}_*f.$
\end{lem}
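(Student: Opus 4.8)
The plan is to prove Lemma \ref{identity2} by a deformation argument that slides the homological constraint $i^{nd}(\X)$ off the nodal fiber onto a smooth fiber and then localizes the computation there. Write $j^{\varpi}:=\mathfrak{c}^{\varpi}\circ\iota^{\varpi}:\X\to\Y$ for the inclusion of the $\X$-fiber of the orbifiber bundle $\Y\to\mathcal{T}$ sitting over the point $q:=(b,\infty)$ inside the smooth fiber $\E_{\varpi}$ over a generic $b\in\com P^{1}$. Both $i^{nd}$ and $j^{\varpi}$ are inclusions of fibers of $\Y\to\mathcal{T}$, over the nodal point and over $q$ respectively. Since $\mathcal{T}$ is connected and the fiber transport of a Hamiltonian orbifiber bundle is realized by symplectomorphisms isotopic to the identity, which act trivially on $H^{*}(\X_{(g)})$, dragging the fiber along a path from the nodal point to $q$ exhibits $i^{nd}$ and $j^{\varpi}$ as homotopic through fiber inclusions and yields $i^{nd}_{*}f=j^{\varpi}_{*}f$ in $H^{*}(\Y_{(g)},\ration)$.

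First I would invoke that $ev_{*}[\Mbar_{0,1}(\Y,\mathfrak{c}^{\varpi}_{*}\sigma,(g))]^{v}$ is a well-defined rational homology class by the standard Kuranishi machinery (\cite{FO}), so that the cap product $\smallfrown$ against a cohomology class depends only on that class. Combined with the fiber homology $i^{nd}_{*}f=j^{\varpi}_{*}f$, this reduces the left-hand side of Lemma \ref{identity2} to $ev_{*}[\Mbar_{0,1}(\Y,\mathfrak{c}^{\varpi}_{*}\sigma,(g))]^{v}\smallfrown j^{\varpi}_{*}f$.

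The second step is localization to the smooth fiber. Because $\mathfrak{c}^{\varpi}_{*}\sigma$ projects to $0$ under $pr:\Y\to\com P^{1}$, every stable orbifold morphism representing this class lies in a single fiber of $pr$; the constraint $ev(\tau)\in j^{\varpi}(\X)$ forces $pr(ev(\tau))=b$ and hence confines the whole curve to the smooth fiber $\E_{\varpi}$ over $b$, i.e. $\tau\in\M^{\E_{\varpi}}(\Y)\cong\Mbar_{0,1}(\E_{\varpi},\sigma,(g))$. By Lemma \ref{virtualY}(\ref{Ecompatible}) the Kuranishi charts meeting $\M^{\E_{\varpi}}(\Y)$ have the form $D\times\check{V}^{\varpi}_{i}$ with obstruction $D\times E^{\varpi}_{i}$ and multisection $\ms=\ms_{\varpi}$ on the $\check{V}^{\varpi}_{i}$-factor, the extra $D$ parametrizing the horizontal deformation to nearby fibers. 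Representing $j^{\varpi}_{*}f$ by $j^{\varpi}\circ\hat{f}$ with $\hat{f}$ a representative of $\mathrm{PD}_{\X_{(g)}}(f)$, the incidence equation $ev(\tau)=j^{\varpi}(\hat{f}(t))$ splits into the transverse codimension-$2$ condition pinning the $D$-factor to $\{b\}$ and the fiberwise condition $ev\in\iota^{\varpi}(\X)$ tested against $f$. On zero loci this identifies $\ms^{-1}(0)\cap(j^{\varpi}\circ\hat{f})$ with $\{b\}\times(\ms_{\varpi}^{-1}(0)\cap(\iota^{\varpi}\circ\hat{f}))$, with matching multiplicities and orientations, yielding $ev_{*}[\Mbar_{0,1}(\E_{\varpi},\sigma,(g))]^{v}\smallfrown\iota^{\varpi}_{*}f$ and completing the proof.

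I expect the main obstacle to be the rigorous justification of the first step inside the Kuranishi framework: establishing that the pairing is genuinely insensitive to moving the constraint cycle from the nodal fiber to a smooth fiber. The cleanest route is to combine representative-independence of the cap product with the fiber homology $i^{nd}_{*}f=j^{\varpi}_{*}f$, but this hinges on the subtle point that the monodromy of $\Y\to\mathcal{T}$ acts trivially on $H^{*}(\X_{(g)})$, so that $f$ is preserved under transport, and on correctly handling the twisted-sector Gysin pushforwards $i^{nd}_{*}$ and $\iota^{\varpi}_{*}$. Alternatively one builds an explicit cobordism of constraint cycles and verifies, using the compactness of $\Mbar_{0,1}(\Y,\mathfrak{c}^{\varpi}_{*}\sigma,(g))$, that no contributing curves escape to the singular fibers along the deformation.
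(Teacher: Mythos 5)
Your proposal is correct and is essentially the paper's own argument: the paper disposes of Lemma \ref{identity2} in one line (``similar to that of Lemma \ref{identity1} using Lemma \ref{virtualY}''), and your two steps---sliding the constraint class across the connected base so that $i^{nd}_{*}f=(\mathfrak{c}^{\varpi}\circ\iota^{\varpi})_{*}f$ in $H^{*}(I\Y,\ration)$, then matching intersection points inside the product charts $D\times\check{V}^{\varpi}_{i}$ using $\ms|_{\check{V}^{\varpi}}=\ms_{\varpi}$ from Lemma \ref{virtualY}~(\ref{Ecompatible})---are precisely what that sentence leaves implicit. Your localization step is the proof of Lemma \ref{identity1} repeated with $\M^{\E_{\varpi}}(\Y)$ in place of $\M^{\Delta}(\Y)$ (the pinning of the extra $D$-factor by holomorphicity of $pr$ replacing the confinement to the singular fiber), which is exactly the intended reading.
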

\begin{proof}
The proof is similar to that of Lemma \ref{identity1} using Lemma \ref{virtualY}. 
\end{proof}

By Lemma \ref{diagonal}, Lemma \ref{identity1} and Lemma \ref{identity2}, we find that the left-hand side of (\ref{compGWid}) is equal to
\begin{equation*}
\begin{split}
\sum_{\sigma_{\alpha}\sharp\sigma_{\beta}+\iota^{\varpi}_*A=\sigma} ev_{*}[\M^{\Delta}(\sigma_{\alpha},\sigma_{\beta}, A)]^{v} \smallfrown f
& =ev_{*}[\Mbar_{0,1}(\Y,\mathfrak{c}^{\varpi}_{*}\sigma,(g))]^{v}  \smallfrown i^{nd}_{*}f\\
& =ev_{*}[\Mbar_{0,1}(\E_{\varpi},\sigma,(g))]^{v} \smallfrown \iota^{\varpi}_*f,
\end{split}
\end{equation*}
which is the right-hand side of (\ref{compGWid}). The proof of Proposition \ref{compGW} is thus complete.

\section{Appendix: Cutting and Glueing of Groupoids}
Let $\gG$ be a groupoid, $X=|\gG|$ the underlying topological space, $Y$ a subset of $X$. Then the restriction of $\gG$ to $Y$, denoted by $\gG|_{Y}$, is defined as follows:  
\begin{eqnarray*}
Ob(\gG|_{Y}) && := \{x\in Ob(\gG) \,\, | \,\,[x]\in Y\}\\
Mor(\gG|_{Y}) &&:= \{\xymatrix{x\ar[r]^{g} & y}\in Mor(\gG) \,\, | \,\,[x]=[y]\in Y\}
\end{eqnarray*}

Assume that $X$ is a manifold, $X=Y_{1} \cup Y_{2}$, $Z:=Y_{1} \cap Y_{2}=\partial Y_{1}=\partial Y_{2}$ is a submanifold of $X$ of codimension one. Then $\gG|_{Y_{1}}$ and $\gG|_{Y_{2}}$ are called a \emph{cutting} of $\gG$ along $Z$.

Let $\gG_{1}$ and $\gG_{2}$ be two groupoids with boundaries. Namely for $i=1,2$, $Ob(\gG_{i})$ and $Mor(\gG_{i})$ are manifolds with boundaries. Moreover, $\partial Mor(\gG_{i})\twoarrows \partial Ob(\gG_{i})$ is a full subgroupoid of $\gG_{i}$. Let $\phi=(\phi_{0},\phi_{1})$ be a groupoid isomorphism from $\partial Mor(\gG_{1})\twoarrows \partial Ob(\gG_{1})$ to $\partial Mor(\gG_{2})\twoarrows \partial Ob(\gG_{2})$. Then the {\em glueing} of $\gG_{1}$ and $\gG_{2}$ with respect to $\phi$, denoted by $\gG_{1} \sqcup_{\phi} \gG_{2}$ is defined by:
 \begin{eqnarray*}
Ob(\gG_{1} \sqcup_{\phi} \gG_{2}) && \negthickspace\negthickspace\negthickspace\negthickspace\negthickspace\negthickspace := Ob(\gG_{1})\sqcup Ob(\gG_{2})/\<x_{1}\sim x_{2} \ \text{i.f.f.}\ \phi_{0}(x_{1})=x_{2} \ for\ x_{i}\in\partial Ob(\gG_{i}),\ i=1,2\>,\\
Mor(\gG_{1} \sqcup_{\phi} \gG_{2}) && \negthickspace\negthickspace\negthickspace\negthickspace\negthickspace\negthickspace:= Mor(\gG_{1})\sqcup Mor(\gG_{2})/\<r_{1}\sim r_{2} \ \text{i.f.f.}\ \phi_{0}(r_{1})=r_{2} \ for\ r_{i}\in\partial Mor(\gG_{i}), \ i=1,2\>.
\end{eqnarray*}

It is straightforward to verify that this defines a Lie groupoid.

\end{document}